\documentclass[11pt]{amsart}
%
%%%%%%%%%%%%%%%%% Packages %%%%%%%%%%%%%%%%%%%%%%%%%%%%
%\usepackage{ifthen}
\usepackage{amsmath,amssymb}
\usepackage{graphicx}
\usepackage{xcolor}
\usepackage{xkeyval}
\usepackage{calc}
\usepackage{ifthen}
\usepackage{algorithm}
\usepackage{algpseudocode}
\usepackage{mathrsfs}
\usepackage{fullpage}
\usepackage[pdftex,colorlinks,backref=page,citecolor=blue]{hyperref}
\usepackage{enumerate}
\usepackage{pgf,tikz}
\usepackage{todonotes}
\usepackage{appendix}
\usetikzlibrary{arrows}
\usetikzlibrary{positioning}
\usepackage{array}

\def\COMMENT#1{}
\def\TASK#1{}

\def\noproof{{\unskip\nobreak\hfill\penalty50\hskip2em\hbox{}\nobreak\hfill%
        $\square$\parfillskip=0pt\finalhyphendemerits=0\par}\goodbreak}
\def\endproof{\noproof\bigskip}
\newdimen\margin   % needed for macros \textdisplay & \ltextdisplay
\def\textno#1&#2\par{%
    \margin=\hsize
    \advance\margin by -4\parindent
           \setbox1=\hbox{\sl#1}%
    \ifdim\wd1 < \margin
       $$\box1\eqno#2$$%
    \else
       \bigbreak
       \hbox to \hsize{\indent$\vcenter{\advance\hsize by -3\parindent
       \sl\noindent#1}\hfil#2$}%
       \bigbreak
    \fi}

\makeatletter \providecommand\@dotsep{5} \def\listtodoname{List of Todos} \def\listoftodos{\@starttoc{tdo}\listtodoname} \makeatother

\newtheorem{thm}{Theorem}[section]
\newtheorem{define}[thm]{Definition}

\newtheorem{lem}[thm]{Lemma}
\newtheorem{claim}[thm]{Claim}
\newtheorem{fact}[thm]{Fact}
\newtheorem{col}[thm]{Corollary}
\newtheorem{conj}[thm]{Conjecture}
\newtheorem{prop}[thm]{Proposition}

\newtheorem{ques}[thm]{Question}
\newtheorem{obs}[thm]{Observation}
\newtheorem*{thm*}{Theorem}
\newtheorem*{define*}{Definition}
\newtheorem*{examp*}{Example}
\newtheorem*{lem*}{Lemma}
\newtheorem*{claim*}{Claim}
\newtheorem*{fact*}{Fact}
\newtheorem*{col*}{Corollary}
\newtheorem*{conj*}{Conjecture}

\newcommand{\eps}{\varepsilon}

\newtheorem*{claim:non-degen}{Claim~\ref{claim:non-degen}}
\newtheorem*{claim:degenfull}{Claim~\ref{claim:degenfull}}
\newtheorem*{claim:non-degensym}{Claim~\ref{claim:non-degensym}}
\newtheorem*{claim:degenfullsym}{Claim~\ref{claim:degenfullsym}}
\newtheorem*{claim:q_2}{Claim~\ref{claim:q_2}}
\newtheorem*{prop:m2h1h2}{Proposition~\ref{prop:m2h1h2}}
\newtheorem*{claim:q_1}{Claim~\ref{claim:q_1}}
\newtheorem*{lemma:errorvalid}{Lemma~\ref{lemma:errorvalid}}
\newtheorem*{claim:polylog}{Claim~\ref{claim:polylog}}
\newtheorem*{claim:conclusion2}{Claim~\ref{claim:conclusion2}}

\title{Proof of the Kohayakawa--Kreuter conjecture for the majority of cases}
\author{Candida Bowtell\textsuperscript{c}}
\author{Robert Hancock\textsuperscript{r}}
\author{Joseph Hyde\textsuperscript{j}}
\thanks{c: School of Mathematics, University of Birmingham, United Kingdom, {\tt c.bowtell@bham.ac.uk}, research supported in parts by ERC Starting Grant 947978, Philip Leverhulme Prize PLP--2020--183 and Leverhulme Trust Early Career Fellowship ECF--2023--393. \\
\indent r: 
Department of Theoretical Computer Science, Faculty of Information Technology,
Czech Technical University in Prague, Th\'akurova 7, Prague, 160 00, Czech Republic.
%Mathematical Institute, University of Oxford, Radcliffe Observatory, Andrew Wiles Building, Woodstock Rd, Oxford OX2~6GG, UK.  
\texttt{robert.hancock@cvut.cz}. Research supported by a Humboldt Research Fellowship and ERC Advanced Grant 883810.\\
\indent j: Department of Mathematics, Strand Building, 5th floor, Strand Campus, Strand, London, WC2R 2LS, {\tt joseph.hyde@kcl.ac.uk}, research supported by the UK Research and Innovation Future Leaders Fellowship MR/S016325/1 and ERC Advanced Grant 101020255.\\
}

\begin{document}

\maketitle

\begin{abstract}
For graphs $G, H_1,\dots,H_r$, write $G \to (H_1, \ldots, H_r)$ to denote the property that whenever we $r$-colour the edges of $G$, there is a monochromatic copy of $H_i$ in colour $i$ for some $i \in \{1,\dots,r\}$.
Mousset, Nenadov and Samotij proved an upper bound on the threshold function for the property that $G_{n,p} \to (H_1,\dots,H_r)$, thereby resolving the $1$-statement of the Kohayakawa--Kreuter conjecture. 
%We show that to prove the $0$-statement it suffices to prove a deterministic colouring result, which says that if $G$ is not too dense then $G \not \to (H_1,\dots,H_r)$.
We reduce the $0$-statement of the Kohayakawa--Kreuter conjecture to a natural deterministic colouring problem and resolve this problem for almost all cases,
which in particular includes (but is not limited to) 
%when $r \geq 3$ or 
when $H_2$ is strictly $2$-balanced and either has density greater than $2$ or is not bipartite. 
In addition, we extend our reduction to hypergraphs, proving the colouring problem in almost all cases there as well.
\end{abstract}

%\COMMENT{RH: should mention reduction to colouring problem in abstract I think. Put this before hypergraphs, as there are technicalities which make it not quite true for hypergraphs (i.e. its not really conjecture for hypergraphs). Also replaced $m_2(H_1)=m_2(H_2)$ by density statement. It is quite annoying having to say strictly $2$-balanced, as it makes it seem limiting, but it really isn't because of hearts thing... JH: We can say in the paper why it suffices to assume $H_2$ is strictly $2$-balanced, so I don't this is that bad. I rearranged things and snuck in hypergraphs at the end in a vague way which I think is successful.}

An extended abstract describing this work appeared in the proceedings of EUROCOMB2023~\cite{bhh}.

%\setcounter{tocdepth}{1}
%\tableofcontents

\section{Introduction}\label{sec:intro}
%Current things missing from the introduction.
%\begin{itemize}
%    \item Kuperwasser and Samotij's results (and discussion about said results);
%    \item Colouring statement, our results and discussion;
%    \item Recognising the simultaneous work of Kuperwasser, Samotij and Widgerson and comparing their work with ours (of course, we can only do this much later in mid-July);
%    \item Hypergraph KK and discussion (for instance, the work of Gugelmann et al. needs to be included);
%    \item 
%\end{itemize}

Let $r \in \mathbb{N}$ and $G, H_1, \ldots, H_r$ be $k$-uniform hypergraphs, hereafter referred to as `$k$-graphs'.
We write $G \to (H_1, \ldots, H_r)$ to denote the property that whenever we colour the edges of $G$ with colours from the set $[r] := \{1, \ldots, r\}$ there exists $i \in [r]$ and a copy of $H_i$ in $G$ monochromatic in colour $i$. 
%\COMMENT{RH: to do at some point: go through and make $G_{n,p}$ $G(n,p)$ everywhere. Or vice versa! And also make sure $v_H$, $e_H$ do not appear in intro... JH: Done (in the vice versa way for the first part).}
Ramsey~\cite{r} famously showed, for $n$ sufficiently large, that $K_n \to (H_1, \ldots, H_r)$. 
On inspection one may wonder if Ramsey's theorem is true because $K_n$ is very dense, 
but Folkman~\cite{f}, and in a more general setting Ne\v{s}et\v{r}il and R\"{o}dl~\cite{nr}, 
proved that for any graph $H$ there are locally sparse graphs $G = G(H)$ such that $G \to (H_1, \ldots, H_r)$ when $H_1 = \cdots = H_r = H$. 
By considering Ramsey properties in the random setting, we discover that such graphs $G$ are in fact very common. Let $G^k_{n,p}$ be the binomial random graph with $n$ vertices and edge probability $p$. Throughout this paper we write `$G_{n,p}$' for `$G^2_{n,p}$'. 
To speak about Ramsey properties in random hypergraphs we must introduce the notion of a \emph{threshold}. 
Let $P$ be a monotone (increasing) property of hypergraphs, that is, if $H \in P$ and $H \subseteq G$, then $G \in P$.\footnote{By $H \in P$ we mean that $H$ has property $P$. Indeed, a property for hypergraphs can be thought of as the collection of all hypergraphs that have that property.} In this paper, we say that a function $f:\mathbb{N} \to \mathbb{R}$ is a {\it threshold for $P$ in $G^k_{n,p}$} if there exist constants $b, B \in \mathbb{N}$ such that \[\lim\limits_{n \to \infty} \mathbb{P}[G^k_{n,p} \in P] = 
\begin{cases}
    0       & \quad \text{if } p \leq bf(n),\\
    1       & \quad \text{if } p \geq Bf(n).
\end{cases}\] Bollob\'{a}s and Thomason~\cite{bt} and Friedgut~\cite{f}, for $k = 2$ and general $k \in \mathbb{N}$, respectively, showed that every non-trivial\footnote{$P$ is non-trivial if there exist (hyper)graphs $H$ and $G$ such that $H\in P$ and $G\not\in P$, i.e.\ $P$ does not hold for all graphs.} monotone property $P$ for $G^k_{n,p}$ has a threshold.%\COMMENT{JH: Might have overcomplicated this referencing. Its also arguable that we don't need to define thresholds here. Mousset, Nenadov and Samotij don't do this for instance. RH No I like it, I think it is useful.} 

\L{}uczak, Ruci\'{n}ski and Voigt~\cite{lrv}, improving on earlier work of Frankl and R\"{o}dl~\cite{fr}, proved that $p = n^{-1/2}$ is a threshold for the property $G_{n,p} \to (K_3, K_3)$. 
Building on this, R\"{o}dl and Ruci\'{n}ski~\cite{rr1, rrrandom, rr2} determined a threshold for the general symmetric case in graphs. 
Before stating this result, we give the following definitions concerning density conditions, including the definition for all $k$-graphs, as we shall need it in this form later. 
Let $H$ be a graph. We define 
\begin{align*}
d(H) := 
    \begin{cases}
        e(H)/v(H)         & \quad \text{if } v(H) \geq 1,\\
        0               & \quad \text{otherwise};
    \end{cases}
\end{align*}
and define the \emph{$m$-density} of $H$ to be
\begin{align*}
m(H) & := \max\{d(J): J \subseteq H\}.
\end{align*}
We say that a graph $H$ is \emph{balanced w.r.t $d$}, or just \emph{balanced}, if we have $d(H) = m(H)$. 
Moreover, we say $H$ is \emph{strictly balanced} if for every proper subgraph $J \subsetneq H$, we have $d(J) < m(H)$.

We define 
\begin{align*}
d_1(H) & := 
    \begin{cases}
        e(H)/(v(H)-1)         & \quad \text{if } v(H) \geq 2,\\
        0               & \quad \text{otherwise};
    \end{cases}
\end{align*}
and the \emph{arboricity} (also known as the $1$-density) of $H$ to be
\begin{align*}    
ar(H)=m_1(H) & := \max\{d_1(J): J \subseteq H\}.
\end{align*}

%\footnote{For $m$-density and arboricity, we define (strictly) balanced and (strictly) $1$-balanced respectively in the analogous way to the definition of (strictly) $k$-balanced earlier.}

%We state the following definition for $k$-graphs as we need it in this form later. 
For $k\geq 2$ and a $k$-graph $H$, we define \[d_k(H) := 
    \begin{cases}
        (e(H) - 1)/(v(H) - k)         & \quad \text{if $H$ is non-empty with} \ v(H) \geq k+1,\\
        1/k                         & \quad \text{if} \ H \cong K_k,\\    
        0                           & \quad \text{otherwise};
    \end{cases}\] and the \emph{$k$-density of $H$} to be \[m_k(H) := \max\{d_k(J): J \subseteq H\}.\] 
    
We say that a $k$-graph $H$ is \emph{$k$-balanced} if $d_k(H) = m_k(H)$, and \emph{strictly $k$-balanced} if for all proper subhypergraphs $J \subsetneq H$, we have $d_k(J) < m_k(H)$.

\begin{thm}[R\"{o}dl and Ruci\'{n}ski~\cite{rr2}]\label{thm:rr}
Let $r \geq 2$ and $H$ be a non-empty graph such that at least one component of $H$ is not a star or, when $r = 2$, a path on $3$ edges. 
Then there exist positive constants $b, B > 0$ such that \[\lim\limits_{n \to \infty} \mathbb{P}[G_{n,p} \to (\underbrace{H, \ldots, H}_{r \ times})] = 
\begin{cases}
    0       & \quad \text{if } p \leq bn^{-1/m_2(H)},\\
    1       & \quad \text{if } p \geq Bn^{-1/m_2(H)}.
\end{cases}\] 
\end{thm}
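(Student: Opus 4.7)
I would prove the $0$- and $1$-statements separately; the exponent $1/m_2(H)$ is natural because, for $p$ of that order, an edge of $G_{n,p}$ is expected to lie in a constant number of copies of each strictly $2$-balanced $J \subseteq H$ with $d_2(J) = m_2(H)$. For the $0$-statement the plan is to exhibit, whp, an $r$-edge-colouring of $G_{n,p}$ with no monochromatic $H$. A first-moment argument on each such $J$ shows that whp almost every edge of $G_{n,p}$ lies in only $O(1)$ copies of $H$; the few remaining ``heavy'' edges, sitting in exceptionally dense configurations, are so sparse that they can be deleted and recoloured at the end. The hypothesis that some component of $H$ is not a star (nor $P_3$ when $r=2$) is exactly the structural ingredient needed for this cleanup: otherwise $m_2(H)$ is too small and the true threshold is governed by vertex-degree rather than edge-density phenomena. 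With the remaining edges having only bounded overlap among copies of $H$, I would colour each edge uniformly at random from $[r]$ and apply the (asymmetric) Lov\'asz Local Lemma to the events ``a given copy of $H$ is monochromatic'', each of probability $r^{1-e(H)}$; the constant dependency degree then yields a valid colouring with positive probability.

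For the $1$-statement my approach would be via the hypergraph container method (or equivalently the sparse-regularity / K{\L}R embedding framework). One constructs a family $\mathcal{C} = \{C_1, \ldots, C_t\}$ of subgraphs of $K_n$ with $t \leq \exp(o(n^2 p))$ such that every $H$-free graph on $[n]$ lies in some $C_i$, and each $C_i$ is ``close to $H$-free-extremal'' in the quantitative sense coming from a balanced-supersaturation lemma for $H$. Any non-Ramsey $r$-colouring of $G_{n,p}$ partitions its edges into $r$ $H$-free classes and hence yields a tuple $(i_1, \ldots, i_r)$ with $E(G_{n,p}) \subseteq C_{i_1} \cup \cdots \cup C_{i_r}$; a union bound over the $t^r$ tuples, combined with the supersaturation deficit, shows that this occurs with probability $o(1)$ once $p \geq B n^{-1/m_2(H)}$, giving $G_{n,p} \to (H,\dots,H)$ whp.

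The main obstacle is the quantitative calibration in the $1$-statement: producing a container family whose cardinality and deficit line up exactly at the exponent $1/m_2(H)$ requires a balanced-supersaturation lemma for $H$, which is itself delicate and is the step where $m_2$ (as opposed to $m_1$ or $v/e$) naturally enters. The exceptional graphs (stars in general, $P_3$ for $r=2$) fail this lemma because their true Ramsey thresholds lie strictly below $n^{-1/m_2(H)}$ and are governed by vertex rather than edge density; these I would treat separately via direct vertex-degree concentration on one side and explicit non-Ramsey colourings on the other.
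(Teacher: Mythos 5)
First, note that this paper does not prove Theorem~\ref{thm:rr} at all: it is quoted as background and attributed to R\"odl and Ruci\'nski, with the short container-based proof of Nenadov and Steger~\cite{ns} mentioned. So the comparison below is against the known proofs that the paper relies on and generalises.

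Your $1$-statement sketch is essentially the Nenadov--Steger argument and is a legitimate route, modulo one phrasing issue: for the Ramsey application the containers are not compared to an extremal number; rather one uses Ramsey supersaturation of $K_n$ (every $r$-colouring of $K_n$ has a colour class with $\Omega(n^{v(H)})$ copies of $H$) to conclude that the union of any $r$ containers, each containing few copies of $H$, must miss $\Omega(n^2)$ pairs, and the union bound is taken over fingerprints of size $O(n^{2-1/m_2(H)})$. That is a repair of emphasis, not of substance.

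The genuine gap is in your $0$-statement. The claim that the Local Lemma applies with ``constant dependency degree'' is false at $p=\Theta(n^{-1/m_2(H)})$: only the \emph{expected} number of copies of $H$ through an edge is $O(1)$. Sunflower-type configurations in which $k$ copies of $H$ share a single edge have $2+k(v(H)-2)$ vertices and $1+k(e(H)-1)$ edges, so (for $2$-balanced $H$) their expected count is $n^{2-1/m_2(H)}b^{1+k(e(H)-1)}$, which is $\omega(1)$ for every fixed $k$ and remains $\geq 1$ up to $k=\Theta(\log n)$. Hence whp there are edges lying in $\omega(1)$ copies of $H$, and these configurations are not ``exceptionally dense'' -- their densities sit at or below $m_2(H)$, which is exactly why they appear. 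Your cleanup step is also unsound as stated: a ``heavy'' edge lies in \emph{many} copies of $H$, so deleting it and recolouring it arbitrarily at the end is precisely the dangerous operation, since the reinstated edge can complete monochromatic copies in whichever colour it receives. Handling these clustered copies is the entire content of the $0$-statement: the actual proofs (R\"odl--Ruci\'nski, and Theorem~3.2 together with the growing-sequence analysis in~\cite{ns}, which is the template generalised by the present paper's \textsc{Grow}/\textsc{Asym-Edge-Col} machinery) show that whp every substructure on which a greedy colouring procedure could get stuck either has density exceeding $m_2(H)$ (hence is absent by Markov) or decomposes into bounded-size pieces of $m$-density at most $m_2(H)$, and then invoke the deterministic colouring lemma that $m(G)\leq m_2(H)$ implies $G\not\to(H,\dots,H)$. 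Your proposal contains neither this deterministic lemma nor the structural clustering analysis that reduces to it, and a first-moment-plus-LLL argument cannot substitute for them; this is also where the hypotheses on $H$ (no star components, no $P_3$ when $r=2$) genuinely enter, namely in the validity of that deterministic colouring statement, not merely in a supersaturation calibration.
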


As is common in this study, we call the statement for $p \leq bn^{-1/m_2(H)}$ in Theorem~\ref{thm:rr} a \emph{$0$-statement} and the statement for $p \geq Bn^{-1/m_2(H)}$ a \emph{$1$-statement}. 

The assumptions put on $H$ in Theorem~\ref{thm:rr} are necessary. 
If every component of $H$ is a star then $G_{n,p} \to (\underbrace{H, \ldots, H}_{r \ times})$ as soon as sufficiently many vertices of degree $(\Delta(H) - 1)r + 1$ appear in $G_{n,p}$. 
A threshold for this property in $G_{n,p}$ is $f(n) = n^{-1-1/((\Delta(H) - 1)r + 1)}$, but $m_2(H) = 1$.
For the case when $r = 2$ and at least one component of $H$ is a path on 3 edges while the others are stars, the $0$-statement of Theorem~\ref{thm:rr} does not hold. 
Indeed, if $p = cn^{-1/m_2(P_3)} = cn^{-1}$ for some $c > 0$, then one can show that the probability that $G_{n,p}$ contains a cycle of length $5$ with an edge pending at every vertex is bounded from below by a positive constant $d = d(c)$.
One can check that every colouring of the edges of this augmented $5$-cycle with $2$ colours yields a monochromatic path of length $3$, and hence $p = o(n^{-1/m_2(H)})$ is required for the 0-statement to hold in this case.
This construction was missed in \cite{rr2}, and was eventually observed by Friedgut and Krivelevich~\cite{fk}, who corrected the $0$-statement to have the assumption $p = o(n^{-1/m_2(H)})$ instead. 

The intuition behind the threshold in Theorem~\ref{thm:rr} is as follows: Firstly, assume $H$ is $2$-balanced. The expected number of copies of a graph $H$ in $G_{n,p}$ is $\Theta(n^{v(H)}p^{e(H)})$ and the expected number of edges is $\Theta(n^2p)$. 
For $p = n^{-1/m_2(H)}$, these two expectations are of the same order since $H$ is $2$-balanced. 
That is to say, if the expected number of copies of $H$ containing some fixed edge is smaller than some small constant $c$, then we can hope to colour without creating a monochromatic copy of $H$: 
very roughly speaking, each copy will likely contain an edge not belonging to any other copy of $H$, so by colouring these edges with one colour and all other edges with a different colour we avoid creating monochromatic copies of $H$. 
If the expected number of copies of $H$ containing some fixed edge is greater than some large constant $C$ then a monochromatic copy of $H$ may appear in any $r$-colouring since the copies of $H$ most likely overlap heavily.

Note that Nenadov and Steger~\cite{ns} produced a short proof of Theorem~\ref{thm:rr} using the hypergraph container method (see \cite{bms, st}).
%\COMMENT{JH: I sort-of want to recommend this paper to a new reader, but I'm not sure if that's appropriate? RH: In what sense? I've added comment: also Friedgut himself said last week at RSA `what we did in the 100 page sharp Ramsey result for triangles was essentially design a container style result specific to the result, so we are able to cover so much more (in recent sharp Ramsey paper) using containers'! Note sure how/if worth mentioning. JH: yeah, not sure whether to mention any of that. And let's not recommend any particular paper; I can see how political that could get in the future.}
Indeed, the proof of the $1$-statement is reduced to a mere two pages and demonstrates a fundamental application of the container method.
%\COMMENT{RH: might also make sense to cite BMS and ST here. JH: Yes, I think citing them here makes sense.}

Finally, before we move onto the asymmetric setting, it is worth mentioning that sharp thresholds for $G_{n,p} \to (H,H)$ have been obtained when $H$ is a tree \cite{fk}, a triangle \cite{frrt} or a strictly $2$-balanced graph that can be made bipartite by the removal of some edge \cite{ss}.
Very recently, in work which generalises the non-tree cases above, sharp thresholds were obtained for strictly $2$-balanced graphs which are `collapsible'~\cite{fkss}; see \cite{fkss} for details, as well as further discussion on sharp thresholds in general random Ramsey-style problems. 
%\COMMENT{RH: Could say `The final of these papers made much lighter work of the problem than the previous two, helped largely by the container method.' JH: If you're confident that's true, then put it down. I think from here on though we shouldn't be really mentioning containers again. RH: on second thoughts, its not particularly relevant, so won't bother (though am certain it is true!)}

\subsection{The Kohayakawa--Kreuter conjecture: asymmetric case}

Asymmetric properties of random graphs were first considered by Kohayakawa and Kreuter~\cite{kk}. In this paper, we will prove the $0$-statement of a conjecture from \cite{kk} for almost all finite collections of $k$-graphs.
This conjecture is an asymmetric analogue of Theorem~\ref{thm:rr}. 
%, the $k$-graph version being first given in \cite{gnpsst}.\footnote{JH: This is essentially correct, even though they don't explicitly write down `Conjecture blah'.} 
To state the conjecture we require some more nomenclature, where we again make the definitions for $k$-graphs.

For $k$-graphs $H_1$ and $H_2$ with $m_k(H_1) \geq m_k(H_2)$, we define \[d_k(H_1,H_2) := 
    \begin{cases}
        \frac{e(H_1)}{v(H_1) - k + \frac{1}{m_k(H_2)}}         & \quad \text{if $H_2$ is non-empty and} \ v(H_1) \geq k,\\
        0                         & \quad \text{otherwise};
    \end{cases}\] and the \emph{asymmetric $k$-density of the pair $(H_1, H_2)$} to be \[m_k(H_1, H_2) := \max\left\{d_k(J,H_2): J \subseteq H_1\right\}.\]
    
We say that $H_1$ is \emph{balanced with respect to $d_k(\cdot, H_2)$} if we have $d_k(H_1, H_2) = m_k(H_1, H_2)$ and \emph{strictly balanced with respect to $d_k(\cdot, H_2)$} if for all proper subgraphs $J \subsetneq H_1$ we have $d_k(J, H_2) < m_k(H_1, H_2)$. 
Note that we always have $m_k(H_1) \geq m_k(H_1, H_2) \geq m_k(H_2)$ (see Proposition~\ref{prop:m2h1h2}).

\begin{conj}[Kohayakawa and Kreuter~\cite{kk}]\label{conj:kk}
Let $r \geq 2$ and suppose that $H_1, \ldots, H_r$ are non-empty graphs such that $m_2(H_1) \geq m_2(H_2) \geq \cdots \geq m_2(H_r)$ and $m_2(H_2) > 1$.
Then there exist constants $b, B > 0$ such that \[\lim\limits_{n \to \infty} \mathbb{P}[G_{n,p} \to (H_1, \ldots, H_r)] =   
\begin{cases}
    0       & \quad \text{if } p \leq bn^{-1/m_2(H_1,H_2)},\\
    1       & \quad \text{if } p \geq Bn^{-1/m_2(H_1,H_2)}.
\end{cases}\] 
\end{conj}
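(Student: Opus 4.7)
My plan, since the $1$-statement of Conjecture~\ref{conj:kk} is already due to Mousset, Nenadov and Samotij, is to focus on the $0$-statement: for $p = b n^{-1/m_2(H_1,H_2)}$ with $b>0$ sufficiently small, show a.a.s.\ that $G_{n,p}$ admits an $r$-edge-colouring with no monochromatic copy of $H_i$ in colour~$i$. I would first reduce to $r = 2$ by colouring $E(G_{n,p})$ using only colours $1$ and $2$; this trivially avoids monochromatic $H_i$ in colour $i$ for $i \geq 3$, so it suffices to find a $2$-colouring avoiding $H_1$ in colour $1$ and $H_2$ in colour $2$.

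The overall framework follows the peeling strategy introduced by Marciniszyn, Skokan, Sp\"ohel and Steger, and refined using the container method by Nenadov--Steger~\cite{ns}. I would iteratively locate copies of $H_2$ in $G_{n,p}$ possessing a ``private'' edge (one lying in no other copy of $H_2$), assign it colour~$1$, and delete it; and analogously peel copies of $H_1$ via edges coloured~$2$. The process terminates at a \emph{core} subgraph $G^*$ in which every copy of $H_1$ or $H_2$ is densely tangled with further copies. A first-moment calculation tuned to the asymmetric density $m_2(H_1,H_2)$---exploiting that any graph forcing the peeling to stall has density at least $m_2(H_1,H_2)$---shows that below the conjectured threshold the expected number of such tangled configurations is $o(1)$, so a.a.s.\ the core $G^*$ is drawn from a finite list of deterministic residual obstructions, reducing the original probabilistic question to a purely deterministic one.

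This deterministic reduction yields the \emph{natural colouring problem} referenced in the abstract: given any residual obstruction $G^*$, find a $2$-colouring of $E(G^*)$ that avoids $H_1$ in colour~$1$ and $H_2$ in colour~$2$. The constraints interact nontrivially: every copy of $H_2$ in $G^*$ must lose at least one edge to colour~$1$, yet the colour-$1$ subgraph must not itself contain an $H_1$. My approach would be to exhibit, inside every copy of $H_2$ in $G^*$, a ``flippable'' edge whose recolouring to colour~$1$ cannot complete an $H_1$. Strict $2$-balancedness of $H_2$ provides a structural wedge here, since every proper subgraph of $H_2$ is strictly sparser and thus supplies a light edge; meanwhile the hypotheses $d_2(H_2) > 2$ or $H_2$ non-bipartite prevent the flipped edges from spuriously assembling an $H_1$.

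The $k$-uniform extension follows the same pipeline with $m_2$ and $d_2$ replaced by $m_k$ and $d_k$ throughout: the first-moment input, the peeling, and the deterministic reduction all have direct $k$-uniform analogues, although the density calculations and the classification of residual obstructions must be redone in the hypergraph setting. The \textbf{main obstacle}, without doubt, is the deterministic colouring problem in full generality: pathological pairs, for instance a bipartite $H_2$ with $d_2(H_2) \leq 2$ paired with a delicately structured $H_1$, resist the flippable-edge strategy, which is precisely why the authors can resolve ``almost all'' cases rather than every pair $(H_1, H_2)$.
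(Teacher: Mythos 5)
Your high-level architecture is the same as the paper's: cite Mousset--Nenadov--Samotij for the $1$-statement, reduce to two colours, peel ``easy'' edges until a core remains, reduce the $0$-statement to a deterministic colouring question about the surviving obstructions, and then prove that colouring statement in many (not all) cases. But note that the statement you were given is the full conjecture, which neither you nor the paper proves; what the paper actually establishes is the reduction (Theorem~\ref{thm:coloursuffices}) plus the colouring lemma in most cases (Lemma~\ref{lem:maincolouring}, after passing to a heart via Observation~\ref{obs:heart}). Judged against that, your proposal is a plan in the right spirit, but two of its load-bearing steps are asserted rather than available, and one of them is false as stated.

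Concretely: (a) your first-moment claim does not work. At $p=bn^{-1/m_2(H_1,H_2)}$ a stalled configuration $F$ with $d(F)=m_2(H_1,H_2)$, i.e.\ $\lambda(F)=v(F)-e(F)/m_2(H_1,H_2)=0$, has expected count $\Theta(1)$, and anything strictly sparser has expectation tending to infinity; so ``density at least $m_2(H_1,H_2)$'' does not give $o(1)$. Only density strictly above the threshold can be killed by Markov, and even then one needs a uniform gap --- this is exactly why the paper introduces $\eps^*$ and $\gamma>0$ (see Claim~\ref{claim:conclusion1} and Claim~\ref{claim:conclusion2}). The cores of density at most $m_2(H_1,H_2)$ genuinely appear and are precisely what must be coloured deterministically. (b) That the residual obstructions form a \emph{finite} list is not a by-product of the peeling: a stalled core could a priori be an unboundedly large edge-disjoint union of pieces, or the output of a long growth process, and the paper needs both the algorithmic decomposition (\textsc{Asym-Edge-Col-$\hat{\mathcal{B}}_{\eps}$}, \textsc{Special}, \textsc{Grow}) and a separate, nontrivial finiteness theorem for $\hat{\mathcal{B}}$ (Theorem~\ref{thm:bfinite}, proved via the fully open flower counting argument) to get there. (c) Your proposed deterministic colouring via a ``flippable'' light edge inside each copy of $H_2$ is not what yields the advertised cases: the paper's Lemma~\ref{lem:maincolouring} instead uses Nash--Williams/arboricity decompositions, Hall-type edge partitions bounding $m$-density, degeneracy orderings, chromatic-number arguments, and minimum-degree bounds for Ramsey-minimal graphs, and it is far from clear that a flippable-edge scheme alone handles even the cases $\chi(H_2)\geq 3$ or $m(H_2)>2$. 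You correctly identify the general colouring problem as the main obstacle, but as written both the reduction and the colouring halves of your plan have gaps that the paper closes with substantially more machinery.
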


Observe that, since $H_2$ is non-empty, we would always need $m_2(H_2) \geq 1$ as an assumption, otherwise $m_2(H_2) = 1/2$ (that is, $H_2$ is the union of a matching and some isolated vertices, and $m_2(H_2)$ is maximised by $K_2 \subseteq H_2$) and we would have that $m_2(H_1, H_2) = e(J)/v(J)$ for some non-empty subgraph $J \subseteq H_1$.
Then for any constant $B > 0$, and $p = Bn^{-1/m_2(H_1, H_2)}$, the probability that $G_{n,p}$ contains no copy of $H_1$ exceeds a positive constant $C = C(B)$; see, e.g. \cite{jlr}. 
We include the assumption of Kohayakawa, Schacht and Sp\"{o}hel~\cite{kss}, that $m_2(H_2) > 1$, so that we avoid possible complications arising from $H_2$ (and/or $H_1$) being certain forests, such as those excluded in the statement of Theorem~\ref{thm:rr}.
Note that if $m_2(H_2) = 1$ there is at least one case for which Conjecture~\ref{conj:kk} does not hold and which is not covered by Theorem~\ref{thm:rr}: Let $H_1 = K_3$ and $H_2 = K_{1,2}$.
One can calculate that $m_2(H_1, H_2) = 3/2$.\footnote{Note that $H_2$ is strictly $2$-balanced and $H_1$ is strictly balanced with respect to $m_2(\cdot, H_2)$.}
Now, consider the graph $G$ with $9$ edges and $6$ vertices constructed by taking a copy $T$ of $K_3$ and appending a triangle to each edge of $T$ (see Figure~\ref{fig:k3k12badgraph}). We have $m(G) = 3/2 = m_2(H_1, H_2)$, but $G \to (H_1, H_2)$. 
Thus we at least require that $p = o(n^{-1/m_2(H_1, H_2)})$ to ensure a $0$-statement holds in this case.

\begin{figure}[!ht]
\begin{center}
\begin{tikzpicture}
\draw [line width=2pt] (3,3)-- (2,1.5);
\draw [line width=2pt] (4,1.5)-- (3,3);
\draw [line width=2pt] (4,1.5)-- (2,1.5);
\draw [line width=2pt] (2,1.5)-- (1,0);
\draw [line width=2pt] (1,0)-- (3,0);
\draw [line width=2pt] (3,0)-- (2,1.5);
\draw [line width=2pt] (4,1.5)-- (5,0);
\draw [line width=2pt] (5,0)-- (3,0);
\draw [line width=2pt] (3,0)-- (4,1.5);
\begin{scriptsize}
\draw [fill=black] (3,3) circle (2.5pt);
\draw [fill=black] (2,1.5) circle (2.5pt);
\draw [fill=black] (4,1.5) circle (2.5pt);
\draw [fill=black] (3,0) circle (2.5pt);
\draw [fill=black] (1,0) circle (2.5pt);
\draw [fill=black] (5,0) circle (2.5pt);
\end{scriptsize}
\end{tikzpicture}
\caption{Every red/blue edge colouring of the `triforce' graph yields either a monochromatic copy of $K_3$ in red or a monochromatic copy of $K_{1,2}$ in blue.}\label{fig:k3k12badgraph}
\end{center}
\end{figure}

The intuition behind the threshold in Conjecture~\ref{conj:kk} is most readily explained in the case of $r = 3$, $H_2 = H_3$ and when $m_2(H_1) > m_2(H_1, H_2)$. (The following explanation is adapted from \cite{gnpsst}.)
Firstly, observe that we can assign colour $1$ to every edge that does not lie in a copy of $H_1$. 
Since $m_2(H_1) > m_2(H_1, H_2)$, we expect that, with $p = \Theta(n^{-1/m_2(H_1,H_2)})$, the copies of $H_1$ in $G_{n,p}$ do not overlap much (by similar reasoning as in the intuition for the threshold in Theorem~\ref{thm:rr}). 
Hence the number of edges left to be coloured is of the same order as the number of copies of $H_1$, which is $\Theta(n^{v(H_1)}p^{e(H_1)})$. 
If we further assume that these edges are randomly distributed (which although not correct, does give good intuition) then we get a random graph $G^*$ with edge probability $p^* =  \Theta(n^{v(H_1) - 2}p^{e(H_1)})$. 
Now we colour $G^*$ with colours $2$ and $3$, and apply the intuition from the symmetric case (as $H_2 = H_3$): if the copies of $H_2$ are heavily overlapping then we cannot hope to colour without getting a monochromatic copy of $H_2$, but if not then we should be able to colour.
As observed earlier, a threshold for this property is $p^* = n^{-1/m_2(H_2)}$. Solving $n^{v(H_1) - 2}p^{e(H_1)} = n^{-1/m_2(H_2)}$ for $p$ then yields $p = n^{-1/m_2(H_1,H_2)}$, the conjectured threshold. 

Following some partial progress by 
%\COMMENT{RH: I saw you used e.g. in your last paper, is that because you didn't want to claim that you cited everything? If this is really everything then we can remove e.g.! JH: I honestly can't remember, but I probably put `e.g.' to cover my back. I think we can probably remove it. RH: removed it. I checked Samotij paper on $1$-statement, who has a detailed description of what each of these covered. He says Nenadov-Steger short proof extends to $m_2(H_1)=m_2(H_2)$ case, but this wasn't explicitly observed in that paper or commented on whatsoever there (I checked) so I think this list is fine.}
~\cite{gnpsst, hst, kk, kss, msss}, the $1$-statement of Conjecture~\ref{conj:kk} was fully solved by Mousset, Nenadov and Samotij~\cite{mns}.
We therefore now focus on the $0$-statement.

\subsection{The $0$-statement of the Kohayakawa--Kreuter Conjecture}

We first consider a natural reduction for proving the $0$-statement of Conjecture~\ref{conj:kk}.
Let $H_1, H_2$ be graphs with $m_2(H_1) \geq m_2(H_2)$. We call $(H_1,H_2)$ a \emph{heart} if 
\begin{itemize}
\item $H_2$ is strictly $2$-balanced,
\item when $m_2(H_1)=m_2(H_2)$, $H_1$ is strictly $2$-balanced,
\item when $m_2(H_1)>m_2(H_2)$, $H_1$ is strictly balanced with respect to $d_2(\cdot, H_2)$.
\end{itemize}
It is easy to show that for any pair of graphs $(H_1,H_2)$ with $m_2(H_1) \geq m_2(H_2)$, 
there exists a heart $(H_1',H_2')$ with \begin{itemize}
    \item $H_i' \subseteq H_i$ \ \mbox{for each} \ $i \in [2]$,
    \item $m_2(H_2')=m_2(H_2)$,
    \item $m_2(H_1',H_2')=m_2(H_1,H_2)$ \ \mbox{if} \ $m_2(H_1) > m_2(H_2)$, and
    \item $m_2(H_1')=m_2(H_1)$ \ \mbox{if} \ $m_2(H_1) = m_2(H_2)$.
\end{itemize} 
We call this pair a \emph{heart of $(H_1,H_2)$}. The relevance of hearts for $0$-statements comes from the following simple observation.
\begin{obs}\label{obs:heart}
Let $H_1,H_2$ be graphs. In order to prove a $0$-statement at $n^{-1/m_2(H_1, H_2)}$ for $(H_1,H_2)$, it suffices to prove the $0$-statement at $n^{-1/m_2(H_1', H_2')}$ for some heart $(H_1',H_2')$ of $(H_1,H_2)$.
\end{obs}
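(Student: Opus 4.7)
The plan is to reduce the desired $0$-statement for $(H_1,H_2)$ to the assumed $0$-statement for the heart $(H_1',H_2')$ via a direct subgraph-monotonicity argument, combined with a small density calculation to verify that the two thresholds agree. I would fix a heart $(H_1',H_2')$ of $(H_1,H_2)$ (whose existence is asserted just above the observation and which I take for granted) and assume there exists $b>0$ such that $\mathbb{P}[G_{n,p}\to(H_1',H_2')]\to 0$ whenever $p\leq bn^{-1/m_2(H_1',H_2')}$.

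The key combinatorial step is the pointwise implication that for every graph $G$, if $G\to(H_1,H_2)$ then $G\to(H_1',H_2')$. Indeed, given any $2$-colouring of $E(G)$, a monochromatic copy of $H_i$ in colour $i$ restricts to a monochromatic copy of $H_i'$ in colour $i$, since $H_i'\subseteq H_i$. Passing to $G_{n,p}$ this yields
\[
\mathbb{P}[G_{n,p}\to(H_1,H_2)] \;\leq\; \mathbb{P}[G_{n,p}\to(H_1',H_2')].
\]

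To finish I need $m_2(H_1',H_2')=m_2(H_1,H_2)$, so that the constant $b$ of the heart's $0$-statement works verbatim for $(H_1,H_2)$. In the regime $m_2(H_1)>m_2(H_2)$ this equality is recorded directly in the definition of a heart. In the regime $m_2(H_1)=m_2(H_2)$ the heart satisfies $m_2(H_1')=m_2(H_1)$ and $m_2(H_2')=m_2(H_2)$; applying the sandwich $m_2(H_2')\leq m_2(H_1',H_2')\leq m_2(H_1')$ from Proposition~\ref{prop:m2h1h2} (and the analogous sandwich for $(H_1,H_2)$) collapses both asymmetric densities to the common value. I do not expect any real obstacle: everything is immediate from the definitions and the trivial subgraph-monotonicity of the arrow relation, with the only delicate point being the density collapse in the degenerate regime $m_2(H_1)=m_2(H_2)$, which the sandwich handles at once.
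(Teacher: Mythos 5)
Your proof is correct and matches the paper's (one-sentence) argument: $G\to(H_1,H_2)$ implies $G\to(H_1',H_2')$ by subgraph monotonicity of the arrow relation, and $m_2(H_1',H_2')=m_2(H_1,H_2)$ ensures the thresholds coincide. The only thing you add is an explicit check of the density equality in the regime $m_2(H_1)=m_2(H_2)$ via the sandwich from Proposition~\ref{prop:m2h1h2}, which the paper states without spelling out; that check is correct.
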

The observation follows immediately since any colouring avoiding a monochromatic copy of a subgraph of some graph $H$ clearly avoids a monochromatic copy of $H$ itself, and in all cases $m_2(H_1', H_2') = m_2(H_1, H_2)$. %\COMMENT{JH: I want to chat through this section a bit. I like it, but feel like the observation is strange as for it to work you only need H_1' and H_2' to be subhypergraphs, rather than a heart. The heartness here is relevant if we make f(n) explicitly n^{-1}/m_k(H_1, H_2). There's also trouble here for talking about hypergraphs so early. Conjecture~\ref{conj:kk} is currently only stated for graphs, and we don't want to state a k-graph conjecture since we know its not true. I'm not sure what to do structurally here. Perhaps there is some way of handwaving, or maybe we should just change all the heart stuff for 2-graphs and in the same  Yeah, the latter seems like a good idea.}

%The progress on the $0$-statement of Conjecture~\ref{conj:kk} so far is as follows.
Previously obtained results on the $0$-statement of Conjecture~\ref{conj:kk} are summed up in the following theorem.
\begin{thm}\label{thm:done}
The $0$-statement of Conjecture~\ref{conj:kk} holds for $(H_1,\dots,H_r)$ in each of the following cases.
 
For some heart $(H_1',H_2')$ of $(H_1,H_2)$, we have:
 \begin{enumerate}
 \item[(i)] $H_1'$ and $H_2'$ are both cycles {\rm(\cite{kk})};
 \item[(ii)] $H_1'$ and $H_2'$ are both cliques {\rm(\cite{msss})};
 \item[(iii)] $H_1'$ is a clique and $H_2'$ is a cycle {\rm(\cite{lmms})};
 \item[(iv)] $H_1'$ and $H_2'$ are a pair of regular graphs, excluding the cases when
 (a) $H_1'$ is a clique and $H_2'$ is a cycle; (b) $H_2'$ is a cycle and $v(H_1') \geq v(H_2')$; (c) $(H_1',H_2')=(K_3,K_{3,3})$ 
 {\rm(\cite{h})}; or,
 \end{enumerate}
we have:
 \begin{enumerate}
 \item[(v)] $m_2(H_1)=m_2(H_2)$ {\rm(\cite{ks})}.
 \end{enumerate}
\end{thm}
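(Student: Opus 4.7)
Since Theorem~\ref{thm:done} compiles five results proved independently in \cite{kk, msss, lmms, h, ks}, a proof proposal amounts to outlining the common scheme they share and indicating the structural features exploited in each case. My plan is to lay out the common framework that all five share, and then explain what each case needs on top.

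First, for cases (i)--(iv), I would invoke Observation~\ref{obs:heart} at the outset: it suffices to establish the 0-statement at $p = bn^{-1/m_2(H_1', H_2')}$ for the heart $(H_1', H_2')$, since passing to a subgraph only strengthens the Ramsey property one needs to avoid. This is already built into the phrasing of the theorem. Case (v) does not require this reduction because the symmetric-density regime $m_2(H_1)=m_2(H_2)$ is handled directly.

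Second, the common strategy is to exhibit, with high probability, an explicit $2$-colouring of $E(G_{n,p})$ with no copy of $H_1'$ in colour $1$ and no copy of $H_2'$ in colour $2$. The typical recipe is a \emph{peeling} argument: one identifies, inside each copy of $H_1'$ in $G_{n,p}$, a special edge that lies in comparatively few other copies of $H_1'$, colours it with colour $2$, and removes the copy; then one iterates. The probabilistic heart of the argument is to show that at the chosen density the \emph{core} (the union of edges in copies of $H_1'$) is sparse enough, and the copies of $H_1'$ overlap weakly enough, that the edges ultimately receiving colour $2$ cannot themselves contain $H_2'$. In particular, one shows w.h.p. that $G_{n,p}$ does not contain any \emph{minimal Ramsey witness}; concretely, for suitably small $b$, the expected number of subgraphs of $G_{n,p}$ that are Ramsey-minimal for $(H_1',H_2')$ tends to $0$, which one bounds via a union bound over isomorphism types combined with the density inequality $e(J) \geq (v(J)-2+1/m_2(H_2')) \cdot m_2(H_1',H_2')$ that any such witness must satisfy.

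The principal obstacle, and the reason each cited paper treats only a specific family, is controlling the joint overlap structure of $H_1'$- and $H_2'$-copies inside the core. In~\cite{kk}, the cycle case uses the elementary fact that two distinct cycles intersect in paths; \cite{msss} for cliques uses the extremal structure of $K_r$; \cite{lmms} combines both flavours; \cite{h} exploits regularity to enforce a uniform degree-balance; and \cite{ks} sidesteps the difficulty entirely because $m_2(H_1)=m_2(H_2)$ allows the reduction to a Theorem~\ref{thm:rr}-style symmetric argument. The missing ingredient — a single deterministic colouring lemma valid for all hearts — is precisely what this paper aims to isolate; so, absent that, the honest proof of Theorem~\ref{thm:done} is to invoke each of the five cited papers in turn.
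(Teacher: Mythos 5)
Your proposal is essentially correct and matches the paper's own treatment: Theorem~\ref{thm:done} is a survey statement whose proof consists of citing the five papers, together with the remark (which you correctly identify via Observation~\ref{obs:heart}) that cases (i)--(iii) were originally stated only for $(H_1,H_2)$ exactly equal to the heart $(H_1',H_2')$ but extend to all pairs having such a heart. The only small detail you gloss over is how (v) is recovered: the paper derives it by specialising Kuperwasser--Samotij's \emph{list-Ramsey} threshold theorem for finite families (taking $\mathcal{F}=\{H_1,H_2\}$ and forcing each list to be $[r]$), rather than by a reduction to a Theorem~\ref{thm:rr}-style symmetric argument; but since the logical content is just ``cite \cite{ks}'', this does not affect the correctness of your proposal.
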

Note that (i)--(iii) above were only stated for $(H_1,H_2)$ of the precise form of $(H_1',H_2')$ stated (i.e. for (i), with $H_1$ and $H_2$ themselves both cycles). 
However, note that each such pair is a heart itself, so the theorem, via Observation~\ref{obs:heart}, extends to the cases indicated. 

Also note that (v) above was proven in a much stronger sense.
Let $\mathcal{F}$ be a finite family of graphs and let $r \geq 2$. 
Call a graph $G$ \emph{$r$-list-Ramsey with respect to $\mathcal{F}$} if
there exists an  assignment of lists of $r$ colours to each edge (possibly different lists for each edge)  
such that however we choose colours for each edge from its list,
there is a monochromatic copy of some $F \in \mathcal{F}$.
Define $m_2(\mathcal{F}):=\min \{ m_2(F): F \in \mathcal{F} \}$.
Kuperwasser and Samotij's result~\cite[Theorem 1.5]{ks} states that $n^{-1/m_2(\mathcal{F})}$ is the threshold for $G_{n,p}$ being $r$-list-Ramsey with respect to $\mathcal{F}$.
Now, forcing the lists to be $[r]$ for each edge and setting $\mathcal{F}=\{H_1,H_2\}$ for graphs $H_1,H_2$ with $m_2(H_1)=m_2(H_2)$ recovers Theorem~\ref{thm:done}(v).

In this paper, we vastly extend the number of cases for which the $0$-statement holds, increasing the proportion of known cases sharply from $o(1)$ to $1-o(1)$. In particular, case (ii) of Theorem \ref{thm:main} shows that we prove the $0$-statement for all collections $(H_1, H_2, \ldots, H_r)$ in which $H_2$ is not bipartite, which already satisfies this. The six additional cases then present nice characterisations of a large number of other graphs for which Conjecture~\ref{conj:kk} holds. In fact, the breadth of cases covered by this result might best be seen by inspecting which cases are not covered by our theorem.  %prove the conjecture for  We also cover but a $o(1)$ proportion of graphs. This is most quickly seen by noting that the remaining cases are a sub family of bipartite graphs, which comes from 
%Before we state it, we need to define two more density parameters.

\begin{thm}\label{thm:main}
The $0$-statement of Conjecture~\ref{conj:kk} holds for all but a $o(1)$ proportion of cases. In particular, it holds for $(H_1,\dots,H_r)$ in all of the following cases.
We have:
\begin{enumerate}
%\item[(i)] $r \geq 3$, i.e. we have at least $3$ graphs $H_1, H_2, H_3$; 
\item[(i)] $m_2(H_1)=m_2(H_2)$ (independent proof from~\cite{ks}); or,
\end{enumerate} 
for some heart $(H_1',H_2')$ of $(H_1,H_2)$, we have:
\begin{enumerate}
\item[(ii)] $\chi(H_2') \geq 3$;
\item[(iii)] $m(H_2')>2$; 
\item[(iv)] $ar(H_2')>2$;
\item[(v)] $H_1'=K_a$ for some $a \geq 3$;
\item[(vi)] $H_1'=K_{a,b}$ for some $a,b \geq 3$;
\item[(vii)] there exists $\ell \in \mathbb{N}$ such that $\ell<ar(H_1')$ and $m_2(H_1,H_2) \leq \ell+\frac{1}{2}$;
\item[(viii)] there exists $\ell \in \mathbb{N}$ such that $\ell<m(H_1')$ and $m_2(H_1,H_2) \leq \ell+1$, and $H_2'$ is not a cycle.
\end{enumerate}
\end{thm}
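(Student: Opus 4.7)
The overall plan is to prove Theorem~\ref{thm:main} in two stages: a probabilistic reduction to a finite deterministic colouring problem, followed by a case-by-case resolution of that problem. By Observation~\ref{obs:heart} we may replace $(H_1,H_2)$ by a heart $(H_1',H_2')$ throughout, and by monotonicity (using only two colours on $G_{n,p}$ and ignoring the remaining $r-2$) we may assume $r=2$. Fix $p = bn^{-1/m_2(H_1,H_2)}$ for small $b>0$ and rename the heart as $(H_1,H_2)$.

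For the reduction, the template (going back to Marciniszyn--Skokan--Sp\"ohel) is: colour with colour $2$ an edge inside each copy of $H_1$ that is ``exclusive'' (appears in no other copy that matters) and colour everything else with colour $1$. The density $m_2(H_1,H_2)$ is calibrated precisely so that, via Janson-type and second-moment arguments, w.h.p.\ every copy of $H_1$ in $G_{n,p}$ admits such an exclusive edge; the only obstruction left is creating a monochromatic $H_2$ in colour $2$. Packaging this obstruction leads to a deterministic statement: for some explicit class $\mathcal{G}=\mathcal{G}(H_1,H_2)$ of finite ``gadget'' graphs (essentially unions of overlapping copies of $H_1$ together with any copy of $H_2$ they collectively contain), every $G\in\mathcal{G}$ admits a $2$-colouring with no $H_1$ in colour $1$ and no $H_2$ in colour $2$. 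The union bound over which gadgets appear in $G_{n,p}$ requires controlling the maximum density of members of $\mathcal{G}$, which is where the bound $p \le b n^{-1/m_2(H_1,H_2)}$ is used.

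The bulk of the work is then solving the deterministic colouring problem in each listed case. Case (i) ($m_2(H_1)=m_2(H_2)$) is symmetric enough that we can adapt the Nenadov--Steger style argument (giving a proof independent of \cite{ks}): each copy of $H_1$ in the gadget shares at most strictly fewer than $v(H_2)-2+1/m_2(H_2)$ vertices with other copies, producing an exclusive edge for colour~$2$ in a way that also destroys all copies of $H_2$. Cases (ii)--(iv) exploit strong structural features of $H_2'$: non-bipartiteness, density exceeding~$2$, or arboricity exceeding~$2$ each allow one to find a proper $2$-edge-colouring of the gadget in which $H_2'$ cannot embed monochromatically (roughly, by a bipartition/orientation obstruction against which copies of $H_2'$ cannot align). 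Cases (v) and (vi) use the rigidity of $K_a$ and $K_{a,b}$: any two copies of $H_1'$ in the gadget must share either very few vertices or essentially the whole graph, which sharply restricts gadget structure. Cases (vii) and (viii) are the quantitative cases: the inequality $\ell<ar(H_1')$ (resp.\ $\ell<m(H_1')$) together with the ceiling on $m_2(H_1,H_2)$ provides slack that can be turned into a colouring via Nash-Williams / Hakimi-type orientations of the gadget, colouring edges according to their orientation depth modulo~$2$, and then ruling out monochromatic $H_2'$ using the remaining hypothesis ($H_2'$ not being a cycle in case (viii)).

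The main obstacle is the deterministic problem in cases (vii) and (viii): unlike (ii)--(vi) there is no clean structural assumption on $H_1'$ or $H_2'$, only a density inequality, so the colouring must be constructed by an inductive argument which repeatedly peels off a sparse subgraph witnessing the density gap and colours it consistently with the exclusive-edge requirement on each copy of $H_1$. Establishing that no copy of $H_2'$ arises monochromatically throughout this peeling (and in particular handling the excluded cycle case in (viii)) is the delicate combinatorial heart of the argument, and it is where the precise interaction between $m(H_1')$, $ar(H_1')$ and $m_2(H_1,H_2)$ becomes essential. The probabilistic reduction itself, while technical, is routine once these deterministic inputs are in place.
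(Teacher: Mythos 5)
Your two-stage architecture (probabilistic reduction to a deterministic colouring problem, then case analysis) matches the paper's at the top level, but Stage~1 as you describe it does not work and is precisely where the paper's main content lies. At $p=\Theta\bigl(n^{-1/m_2(H_1,H_2)}\bigr)$ it is \emph{not} true that a.a.s.\ every copy of $H_1$ has an ``exclusive'' edge: configurations such as two copies of $H_1$ overlapping in an edge typically have density at most $m_2(H_1,H_2)$ and so appear with constant or high probability, which is exactly why no Janson/second-moment argument is attempted. Instead the paper runs a deletion algorithm (\textsc{Asym-Edge-Col-$\hat{\mathcal{B}}_{\eps}$}) and analyses the structures it can get stuck on via \textsc{Special} and \textsc{Grow}. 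Making your ``gadget class $\mathcal{G}$'' precise requires: (a) proving that the stuck structures of admissible density form a \emph{finite} family $\hat{\mathcal{B}}$ (Theorem~\ref{thm:bfinite}, via the fully-open-flower analysis and the connectivity result Lemma~\ref{lem:newconnectivity}); (b) showing the uncoloured remainder decomposes edge-disjointly into members of $\hat{\mathcal{B}}$ with no copy of $H_1$ or $H_2$ straddling two of them (the $(H_1,H_2)$-sparseness condition, handled by \textsc{Special} and Claim~\ref{claim:conclusion1}); and (c) bounding the number of outputs of \textsc{Grow} through the degenerate/non-degenerate iteration analysis (Claims~\ref{claim:non-degen} and~\ref{claim:degenfull}) so Markov's inequality closes the union bound. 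None of this is ``routine'', and the deterministic input one actually needs is the statement of Theorem~\ref{thm:coloursuffices} (``$m(G)\le m_2(H_1,H_2)$ implies $G\not\to(H_1,H_2)$'', equivalently colourability of every member of $\hat{\mathcal{B}}$), not merely colourability of unions of overlapping copies of $H_1$.

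In Stage~2 your assessment of the cases is inverted and several sketches are not workable. Cases (vii) and (viii) are the \emph{easy} ones: they follow in a few lines from Lemma~\ref{lem:asymmcol}(i),(ii) by decomposing $E(G)$ into $\lceil ar(G)\rceil$ forests (Nash-Williams) or $\lceil m(G)\rceil$ sparse parts (Hall's theorem) and assigning $\lfloor ar(H_i)-\eps\rfloor$ (resp.\ $\lfloor m(H_i)-\eps\rfloor$) parts to colour $i$; no peeling induction is needed, and your ``orientation depth modulo~$2$'' colouring gives no visible mechanism for excluding a monochromatic $H_2'$. Conversely, your sketches for (v), (vi) and (i) omit the actual ideas: for (v) one passes to a Ramsey-minimal subgraph and uses the minimum-degree bound $\delta\ge 2(a-1)$ (Lemma~\ref{lem:cliqueminimal}) to contradict $m(G)\le m_2(H_1,H_2)<\tfrac{a+1}{2}$; for (vi) one needs the density lower bound of Lemma~\ref{lem:z} for graphs in $\mathcal{C}(H_1,H_2)$, which exploits that $K_{a,b}$ has no edge joining two vertices of near-minimum degree (the parameter $z$), followed by a careful case computation; and (i) is proved by first reducing (via (ii), (iii)) to both $H_i$ bipartite with $m(H_i)\le 2$ and then splitting into subcases using the same decomposition lemmas together with a degeneracy argument (Lemma~\ref{lem:asymmcol}(iii)). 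A ``rigidity/overlap'' heuristic for cliques and bicliques does not deliver these quantitative bounds, so as it stands the proposal has genuine gaps both in the reduction and in the hardest colouring cases.
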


We remark that case (iii) is redundant since all graphs $H$ satisfy $ar(H) \geq m(H)$, and so it is covered by case (iv). 
However we still wish to highlight it since,
as we will see later, the required colouring result for (iii) (see Lemma~\ref{lem:maincolouring}) has a natural proof which generalises to the hypergraph case, which we now discuss.

\subsection{Random Ramsey results for hypergraphs}
It is natural to ask for generalisations of both Theorem~\ref{thm:rr} and Conjecture~\ref{conj:kk} to the setting of $k$-graphs for any fixed $k \geq 2$.
In these versions, we replace graphs with $k$-graphs, `$G_{n,p}$' with `$G^k_{n,p}$', `$m_2(H)$' with `$m_k(H)$' and `$m_2(H_1,H_2)$' with `$m_k(H_1,H_2)$'. 
Note that the intuition provided for why $n^{-1/m_2(H)}$ and $n^{-1/m_2(H_1,H_2)}$ are the respective thresholds carries through to the hypergraph setting, as does the definition of a heart and Observation~\ref{obs:heart}.

R\"{o}dl and Ruci\'{n}ski~\cite{rrhyper} conjectured that a $k$-graph version of the $1$-statement of Theorem~\ref{thm:rr} should hold.
They proved this conjecture when $H$ is the complete $3$-graph on $4$ vertices and there are $2$ colours and, together with Schacht~\cite{rrs},
extended their result to $k$-partite $k$-uniform hypergraphs and $r$ colours.
In 2010, Freidgut, R\"{o}dl and Schacht~\cite{frs} and, independently, Conlon and Gowers~\cite{cg} resolved R\"{o}dl and Ruci\'{n}ski's conjecture for all $k$-graphs. 

\begin{thm}[\cite{cg, frs}]\label{thm:rrhyper}
    Let $H$ be a $k$-graph with maximum degree at least $2$ and let $r\geq 2$. There exists a constant $B > 0$ such that for $p = p(n)$ satisfying $p \geq Bn^{-1/m_k(H)}$, we have

    \[\lim_{n\to\infty}\mathbb{P}[G^k(n,p) \to (\underbrace{H, \ldots, H}_{r \ times})] = 1.\]
\end{thm}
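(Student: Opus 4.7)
The plan is to prove Theorem~\ref{thm:rrhyper} by adapting to $k$-graphs the hypergraph-container proof of Nenadov and Steger~\cite{ns} for the graph case. Fix $p = Bn^{-1/m_k(H)}$ for a large constant $B=B(H,r)$ and put $N = \binom{n}{k}$. Consider the auxiliary $e(H)$-uniform hypergraph $\mathcal{A}$ on vertex set $E(K_n^{(k)})$ whose hyperedges are the edge-sets of copies of $H$ in $K_n^{(k)}$, so that independent sets of $\mathcal{A}$ are precisely the $H$-free $k$-graphs on $[n]$. The definition of $m_k(H)$ bounds, for every $J \subseteq H$, the number of copies of $H$ extending a fixed copy of $J$, and this is exactly the codegree hypothesis needed to apply the hypergraph container theorem of Balogh--Morris--Samotij and Saxton--Thomason to $\mathcal{A}$.

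That theorem produces a map $S \mapsto C(S)$ from ``fingerprints'' $S \subseteq E(K_n^{(k)})$ with $|S| \leq \tau = C_0 n^{k-1/m_k(H)}$ to containers such that (a) every $H$-free $k$-graph $I$ has a fingerprint $S \subseteq I$ with $I \subseteq C(S)$, and (b) each $C(S)$ captures at most an $\varepsilon$-fraction of the copies of $H$ in $K_n^{(k)}$. Standard supersaturation for $H$ turns (b) into $|C(S)| \leq (\pi(H)+o(1))N$ when $H$ is non-$k$-partite and $|C(S)| = o(N)$ when it is. If $G^k_{n,p} \not\to (H,\dots,H)$, then any valid $r$-colouring yields $r$ fingerprints $S_1,\dots,S_r \subseteq E(G^k_{n,p})$ with $E(G^k_{n,p}) \subseteq \bigcup_i C(S_i)$, and a union bound over fingerprint tuples gives
\[
\mathbb{P}[G^k_{n,p} \not\to (H,\dots,H)] \;\leq\; \sum_{(S_1,\dots,S_r)} p^{|\bigcup_i S_i|}(1-p)^{N - |\bigcup_i C(S_i)|},
\]
which is $o(1)$ for $B$ large provided the uncovered gap $N - |\bigcup_i C(S_i)|$ stays $\Omega(N)$.

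The main obstacle is the boundary regime where that gap is only barely $\Omega(N)$, arising when $H$ is non-$k$-partite and $r\pi(H) \geq 1$; then $r$ near-extremal containers could in principle cover nearly all of $E(K_n^{(k)})$ and the naive union bound breaks. Closing this case would require a more refined argument --- for example an asymmetric or iterated application of containers that peels off copies of $H$ one colour at a time, exploiting (b) to bound the incremental size of each new container, or a Nenadov--Steger-style direct count of the number of $r$-colourings consistent with a given fingerprint tuple. Verifying the codegree hypotheses needed to invoke the container theorem in the first place is a routine $k$-graph analogue of the graph-case computation, following directly from the definition of $m_k(H)$ via induction over subhypergraphs $J \subseteq H$.
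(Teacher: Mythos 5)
The paper does not prove Theorem~\ref{thm:rrhyper}: it is stated as a black box and attributed to Conlon--Gowers~\cite{cg} and Friedgut--R\"odl--Schacht~\cite{frs}, whose original proofs use transference/sparse-regularity machinery rather than containers. So there is nothing internal to the paper to compare your argument against; you are being asked to reprove an imported result.

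As a reproof attempt, your container-based plan follows the later Nenadov--Steger line, and most of it is sound: the auxiliary $e(H)$-uniform hypergraph on $E(K_n^{(k)})$, the verification of the codegree hypothesis from $m_k(H)$, supersaturation, and the union bound over fingerprint tuples are all correct in outline. But the argument as written is incomplete, and you say so yourself. The ``boundary regime'' you flag --- when $H$ is non-$k$-partite and $r\pi(H)\geq 1$, so $r$ containers can in principle cover all of $E(K_n^{(k)})$ --- is not a corner case: it already occurs for $H=K_3$, $r=2$ in the graph setting, so the naive bound fails for the most basic instance. The standard fix is exactly the iterated/asymmetric application you mention: peel off one colour at a time, using supersaturation \emph{inside the current residual host graph} (whose density stays above $\pi(H)+\eps$ by induction) to keep each successive container from swallowing more than a $(1-\delta)$-fraction, and conclude that the final residual set has $\Omega(N)$ edges before bounding the probability that $G^k_{n,p}$ avoids it. Without carrying that step out, the proposal establishes the $1$-statement only in the strict subcase $r\pi(H)<1$, which is not the theorem. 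Since the paper treats the theorem as given, the cleanest course here is simply to cite~\cite{cg,frs} (or~\cite{ns} for the container proof) rather than to rederive it.
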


A corresponding $0$-statement for Theorem~\ref{thm:rrhyper} was proved when $H$ is a complete hypergraph in~\cite{npss} and for some other special classes in~\cite{t}, 
while in~\cite{gnpsst} the authors proved that the threshold is not $n^{-1/m_k(H)}$ for certain cases.
See Section~\ref{sec:conclude} for more discussion regarding what is proved in~\cite{t,gnpsst} and what is remaining to prove for a hypergraph version of Theorem~\ref{thm:rr}.

Regarding the asymmetric case for hypergraphs, 
a $1$-statement at $n^{-1/m_k(H_1,H_2)}$ was proven for asymmetric cliques in~\cite{gnpsst}, and was proven for all cases by Mousset, Nenadov and Samotij~\cite{mns}.\footnote{Mousset, Nenadov and Samotij  observe this in the last paragraph of their paper.}

We make the first progress on finding a matching $0$-statement for a pair of hypergraphs $H_1, H_2$ with $H_1 \not=H_2$. In fact, we show for all $r \geq 2$, as in the graph case, that a $1-o(1)$ proportion of all tuples $(H_1, H_2, \dots, H_r)$ satisfy the matching $0$-statement.

Note that $m$-density and arboricity have exactly the same definitions for $k$-graphs. 
For a $k$-graph $H$, we use the \emph{weak chromatic number} (denoted by $\chi(H)$),
which is the minimum number $t$ of colours required such that there exists a $t$-colouring of the vertices which does not produce a monochromatic edge.
This generalises the chromatic number of graphs.

\begin{thm}\label{thm:mainhyp}
Let $r \geq 2$, $k \geq 3$ and suppose that $H_1, \ldots, H_r$ are non-empty $k$-graphs such that $m_k(H_1) \geq m_k(H_2) \geq \cdots \geq m_k(H_r)$ and $m_k(H_2) > 1$.
There exists a constant $b > 0$ such that \[\lim\limits_{n \to \infty} \mathbb{P}[G_{n,p} \to (H_1, \ldots, H_r)] =   
    0      \quad \text{if } p \leq bn^{-1/m_k(H_1,H_2)},
\] 
in each of the following cases. For some heart $(H_1',H_2')$ of $(H_1,H_2)$ we have:
\begin{enumerate}
\item[(i)] $\chi(H_2') \geq k+1$;
\item[(ii)] $m(H_2')>\binom{v_1}{k-2}+1$. 
%\item[(ii)] $ar(H_2')>???$;
\end{enumerate}
\end{thm}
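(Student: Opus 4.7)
The plan is to mirror the framework used for the graph case (Theorem~\ref{thm:main}): first reduce the probabilistic statement to a deterministic colouring question, then solve that colouring question in each of the two asserted cases. By the hypergraph analog of Observation~\ref{obs:heart} (which the paper notes carries over verbatim), it suffices to prove the 0-statement at $p = bn^{-1/m_k(H_1', H_2')}$ for a heart $(H_1', H_2')$ of $(H_1, H_2)$.

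For the reduction step, I would adapt the core/peeling strategy used in the graph setting. At $p$ below the conjectured threshold, the expected structural behaviour is that any Ramsey witness subhypergraph of $G^k_{n,p}$ can be iteratively pruned by removing $k$-edges that lie in few copies of $H_1'$; such edges can be safely placed in colour $1$ without creating a monochromatic $H_1'$. With high probability, what remains is a controlled ``core'' to which the deterministic colouring lemma will be applied. This kind of reduction, in the spirit of Nenadov--Steger~\cite{ns} and Marciniszyn--Skokan--Sp\"ohel--Steger~\cite{msss}, should carry over from graphs with only bookkeeping changes, since $m_k$ plays the same role for $G^k_{n,p}$ as $m_2$ did in the graph case.

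The colouring lemma asserts that every such core admits an $r$-edge colouring with no monochromatic $H_1'$ in colour $1$ and no monochromatic $H_2'$ in colours $2,\ldots,r$. For case (i), $\chi(H_2') \geq k+1$, the natural plan is to take a random weak $k$-vertex colouring: since $H_2'$ is not weakly $k$-colourable, every copy of $H_2'$ contains a monochromatic edge under any such colouring, and this pattern can be used to encode an edge colouring of the core into $r-1$ classes that breaks all copies of $H_2'$. This is a direct analog of the $\chi(H_2')\geq 3$ case of Theorem~\ref{thm:main}(ii), where a random $2$-vertex colouring is used.

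For case (ii), $m(H_2') > \binom{v_1}{k-2}+1$, the strategy generalises the $m(H_2')>2$ case of Theorem~\ref{thm:main}: pick a densest subhypergraph $J \subseteq H_2'$, giving $e(J) > (\binom{v_1}{k-2}+1)v(J)$, start from a greedy colouring and locally recolour on occurrences of $J$. The key numerology is that $\binom{v_1}{k-2}$ upper-bounds the number of $k$-edges of a copy of $H_1'$ through any fixed pair of vertices, so the density bound ensures that recolouring edges inside $J$-copies cannot create new monochromatic copies of $H_1'$ while still breaking all copies of $H_2'$. The hardest step, and the main obstacle, will be this recolouring argument for case~(ii): the combinatorics of how $k$-uniform copies of $H_1'$ and $H_2'$ can overlap is considerably more intricate than in the graph case and calls for careful control of $(k-2)$-codegrees in the core. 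The probabilistic reduction step is expected to be technically heavier but essentially routine, while the creative work lies in making the case (ii) colouring go through.
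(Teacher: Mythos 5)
Your high‑level skeleton is correct — reduce to a heart, reduce the probabilistic statement to a deterministic colouring question, and then prove the colouring result in the two cases — and this matches the paper's use of Theorem~\ref{thm:coloursuffices} together with Lemma~\ref{lem:hypergraphcolouring} and the hypergraph version of Observation~\ref{obs:heart}. However the content of the two colouring cases is off.

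For case~(i), the paper does not use a \emph{random} weak $k$-vertex colouring. It constructs a deterministic weak $k$-colouring of the vertices with \emph{no monochromatic copy of $H_1'$} by a minimal-counterexample argument, and the existence of such a colouring relies precisely on the bound $m(G)\le m_k(H_1,H_2)<\delta_1\le\delta_{\max}(H_1')$, which in turn follows from $(H_1',H_2')$ being a heart (so $H_1'$ is strictly balanced with respect to $d_k(\cdot,H_2')$, giving $\delta_1 > m_k(H_1,H_2)$). This is the heart of Lemma~\ref{lem:asymmcol}(iv). Your description of "every copy of $H_2'$ contains a monochromatic edge" is the right observation for ruling out a blue $H_2'$, but a random colouring gives you no control over monochromatic $H_1'$, so your proposal as stated would not close case~(i). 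You need to say where the $H_1'$-avoiding vertex colouring comes from; that is the non-trivial step.

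For case~(ii), your proposed method — pick a densest subhypergraph $J\subseteq H_2'$, colour greedily, and locally recolour on copies of $J$ — is not what the paper does and, as you yourself flag, the recolouring interactions look hard to control. The paper avoids this entirely. The argument is a \emph{one-shot edge decomposition}: decompose $E(G)$ into $\lceil m(G)\rceil$ parts each of $m$-density at most $1$ (via Hall's marriage theorem), then assign $\lfloor m(H_i')-\varepsilon\rfloor$ parts to colour $i$, so each colour class is too sparse to contain $H_i'$. This is Lemma~\ref{lem:asymmcol}(ii). The role of $\binom{v_1}{k-2}$ is not the codegree bound you cite but the density inequality $m_k(H)\le m(H)+\binom{v_H}{k-2}$ (see~\eqref{eq:mkHbound}); combining this with $m(G)\le m_k(H_1,H_2)\le m_k(H_1')$ and the hypothesis $m(H_2')>\binom{v_1}{k-2}+1$ gives $m(G)\le\lfloor m(H_1')-\varepsilon\rfloor+\lfloor m(H_2')-\varepsilon\rfloor$, which is exactly what the decomposition lemma needs. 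So the "hardest step" you anticipated does not arise at all — the apparent difficulty dissolves once you use the density inequality rather than overlap combinatorics.

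Finally, a smaller remark on the reduction step: you describe it as a "core/peeling" argument and say it "should carry over with only bookkeeping changes." That is roughly fair in spirit, but the paper's Theorem~\ref{thm:coloursuffices} is a substantive piece of machinery (the family $\hat{\mathcal{B}}_\varepsilon$, the algorithms \textsc{Asym-Edge-Col-$\hat{\mathcal{B}}_\varepsilon$}, \textsc{Grow}, \textsc{Grow-$\hat{\mathcal{B}}_\varepsilon$}, the degenerate/non-degenerate iteration analysis, and the finiteness Theorem~\ref{thm:bfinite}); treating it as routine undersells the main technical work and leaves the proposal incomplete on that front too.
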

%$\todo{Finish the hypergraph part of the introduction i.e. add in what the results are and anything else needed afterwards.}

\subsection{Resolution for all $k$-graphs up to proving a deterministic colouring result}
% From the results listed above in Theorem~\ref{thm:mainhyp}, determining whether $n^{-1/m_k(H_1,H_2)}$ is a threshold for $G^k_{n,p} \to (H_1,\dots,H_r)$ seems a difficult question to prove or disprove in general. 
In this section, we wish to highlight a result which demonstrates that the answer to the problem of determining whether $n^{-1/m_k(H_1,H_2)}$ is a threshold for $G^k_{n,p} \to (H_1,\dots,H_r)$ depends entirely on the answer to a certain deterministic colouring question. Indeed, this result, Theorem~\ref{thm:coloursuffices} below, is the main result of this paper.%\COMMENT{JH: I've taken away talking about the question being `difficult' for now, because I think that is more readily able to be said in the concluding remarks after we've gone through more explictly examples of $k$-graphs with different thresholds. Indeed, the few results which will be listed in Theorem~\ref{thm:mainhyp} don't necessarily say the problem is hard. Rather it just says this is all we can do for now (which is still almost everything). RH: agreed, good point.}
%\COMMENT{RH: First paragraph key to why we are bothering to put this in the intro, so needs to be strong. I think we definitely want this section in the intro. Let me know if you can think of a way to improve it? JH: Don't know if my sentence adds much, but maybe would catch the reader's eye more. RH: Yes I like it, I've added an explicit reference to the theorem just so it is clear!}

Let $G, H_1,\dots,H_r$ be $k$-graphs with $m_k(H_1) \geq \cdots \geq m_k(H_r)$ and $m_k(H_2)>1$, and 
suppose that $G$ has constant size and $m(G) \leq m_k(H_1,H_2)$. 
For any constant $b>0$ it is easy to show for $p=bn^{-1/m_k(H_1,H_2)}$ that $G$ will appear as a subhypergraph of $G^k_{n,p}$ with at least constant probability.
Therefore, if $n^{-1/m_k(H_1,H_2)}$ is to be a threshold function for $G^k_{n,p} \to (H_1,\dots,H_r)$,
it is certainly necessary that $G \not \to (H_1,\dots,H_r)$. %\COMMENT{JH: One thing that bugs me is we've mentioned `sharp thresholds' without describing what that means, and our notion of a `threshold function' is also a more particular notion that the `$\gg$ and $\ll$' version. But I think people interchangeably use the `$b,B$' and `$\gg$ and $\ll$' versions when talking about threshold functions so maybe what we're saying is fine. RH: Yes its annoying there isn't 3 different names... I guess coarse, `no-name', and sharp... we define threshold at the start so it is clear what we mean here by saying threshold function everywhere (we always mean existence of $b,B$.)}
It is natural to ask whether this is in fact the only obstruction in proving a $0$-statement i.e.
%\begin{ques}\label{ques:colouring}
is the statement
%$m_k(H_1)\geq m_k(H_2)>1$
`$m(G) \leq m_k(H_1,H_2)$ implies $G \not \to (H_1,\dots,H_r)$' 
a sufficient condition for proving a $0$-statement at the threshold $n^{-1/m_k(H_1,H_2)}$?
Observe that we only need to prove such a statement for $(H_1,H_2)$ which are hearts.
Further motivation for this question arises from observing that for all known cases for which $n^{-1/m_k(H_1,H_2)}$ is not a threshold for $(H_1,H_2)$ given earlier (including the cases where $H_1=H_2$), there exists a $k$-graph $G$ with $m(G) \leq m_k(H_1,H_2)$ such that $G \to (H_1,\dots,H_r)$.
%\COMMENT{RH: refer back to examples? Or not? JH: I think my slight additions to the text are enough. RH: agreed}

In the symmetric setting, the answer to this question is yes. 
In the case of graphs this result can be found within both the original works of R\"{o}dl and Ruci\'{n}ski~\cite{rr1, rrrandom, rr2} and also in the short proof by Nenadov and Steger~\cite{ns}.
In the symmetric $k$-graphs setting, the result appears in~\cite{npss,t}.
Additionally, in~\cite{npss},  Nenadov et al. showed that this same phenomenon occurs for a number of other Ramsey-style properties. 
Therefore, naturally, there have been attempts to answer this question in the asymmetric setting.
After previous results of~\cite{gnpsst, h},
we resolve this question in the following sense.
\begin{thm}\label{thm:coloursuffices}
Let $H_1,\dots,H_r$ be $k$-graphs with $m_k(H_1) \geq \cdots \geq m_k(H_r)$ and $m_k(H_2) > 1$ and suppose $(H_1,H_2)$ is a heart.
If for all $k$-graphs $G$ we have that $m(G) \leq m_k(H_1,H_2)$ implies $G \not \to (H_1,H_2)$,
then 
there exists a constant $b > 0$ such that \[\lim\limits_{n \to \infty} \mathbb{P}[G^k_{n,p} \to (H_1, \ldots, H_r)] =   
    0      \quad \text{if } p \leq bn^{-1/m_k(H_1,H_2)}.
\] 
%$n^{-1/m_k(H_1,H_2)}$ is a threshold for the $0$-statement.
\end{thm}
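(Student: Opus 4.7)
The plan is to adapt the hypergraph-container style argument of Nenadov and Steger~\cite{ns} for the symmetric case, together with Kohayakawa--Kreuter-style closure arguments refining it for the asymmetric setting. Set $p = b n^{-1/m_k(H_1, H_2)}$ for a small constant $b > 0$ to be chosen. Since a valid $2$-colouring of $G^k_{n,p}$ that avoids a monochromatic $H_1$ in colour $1$ and a monochromatic $H_2$ in colour $2$ automatically extends to a valid $r$-colouring (by leaving colours $3, \ldots, r$ unused), I may immediately reduce to the case $r = 2$.

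First I would prove a structural lemma isolating a "dense core" of $G^k_{n,p}$. Define a core subgraph $G^* \subseteq G^k_{n,p}$ via a closure operation: begin with all edges lying in pairs of overlapping copies of $H_1$ (and in copies of $H_2$ overlapping such $H_1$-clusters) and iteratively close under further overlap. Using standard first-moment calculations together with the heart assumption on $(H_1, H_2)$, one argues that w.h.p.\ every connected component $C$ of $G^*$ has constant size and $m(C) \leq m_k(H_1, H_2)$. The point is that any connected non-degenerate extension exceeding $m_k(H_1, H_2)$ in density would appear only in expectation $o(1)$ under the given $p$; the heart conditions ensure that no small proper subconfiguration can already achieve density $m_k(H_1, H_2)$, so genuine extensions are strictly above the threshold.

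The deterministic hypothesis then applies to each component $C$ of $G^*$: since $m(C) \leq m_k(H_1, H_2)$, we have $C \not\to (H_1, H_2)$, yielding a valid $2$-colouring of $C$. Taking the union gives a valid colouring of all of $G^*$. To extend to the remaining edges, note that any edge of $G^k_{n,p} \setminus G^*$ lies in at most one copy of $H_1$ (an "isolated" copy, by construction of the core), or in no copy at all. Edges in no copy of $H_1$ receive colour $1$; for each isolated copy of $H_1$, a single designated edge is assigned colour $2$ and the rest colour $1$, thereby destroying the $H_1$ while keeping colour $1$ free of $H_1$. A final expected-count argument, using $m_k(H_2) \leq m_k(H_1, H_2)$ and the isolation of these copies, verifies that the colour-$2$ edges (those chosen inside $G^*$ together with the designated edges from isolated $H_1$-copies) cannot w.h.p.\ form a copy of $H_2$, since such an $H_2$ would itself constitute a dense configuration ruled out by the structural lemma.

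The main obstacle will be the structural lemma and the precise formulation of the closure operation defining $G^*$: one needs (i) that every copy of $H_1$ interacting non-trivially with anything else lies in $G^*$, (ii) that w.h.p.\ every component of $G^*$ has bounded size and density at most $m_k(H_1, H_2)$, and (iii) that the complement decomposes into genuinely isolated copies of $H_1$ for which the colour-$2$ designated edges cannot conspire to form $H_2$. The heart hypothesis must be used sharply here, handling separately the regimes $m_k(H_1) = m_k(H_2)$ (where $H_1$ is strictly $k$-balanced) and $m_k(H_1) > m_k(H_2)$ (where $H_1$ is strictly balanced with respect to $d_k(\cdot, H_2)$); the latter requires more delicate hypergraph density bookkeeping than the graph case, as overlaps of two edges may now share up to $k-1$ vertices.
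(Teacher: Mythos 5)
Your high-level blueprint — strip $G^k_{n,p}$ down to a ``core'' $G^*$, apply the deterministic hypothesis to colour it, then greedily colour the stripped edges — is the same outline the paper follows (the paper's core is the $k$-graph $G'$ left when the first while-loop of \textsc{Asym-Edge-Col-$\hat{\mathcal{B}}_{\eps}$} terminates, and its stripping criterion is that an edge is removed iff it is \emph{not} the unique intersection of a copy of $H_1$ and a copy of $H_2$). The hard work, however, is all concentrated in what you call the ``structural lemma'', and there your proposal has two genuine gaps.

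First, the first-moment argument you invoke to establish ``w.h.p.\ every component $C$ of $G^*$ satisfies $m(C)\le m_k(H_1,H_2)$'' is not standard. The family of possible core components is not finite: it is built by iteratively gluing copies of $H_1$ and $H_2$ onto the already constructed configuration, and naively the number of isomorphism types grows exponentially in the number of edges, so the sum of expected counts does not converge even when each individual configuration is negligible. The key combinatorial insight the paper supplies, and which your proposal is missing, is the degenerate/non-degenerate dichotomy of the \textsc{Grow} process: non-degenerate attachment steps keep the density invariant $\lambda$ constant and branch into only a bounded number of isomorphism types, while degenerate steps strictly decrease $\lambda$ by a fixed $\kappa>0$ and are therefore bounded in number. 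This is precisely what makes the first-moment sum over types converge (see Claims~\ref{claim:non-degen}, \ref{claim:degenfull}, \ref{claim:q_1} and \ref{claim:polylog}). Without such a mechanism your lemma cannot be proved. Relatedly, applying the hypothesis to whole connected components $C$ is fragile: edge-disjoint dense pieces glued cyclically at vertices (cf.\ the $C^A_\ell$ constructions in Appendix~\ref{app:hydeconj}) can inflate the density of a connected region past $m_k(H_1,H_2)$ while each piece individually stays below it, and ruling these out is itself a delicate density argument. The paper sidesteps this by proving a finer statement: $G'$ is (w.h.p.) an edge-disjoint union of members of a \emph{finite} family $\hat{\mathcal{B}}$ with every copy of $H_1$ or $H_2$ confined to a single piece, so each piece can be coloured independently and no bound on component density is needed.

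Second, the closing step is hand-waved away incorrectly. You claim the designated colour-$2$ edges cannot form an $H_2$ ``since such an $H_2$ would itself constitute a dense configuration ruled out by the structural lemma,'' but a copy of $H_2$ assembled from one edge in each of several isolated $H_1$-copies is not a dense configuration — it is just an $H_2$, which occurs in abundance at this $p$. What actually prevents a monochromatic $H_2$ in the paper is the stack discipline of \textsc{Asym-Edge-Col}: edges are reinserted in reverse deletion order, and whenever a blue $H_2$ forms one edge is flipped to red, with the choice of recolourable edge guaranteed by the condition under which that edge or that $L$ was pushed (Lemma~\ref{lemma:errorvalid}). Your one-shot ``designate one edge per isolated $H_1$'' scheme has no analogous correctness guarantee. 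So while your proposal is aimed in the right direction, both the structural lemma and the periphery-colouring step need the paper's mechanisms (or equivalents) to actually go through.
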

This means that in order to prove the $0$-statement at $n^{-1/m_k(H_1,H_2)}$ for a pair $(H_1,H_2)$ (in particular, in order to prove Conjecture~\ref{conj:kk} for the remaining open cases)
it suffices to prove the colouring result contained in Theorem~\ref{thm:coloursuffices} -- `for all $k$-graphs $G$ we have that $m(G) \leq m_k(H_1, H_2)$ implies $G \not\to (H_1, H_2)$' -- for a heart $(H_1',H_2')$ of $(H_1,H_2)$.
%\COMMENT{RH: Is the use of hearts clear here? Feels a bit clunky. JH: I think what you've said is fine and makes sense. RH: especially since you added the explicit statement into the sentence! Looks fine now.}

Observe that for $r \geq 3$, showing that it suffices to prove `for all $k$-graphs $G$ we have that $m(G) \leq m_k(H_1,H_2)$ implies $G \not \to (H_1,\dots,H_r)$' would improve upon  Theorem~\ref{thm:coloursuffices}; 
however, note that Conjecture~\ref{conj:kk} states that the threshold should only depend on $H_1$ and $H_2$, so Theorem~\ref{thm:coloursuffices} is natural as stated. Indeed, if some $H_1,H_2,H_3$ with $m_2(H_1)\geq m_2(H_2) \geq m_2(H_3)$ and $m_2(H_2)>1$ satisfy that for all $G$ with $m(G) \leq m_2(H_1,H_2)$ we have $G \not \to (H_1,H_2,H_3)$, while simultaneously for one such $G'$ we have $G' \to (H_1,H_2)$, then the pair $(H_1, H_2)$ is a counterexample to Conjecture~\ref{conj:kk}.
\COMMENT{RH: New paragraph, please check carefully!}

Note that in the symmetric setting, the colouring result contained in Theorem~\ref{thm:coloursuffices} holds in all cases\footnote{That is, for all graphs $H$ satisfying the natural condition $m_2(H) > 1$, in order to avoid those cases discussed after the statement of Theorem~\ref{thm:rr}.} and has a short proof (see e.g. Theorem 3.2 in~\cite{ns}).
In the asymmetric setting, we prove the colouring result for graphs in the following cases.
\begin{lem}\label{lem:maincolouring}
For all graphs $G,H_1,H_2$ with 
$m_2(H_1) \geq m_2(H_2)>1$, $m(G) \leq m_2(H_1,H_2)$ and $(H_1, H_2)$ a heart\footnote{Note that we will need the assumption that $(H_1, H_2)$ is a heart in most (but not all) cases below. In particular (i) and (vi) and, implicitly, (iii) and (v), since they each rely on one of (i) or Lemma~\ref{lem:hypergraphcolouring}(i).},
%we have $G \not \to (H_1,\dots,H_r)$ if any of the following conditions are satisfied.
we have $G \not \to (H_1,H_2)$ if any of the following conditions are satisfied.
\begin{enumerate}
%\item[(i)] We have $r \geq 3$, i.e. at least $3$ graphs $H_1, H_2, H_3$ and $m_2(H_3) > 1$; 
\item[(i)] We have $m_2(H_1)=m_2(H_2)$;
\item[(ii)] We have $\chi(H_2) \geq 3$;
\item[(iii)] We have $m(H_2) >2$;
\item[(iv)] We have $ar(H_2)>2$;
\item[(v)] We have $H_1=K_a$ for any $a \geq 3$;
\item[(vi)] We have $H_1=K_{a,b}$ for any $a,b \geq 2$;
\item[(vii)] There exists $\ell \in \mathbb{N}$ such that $\ell<ar(H_1)$ and $m_2(H_1,H_2) \leq \ell+\frac{1}{2}$;
\item[(viii)] There exists $\ell \in \mathbb{N}$ such that $\ell<m(H_1)$ and $m_2(H_1,H_2) \leq \ell+1$, and $H_2$ is not a cycle.
\end{enumerate}
\end{lem}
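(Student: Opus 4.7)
The plan is, for each case (i)--(viii) in turn, to exhibit an explicit $2$-edge-colouring of $G$ that avoids a red copy of $H_1$ and a blue copy of $H_2$; both the density bound $m(G) \le m_2(H_1,H_2)$ and the heart structure on $(H_1,H_2)$ will feed into the argument. In each case I would work with an edge-minimal counterexample $G$, so that every edge $e \in E(G)$ is \emph{critical}: there is a valid $2$-colouring of $G-e$, but any extension to $e$ creates a monochromatic copy of $H_1$ (if $e$ is coloured red) or $H_2$ (if $e$ is coloured blue). Criticality forces each edge to lie in such a ``dangerous'' copy of both $H_1$ and $H_2$, which is the mechanism that transfers the global density hypothesis to the fine structure of $H_1$ and $H_2$.

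Cases (i)--(iv) I would handle by global density / colouring arguments. In (i), $m_2(H_1)=m_2(H_2)$ together with the heart condition forces both $H_1$ and $H_2$ to be strictly $2$-balanced, so a Nenadov--Steger-style argument (as in Theorem~3.2 of~\cite{ns}) should extract, from criticality, a subgraph $J\subseteq G$ of density strictly exceeding $m_2(H_1,H_2)$, contradicting the hypothesis. In (ii), where $\chi(H_2)\ge 3$, I would choose a $2$-vertex-colouring of $V(G)$ and put all cross-edges in blue; the blue subgraph is then bipartite and contains no $H_2$, while the red subgraph (living inside the two parts) is constrained enough by the density bound to avoid $H_1$. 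In (iii) and (iv), $m(H_2)>2$ or $ar(H_2)>2$ allows me to peel a spanning substructure of bounded density or arboricity from $G$, colour it blue --- trivially $H_2$-free --- and argue the remaining red graph cannot host $H_1$ using $m(G)\le m_2(H_1,H_2)$.

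Cases (v)--(viii) exploit the structure of $H_1$. For (v), $H_1=K_a$, and for (vi), $H_1=K_{a,b}$, the high symmetry of $H_1$ allows an explicit avoidance colouring based on a vertex bipartition or a cover of $G$; here the heart assumption is essential (as noted in the footnote) to keep $H_2$ under control. Cases (vii) and (viii) share a common peeling strategy parametrised by $\ell$: since $\ell<ar(H_1)$ (resp.\ $\ell<m(H_1)$), any copy of $H_1$ must contain a subgraph of arboricity (resp.\ density) exceeding $\ell$, so peeling $\ell$ edge-disjoint forests (resp.\ a suitable subgraph of density at most $\ell$) from $G$ and colouring them red leaves a red graph free of $H_1$. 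The complementary blue residue then has ``extra density'' at most $m_2(H_1,H_2)-\ell \le \tfrac12$ in (vii) (resp.\ $\le 1$ in (viii)), which, combined with the strict $2$-balancedness of $H_2$ from the heart assumption, should preclude a blue $H_2$.

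The main obstacle I expect is case (viii): a blue residue with density up to $1$ is dangerously close to being able to contain $H_2$, which is precisely why cycles are excluded from the hypothesis (a cycle is a strictly $2$-balanced graph with $m_2$ arbitrarily close to $1$ and could embed into such a residue). Ruling out a blue $H_2$ in this regime should require a delicate structural argument showing that a non-cycle, strictly $2$-balanced $H_2$ with $m_2(H_2)>1$ cannot embed into a graph of density at most $1$. A secondary difficulty common to (i) and (vii) is that the relevant density inequalities sit right at the threshold $m_2(H_1,H_2)$, so a careful splitting based on whether the maximum defining $m_2(H_1,H_2)$ is attained at $H_1$ itself or at a proper subgraph will likely be required to make the arguments tight.
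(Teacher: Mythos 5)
Your framework---reduce to an edge-minimal counterexample and use criticality to force structure---is in the right spirit for a few of the cases, and your sketches for (ii), (iii), (iv), (vii) and (viii) are broadly aligned with the paper's actual strategy, which routes almost everything through a single workhorse lemma (Lemma~\ref{lem:asymmcol}): decompose $G$ into $\lceil ar(G)\rceil$ forests or $\lceil m(G)\rceil$ low-density parts, distribute them between red and blue so that neither colour class has enough arboricity (resp.\ density) to host its forbidden graph, and feed the hypotheses on $ar(H_i)$, $m(H_i)$, $\chi(H_2)$ through inequalities like $m_2(G)\le ar(G)+\tfrac12\le m(G)+1$. Your intuition for case (viii), that cycles must be excluded because they have $m=1$ exactly and hence $\lfloor m(H_2)-\eps\rfloor=0$, is also correct; you do not additionally need strict $2$-balancedness of $H_2$ there, only connectivity, cyclicity and not being a cycle.

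However, there are genuine gaps in cases (v), (vi) and (i). For (v), ``an explicit avoidance colouring based on a vertex bipartition or a cover of $G$'' would not work: no bipartition-style colouring can control $H_2$ in general, since $H_2$ is essentially arbitrary here. The paper's argument is quite different: it passes to a Ramsey-minimal subgraph $G'$, invokes a minimum-degree bound for Ramsey-minimal graphs w.r.t.\ $(K_a, H_2)$ (namely $\delta(G')\ge 2(a-1)$, which follows from a Bishnoi et al.\ lemma after replacing cycles by any graph of minimum degree $2$), and then derives the density contradiction $m(G)\ge a-1 > m_2(K_a)/2\ge m_2(K_a,H_2)/\cdot$. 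For (vi), the situation is even further from your sketch: the paper introduces a parameter $z(G)$ capturing that vertices near minimum degree form an independent set, proves a density lower bound $d(A)\ge x(x+z)/(2x+z)$ for any $A\in\mathcal{C}(H_1,H_2)$, and then runs a three-subcase calculation to show this exceeds $m_2(K_{a,b},H_2)$. Nothing resembling a bipartition or cover argument appears, and I don't see how one would make your sketch work. Finally, for (i), ``extract a subgraph of density strictly exceeding $m_2(H_1,H_2)$'' undersells the difficulty: the actual proof reduces via cases (ii) and (iii) to $H_1,H_2$ bipartite with $m(H_i)\le2$, then splits on the fractional part of $m_2(H_i)$, invoking (vii) for the balanced complete bipartite corner case and a minimum-degree ($\delta_{\max}$) argument for the others; your proposal gives no hint of this case analysis and would likely get stuck at precisely the boundary cases you flag in your last paragraph.
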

Theorem~\ref{thm:main} immediately follows from Theorem~\ref{thm:coloursuffices} combined with Lemma~\ref{lem:maincolouring} and Observation~\ref{obs:heart}.
%\COMMENT{RH: (and Observation~\ref{obs:heart}...) Not sure if need to make explicit. JH: I think its best to be explicit here.}
% Note that the assumption in Theorem~\ref{thm:coloursuffices} that $(H_1,H_2)$ is a heart will be necessary in its proof.
%, as the proof of $\hat{\mathcal{B}}(H_1,H_2)$ being finite requires that $(H_1,H_2)$ is a heart.
% This leads to the slightly technical nature of the set of graphs given in Theorem~\ref{thm:main}.
%To finish the resolution of the Kohayakawa--Kreuter conjecture, it remains to extend Lemma~\ref{lem:maincolouring} to all hearts $(H_1,H_2)$. 
\COMMENT{JH: I've removed the lines about $(H_1,H_2)$ being a heart being necessary in Theorem~\ref{thm:coloursuffices}. I don't think that's strictly true. We've shown that one can indeed assume that, but if you didn't give it as an upfront condition in Theorem~\ref{thm:coloursuffices}, I would just ignore you and define $\hat{\mathcal{B}}$ using a heart. In fact, I think we should say the opposite: there may be cases where taking a heart is \emph{not} necessarily a good idea. For instance, say your two graphs are massive but have a heart that's like $(K_3, K_4)$. Can't we say `construct $\hat{\mathcal{B}}$ using the heart $(K_3, K_4)$. There are a finite number of graphs in $\hat{\mathcal{B}}$, hence they have bounded size, say by $L \in \mathbb{N}$. Our two starting graphs are massive. We can trivially colour the graphs in $\hat{\mathcal{B}}$ and are done.'???? In fact, we can even say that for any heart we know how to construct infinitely many pairs of graphs for which this argument is relevant, by doing the `attaching at an edge' operation for both $H_1$ and $H_2$ (separately). Now, I wouldn't want to say this here as it involves knowing what $\hat{\mathcal{B}}$ is, but this feels relevant to bring up in the concluding remarks.
RH: You're right, definitely not necessary. We could even reword it as $(H_1,H_2)$ not a heart, and instead... implies $G \not \to (H_1',H_2',\dots)$ for some heart $(H_1',H_2')$ of $(H_1,H_2)$.  But I think this is less readable - reader would immediately wonder why not $H_1,H_2$ in there!}

We also prove the colouring result for hypergraphs in the following cases. 
\begin{lem}\label{lem:hypergraphcolouring}
Let $k \geq 2$. 
For all $k$-graphs $G, H_1, H_2$ with $m_k(H_1) \geq m_k(H_2)>1$, $m(G) \leq m_k(H_1,H_2)$ and $(H_1, H_2)$ a heart\footnote{We take $(H_1, H_2)$ to be a heart in order to prove case (i) below.}, 
%we have $G \not \to (H_1,\dots,H_r)$ if any of the following conditions are satisfied: 
we have $G \not \to (H_1,H_2)$ if either of the following conditions are satisfied: 
\begin{enumerate}
\item[(i)] $\chi(H_2) \geq k+1$;
\item[(ii)] $m(H_2) > \binom{v(H_1)}{k-2}+1$.
% \item[(ii)] arboricity;
\end{enumerate}
\end{lem}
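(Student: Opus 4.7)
The plan in both cases is to construct a subset $B \subseteq E(G)$ of ``blue'' edges such that every copy of $H_1$ in $G$ contains an edge of $B$ and such that no copy of $H_2$ embeds into $B$; colouring $B$ blue and $E(G) \setminus B$ red then witnesses $G \not\to (H_1, H_2)$.

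For case (i), the $H_2$-freeness of $B$ is enforced indirectly: since $\chi(H_2) \geq k+1$, any $k$-graph admitting a proper vertex $k$-colouring cannot contain $H_2$, so I would take $B$ to be the set of non-monochromatic edges under a suitable vertex $k$-colouring $\varphi : V(G) \to [k]$. The task then reduces to finding $\varphi$ so that no copy of $H_1$ is entirely monochromatic, i.e., to a partition $V(G) = V_1 \sqcup \cdots \sqcup V_k$ with each induced sub-$k$-graph $G[V_i]$ being $H_1$-free. I would proceed by induction on $|V(G)|$, using $m(G) \leq m_k(H_1, H_2)$ to extract at each step a vertex (or small removable substructure) of low degree, and then extending a good colouring of the remainder by a union-bound / local-lemma type argument handling the copies of $H_1$ passing through the removed part. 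The heart hypothesis is used to control how many copies of $H_1$ a newly added vertex can sit in.

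For case (ii), I would construct $B$ directly aiming for $m(B) \leq \binom{v(H_1)}{k-2} + 1$. This forces $H_2$-freeness: if $H_2 \subseteq B$, then the densest subgraph $H_2^* \subseteq H_2$ is also in $B$, whence $m(B) \geq d(H_2^*) = m(H_2) > \binom{v(H_1)}{k-2} + 1$, a contradiction. To build $B$: for each copy $H_1^* \subseteq G$ of $H_1$ and each $(k-2)$-subset $S \subseteq V(H_1^*)$, add to $B$ one (arbitrarily chosen) edge of $H_1^*$ containing $S$, provided such an edge exists. Every copy of $H_1$ is thereby hit at least once. To bound $m(B)$ on any $U \subseteq V(G)$, I would double-count: each edge of $B$ whose vertex set lies in $U$ is charged to at most $\binom{v(H_1)}{k-2}$ pairs $(H_1^*, S)$, while the number of copies of $H_1$ inside $G[U]$ is controlled by $m(G[U]) \leq m_k(H_1, H_2)$ combined with the strict balance of $H_1$ with respect to $d_k(\cdot, H_2)$ coming from the heart hypothesis.

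The hardest step is likely the verification of the density bound $m(B) \leq \binom{v(H_1)}{k-2} + 1$ in case (ii): the argument must deliver the exact constant, using the interplay of the heart condition, the density bound on $G$, and the $(k-2)$-link structure of $B$, essentially without slack. Case (i) is more routine but still needs a careful base case and inductive setup that respects the heart hypothesis, especially when $m_k(H_1) = m_k(H_2)$, in which case one can further reduce to the symmetric colouring result for $H_2$ alone.
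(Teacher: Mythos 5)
Your case (i) follows the paper's reduction exactly (colour the edges inside the classes of a vertex $k$-colouring red and the crossing edges blue, so blue has weak chromatic number at most $k$ and hence no $H_2$), but your proposed completion misidentifies the engine. The heart hypothesis is not used to bound the number of copies of $H_1$ through a vertex, and a union-bound/local-lemma count of such copies is the wrong mechanism (that count is not controlled). What strict balancedness actually gives is $\delta(H_1) > m_k(H_1,H_2) \geq m(G)$, and then a minimal-counterexample/greedy argument needs no counting at all: if some subgraph had every vertex of degree at least $k\,\delta(H_1)$ its density would be at least $\delta(H_1)$, contradicting $m(G) < \delta(H_1)$; so one can always remove a vertex $v$ of degree less than $k\,\delta(H_1)$, colour the rest, and by pigeonhole place $v$ in a class where it has fewer than $\delta(H_1)$ incident edges, which prevents $v$ from lying in a monochromatic copy of $H_1$ since every vertex of (a suitable subgraph of) $H_1$ has degree at least $\delta(H_1)$. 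This is Lemma~\ref{lem:asymmcol}(iv) in the paper; without the inequality $\delta(H_1) > m_k(H_1,H_2)$ your induction has no usable extension step.

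Case (ii) is where the genuine gap lies. The target object you want (a blue set $B$ hitting every copy of $H_1$ with $m(B) \leq \binom{v_1}{k-2}+1$) does exist, but your construction and charging argument do not produce it. Choosing, per copy of $H_1$ and per $(k-2)$-subset, an arbitrary hitting edge gives no density control beyond the trivial $m(B) \leq m(G) \leq m_k(H_1,H_2)$, and in the relevant regime $m_k(H_1,H_2) \geq m_k(H_2) \geq m(H_2) > \binom{v_1}{k-2}+1$, so this is not enough: overlapping copies can force $B$ to swallow an entire dense subgraph of $G$. Your double count is also reversed: each pair $(H_1^*,S)$ contributes at most one edge, so bounding $e(B[U])$ requires bounding the number of copies of $H_1$ whose chosen edges land in $U$, and that number is not linear in $|U|$ (a bounded-density $k$-graph on $u$ vertices can contain superlinearly many copies of $H_1$, and copies hitting $U$ need not even lie inside $U$). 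Moreover, the constant $\binom{v_1}{k-2}$ in the paper has nothing to do with $(k-2)$-subsets of copies of $H_1$; it comes from the inequality $m_k(H_1) \leq m(H_1) + \binom{v_1}{k-2}$ (equation~(\ref{eq:mkHbound})). The paper's route avoids per-copy choices entirely: by a Hall's-theorem argument (Lemma~\ref{lem:asymmcol}(ii)) one partitions $E(G)$ into $\lceil m(G) \rceil$ parts each of $m$-density at most $1$, assigns $\lfloor m(H_1)-\varepsilon\rfloor$ parts to red and $\lfloor m(H_2)-\varepsilon\rfloor$ parts to blue, and the bookkeeping closes because $m(G) \leq m_k(H_1,H_2) \leq m_k(H_1) \leq m(H_1)+\binom{v_1}{k-2} \leq \lfloor m(H_1)-\varepsilon\rfloor + \lfloor m(H_2)-\varepsilon\rfloor$; each colour class is then too sparse to contain the corresponding $H_i$. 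If you want to salvage your hitting-set framing, you need some such global decomposition rather than local edge choices.
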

Theorem~\ref{thm:mainhyp} now immediately follows from Theorem~\ref{thm:coloursuffices} combined with Lemma~\ref{lem:hypergraphcolouring} and the equivalent of Observation~\ref{obs:heart} for hypergraphs.
Note that Lemma~\ref{lem:hypergraphcolouring} is a generalisation of cases~(ii) and~(iii) of Lemma~\ref{lem:maincolouring}.
%\footnote{We use the convention that $\binom{s}{0}=1$ for any $s \in \mathbb{N}$.}
\COMMENT{RH: Do we need the footnote here? I think best to include it to be clear. CB: I actually think we should remove the footnote... Isn't it definition rather than convention -- there is exactly one way to choose nothing, just as there is exactly one way to choose everything? RH: Agreed! I was thinking of $\binom{0}{0}$ which is less obvious.}

\subsection{Additional note} 
Shortly before submitting this paper, we learned of the simultaneous and independent work of Kuperwasser, Samotij and Wigderson~\cite{ksw}. 
They also resolve the Kohayakawa--Kreuter conjecture for the vast majority of cases. 
They too prove Theorem~\ref{thm:coloursuffices} for graphs, along with a large collection of (not entirely overlapping) colouring results similar to Lemma~\ref{lem:maincolouring}. Rather than extending their result to hypergraphs as we do, they instead extend their results to families of graphs, i.e. one aims to avoid a finite family  $\mathcal{H}_1$ of graphs in red and family $\mathcal{H}_2$ of graphs in blue. Finally, they include an interesting conjecture on graph partitioning (\cite[Conjecture 1.5]{ksw}), which if true, would prove the Kohayakawa--Kreuter conjecture in full.

%TO DO: Add to this additional note, depending on KSW draft: - generalise to lists and finite families but not to hypergraphs? Mention cases of KK which they cover which we do not?
\COMMENT{CB: just adding comment here so that easier to see when flicking through the draft that this bit will need updating and the `TO DO' deleting.}

\subsection{Extra additional note}
Since the first submission of this paper, Christoph, Martinsson, Steiner and Wigderson~\cite{cmsw} proved the following colouring statement: Let $H_1$, $H_2$ be graphs with $m_2(H_1) > m_2(H_2) > 1$. If a graph $G$ satisfies $G \to (H_1,H_2)$, then $m(G) > m_2(H_1, H_2)$.
This result, coupled with Theorem~\ref{thm:coloursuffices}
and Lemma~\ref{lem:maincolouring}(i), completes the resolution of the Kohayakawa--Kreuter conjecture.
In particular, they proved this result via proving a slight weakening of the aforementioned graph partitioning conjecture of Kuperwasser, Samotij and Wigderson,
along with an extra graph partitioning result.
%{\color{red} Maybe do not need to mention this, but might be good to do so? JH: I think it is good to mention it, as it also implicitly means KSW's partitioning conjecture is still open - and that conjecture is certainly of independent interest.}

\section{Organisation} 

The paper is organised as follows.
%We lay out the format of the paper. 
In Section~\ref{sec:notation} we give any remaining notation we will use and a couple of results which will be useful throughout.
In Section~\ref{sec:overview} we give an overview of the different components used to prove Theorem~\ref{thm:coloursuffices}.
This includes discussion of the relevance of -- as well as how we employ -- the central class of graphs important to this paper, $\hat{\mathcal{B}}(H_1, H_2, \eps)$ (defined formally in Section~\ref{sec:bhatdef}), together with a proof sketch of the finiteness of $\hat{\mathcal{B}}(H_1, H_2, \eps)$ (for any $\eps \geq 0$ and heart $(H_1, H_2)$).
In Section~\ref{sec:asymedgecolb}, we introduce the algorithm \textsc{Asym-Edge-Col-$\hat{\mathcal{B}}_{\eps}$}, a process that, if it does not run into an error, produces a colouring of $G^{k}_{n,p}$ which yields $G^{k}_{n,p} \not \to (H_1,H_2)$, thus reducing the proof of Theorem~\ref{thm:coloursuffices} to proving that \textsc{Asym-Edge-Col-$\hat{\mathcal{B}}_{\eps}$} does not run into an error asymptotically almost surely (a.a.s.) (see Lemma~\ref{lemma:noerror}).
In Section~\ref{sec:bhatdef}, we define $\hat{\mathcal{B}}(H_1, H_2, \eps)$ when $m_k(H_1) > m_k(H_2)$.
%Note that, given the $m_2(H_1) = m_2(H_2)$ case of Conjecture~\ref{conj:kk} has already been proven by Kuperwasser and Samotij~\cite{ks}, we relegate many of the components of the proof of the $m_k(H_1) = m_k(H_2)$ case, for $k \geq 2$, to Appendix~\ref{app:mh1mh2equal}. 
(The case that $m_2(H_1) = m_2(H_2)$ can be found in \cite[Appendix~\ref{app:mh1mh2equal}]{bhh_kk_arxiv}.) % Note that, given the $m_2(H_1) = m_2(H_2)$ case of Conjecture~\ref{conj:kk} has already been proven by Kuperwasser and Samotij~\cite{ks}, we relegate many of the components of the proof of the $m_k(H_1) = m_k(H_2)$ case, for $k \geq 2$, to Appendix~\ref{app:mh1mh2equal}. 
In Section~\ref{sec:grow}, we prove Lemma~\ref{lemma:noerror} using an auxiliary algorithm \textsc{Grow-$\hat{\mathcal{B}}_{\eps}(H_1, H_2, \eps)$}. %Proving Claim~\ref{claim:degenfull} from this section is pivotal to proving Lemma~\ref{lemma:noerror}, however its proof largely resembles that of its sister result in \cite{h}, and so is relegated to Appendix~\ref{app:degenfull}. 
In Section~\ref{sec:connectivity}, we prove a connectivity result for $k$-graphs that we will utilise to prove that $\hat{\mathcal{B}}(H_1, H_2, \eps)$ is finite in Section~\ref{sec:bfinite}. In Section~\ref{sec:colour}, we prove Lemmas~\ref{lem:maincolouring} and \ref{lem:hypergraphcolouring}. We then give several concluding remarks in Section~\ref{sec:conclude}.

%***Maybe just this instead of Section 2.1 and keep Section 12 as was for the arxiv version?***

In the arXiv version of this paper~\cite{bhh_kk_arxiv}, we include four appendices. The work included in those is sufficiently similar to previous work carried out by the third author in~\cite{h}, and so is provided separately for completeness. A guide to these appendices can be found in Section 12 of \cite{bhh_kk_arxiv}. 

\section{Notation and useful results}\label{sec:notation}

\COMMENT{NOTE THAT WE MAY NOT NEED ALL THESE DEFINITIONS SINCE WE'RE SHOVING SO MUCH INTO APPENDICES. I GUESS IF A DEFINITION IS ONLY USED IN AN APPENDIX THEN WE SHOULD MENTION IT ONLY THERE?
We will need the following definitions and tools. Delete ones we don't need once all proofs are completed.}

For a $k$-graph $G=(V,E)$, we denote the number of vertices in $G$ by $v(G)=v_G:=|V(G)|$ and the number of edges in $G$ by $e(G)=e_G:=|E(G)|$.
Moreover, for $k$-graphs $H_1$ and $H_2$ we let $v_1:=|V(H_1)|$, $e_1:=|E(H_1)|$, $v_2:=|V(H_2)|$ and $e_2:=|E(H_2)|$.
Since $d(\cdot)$ is used for density, we will always denote the ($1$-)degree of a vertex $u$ by $\deg(u)$. We use $\delta(H)$ for the minimum ($1$-)degree of a (hyper)graph $H$. For $k$-graphs $H_1,\dots,H_r$, we write $\delta_i:=\delta(H_i)$ for brevity.

The following result elucidates the relationship between the one and two argument $m_k$ densities.
\COMMENT{RH: Might want to reword this/change it - how often do we actually need it before the appendix? (Does it need to have a label Prop, or could it even just be a labelled equation?) JH: Yeah, this one is tricky. Its used used a couple of times deep into the appendices, but I sort-of like it here. It feels like a necessary part of explaining $k$-density and asymmetric $k$-density. It does immediately give us why `strictly balanced with respect to $d_k(\cdot, H_2)$' isn't relevant for the symmetric $k$-densities case, which I think is meaningful and clues up the reader to how the symmetric and asymmetric cases are going to be very different in their proof, even if the former is relegated almost entirely to the appendices.}
Observe that $m_k(\cdot, \cdot)$ is not symmetric in both arguments. 

\begin{prop}\label{prop:m2h1h2}
Suppose that $H_1$ and $H_2$ are non-empty $k$-graphs with $m_k(H_1) \geq m_k(H_2)$. 
Then we have \[m_k(H_1) \geq m_k(H_1, H_2) \geq m_k(H_2).\]
Moreover, \[m_k(H_1) > m_k(H_1, H_2) > m_k(H_2) \ \mbox{whenever} \ m_k(H_1) > m_k(H_2).\]
\end{prop}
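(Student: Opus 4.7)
The plan is to prove both inequalities by direct manipulation of the definitions, reducing each to a comparison of the one-argument densities $d_k(\cdot)$ and $m_k(\cdot)$. The only subtlety will be handling the edge cases where $v(J) = k$ (so $J \cong K_k$) or $v(J) < k$; in both cases $d_k(J, H_2)$ reduces neatly (to $m_k(H_2)$ or to $0$ respectively), so these cases will be dispatched separately.

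For the upper bound $m_k(H_1) \geq m_k(H_1, H_2)$, I will pick any $J \subseteq H_1$ and bound $d_k(J, H_2)$. When $v(J) \geq k+1$, the hypothesis $m_k(H_1) \geq m_k(H_2)$ gives $1/m_k(H_2) \geq 1/m_k(H_1)$, so
\[
d_k(J, H_2) \;=\; \frac{e(J)}{v(J) - k + 1/m_k(H_2)} \;\leq\; \frac{e(J)}{v(J) - k + 1/m_k(H_1)}.
\]
Clearing denominators, the right-hand side is $\leq m_k(H_1)$ iff $e(J) - 1 \leq m_k(H_1)(v(J)-k)$, i.e.\ $d_k(J) \leq m_k(H_1)$, which holds by definition of $m_k(H_1)$. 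Taking the maximum over $J$ yields $m_k(H_1, H_2) \leq m_k(H_1)$. The residual cases $J \cong K_k$ and $v(J) < k$ are trivial.

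For the lower bound $m_k(H_1, H_2) \geq m_k(H_2)$, I will exhibit a single $J \subseteq H_1$ witnessing the inequality: choose $J \subseteq H_1$ with $d_k(J) = m_k(H_1)$. Then the inequality $d_k(J, H_2) \geq m_k(H_2)$ is, after clearing denominators, equivalent to $e(J) - 1 \geq m_k(H_2)(v(J) - k)$, i.e.\ $d_k(J) \geq m_k(H_2)$, which holds since $d_k(J) = m_k(H_1) \geq m_k(H_2)$. (Again the $J \cong K_k$ case is handled by noting $d_k(J,H_2) = m_k(H_2)$ directly.) Since $d_k(J, H_2) \leq m_k(H_1, H_2)$, this gives the claim.

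For the strict statement, I trace back through the two arguments. In the upper bound, if $m_k(H_1) > m_k(H_2)$ then $1/m_k(H_2) > 1/m_k(H_1)$, so the first displayed inequality is strict whenever $e(J) > 0$ (which holds at any $J$ realising the max, since $m_k(H_1) > 0$), giving $m_k(H_1) > m_k(H_1, H_2)$. In the lower bound, the chain reduces to $d_k(J) \geq m_k(H_2)$ with $d_k(J) = m_k(H_1) > m_k(H_2)$, so this becomes strict too. No real obstacle is anticipated; the whole argument is routine algebra, and the only thing to watch is ensuring the boundary cases ($J \cong K_k$, $v(J) < k$) are checked so that the max over $J \subseteq H_1$ really is controlled.
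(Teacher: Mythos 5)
Your proof is correct, and the argument is complete. The paper states Proposition~\ref{prop:m2h1h2} without proof (it is treated as routine), so there is no paper proof to compare against; evaluated on its own merits, your reduction of each inequality to the elementary statement ``$d_k(J) \lessgtr m_k(H_i)$'' by clearing the denominator $v(J) - k + 1/m_k(\cdot)$ is exactly the right move, and you correctly dispatch the boundary cases $J \cong K_k$ and $v(J) < k$ (noting that when $v(J)=k$ and $e(J)\geq 1$ the only possibility is $J\cong K_k$, giving $d_k(J,H_2)=m_k(H_2)$). One tiny slip worth fixing: in the strict upper bound you justify $e(J)>0$ at a maximiser by writing ``since $m_k(H_1)>0$''; the clean justification is $m_k(H_1,H_2) \geq d_k(K_k,H_2) = m_k(H_2) \geq 1/k > 0$, which forces any maximiser to have positive $F$-density and hence a positive edge count. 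Also, you might explicitly remark that $m_k(H_2) \geq 1/k > 0$ (because $H_2$ is non-empty and so contains $K_k$) so that $1/m_k(H_2)$ is always well-defined; this is implicit in your write-up but deserves one sentence.
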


Note that if $m_k(H_1) = m_k(H_2)$ and $H_1$ and $H_2$ are non-empty $k$-graphs, then $H_1$ cannot be strictly balanced with respect to $d_k(\cdot, H_2)$ unless $H_1$ is an edge. 
Indeed, otherwise, by Proposition~\ref{prop:m2h1h2} we would then have that \[m_k(H_2) = m_k(H_1, H_2) > d_k(K_k, H_2) = m_k(H_2).\] 

Generally, 
one can show that $H_1$ being (strictly) $k$-balanced implies it is (strictly) $k$-balanced with respect to $d_k(\cdot,H_2)$.
However, the converse does not hold in general, that is, the notions are not equivalent. For example, let $H_1$ be the graph formed from a copy of $C_4$ with a triangle appended onto two adjacent edges of this $C_4$, and let $H_2=C_7$. Then $H_1$ is strictly balanced with respect to $d_2(\cdot,H_2)$, but is not $2$-balanced.
%\COMMENT{RH: Calculations in Sec 8 of Definitions etc file. JH: I agree that its best to not put the calculations here.}

%In Section~\ref{sec:colour}\footnote{JH: Put the colouring section reference here.}, we will also use the \emph{arboricity} of a $k$-graph, given below. Let $H$ be a $k$-graph and define
%
%\[ar(H) := \max\left\{\frac{e_J}{v_J - 1}:J \subseteq H\right\}.\]
%
The following result will be useful for us.

\begin{fact}\label{fact:ineq}
For $a,c,C \in \mathbb{R}$ and $b,d > 0$, we have 
\[(i) \ \frac{a}{b} \leq C \land \frac{c}{d} \leq C \implies \frac{a+c}{b+d} \leq C, \ \mbox{and} \ (ii) \ \frac{a}{b} \geq C \land \frac{c}{d} \geq C \implies \frac{a+c}{b+d} \geq C,\] and similarly, if also $b > d$, 
\[(iii) \ \frac{a}{b} \leq C \land \frac{c}{d} \geq C \implies \frac{a-c}{b-d} \leq C, \ \mbox{and} \ (iv) \ \frac{a}{b} \geq C \land \frac{c}{d} \leq C \implies \frac{a-c}{b-d} \geq C.\]
\end{fact}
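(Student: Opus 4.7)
The plan is to prove all four parts directly by clearing denominators, since each claim is a purely algebraic manipulation with no hidden content. Since $b, d > 0$, the hypotheses $a/b \leq C$ and $c/d \leq C$ in (i) are equivalent to $a \leq Cb$ and $c \leq Cd$. Adding these gives $a + c \leq C(b+d)$, and dividing by $b + d > 0$ yields $\frac{a+c}{b+d} \leq C$, as required. Part (ii) is immediate by reversing every inequality in the argument for (i), or equivalently by applying (i) to $(-a, -c, -C)$.

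For part (iii), I would again rewrite the hypotheses as $a \leq Cb$ and $c \geq Cd$, then subtract the second from the first to obtain $a - c \leq Cb - Cd = C(b-d)$. The additional assumption $b > d$ ensures $b - d > 0$, so division by $b - d$ preserves the direction of the inequality and gives $\frac{a-c}{b-d} \leq C$. Part (iv) follows either by the same manipulation with reversed inequalities, or by applying (iii) to $(-a, -c, -C)$ with the same $b, d$.

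There is no real obstacle here; the only care needed is to verify that each division is by a strictly positive quantity, which is exactly why $b, d > 0$ is assumed throughout and why the additional hypothesis $b > d$ appears in (iii) and (iv). Since (ii) and (iv) are formal consequences of (i) and (iii) by negation, it suffices to write out (i) and (iii) explicitly and remark that the other two follow by symmetry.
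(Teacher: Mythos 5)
Your proof is correct, and the paper itself gives no proof of this Fact, treating it as immediate; your clearing-denominators argument (with the negation trick reducing (ii) and (iv) to (i) and (iii)) is exactly the standard verification that the authors left implicit.
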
%\COMMENT{JH: We don't explicitly use (iii) but it seems best to include it for completeness.}

\section{Overview of the proof of Theorem~\ref{thm:coloursuffices}}\label{sec:overview}

\subsection{A refinement of Theorem~\ref{thm:coloursuffices}}

%In Section~\ref{sec:bhatdef} and Appendix~\ref{app:mh1mh2equal}, the former for the $m_k(H_1) > m_k(H_2)$ case and the latter for the $m_k(H_1) = m_k(H_2)$ case, we define for each heart $(H_1, H_2)$ and constant $\eps \geq 0$ a class of $k$-graphs $\hat{\mathcal{B}}(H_1, H_2, \eps)$ which will be crucial to our proof of Theorem~\ref{thm:coloursuffices}. 
In Section~\ref{sec:bhatdef}, for the $m_k(H_1) > m_k(H_2)$ case, we define for each heart $(H_1, H_2)$ and constant $\eps \geq 0$ a class of $k$-graphs $\hat{\mathcal{B}}(H_1, H_2, \eps)$ which will be crucial to our proof of Theorem~\ref{thm:coloursuffices}.
Indeed, Theorem~\ref{thm:coloursuffices} follows by combining the following two theorems, and using that all graphs $G \in \hat{\mathcal{B}}(H_1,H_2,\eps)$ will satisfy $m(G) \leq m_k(H_1,H_2)+\eps$ (see Claim~\ref{claim:mgbound}).
Throughout the rest of this paper, we will denote $\hat{\mathcal{B}}(H_1, H_2, 0) =: \hat{\mathcal{B}}(H_1, H_2)$ by $\hat{\mathcal{B}}$ and $\hat{\mathcal{B}}(H_1, H_2, \eps)$ by $\hat{\mathcal{B}}_{\eps}$.  

\begin{thm}\label{thm:colourb}
Let $H_1,\dots,H_r$ be $k$-graphs with $m_k(H_1) \geq \cdots \geq m_k(H_r)$ and $m_k(H_2) > 1$ and suppose $(H_1,H_2)$ is a heart. If
%For each $\eps \geq 0$, there exists a family $\hat{\mathcal{B}}=\hat{\mathcal{B}}(H_1,H_2,\eps)$ of $k$-graphs, where 
\COMMENT{JH: Technically, at the moment we do not define the graphs $G \in \hat{\mathcal{B}}$ to satisfy that $m(G) \leq m_k(H_1,H_2)$. In fact, nowhere in the proof need we reference $m(G)$ for $G \in \hat{\mathcal{B}}$, which seems quite weird. Ok, so the `contrapositive' is true: if $G \not\in \hat{\mathcal{B}}$ and $G$ is constructable by Grow-$\hat{\mathcal{B}}$(-Alt), then $m(G) > m_k(H_1, H_2)$. The positive statement is not true currently. We could impose this restriction on $\hat{\mathcal{B}}$, but probably that's only relevant to the colouring statement. For ease of proofs where we use the meaning of $G \in \hat{\mathcal{B}}$, I think we don't want the extra condition $m(G) \leq m_k(H_1, H_2)$, but, this is something that we could do as a refinement of $\hat{\mathcal{B}}$ later. Ultimately, I'm pretty sure we don't want the $m(G) \leq m_k(H_1, H_2)$ condition in the algorithm itself. (I'm rambling\ldots this is something we should chat about as its quite subtle.) } 
\begin{itemize}
\item $\hat{\mathcal{B}}$ is finite, and% for each $\eps \geq 0$, 
%\item for all $G \in \hat{\mathcal{B}}$, we have $m(G) \leq m_k(H_1, H_2)$, and
\item for all $G \in \hat{\mathcal{B}}$, we have $G \not \to (H_1,H_2)$, \COMMENT{RH: Also might end up being easier to say there exists $\delta>0$ such that true for all $G \in \hat{\mathcal{B}}(H_1,H_2,\delta)$ just because of there not being any graphs in here which actually have density greater than $m_k(H_1,H_2)$... maybe it'll get explained later.... JH: I think we can get rid of $\eps$ from this theorem and do all the explanation of about removing $\eps$ elsewhere.}
\end{itemize}
then there exists a constant $b > 0$ such that \[\lim\limits_{n \to \infty} \mathbb{P}[G^k_{n,p} \to (H_1, \ldots, H_r)] =   
    0      \quad \text{if } p \leq bn^{-1/m_k(H_1,H_2)}.
\] 
\end{thm}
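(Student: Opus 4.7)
The plan is to construct an algorithm that edge-colours $G^k_{n,p}$ using the hypothesised finite library $\hat{\mathcal{B}}$ of pre-validated configurations, and then to show via a first-moment argument that this algorithm almost surely succeeds. The approach mirrors the Nenadov--Steger style proof of the symmetric case, but with the asymmetric density $m_k(H_1, H_2)$ driving both the colouring preference and the density bookkeeping.

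First I would set $p = b n^{-1/m_k(H_1, H_2)}$ for a sufficiently small constant $b > 0$. Since the goal is to avoid a monochromatic $H_i$ in colour $i$, the natural greedy heuristic is to prefer colour $1$ everywhere, switching an edge to colour $2$ only when colour $1$ would complete a copy of $H_1$. The algorithm \textsc{Asym-Edge-Col-$\hat{\mathcal{B}}_{\eps}$} would process edges in some order, maintaining a partial $r$-colouring; whenever the local configuration around the current edge matches some $B \in \hat{\mathcal{B}}$, the hypothesis $B \not\to (H_1, H_2)$ supplies a pre-computed valid $2$-colouring of $B$ that can be spliced in. Because $\hat{\mathcal{B}}$ is finite, there are only finitely many such templates to maintain, and the algorithm errors only when it meets a local configuration outside this library.

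To show that an error a.a.s.\ does not occur (Lemma~\ref{lemma:noerror}), I would introduce an auxiliary procedure \textsc{Grow-$\hat{\mathcal{B}}_{\eps}$}: starting from a would-be error edge $e$ together with the ``offending'' copies of $H_1$ and $H_2$ forcing the failure, it greedily grows a subgraph $F \subseteq G^k_{n,p}$ by attaching further copies of $H_1$ or $H_2$. The key invariant is that each growth step keeps $m(F)$ strictly above $m_k(H_1, H_2) + \eps$: by Fact~\ref{fact:ineq} together with the heart hypotheses on $(H_1, H_2)$ (strict $k$-balancedness of $H_2$ when $m_k(H_1) = m_k(H_2)$, strict balance of $H_1$ with respect to $d_k(\cdot, H_2)$ when $m_k(H_1) > m_k(H_2)$), any new copy attached along a proper sub-configuration must contribute edges-to-vertices at a ratio strictly above $m_k(H_1, H_2) + \eps$. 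Since graphs in $\hat{\mathcal{B}}_{\eps}$ satisfy $m(\cdot) \leq m_k(H_1, H_2) + \eps$ by Claim~\ref{claim:mgbound}, any non-terminating run of \textsc{Grow-$\hat{\mathcal{B}}_{\eps}$} produces an $F \notin \hat{\mathcal{B}}_{\eps}$ with $m(F) > m_k(H_1, H_2) + \eps$. A first-moment bound of the form $\mathbb{E}[N_F] \leq n^{v(F)} p^{e(F)} = o(1)$, summed over the finitely many isomorphism types of bounded-size error witnesses, then yields $\mathbb{P}[\text{error}] = o(1)$ for $b$ small enough.

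The hard part will be designing \textsc{Grow-$\hat{\mathcal{B}}_{\eps}$} so that each attachment yields a \emph{strict} density gain over $m_k(H_1, H_2)$ rather than merely matching it; this is precisely where the heart hypothesis is used, to rule out boundary-case intersections that would attach a new copy at density exactly $m_k(H_1, H_2)$ and enable an unbounded density-neutral growth. A further subtlety is that the subcases $m_k(H_1) > m_k(H_2)$ and $m_k(H_1) = m_k(H_2)$ behave quite differently and require separate bookkeeping, the latter being relegated to an appendix in the paper.
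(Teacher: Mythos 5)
The skeleton of your proposal — a colouring algorithm that splices in pre-computed colourings of $\hat{\mathcal{B}}$-members, plus an auxiliary growth procedure analysed by a first-moment bound — is the right one, and indeed matches the paper's strategy. However, your central quantitative claim is wrong, and it is wrong in a way that hides exactly the difficulty the paper's machinery exists to overcome.

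You assert that ``each growth step keeps $m(F)$ strictly above $m_k(H_1,H_2)+\eps$,'' so that any non-terminating run immediately lands outside $\hat{\mathcal{B}}_\eps$ with too-high density and can be killed by Markov. If this were true the proof would be nearly immediate and you would not need the finiteness of $\hat{\mathcal{B}}$ at all. In fact the opposite holds for the generic growth step: in a \emph{non-degenerate} iteration (one where the newly attached copies meet $F_i$ cleanly in a single edge and are pairwise disjoint), the paper's Claim~\ref{claim:non-degen} shows that $\lambda(F_{i+1}) = \lambda(F_i)$, i.e.\ the attachment is \emph{density-neutral}. Only the \emph{degenerate} iterations — where the new copy overlaps the existing structure in extra vertices or edges — decrease $\lambda$ by a fixed constant $\kappa$ (Claim~\ref{claim:degenfull}). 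Since $\lambda(F_0)$ is a bounded constant and $\lambda$ must stay above $-\gamma$, the number of degenerate iterations is at most a constant $q_1$, but the number of non-degenerate iterations is a priori unbounded: the procedure may run for up to $\ln n$ steps without any density gain. This is precisely why the finiteness of $\hat{\mathcal{B}}$ is a genuine hypothesis and not a triviality.

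Two further pieces of the argument are missing from your account and cannot be omitted. First, the first-moment sum is not over ``finitely many isomorphism types of bounded-size error witnesses.'' The witnesses produced by \textsc{Grow} can have up to $\Theta(\ln n)$ iterations; to make the expectation sum to $o(1)$ you need the enumeration bound $f(t,d) \leq \lceil\ln n\rceil^{(C_0+1)d} A^{t-d}$ (Claim~\ref{claim:polylog}), where the constant base $A$ for non-degenerate steps exploits the canonicity of \textsc{Eligible-Edge}, and the polylogarithmic blow-up is confined to the boundedly-many degenerate steps. Second, you omit the \textsc{Special} cases (Definition~\ref{def:special}), which handle stuck configurations that \textsc{Grow} never sees: a non-$(H_1,H_2)$-sparse $\hat{\mathcal{B}}_\eps$-graph, or two overlapping members of $\mathcal{S}^B_{G'}$. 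Those require the finiteness of $\hat{\mathcal{B}}_\eps$ to run a union bound, and they are logically separate from the \textsc{Grow} analysis. Finally, note that \textsc{Grow-$\hat{\mathcal{B}}_\eps$} (which abstractly generates $\hat{\mathcal{B}}_\eps$ from the pair $(H_1,H_2)$ alone) and \textsc{Grow} (which grows inside the concrete stuck subgraph $G'$) are distinct procedures with distinct roles; your sketch conflates them.
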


\begin{thm}\label{thm:bfinite}
For all $k$-graphs $H_1,\dots,H_r$ satisfying the conditions in Theorem~\ref{thm:colourb},
and for all $\eps\geq 0$, 
the family $\hat{\mathcal{B}}_{\eps}$ is finite.
\end{thm}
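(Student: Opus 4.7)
The plan is to bound $v(G)$ uniformly over $G \in \hat{\mathcal{B}}_\eps$; once this is achieved, finiteness follows immediately, since every such $G$ has $m(G) \leq m_k(H_1,H_2) + \eps$ by Claim~\ref{claim:mgbound}, so $e(G) = O(v(G))$, and $k$-graphs of bounded vertex count form a finite family up to isomorphism. Thus the entire content of the theorem is a structural bound on how much can be added by the construction procedure underlying $\hat{\mathcal{B}}_\eps$.

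I would treat the two density regimes separately, matching the split already made in the paper. First consider the main case $m_k(H_1) > m_k(H_2)$. A graph $G \in \hat{\mathcal{B}}_\eps$ is built by an iterative procedure (a variant of Grow-$\hat{\mathcal{B}}_\eps$ from Section~\ref{sec:grow}) that at each step attaches a copy of $H_1$ (or a controlled subhypergraph thereof) to the current graph along some intersection $S$. Attaching along $S$ adds $v(H_1)-v(S)$ vertices and $e(H_1)-e(S)$ edges; to preserve $m(G) \leq m_k(H_1,H_2) + \eps$, Fact~\ref{fact:ineq} forces the density of the new piece to be at most $m_k(H_1,H_2)+\eps$. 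Because $(H_1,H_2)$ is a heart, $H_1$ is strictly balanced with respect to $d_k(\cdot,H_2)$, and Proposition~\ref{prop:m2h1h2} gives $m_k(H_1) > m_k(H_1,H_2)$; this means that the intersection $S$ cannot be arbitrarily sparse, so $S$ is confined to a bounded collection of configurations within $H_1$.

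The next step is to rule out unboundedly long chains of attachments that share small or repeated intersections. Here I would invoke the connectivity result of Section~\ref{sec:connectivity}: whenever two attached copies of $H_1$ meet only along a thin structure, one can show that the combined configuration has density strictly above $m_k(H_1,H_2)+\eps$, contradicting $m(G) \leq m_k(H_1,H_2) + \eps$. Quantifying this strict excess via the strict $d_k(\cdot,H_2)$-balance of $H_1$ converts the qualitative statement into an explicit bound $N = N(H_1,H_2,\eps)$ on the number of attached copies of $H_1$ in any $G \in \hat{\mathcal{B}}_\eps$, yielding $v(G) \leq N \cdot v(H_1)$. The case $m_k(H_1) = m_k(H_2)$ runs along the same lines, with $H_1$ now strictly $k$-balanced and $m_k(H_1)$ playing the role of $m_k(H_1,H_2)$; this case is relegated to Appendix~\ref{app:mh1mh2equal}.

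The main obstacle is the third step: in principle, two copies of $H_1$ could be glued along intersections whose density is exactly $m_k(H_1,H_2)$, leaving the overall density unchanged and, if allowed repeatedly, producing an infinite family. Ruling this out is precisely the role of the connectivity lemma of Section~\ref{sec:connectivity}, which must exploit the strictness in the heart condition to show that such perfect gluings are possible only through a bounded family of rigid local configurations. Getting the right formulation of this connectivity lemma, and verifying that it captures every way in which the Grow procedure could conceivably produce a new hypergraph, is the crux of the argument.
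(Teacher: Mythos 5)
Your reduction to bounding $v(G)$ via Claim~\ref{claim:mgbound} is fine, and your split into the two density regimes matches the paper (Lemmas~\ref{lem:finitegreater} and~\ref{lem:finiteequal}). But the core of your argument has a genuine gap, and it sits exactly where you flag "the crux": you hope that density considerations, sharpened by the connectivity lemma and the strictness in the heart condition, rule out unboundedly many attachments whose gluing leaves the density unchanged. They do not. In the paper's framework these are precisely the \emph{non-degenerate} iterations of \textsc{Grow-$\hat{\mathcal{B}}_{\eps}$}, and Claim~\ref{claim:non-degen} shows they preserve the potential $\lambda$ (equivalently, the relevant density) \emph{exactly}; they can be repeated arbitrarily often without ever violating $m(G)\leq m_k(H_1,H_2)+\eps$. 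So no bound of the form $N=N(H_1,H_2,\eps)$ on the number of attached copies can be extracted from density plus connectivity alone — indeed Appendix~\ref{app:hydeconj} exhibits arbitrarily large graphs (the cyclic concatenations $C^{A}_{\ell}$) of admissible density lying in $\mathcal{C}^*$, which is exactly why the paper works with $\hat{\mathcal{B}}$ rather than the density-defined family $\hat{\mathcal{A}}$, whose finiteness remains open.

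What actually closes the argument in the paper is a different monovariant: membership in $\hat{\mathcal{B}}_{\eps}$ requires being an \emph{output} of \textsc{Grow-$\hat{\mathcal{B}}_{\eps}$}, i.e.\ a graph with no open edges, and the proof counts \emph{fully open flowers}. Density (via Claims~\ref{claim:non-degen} and~\ref{claim:degenfull}, as in the proof of Claim~\ref{claim:q_1}) only bounds the number of \emph{degenerate} iterations; the connectivity results of Section~\ref{sec:connectivity} enter not to forbid "perfect gluings" but to prove Lemma~\ref{lem:newfinitenesshyper1}, that the petal edges of a freshly attached flower are open. Claims~\ref{claim:newfinitenessc1}--\ref{claim:newfinitenessc3} then show that each iteration destroys at most one fully open flower and that two consecutive non-degenerate iterations strictly increase their number, so a long run of density-preserving attachments accumulates open edges faster than they can ever be closed, and the algorithm can no longer terminate with an output. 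Without this (or some substitute argument showing that density-neutral attachments obstruct membership in $\hat{\mathcal{B}}_{\eps}$ rather than merely failing to raise the density), your step three does not go through and the finiteness claim is unproven.
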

Let us speak more on this class of $k$-graphs $\hat{\mathcal{B}}_{\eps}$. It will be clear from construction and later results -- namely Claims~\ref{claim:non-degen} and Claim~\ref{claim:degenfull} when applied in the context of constructing $\hat{\mathcal{B}}_{\eps}$ -- that for any constants $\eps' > \eps \geq 0$ we have \begin{equation}\hat{\mathcal{B}}_{\eps} \subseteq \hat{\mathcal{B}}_{\eps'}.\end{equation}%\label{eq:epseps'} 
%Hence, given  Theorem~\ref{thm:bfinite}, we may make the following definition.
Hence, given  Theorem~\ref{thm:bfinite}, we introduce the following notational definition. %For all $k$-graphs $H_1,H_2$ satisfying the conditions in Theorem~\ref{thm:colourb}, let $\eps^*=\eps^*(H_1,H_2)>0$ be any $\eps$ such that  $\hat{\mathcal{B}}_{\eps} = \hat{\mathcal{B}}_{0} = \hat{\mathcal{B}}(H_1, H_2)$. 
\begin{define}\label{def:eps*}
For all $k$-graphs $H_1,H_2$ satisfying the conditions in Theorem~\ref{thm:colourb}, we denote by $\eps^* = \eps^*(H_1,H_2) > 0$ any $\eps$ such that  
$\hat{\mathcal{B}}_{\eps} = \hat{\mathcal{B}}_{0} = \hat{\mathcal{B}}(H_1, H_2)$.
\end{define}
That is, throughout this paper, wherever there is an occurrence of $\eps^*$, this is simply a choice of some $\eps>0$ such that $\hat{\mathcal{B}}_{\eps} = \hat{\mathcal{B}}_{0}$. That such a value $\eps^*$ always exists follows from the proof of Theorem~\ref{thm:bfinite} and Claim~\ref{claim:mgbound}. Furthermore, the reason for introducing $\eps^*$ is that in our proof of Lemma~\ref{lemma:noerror} later (upon which Theorem~\ref{thm:colourb} relies) we will need that a parameter $\lambda := \lambda(H_1, H_2, \eps)$ associated with $\eps$ is strictly positive, which occurs only if $\eps > 0$. 
Indeed, in the proof of Claim~\ref{claim:conclusion1} we will conclude that certain $k$-graphs $F$ constructed from a finite number of $k$-graphs in $\hat{\mathcal{B}}_{\eps}$ must have $m(F) > m_k(H_1, H_2) + \eps$. If $\hat{\mathcal{B}}(H_1,H_2,\eps)$ is finite, we may then simply apply Markov's inequality to conclude that $G = G^k_{n,p}$ contains none of these $k$-graphs a.a.s. 
Therefore, thinking of $\eps^*$ as a `dummy variable' to assist with the proof, we will instead prove Theorem~\ref{thm:colourb} with $\hat{\mathcal{B}}_{\eps^*}$ in place of $\hat{\mathcal{B}}$.

%if we take $\eps = \eps(H_1, H_2)$ sufficiently small, then $\hat{\mathcal{B}}_{\eps} = \hat{\mathcal{B}}_{0} = \hat{\mathcal{B}}(H_1, H_2)$. 
%This definition may seem contrived currently, but in our proof of Lemma~\ref{lemma:noerror} later (upon which Theorem~\ref{thm:colourb} relies) we will need that a parameter $\lambda := \lambda(H_1, H_2, \eps)$ associated with $\eps$ is strictly positive, which occurs only if $\eps > 0$. 
%Indeed, in the proof of Claim~\ref{claim:conclusion1} we will conclude that certain $k$-graphs $F$ constructed from a finite number of $k$-graphs in $\hat{\mathcal{B}}_{\eps}$ must have $m(F) > m_k(H_1, H_2) + \eps$. If $\hat{\mathcal{B}}(H_1,H_2,\eps)$ is finite, we may then simply apply Markov's inequality to conclude that $G = G^k_{n,p}$ contains none of these $k$-graphs a.a.s. 
%Therefore, thinking of $\eps^*$ as a `dummy variable' to assist with the proof, we will instead prove Theorem~\ref{thm:colourb} with $\hat{\mathcal{B}}_{\eps^*}$ in place of $\hat{\mathcal{B}}$.

\COMMENT{RH: Deleted comment about $m(G) \leq m_k(H_1, H_2) + \eps$ that was here, as mentioned before statement.}
%Moreover, we will show that for all $\eps \geq 0$ that $m(G) \leq m_k(H_1, H_2) + \eps$ for all $G \in \hat{\mathcal{B}}_{\eps}$, hence the second condition of Theorem~\ref{thm:colourb} will also hold by virtue of setting $\eps = 0$.
% Further, we will see that each $G \in \hat{\mathcal{B}}(H_1,H_2,\eps)$ satisfies $m(G) \leq m_k(H_1,H_2)+\eps$.
\COMMENT{RH: With the slightly more complicated parameters involved, I think it might be easier to put definition of family before statements? We should say that one property these graphs have is if $\eps<\delta$ and $G \in (H_1,H_2,\eps)$ then $G \in (H_1,H_2,\delta)$ (as well as $m_k(H_1,H_2)+\eps$). Then within statement put $\delta>0$ for second bullet point. Then can say after actually $\hat{\mathcal{B}}(H_1,H_2,\delta)=\hat{\mathcal{B}}(H_1,H_2,0)$?
Or other way around, we say $\hat{\mathcal{B}}(H_1,H_2,0)$ within the statement, but actually we prove the second bullet point using $\hat{\mathcal{B}}(H_1,H_2,\delta)$? 
Maybe as is, is fine. We don't need to point out the subtlety of $\hat{\mathcal{B}}(H_1,H_2,\delta)=\hat{\mathcal{B}}(H_1,H_2,0)$
until when within the proof.../it's not important for the sketch I don't think. JH: Yeah, I only just wrote this and maybe its a little out of place. The discrepancy between needing $\eps > 0$ for Theorem~\ref{thm:bfinite} and $\eps = 0$ for Theorem~\ref{thm:colourb} is instructive though. There's some context there immediately for why we want $\hat{\mathcal{B}}$ to be finite and why $\eps = 0$ can be assumed later. But also, I think the last sentence is quite important. If we just used $\eps > 0$ throughout the paper, I don't think we actually conclude $m(G) \leq m_k(H_1, H_2) + 0$, but rather there would still be some $\eps$ on the end messing us up (see the paragraph starting `The outputs $F$ of \textsc{Grow-$\hat{\mathcal{B}}_{\eps}$} have several properties' on -- currently -- page 20 to see what I'm going on about). I've put a definition of $\hat{\mathcal{B}}$ as this was needed.}

%We define $\hat{\mathcal{B}}(H_1, H_2,\eps)$ using one of two algorithms, \textsc{Grow-$\hat{\mathcal{B}}_{\eps}$} and \textsc{Grow-$\hat{\mathcal{B}}_{\eps}$-Alt}, the former handling the case when $m_k(H_1) > m_k(H_2)$ and the latter handling the case when $m_k(H_1) = m_k(H_2)$. 
%We present these algorithms and describe their outputs in detail in Section~\ref{sec:bhatdef} and Appendix~\ref{app:mh1mh2equal}, respectively. 
We define $\hat{\mathcal{B}}(H_1, H_2,\eps)$ using one of two algorithms, \textsc{Grow-$\hat{\mathcal{B}}_{\eps}$} and \textsc{Grow-$\hat{\mathcal{B}}_{\eps}$-Alt}, the former handling the case when $m_k(H_1) > m_k(H_2)$ and the latter handling the case when $m_k(H_1) = m_k(H_2)$. 
We present algorithm \textsc{Grow-$\hat{\mathcal{B}}_{\eps}$} and describe its output in detail in Section~\ref{sec:bhatdef}. For a full description of algorithm \textsc{Grow-$\hat{\mathcal{B}}_{\eps}$-Alt} and its output please see \cite[Appendix~\ref{app:mh1mh2equal}]{bhh_kk_arxiv}.  
\COMMENT{RH: need comment on why $m_k(H_1)=m_k(H_2)$ case in appendix.}
%For $k=2$, the set of graphs $\hat{\mathcal{B}}(H_1, H_2,\eps)$ is a refinement of the set of graphs $\hat{\mathcal{A}}(H_1, H_2, \eps)$, 
%- for $\eps := \eps(H_1, H_2) > 0$ a constant - 
%which was presented in \cite{h}.

\begin{define}\label{def:bhatdef}
For all $\eps \geq 0$ and all $k$-graphs $H_1,H_2$ satisfying the conditions in Theorem~\ref{thm:colourb},
we define $\hat{\mathcal{B}}(H_1, H_2,\eps)$ to be the set of graphs $G$ such that 
\begin{itemize}
\item If $m_k(H_1) > m_k(H_2)$, then $G$ is an output of algorithm\COMMENT{JH: Maybe better to say `procedure' or `construction procedure' here (and elsewhere).} \textsc{Grow-$\hat{\mathcal{B}}_{\eps}$}$(H_1, H_2,\eps)$, and
\item If $m_k(H_1) = m_k(H_2)$, then $G$ is an output of algorithm \textsc{Grow-$\hat{\mathcal{B}}_{\eps}$-Alt}$(H_1, H_2,\eps)$.
\end{itemize}
\end{define}

Theorem~\ref{thm:colourb} is a refinement of the work of the third author, who proved a similar result for the $k=2$ case with the family $\hat{\mathcal{A}}(H_1, H_2, \eps)$ in place of $\hat{\mathcal{B}}(H_1,H_2,\eps)$  (see Theorem~1.9 in~\cite{h}). For the interested reader, the definition of $\hat{\mathcal{A}}(H_1, H_2, \eps)$ and why in this paper we use $\hat{\mathcal{B}}(H_1, H_2, \eps)$ instead of $\hat{\mathcal{A}}(H_1, H_2, \eps)$, is discussed in \cite[Appendix~\ref{app:hydeconj}]{bhh_kk_arxiv}.

Much of the proof of Theorem~\ref{thm:colourb} goes through in the same way as in the proof of Theorem~1.9 from~\cite{h}; as such we defer many of the places where it is almost exactly the same to the appendices.
%that is, replacing $2$-graphs by $k$-graphs, and replacing $\hat{\mathcal{A}}(H_1, H_2, \eps)$ by $\hat{\mathcal{B}}(H_1,H_2,\eps)$ makes no difference to the content of the proof. 
\COMMENT{RH: want to be very careful here, what I'm really saying here is replace 2's by $k$'s, replace graphs by hypergraphs, replace $\hat{\mathcal{A}}$ by $\hat{\mathcal{B}}$ in lots of places and we are done: maybe we can even be explicit about this? And say it really is an exercise of making these replacements...(at least, for all of the proofs moved to the appendix: if in any way it is more complicated we keep it in the main body of text!) JH: We need to be explicit that there are a few places (and point them out?) where the proof changes. I think that saying this `replace 2's by $k$'s' etc might even be too much. We could just say the proofs are almost identical and so are relegated to the appendices. The less we say about $\hat{\mathcal{A}}$ in the main body the better, I think. Like, there's some stuff about \textsc{Eligible-Edge} getting defined a little differently which is so subtle so as to be possibly something to mention once in a `what are the changes' section and never mentioned again.} 
% For the $m_k(H_1)=m_k(H_2)$ case, we defer the entire proofs of both Theorem~\ref{thm:colourb} and Theorem~\ref{thm:bfinite} to the appendix, since it is an exercise of carefully adapting the proof of the $m_k(H_1)>m_k(H_2)$ case, in exactly the same spirit as done in~\cite{h}. \COMMENT{RH: again need to be careful how we say this, not sure what said at the moment is quite right. JH: Yes, needs rewording, but I think what's said is the right thing to say.}

\subsection{Proof overviews of Theorems~\ref{thm:colourb} and~\ref{thm:bfinite}}
%We now give a brief overview of the proof of Theorem~\ref{thm:colourb}.
\COMMENT{RH: I've written both of these without referring to other works... I guess its important to point out that your work and cliques vs cliques did something very similar. Perhaps we should recycle some pieces of your proof sketches and include them. However we want to be brief and get to point about what is going on, and I think what I've written here does that? Think it might be very helpful to refer to technical names (and precise later lemmas) in here, without giving definitions of anything too technical.}
\COMMENT{RH: I think when referring to stuff, we only want to refer to the $m_k(H_1)>m_k(H_2)$ case, as we don't want there to be multiple pointers definitions/lemmas/etc in the appendix! (And I don't think we need a separate proof sketch for the appendix either.) JH: Yes, completely agree.}
For this section, we assume $m_k(H_1) > m_k(H_2)$. 
The ideas for the case $m_k(H_1)=m_k(H_2)$ are almost identical. We also introduce the following notion synonymous with not having the Ramsey property for two $k$-graphs and two colours, which will always be red and blue in what follows.
\begin{define}
\textnormal{Let $H_1$ and $H_2$ be non-empty $k$-graphs. We say that a $k$-graph $G$ has a \emph{valid edge-colouring for $H_1$ and $H_2$} if there exists a red/blue colouring of the edges of $G$ that does not produce a red copy of $H_1$ or a blue copy of $H_2$.}
\end{define} 

To prove Theorem~\ref{thm:colourb},
we will design an algorithm \textsc{Asym-Edge-Col-$\hat{\mathcal{B}}_{\eps^*}$}, (for any $\eps^*$ as per Definition~\ref{def:eps*} above),
which, given an input of $G^k_{n,p}$,
a.a.s. outputs a valid edge-colouring for $H_1$ and $H_2$.
In particular, it will tell us explicitly how to red/blue-colour all edges,
up to using the known existence 
of a valid edge-colouring for 
members of $\hat{\mathcal{B}}_{\eps^*}$,
\footnote{Since $\hat{\mathcal{B}}(H_1,H_2,\eps^*)$ is finite and all graphs in it have valid edge-colourings, one could of course find all $k$-graphs in $\hat{\mathcal{B}}(H_1,H_2,\eps^*)$ and  explicit red/blue-colourings for each by brute force, leading to an explicit colouring for $G^{k}_{n,p}$.}
 and will get stuck if it comes across a substructure it does not know how to colour (see Lemma~\ref{lemma:errorvalid}), which a.a.s. does not happen (see Lemma~\ref{lemma:noerror}).
%the certain
%\COMMENT{RH: but not all! In particular those arising from case (ii) and (iii) below! JH: But cases (ii) and (iii) explicitly produce graphs which are not in $\hat{\mathcal{B}}$ by maximality. So maybe here say something like `isomorphic to $k$- graphs constructed using some finite number of $k$-graphs in $\hat{\mathcal{B}}(H_1, H_2)$'. RH: But (ii) and (iii) are constructed using finite number of (albeit intersecting) copies of these graphs. Decided not to be explicit yet, the better explanation comes shortly after anyway...}
%subypergraphs of $G^{k}_{n,p}$ which are isomorphic to members of $\hat{\mathcal{B}}(H_1,H_2)$.
Roughly, the argument goes as follows.
Suppose that an edge of $G^{k}_{n,p}$ does not lie in any copies of $H_1$;
then clearly we can colour it red and forget about it. 
A similar observation holds for edges not in any copies of $H_2$.
Building on this (by using stronger observations than just the above), 
we can colour everything up to subhypergraphs of $G^k_{n,p}$ with a certain structure.
We show that a.a.s. everything left is an edge-disjoint union of $k$-graphs from $\hat{\mathcal{B}}_{\eps^*}$, such that each copy of $H_1$ or $H_2$ lies entirely inside a $k$-graph from $\hat{\mathcal{B}}_{\eps^*}$
(see Definitions~\ref{def:avgraph} and~\ref{def:h1h2sparse}),
and so being able to colour $k$-graphs from $\hat{\mathcal{B}}_{\eps^*}$ allows us to deal with these copies of $H_1$ and $H_2$ (see Lemma~\ref{lemma:avcolour}) and so complete a valid edge-colouring of $G^k_{n,p}$.

The proof of Lemma~\ref{lemma:noerror} involves looking more carefully at the other possible structures which could remain. These are of the following form
(and are the structures which, by design, the algorithm \textsc{Asym-Edge-Col-$\hat{\mathcal{B}}_{\eps^*}$} gets stuck on): 
%We will show that each such subhypergraph has one of the following forms\COMMENT{JH: I think I disagree here. The idea is that if \textsc{Asym-Edge-Col} doesn't work then we show that one of these subhypergraphs must exist, and they don't exist in $G^k_{n,p}$ a.a.s. so \textsc{Asym-Edge-Col} works a.a.s.}
\begin{itemize}
\item[(i)] an edge-disjoint union of $k$-graphs from $\hat{\mathcal{B}}_{\eps^*}$, such that there exists a copy of either $H_1$ or $H_2$ which has edges in at least two of these $k$-graphs; (See Definition~\ref{def:special}.)
\item[(ii)] a pair of $k$-graphs from $\hat{\mathcal{B}}_{\eps^*}$ intersecting at one or more edges; (See Definition~\ref{def:special}.)
\item[(iii)] the output of another algorithm called \textsc{Grow}. (See Definition~\ref{def:growoutputs}.)
\end{itemize}
%\COMMENT{RH: yes the grammar here is terrible, I can't quite work out how best to put it! Also are the descriptions accurate enough? JH: The description of (i): `at most one'? Isn't it `exactly one'? Certainly each edge is in exactly one $k$-graph from $\hat{\mathcal{B}}$.}
%Being able to colour $k$-graphs from $\hat{\mathcal{B}}$ allows us to deal with (i).

We show that each $k$-graph occurring in (i) or (ii) does not occur in $G^k_{n,p}$ a.a.s. (see Claim~\ref{claim:conclusion1});
finiteness of $\hat{\mathcal{B}}_{\eps^*}$ then allows us to deal with these cases, since then a union bound can be used to say all such possibilities do not appear a.a.s. 
For (iii), we show that the whole family of possible outputs of~\textsc{Grow} do not appear as subhypergraphs of $G^k_{n,p}$ a.a.s. (see Lemma~\ref{claim:conclusion2}).
\COMMENT{RH: There is something slightly confusing about seeing all of these theorem labels here, I'm not quite happy with it. Not sure what would help... Maybe we should leave Theorem references out and put them in the organisation section, which will follow this section? JH: I think the opposite here, maybe because I think proof sketches are more useful in retrospect than initially reading them (unless you're a reader already versed in the subject). So its nice to have results here so if a reader gets lost later they can come back to this as a roadmap and work out where they are.}

%\COMMENT{RH: Could put names of (ii)--(iv) here already (without defining them)! This motivates splitting up later result... Actually it also motivates splitting up GROW(G) into two? (First part the separate 2 things to look for, second part the actual grow bit). Can also say see Lemmma BLAH for this last sentence here as well. JH: Yes, I think including the actual names of (ii) \& (iii) and (iv) is good. And yes, direct to which results these parts relate to. Splitting up Grow(G) is the right idea, though this will take a little thought.}
\COMMENT{RH: Looking at this, is there a way of seeing easily that it's not a big jump to say replace $\hat{\mathcal{A}}$ by $\hat{\mathcal{B}}$, that could be explained here?? Would be great if so, as it would help motivate putting things in an appendix. JH: I think the reader won't know what `replacing $\hat{\mathcal{A}}$ by $\hat{\mathcal{B}}$' really entails. Again, I think $\hat{\mathcal{A}}$ should be also entirely talked about in the appendices.}
\COMMENT{RH: No proof overview of thing for (iv) given: but might be worth mentioning this bit is really the bit which is `Look in~\cite{h}, copy,done'. Also as of yet, no reference to 2 grow algorithms doing essentially same thing, might not be possible to say here/not fit here anyway. JH: Without stating them and seeing firsthand just how similar the algorithms are, I don't think we should compare them here. They are really different with their outputs, even if their while loops are almost identical, at least in the $k$-graphs they attach. `Look in~\cite{h}, copy,done' is sort-of correct, but again we do want to say that there are a few places we have to alter when using $\hat{\mathcal{B}}$ instead of $\hat{\mathcal{A}}$.}

\COMMENT{RH: We could also give the following brief overview of finiteness too - it has to be somewhat loose as nothing is defined, but I think it may still be helpful. JH: I've done a little rewording, but I think overall its good. Still needs work but I think talking about edges being easy to colour works out quite well for the reader. We could just call these `open' edges here, since that's what they are later. We don't have to be explicit about our definition until later.}
Note that, as we saw in the colouring algorithm outlined above, certain edges of $G^{k}_{n,p}$ are `easy to colour'. Roughly speaking, for any $\eps \geq 0$, algorithm \textsc{Grow-$\hat{\mathcal{B}}_{\eps}$} outputs $k$-graphs that have no easy edges to colour. 
Starting from a copy of $H_1$, the algorithm \textsc{Grow-$\hat{\mathcal{B}}_{\eps}$} builds up $k$-graphs entirely by attaching in each step a finite number of copies of $H_1$ and $H_2$ onto an appropriately chosen edge. 
Very roughly speaking, in some step of this process we may increase the number of easy to colour edges by attaching a so-called \emph{fully open flower} (defined in Section \ref{sec:bhatdef}).
We can show that in each step $i$ of the algorithm, one of two possibilities occurs:
\begin{itemize}
\item[(i)] The resulting $k$-graph becomes denser\COMMENT{Roughly speaking.}, and so less likely to appear in $G^{k}_{n,p}$, and the number of fully open flowers does not decrease by too much;
\item[(ii)] The resulting $k$-graph has the same density as in the previous step.
\end{itemize}
Crucially, the number of fully open flowers strictly increases after two such iterations of type (ii) are performed in a row. Since by the end we want no edges to be easy to colour, we require that there are no fully open flowers on output. 
We also want the $k$-graph outputted to be not too dense (otherwise it will have $m$-density larger than $m_k(H_1,H_2)+\eps$).
Combining these requirements, it is easy to see that we can only have a finite number of iterations of each kind, leading to a finite number of outputs of \textsc{Grow-$\hat{\mathcal{B}}_{\eps}$} overall.
\COMMENT{RH: Not linked anything in this sketch to particular lemmas/definitions, but I do not think we need to, since it is a brief/more imprecise sketch CB: I have linked a couple of things that felt useful when I read through.}
\COMMENT{RH: again there might be some way of linking this to what is really going on/comparing the two grow algorithms...not sure. JH: I think this is sufficient. I'm also not sure if much more needs to be said for the \textsc{Grow} algorithms than that \textsc{Grow-$\hat{\mathcal{B}}_{\eps}$} produces all the possible graphs that \textsc{Grow} can get stuck on as well as every graph that is a building block in one of the special cases. These are the bad graphs (we could even say `B' for `bad').}

\section{Setup to prove Theorem~\ref{thm:colourb}}\label{sec:asymedgecolb}

%Let $\eps := \eps(H_1, H_2) > 0$ be a constant which we will choose explictly later.
Assume the conditions in Theorem~\ref{thm:colourb} hold, that is, $(H_1, H_2)$ is a heart, $m_k(H_1) \geq m_k(H_2) > 1$ and the two conditions listed in the bullet points hold. 
All of the definitions which follow in this section through to Section~\ref{sec:grow} can be defined for any $\eps := \eps(H_1, H_2) \geq 0$, and so we state them in that way. 
For each result, we make it clear whether any $\eps \geq 0$ works, or if only $\eps^*$ as given by Definition~\ref{def:eps*} works.
\COMMENT{RH: I want to be explicit about this; hopefully this is clear? JH: Seems clear to me.}

% We will prove Theorem~\ref{thm:colourb} in essentially the same way Theorem~$1.9$ is proved in \cite{h}. % Recall that $\gamma = \gamma(H_1,H2,\eps) := \frac{1}{m_k(H_1, H_2)} - \frac{1}{m_k(H_1, H_2) + \eps}.$ Crucially, since $\eps > 0$ we have that $\gamma > 0$. This will allow us to appropriately use Markov's inequality later.
% However, since we are working with $\hat{\mathcal{B}}_{\eps}$ instead of $\hat{\mathcal{A}}(H_1, H_2, \eps)$, we will need to make changes in a few places.\COMMENT{JH: I guess this is where we should point to a section that outlines the differences. We should also make sure to note these changes when they appear, if only in footnotes. I also imagine we'll have said several times before this that there are differences.} We will point out in the following section the two proofs which contain such changes for the interested reader.\footnote{Let us also note that the definition of the function \textsc{Eligible-Edge} has slightly changed from \cite{h}.} All other proofs we will defer to the appendices as they are almost identical to their counterparts in \cite{h}. See Section~\ref{sec:guidetoapp} for a guide to what is contained in the appendices.   
Let us now begin our setup for proving Theorem~\ref{thm:colourb}. Firstly, we need the following definitions relating to copies of $k$-graphs from $\hat{\mathcal{B}}_{\eps}$ that appear in some $k$-graph $G$. (These are analogous to definitions relating to $\hat{\mathcal{A}}$ in \cite{h}.)  

For any $k$-graph $G$, we define \[\mathcal{S}^B_G := \{S \subseteq G: S \cong B \in \hat{\mathcal{B}}_{\eps} \land \nexists S' \supset S \ \mbox{with} \ S' \subseteq G, \ S' \cong B' \in \hat{\mathcal{B}}_{\eps}\},\] that is, the family $\mathcal{S}^B_G$ contains all maximal subhypergraphs of $G$ isomorphic to a member of $\hat{\mathcal{B}}_{\eps}$. 
Hence, there are no two members $S_1, S_2 \in \mathcal{S}^B_G$ such that $S_1 \subsetneq S_2$.
For any edge $e \in E(G)$, let \[\mathcal{S}^B_G(e) := \{S \in \mathcal{S}^B_G : e \in E(S)\}.\]

\begin{define}\label{def:avgraph}
    We call $G$ a \emph{$\hat{\mathcal{B}}_{\eps}$-graph} if, for all $e \in E(G)$, we have \[|\mathcal{S}^B_G(e)| = 1.\] 
\end{define}
In particular, a $\hat{\mathcal{B}}_{\eps}$-graph is an edge-disjoint union of $k$-graphs from $\hat{\mathcal{B}}_{\eps}$.
For a $\hat{\mathcal{B}}_{\eps}$-graph $G$, 
\COMMENT{RH: Previously looked like definition was only for $\hat{\mathcal{B}}_{\eps}$ graphs only. But it is for all $G$ technically. JH: I think I disagree. It makes the sentence not true as $G$ could have copies of $H_1$ or $H_2$ that have edges that are not in any $k$-graph from $\mathcal{S}^B_G$. Moreover, it is precisely this observation that it used to choose the edge at the start of Grow that is not contained in any $k$-graph from $\mathcal{S}^B_G$. The sparsity only matters if you're a $\hat{\mathcal{B}}_{\eps}$-graph, as not being sparse is covered by special, as is the case when you're not a $\hat{\mathcal{B}}_{\eps}$-graph and the reason for that is there's some edge in two different $k$-graphs from $\mathcal{S}^B_G$.}
a copy of $H_1$ or $H_2$ can be a subhypergraph of $G$ in two particular ways: 
either it is a subhypergraph of some $S \in \mathcal{S}^B_G$ or it is a subhypergraph with edges in at least two different $k$-graphs from $\mathcal{S}^B_G$. 
The former we call \emph{trivial} copies of $H_1$ and $H_2$, and we define \[\mathcal{T}^B_G := \left\{T \subseteq G : (T \cong H_1 \vee T \cong H_2) \land \left|\bigcup_{\substack{e \in E(T)}}\mathcal{S}^B_G(e)\right| \geq 2\right\}\] to be the family of all \emph{non-trivial} copies of $H_1$ and $H_2$ in $G$.
\begin{define}\label{def:h1h2sparse}
    We say that a $k$-graph $G$ is \emph{$(H_1, H_2)$-sparse} if $\mathcal{T}^B_G = \emptyset$.
\end{define} 

Our next lemma asserts that $(H_1, H_2)$-sparse $\hat{\mathcal{B}}_{\eps}$-graphs are easily colourable, provided that there exists a valid edge-colouring for $H_1$ and $H_2$ for all graphs $G \in \hat{\mathcal{B}}_{\eps}$.

\begin{lem}\label{lemma:avcolour}
Let $\eps\geq 0$. 
Suppose that for all $B \in \hat{\mathcal{B}}_{\eps}$, we have $B \not \to (H_1,H_2)$.
Then there exists a procedure \textsc{B-Colour($G,\eps$)} that returns for any $(H_1, H_2)$-sparse $\hat{\mathcal{B}}_{\eps}$-graph $G$ a valid edge-colouring for $H_1$ and $H_2$.
\end{lem}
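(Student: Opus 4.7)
The plan is to exploit the two structural hypotheses on $G$ in sequence: the $\hat{\mathcal{B}}_{\eps}$-graph property partitions $E(G)$ into pieces, each of which can be coloured in isolation thanks to the assumption $B \not\to (H_1,H_2)$, and the $(H_1,H_2)$-sparse property guarantees that the locally valid colourings assemble into a globally valid colouring.

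First I would describe the procedure \textsc{B-Colour}$(G,\eps)$ explicitly. By Definition~\ref{def:avgraph}, for every $e \in E(G)$ we have $|\mathcal{S}^B_G(e)| = 1$, so the family $\mathcal{S}^B_G = \{S_1,\ldots,S_t\}$ partitions $E(G)$ into edge-disjoint pieces (the $S_j$ may share vertices but not edges). For each $j \in [t]$, the subhypergraph $S_j$ is isomorphic to some $B_j \in \hat{\mathcal{B}}_{\eps}$. By our assumption $B_j \not\to (H_1,H_2)$, so there exists a red/blue edge-colouring $c_j$ of $S_j$ containing no red copy of $H_1$ and no blue copy of $H_2$. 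Since each $B \in \hat{\mathcal{B}}_{\eps}$ has bounded size (indeed, $\hat{\mathcal{B}}_{\eps}$ is finite by Theorem~\ref{thm:bfinite}, so we can precompute and store one valid colouring per isomorphism type in $\hat{\mathcal{B}}_{\eps}$ and transport it via the isomorphism $S_j \cong B_j$; alternatively one can brute-force search the $2^{e(B_j)}$ possibilities for a single $S_j$). Define the output colouring $c$ of $E(G)$ by setting $c(e) := c_j(e)$ whenever $e \in E(S_j)$; this is well defined precisely because $|\mathcal{S}^B_G(e)| = 1$ for every $e$.

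Next I would verify that $c$ is a valid edge-colouring for $H_1$ and $H_2$. Suppose, for contradiction, that there is a monochromatic copy $T \subseteq G$ with $T \cong H_1$ (red) or $T \cong H_2$ (blue). Since $G$ is $(H_1,H_2)$-sparse, $\mathcal{T}^B_G = \emptyset$, so $T$ is not a non-trivial copy; equivalently, all edges of $T$ lie in a common member $S_j \in \mathcal{S}^B_G$. But then $T \subseteq S_j$ is monochromatic under $c_j$, contradicting the validity of $c_j$ for $H_1$ and $H_2$. Hence no such $T$ exists and $c$ is a valid colouring.

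There is no serious obstacle here: the construction is essentially "colour each maximal $\hat{\mathcal{B}}_{\eps}$-piece separately and paste together". The only subtlety is to justify that the colourings of the individual $S_j$ can be chosen algorithmically, which follows because each $S_j$ has size bounded by $\max_{B \in \hat{\mathcal{B}}_{\eps}} v(B) < \infty$ (Theorem~\ref{thm:bfinite}) and so exhaustive search over colourings of $S_j$ terminates in bounded time; equivalently, one can fix, once and for all, a valid colouring for each of the finitely many isomorphism classes in $\hat{\mathcal{B}}_{\eps}$ and apply the relevant one to each $S_j$. The correctness argument then reduces to the two-line check in the previous paragraph.
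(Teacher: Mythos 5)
Your proposal is correct and follows essentially the same route as the paper: colour each member of $\mathcal{S}^B_G$ locally using the hypothesis $B\not\to(H_1,H_2)$, and use the well-definedness guaranteed by $|\mathcal{S}^B_G(e)|=1$ together with $\mathcal{T}^B_G=\emptyset$ to conclude that no monochromatic $H_1$ or $H_2$ survives. One small remark: your algorithmic aside invokes Theorem~\ref{thm:bfinite} (finiteness of $\hat{\mathcal{B}}_{\eps}$) for bounded-time search, but the paper deliberately notes immediately after this lemma that finiteness is \emph{not} used here --- only the existence of a valid colouring for each $B\in\hat{\mathcal{B}}_{\eps}$ is needed --- so you should not present the lemma as depending on Theorem~\ref{thm:bfinite}.
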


\begin{proof} 
There exists a valid edge-colouring for $H_1$ and $H_2$ of every $B \in \hat{\mathcal{B}}_{\eps}$. Using this we define a procedure \textsc{B-Colour}$(G,\eps)$ as follows:
Assign a valid edge-colouring for $H_1$ and $H_2$ to every subhypergraph $S \in \mathcal{S}^B_G$ locally, that is, regardless of the structure of $G$. 
Since $G$ is an $(H_1, H_2)$-sparse $\hat{\mathcal{B}}_{\eps}$-graph, we assign a colour to each edge of $G$ without producing a red copy of $H_1$ or a blue copy of $H_2$, and the resulting colouring is a valid edge-colouring for $H_1$ and $H_2$ of $G$.\end{proof}

In particular, the conditions of Lemma~\ref{lemma:avcolour} are met when $\eps:=\eps^*$, by the second condition in Theorem~\ref{thm:colourb} and the definition of $\eps^*$.
Note that we did not use that $\hat{\mathcal{B}}_{\eps^*}$ is finite  (as given by the other condition in Theorem~\ref{thm:colourb}) in our proof of Lemma~\ref{lemma:avcolour}, only that every $k$-graph in $\hat{\mathcal{B}}_{\eps^*}$ has a valid edge-colouring for $H_1$ and $H_2$. 
The finiteness of $\hat{\mathcal{B}}_{\eps^*}$ will be essential later for the proofs of Claims~\ref{claim:conclusion1} and \ref{claim:conclusionsym}. 

We now need a hypergraph version of algorithm \textsc{Asym-Edge-Col} from \cite{h}. (See Figure \ref{asymedgecolfig} for the algorithm and Section \ref{subsec_alg_det1} for a detailed description.)
The algorithm \textsc{Asym-Edge-Col-$\hat{\mathcal{B}}_{\eps}$} below works in essentially the same way as \textsc{Asym-Edge-Col} does in \cite{h}.
For this algorithm, we need the following definition.
For any $k$-graph $G$ we define 
the families \[\mathcal{R}_G := \{R \subseteq G : R \cong H_1\} \ \mbox{and}\ \mathcal{L}_G := \{L \subseteq G : L \cong H_2\}\] of all copies of $H_1$ and $H_2$ in $G$, respectively. 

\begin{figure} 
\begin{algorithmic}[1]
\Procedure{\sc Asym-Edge-Col-$\hat{\mathcal{B}}_{\eps}$}{$G = (V,E), \eps$}
    \State $s\gets$ {\sc empty-stack}()
    \State $E'\gets E$
    \State $\mathcal{L}\gets \mathcal{L}_G$
    \While{$G' = (V, E')$ is not an $(H_1, H_2)$-sparse  $\hat{\mathcal{B}}_{\eps}$-graph}\label{line:while1start}
            \If{$\exists e \in E' \ \mbox{s.t.} \ \nexists (L,R) \in \mathcal{L} \times \mathcal{R}_{G'}: E(L) \cap E(R) = \{e\}$}\label{line:edgeremoval}
                \ForAll {$L \in \mathcal{L}: e \in E(L)$}\label{line:Lall1}
                    \State $s$.{\sc push}($L$)\label{line:Lpush1}
                \State $\mathcal{L}$.{\sc remove}($L$) %\label{line:Lremove1}
                \EndFor %\label{line:Lend1}
            \State $s$.{\sc push}($e$)%\label{line:edgepush}
            \State $E'$.{\sc remove}($e$)\label{line:edgeremove}
            \Else
                \If{$\exists L \in \mathcal{L}: \ \exists e \in E(L) \ \mbox{s.t.} \ \nexists R \in \mathcal{R}_{G'} \ \mbox{with} \ E(L) \cap E(R) = \{e\} $}\label{line:L*check}
                    \State $s$.{\sc push}($L$)\label{line:Lpush2}
                    \State $\mathcal{L}$.{\sc remove}($L$)%\label{line:Lend2}
                \Else
                    \State {\bf error} {``stuck"}\label{line:error}
                \EndIf
            \EndIf
    \EndWhile\label{line:while1end}
    \State {\sc B-colour}($G' = (V,E'),\eps$)\label{line:acolourcall}
    \While{$s \neq \emptyset$}\label{line:while2start}
        \If{$s$.{\sc top}() is an edge}%\label{line:edgeif}
            \State $e \gets s$.{\sc pop}()\label{line:edgepop}
            \State $E'$.{\sc add}($e$)%\label{line:edgeadd}
            \State $e$.{\sc set-colour}(blue)%\label{line:edgeblue}
        \Else
            \State $L \gets s$.{\sc pop}()\label{line:Lpop}
            \If{$L$ is entirely blue}\label{line:colourswapstart}
                \State $f \gets$ any $e \in E(L)$ s.t. $\nexists R \in \mathcal{R}_{G'}: E(L) \cap E(R) = \{e\}$\label{line:getf}
                \State $f$.{\sc set-colour}(red)\label{line:colourswapend}
                
            \EndIf
        \EndIf
    \EndWhile \label{line:while2end}
\EndProcedure
\end{algorithmic}
\caption{The implementation of algorithm \textsc{Asym-Edge-Col-$\hat{\mathcal{B}}_{\eps}$}.}\label{asymedgecolfig}
\end{figure}

Let us describe the algorithm \textsc{Asym-Edge-Col-$\hat{\mathcal{B}}_{\eps}$} which if successful outputs a valid edge-colouring of $G$. 
In \textsc{Asym-Edge-Col-$\hat{\mathcal{B}}_{\eps}$}, edges are removed from and then inserted back into a working copy $G'~=~(V, E')$ of $G$.
Each edge is removed in the first while-loop only when it is not the unique intersection of the edge sets of some copy of $H_1$ and some copy of $H_2$ in $G'$ (line~\ref{line:edgeremoval}). 
It is then `pushed\footnote{For clarity, by `push' we mean that the object is placed on the top of the stack $s$.}' onto a stack $s$ such that when we reinsert edges (in reverse order) in the second while-loop we can colour them to construct a valid edge-colouring for $H_1$ and $H_2$ of $G$;
if at any point $G'$ is an $(H_1, H_2)$-sparse $\hat{\mathcal{B}}_{\eps}$-graph, then we combine the colouring of these edges with a valid edge-colouring for $H_1$ and $H_2$ of $G'$ provided by \textsc{B-Colour$(G,\eps)$}. 
We also keep track of the copies of $H_2$ in $G$ and push abstract representations of some of them (or all of them if $G'$ is never an $(H_1, H_2)$-sparse $\hat{\mathcal{B}}_{\eps}$-graph during \textsc{Asym-Edge-Col-$\hat{\mathcal{B}}_{\eps}$}) onto $s$ (lines~\ref{line:Lpush1} and \ref{line:Lpush2}) to be used later in the colour swapping stage of the second while-loop (lines~\ref{line:colourswapstart}-\ref{line:colourswapend}). 

The following two lemmas combined prove Theorem~\ref{thm:colourb}.
The first confirms that our colouring process in the second while-loop of \textsc{Asym-Edge-Col-$\hat{\mathcal{B}}_{\eps^*}$}
produces a valid edge-colouring for $H_1$ and $H_2$ of $G$. The proof can be found in \cite[Appendix~\ref{sec:omittedproofs}]{bhh_kk_arxiv}. The second assures us that it enters this loop, thereby succeeding, a.a.s.

\begin{lem}\label{lemma:errorvalid}
Algorithm \textsc{Asym-Edge-Col-$\hat{\mathcal{B}}_{\eps^*}$} either terminates with an error in line~\ref{line:error} or finds a valid edge-colouring for $H_1$ and $H_2$ of $G$.
\end{lem}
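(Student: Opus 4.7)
The plan is to show that if \textsc{Asym-Edge-Col-$\hat{\mathcal{B}}_{\eps^*}$} does not terminate with an error, then the colouring it outputs is valid, by maintaining two invariants through the second while-loop. If no error occurs, the first while-loop exits with $G' = (V,E')$ being an $(H_1,H_2)$-sparse $\hat{\mathcal{B}}_{\eps^*}$-graph, so Lemma~\ref{lemma:avcolour} provides a valid edge-colouring for $H_1$ and $H_2$ of $G'$ at line~\ref{line:acolourcall}. I would then prove, by induction on the number of iterations of the second while-loop, that after each iteration the current working graph $G'_c$ satisfies: (I1) no red copy of $H_1$ lies in $G'_c$, and (I2) every copy of $H_2$ in $G'_c$ that is currently entirely blue appears on the stack $s$. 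Both invariants hold at the base since \textsc{B-Colour} produces a valid colouring of $G'$; and once the loop terminates with $s = \emptyset$ and $G'_c = G$, (I2) forces there to be no blue copy of $H_2$, which with (I1) yields the desired valid edge-colouring.

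When we pop an edge $e$ and colour it blue, (I1) is immediate. For (I2), any newly blue copy of $H_2$ must contain $e$, so it suffices to show that every copy $L \cong H_2$ in $G$ with $e \in E(L)$ was already pushed strictly below $e$ on $s$. This follows because when $e$ was pushed in the first while-loop, the preceding for-loop on line~\ref{line:Lall1} pushed every copy of $H_2$ then in $\mathcal{L}$ that contained $e$; any other copy $L'$ of $H_2$ in $G$ containing $e$ was already absent from $\mathcal{L}$ at that moment, having been removed (and hence pushed) earlier by either line~\ref{line:Lpush1} (when a different edge of $E(L')$ was being removed) or line~\ref{line:Lpush2}. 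So every blue copy of $H_2$ through $e$ is still on $s$ after $e$ is popped.

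The delicate case is popping a copy $L$ that is entirely blue, where I need to check both that the edge $f$ in line~\ref{line:getf} exists and that recolouring it red preserves the invariants. The key structural observation is that the current graph $G'_c$ at the pop time of $L$ coincides with $G'_0$, the state of $G'$ at the instant $L$ was pushed: every edge removed in the first while-loop after $L$ was pushed sits above $L$ on $s$ and has already been re-added in the second while-loop, while every edge removed before $L$ was pushed sits below $L$ and has not yet been re-added. Consequently $\mathcal{R}_{G'_c} = \mathcal{R}_{G'_0}$, and the witness edge $e_L \in E(L)$ that made $L$ eligible for pushing (either the edge $e$ being removed in line~\ref{line:edgeremove} when $L$ was pushed by line~\ref{line:Lpush1}, or the edge $e$ witnessing the condition of line~\ref{line:L*check} when $L$ was pushed by line~\ref{line:Lpush2}) still satisfies that no $R \in \mathcal{R}_{G'_c}$ has $E(L) \cap E(R) = \{e_L\}$; so $f := e_L$ is a valid choice. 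Recolouring $f$ from blue to red cannot create a red copy of $H_1$, because any $R \in \mathcal{R}_{G'_c}$ with $f \in E(R)$ must share a further edge $g \in E(L) \setminus \{f\}$ with $L$, and $g$ remains blue. Invariant (I2) persists because the only blue copies of $H_2$ that lose their blueness contain $f$ (in particular, $L$ itself), while copies avoiding $f$ stay blue and stay on $s$.

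The main obstacle I anticipate is the clean verification that $G'_c = G'_0$ at pop time of $L$; without this identity, $\mathcal{R}_{G'_c}$ could strictly enlarge $\mathcal{R}_{G'_0}$ and the witness $e_L$ might cease to work. The stack-bookkeeping argument behind this identity, together with the parallel bookkeeping that every $H_2$-copy through a popped edge has already been deposited strictly below it on $s$, forms the essential combinatorial core of the proof.
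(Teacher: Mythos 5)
Your proposal is correct and takes essentially the same approach as the paper's own proof. The paper argues the same three facts — that the working graph $G'$ at the moment a copy $L$ is popped coincides (by the LIFO structure of the stack) with $G'$ at the moment $L$ was pushed, that $L$ was pushed precisely because $L \notin \mathcal{L}^*_{G'}$ so the witness edge $f$ in line~\ref{line:getf} exists, and that recolouring $f$ red cannot complete a red $H_1$ because any $R$ through $f$ shares a second (still blue) edge with $L$ — your explicit invariants $(I1)$ and $(I2)$ are just a more formal packaging of this same bookkeeping.
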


%\begin{proof}
%See Appendix~\ref{sec:omittedproofs}.
%\end{proof}

%To prove Theorem~\ref{thm:colourb}, it now suffices to prove the following lemma.

\begin{lem}\label{lemma:noerror}
There exists a constant $b = b(H_1,H_2) > 0$ such that for $p \leq bn^{-1/m_k(H_1,H_2)}$ algorithm \textsc{Asym-Edge-Col-$\hat{\mathcal{B}}_{\eps^*}$} terminates on $G^k_{n,p}$ without error a.a.s.
\end{lem}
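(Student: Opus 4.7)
The plan is to analyse exactly which configurations of $G = G^k_{n,p}$ can cause the algorithm to reach the {\bf error} line (line~\ref{line:error}), and then to show that a.a.s.\ none of these configurations occur when $p\leq bn^{-1/m_k(H_1,H_2)}$ for a suitably small constant $b>0$. First I would trace the while-loop: at each iteration the algorithm fails to progress only if the current working graph $G' = (V,E')$ is not an $(H_1,H_2)$-sparse $\hat{\mathcal{B}}_{\eps^*}$-graph, yet (a) every edge $e \in E'$ lies in some $(L,R)\in\mathcal{L}\times\mathcal{R}_{G'}$ with $E(L)\cap E(R)=\{e\}$, and (b) every $L\in\mathcal{L}$ has the property that each of its edges lies in such a unique-intersection pair. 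Inspecting the definitions of a $\hat{\mathcal{B}}_{\eps^*}$-graph (Definition~\ref{def:avgraph}) and $(H_1,H_2)$-sparse (Definition~\ref{def:h1h2sparse}), the failure of $G'$ to be an $(H_1,H_2)$-sparse $\hat{\mathcal{B}}_{\eps^*}$-graph together with (a) and (b) must force $G'$ to contain, as a subhypergraph, one of the three bad configurations highlighted in the proof overview: (i) a non-trivial copy of $H_1$ or $H_2$ spread across two or more members of $\mathcal{S}^B_{G'}$; (ii) two overlapping members of $\hat{\mathcal{B}}_{\eps^*}$ sharing an edge; or (iii) an output of the auxiliary procedure \textsc{Grow} (Definition~\ref{def:growoutputs}), which by design captures precisely the local obstructions to removing a further edge or copy of $H_2$.

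Next I would treat cases (i) and (ii) together. Because $\hat{\mathcal{B}}_{\eps^*}$ is finite by Theorem~\ref{thm:bfinite}, there are only finitely many isomorphism classes of hypergraphs arising from gluing two members of $\hat{\mathcal{B}}_{\eps^*}$ along a shared edge, or from overlaying a copy of $H_1$ or $H_2$ across two or more members. The content of Claim~\ref{claim:conclusion1} (which I would prove by carefully bounding $e(F)/v(F)$ using Fact~\ref{fact:ineq} and the heart conditions on $(H_1,H_2)$, exploiting the slack $\eps^*>0$ afforded by Definition~\ref{def:eps*} and Claim~\ref{claim:mgbound}) is that every such configuration $F$ satisfies $m(F) > m_k(H_1,H_2) + \lambda$ for some $\lambda = \lambda(H_1,H_2,\eps^*) > 0$. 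The expected number of copies of a fixed such $F$ in $G^k_{n,p}$ with $p = bn^{-1/m_k(H_1,H_2)}$ is of order $n^{v(F) - e(F)/m_k(H_1,H_2)}$, and strict inequality $m(F) > m_k(H_1,H_2)$ gives a polynomial $n^{-\delta}$ factor. A union bound over the finite list of configurations, together with Markov's inequality, shows that a.a.s.\ none of them embeds into $G$.

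For case (iii) the situation is more delicate because the family of \textsc{Grow} outputs is infinite. Here I would invoke Claim~\ref{claim:conclusion2}, which asserts directly that a.a.s.\ $G^k_{n,p}$ contains no member of this family as a subhypergraph. The strategy for Claim~\ref{claim:conclusion2} is the one previewed in the overview: each iteration of \textsc{Grow-$\hat{\mathcal{B}}_{\eps}$} that preserves the density also strictly increases the number of fully open flowers after at most two consecutive such steps, while dense-increasing iterations already push the $m$-density above $m_k(H_1,H_2)$; thus a \textsc{Grow} output on which the algorithm gets stuck (which by definition has no fully open flowers) cannot have performed too many density-preserving steps, and must therefore be strictly denser than $m_k(H_1,H_2)$ by some $\lambda>0$ depending on how many such steps are allowed before the dense-increasing one kicks in. Since the expected number of copies is again bounded by a geometric series in $n^{-\lambda}$ summed over the (infinite but controllably sparse) list of \textsc{Grow} outputs, the sum tends to $0$ and the first-moment bound closes the argument.

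The main obstacle is the density analysis hidden inside Claims~\ref{claim:conclusion1} and~\ref{claim:conclusion2}: one must track exactly how the edge-density of a grown or glued configuration evolves under each elementary operation, and verify that the strict-balancedness hypotheses built into the definition of a heart (together with Proposition~\ref{prop:m2h1h2}) translate into a uniform positive gap $\lambda$ above $m_k(H_1,H_2)$. Once this density gap is established, the probabilistic step is a routine first-moment computation, and combining the three cases with a union bound yields the desired constant $b>0$.
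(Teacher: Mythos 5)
Your high‐level plan matches the paper's: identify the three obstruction types~(i)--(iii), show that types~(i) and~(ii) give a finite family of ``too dense'' configurations via $\hat{\mathcal{B}}_{\eps^*}$ being finite, and handle the infinite family of \textsc{Grow} outputs in~(iii) by a first‐moment argument. The reduction to these three cases, the use of Markov's inequality and a union bound, and the reliance on the heart conditions are all sound and align with the paper.

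There is, however, a genuine gap in your treatment of case~(iii). You claim that a \textsc{Grow} output ``by definition has no fully open flowers'' and must therefore have $m$-density strictly above $m_k(H_1,H_2)$. Neither assertion is correct for \textsc{Grow}. You are importing the analysis that governs \textsc{Grow-$\hat{\mathcal{B}}_{\eps}$} (which returns only when no open edges remain, and whose finiteness proof does rest on the fully‐open‐flower count), but \textsc{Grow} terminates for a different reason: either because $i \geq \ln(n)$ or because $\lambda(\tilde{F}) \leq -\gamma$ for some subhypergraph $\tilde{F}$. A hypergraph $F \in \mathcal{F}_1$ returned at the iteration cap $i \geq \ln(n)$ can consist entirely of non‐degenerate iterations, in which case $\lambda(F) = \lambda(F_0) > 0$ and $F$ is \emph{not} dense in the sense you need. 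The paper handles $\mathcal{F}_1$ by an entirely different mechanism: since each iteration adds at least one edge (Claim~\ref{claim:growv}), we have $e(F) \geq \ln(n)$, and choosing the constant $b < 1$ small enough makes $b^{e(F)} \leq b^{\ln n}$ beat the $n^{\lambda(F_0)}$ factor. For $\mathcal{F}_2$ (density‐triggered termination) the density gap does the work as you describe. Additionally, the ``sparse list'' intuition you invoke needs to be made precise: the count of isomorphism classes $f(t,d)$ is polylogarithmic in $n$ precisely because Claims~\ref{claim:non-degen} and~\ref{claim:degenfull} (non‐degenerate steps preserve $\lambda$, degenerate steps decrease it by a constant $\kappa$) force the number $d$ of degenerate iterations to be bounded by a constant $q_1$ (Claim~\ref{claim:q_1}), while each non‐degenerate iteration is essentially unique by \textsc{Eligible-Edge}. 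Without this degenerate/non‐degenerate dichotomy and the resulting bound on $f(t,d)$, the sum over the infinite family does not obviously converge. So the density‐only argument you propose is insufficient; you need both the edge‐count argument for $\mathcal{F}_1$ and the combinatorial counting from Claims~\ref{claim:q_1} and~\ref{claim:polylog}.

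A smaller point: for Claim~\ref{claim:conclusion1} the paper does not bound $e(F)/v(F)$ directly via Fact~\ref{fact:ineq}. Instead it observes that each $F \in \mathcal{F}_0$ could be built by \textsc{Grow-$\hat{\mathcal{B}}_{\eps^*}$} but is not an output (by maximality of the sets in $\mathcal{S}^B_{G'}$, $F$ is not isomorphic to any member of $\hat{\mathcal{B}}_{\eps^*}$), so the termination condition of \textsc{Grow-$\hat{\mathcal{B}}_{\eps^*}$} forces some subhypergraph $\tilde{F} \subseteq F$ to have $\lambda(\tilde{F}) \leq -\gamma^*$. Your direct density bound would need a separate argument to establish, and it is the existence of the dense \emph{subhypergraph} (not density of $F$ itself) that is used with Markov.
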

%We split our proof of Lemma~\ref{lemma:noerror} into two cases: (1) when $m_k(H_1) > m_k(H_2)$ and (2) when $m_k(H_1) = m_k(H_2)$.
%Note that 
%(for all $\eps \geq 0$) 
%$\hat{\mathcal{B}}_{\eps}$ will be defined differently for Cases 1 and 2 here (see the definition of \textsc{Grow-$\mathcal{B}_{\eps}$} in Section~\ref{sec:bhatdef} and the definition of \textsc{Grow-$\mathcal{B}_{\eps}$-Alt} in Appendix~\ref{app:mh1mh2equal}).
The proof of Lemma~\ref{lemma:noerror} in the case when $m_k(H_1) > m_k(H_2)$ is given in Section~\ref{sec:grow}. The proof in the case that $m_k(H_1) = m_k(H_2)$ is given in \cite[Appendix~\ref{app:mh1mh2equal}]{bhh_kk_arxiv}. The proof of the latter is similar to the former, but requires a slightly different definition $\hat{\mathcal{B}}_{\eps}$, resulting from a slightly different algorithm in this case. % of the case  to two cases: (1) when $m_k(H_1) > m_k(H_2)$ and (2) when $m_k(H_1) = m_k(H_2)$.
%Note that 
%(for all $\eps \geq 0$) 
%$\hat{\mathcal{B}}_{\eps}$ will be defined differently for Cases 1 and 2 here (see the definition of \textsc{Grow-$\mathcal{B}_{\eps}$} in Section~\ref{sec:bhatdef} and the definition of \textsc{Grow-$\mathcal{B}_{\eps}$-Alt} in Appendix~\ref{app:mh1mh2equal}).
%Since the changes from ~\cite{h} in proving Case~$2$ of Lemma~\ref{lemma:noerror} are essentially the same as those in Case~$1$, we defer the entire proof of Case~$2$ to Appendix~\ref{app:mh1mh2equal}.
%\COMMENT{JH: Maybe something about how Kuperwasser and Samotij have already done this, but I'll wait to write something so that it lines up with the introduction. Possibly it'll be covered there and so nothing needs to be said here. RH: Yes in organisation so removed.}

\subsection{Detailed description of \textsc{Asym-Edge-Col-$\hat{\mathcal{B}}_{\eps}$}} \label{subsec_alg_det1}
We finish this section by describing \textsc{Asym-Edge-Col-$\hat{\mathcal{B}}_{\eps}$} in detail. In line~\ref{line:while1start}, we check whether $G'$ is an $(H_1, H_2)$-sparse $\hat{\mathcal{B}}_{\eps}$-graph.
If not, then we enter the first while-loop. In line~\ref{line:edgeremoval}, we choose an edge $e$ which is not the unique intersection of the edge sets of some copy of $H_1$ and some copy of $H_2$ in $G'$ (if such an edge $e$ exists).
Then in lines~\ref{line:Lall1}-\ref{line:edgeremove} we push each copy of $H_2$ in $G'$ that contains $e$ onto $s$ before pushing $e$ onto $s$ as well.
Now, if every edge $e \in E'$ is the unique intersection of the edge sets of some copy of $H_1$ and some copy of $H_2$ in $G'$,
then we push onto $s$ a copy $L$ of $H_2$ in $G'$ which contains an edge that is not the unique intersection of the edge set of $L$ and the edge set of some copy of $H_1$ in $G'$.
If no such copies $L$ of $H_2$ exist, then the algorithm has an error in line~\ref{line:error}.
If \textsc{Asym-Edge-Col-$\hat{\mathcal{B}}_{\eps}$} does not run into an error, then we enter the second while-loop with input $G'$.
Observe that $G'$ is either the empty $k$-graph on vertex set $V$ or some $(H_1, H_2)$-sparse $\hat{\mathcal{B}}_{\eps}$-graph.
Now for $\eps:=\eps^*$,  Lemma~\ref{lemma:avcolour} asserts $G'$ has a valid edge-colouring for $H_1$ and $H_2$.
The second while-loop successively removes edges (line~\ref{line:edgepop}) and copies of $L$ (line~\ref{line:Lpop}) from $s$ in the reverse order in which they were added onto $s$, with the edges added back into $E'$.
Each time an edge is added back it is coloured blue, and if a monochromatic blue copy $L$ of $H_2$ is constructed, we make one of the edges of $L$ red (lines~\ref{line:colourswapstart}-\ref{line:colourswapend}).
This colouring process is then repeated until we have a valid edge-colouring for $H_1$ and $H_2$ of $G$.

\section{Definition of \texorpdfstring{$\hat{\mathcal{B}}_{\eps}$: $m_k(H_1) > m_k(H_2)$}{B family, strict inequality} case}\label{sec:bhatdef}

Let $\eps \geq 0$. We now introduce \textsc{Grow-$\hat{\mathcal{B}}_{\eps}$}, whose outputs form the set $\mathcal{B}(H_1, H_2, \eps)$.
Recall that \textsc{Grow-$\hat{\mathcal{B}}_{\eps}$} is used when $m_k(H_1) > m_k(H_2)$. In order to state \textsc{Grow-$\hat{\mathcal{B}}_{\eps}$} we require several definitions. For any $k$-graph $F$, we define \[\lambda(F) := v(F) - \frac{e(F)}{m_k(H_1,H_2)}.\] This definition is motivated by the fact that the expected number of copies of $F$ in $G^k_{n,p}$ with $p = bn^{-1/m_k(H_1,H_2)}$ has order of magnitude \[n^{v(F)}p^{e(F)} = b^{e(F)}n^{\lambda(F)}.\] We define the following sets of $k$-graphs in a $k$-graph $G$. 
\COMMENT{RH: moved $\mathcal{L}_G$ and $\mathcal{R}_G$ up. Wondering if it is necessary to highlight them in a definition? I find it slightly hard to read in sense that it is not italics so not sure when it ends!}
%\label{def:c*}
For any $k$-graph $G$ we define 
%the families \[\mathcal{R}_G := \{R \subseteq G : R \cong H_1\} \ \mbox{and}\ \mathcal{L}_G := \{L \subseteq G : L \cong H_2\}\] of all copies of $H_1$ and $H_2$ in $G$, respectively. 
%Furthermore, we define 
\[\mathcal{L}^*_G := \{L \in \mathcal{L}_G : \forall e \in E(L) \ \exists R \in \mathcal{R}_G \ \mbox{s.t.} \ E(L) \cap E(R) = \{e\}\} \subseteq \mathcal{L}_G,\] the family of copies $L$ of $H_2$ in $G$ with the property that for every edge $e$ in $L$ there exists a copy $R$ of $H_1$ such that the edge sets of $L$ and $R$ intersect uniquely at $e$; 
\[\mathcal{C} = \mathcal{C}(H_1, H_2) := \{G = (V,E) : \forall e \in E\ \exists(L,R) \in \mathcal{L}_G \times \mathcal{R}_G\ \mbox{s.t.}\ E(L) \cap E(R) = \{e\}\},\] the family of $k$-graphs $G$ where every edge is the unique edge-intersection of some copy $L$ of $H_2$ and some copy $R$ of $H_1$;
and 
\[\mathcal{C}^* = \mathcal{C}^*(H_1, H_2) := \{G = (V,E): \forall e \in E \ \exists L \in \mathcal{L}^*_{G} \ \mbox{s.t.} \ e \in E(L)\},\] the family of $k$-graphs $G$ where every edge is contained in a copy $L$ of $H_2$ which has at each edge $e$ some copy $R$ of $H_1$ attached such that $E(L) \cap E(R) = \{e\}$.

Define \begin{equation}\label{eq:gamma}\gamma = \gamma(H_1,H_2,\eps) := \frac{1}{m_k(H_1, H_2)} - \frac{1}{m_k(H_1, H_2) + \eps} \geq 0.\end{equation}

Let $F$ be a $k$-graph. We define an edge $f$ to be \emph{open in $F$} if there exists no $L \in \mathcal{L}^*_{G}$ such that $f \in E(L)$.
Now attach a copy $L$ of $H_2$ to $F$ at some edge $e \in E(F)$ such that $E(L) \cap E(F) = e$ and $V(L) \cap V(F) = \cup_{v\in e} v$.
Further, attach a copy $L_{e'}$ of $H_1$ at each edge $e' \in E(R)\setminus e$ such that $E(R_{e'}) \cap (E(R)\cup E(H)) = e'$, $V(L_{e'}) \cap (V(R)\cup V(F)) =  \cup_{v\in e'} v$ and for each pair of edges $e', e'' \in  E(R)\setminus e$ we have $V(R_{e'}) \cap V(R_{e''}) = e' \cap e''$.
That is, the copies of $H_1$ attached are in some sense `pairwise-disjoint'.
Call the graph constructed in this process $F'$.
We call the $k$-graph induced by the collection of edges in $J := \cup_{e'\in R_{e'}} E(R_{e'}) \cup e$ a \emph{fully open flower} and the edges belonging to $\cup_{e'\in R_{e'}} E(R_{e'})\setminus e'$ the \emph{petal edges of $J$}.
If the construction of $F'$ from $F$ is different at all from the above, that is, if $J$ intersects $F$ in more than one edge or more than $k$ vertices, then $J$ is not a fully open flower.
Later, in Section~\ref{sec:bfinite}, we will see that the petal edges are in fact all open, hence the name `fully open flower'.
For now, all we need observe is that the construction process of $F'$ yields that one can tell by inspection whether an edge belongs
to the petal edges of some fully open flower or not without knowing anything about how $F$ and $F'$ were constructed.
This may seem enigmatic currently, but this observation will allow us to define a function \textsc{Eligible-Edge}, 
used in algorithm \textsc{Grow-$\hat{\mathcal{B}}_{\eps}$} below and 
\textsc{Grow}$=$\textsc{Grow($G', \eps,n$)}
(see its definition in Section~\ref{sec:grow}), 
in a way that does not increase the number of outputs of either algorithm.\COMMENT{JH: Not sure how happy I am with this paragraph.}\COMMENT{JH: I wonder how fast \textsc{Eligible-Edge} runs? In the previous paper this didn't matter as the version of \textsc{Eligible-Edge} there was only used in the theoretical algorithm \textsc{Grow}, but here we want to say something like `plug this algorithm into a computer and use it'. I think this \textsc{Eligible-Edge}, to guarantee there are no open edges that do not belong to fully open flowers, would need to preform a lot of checks. It would still be polynomial. I guess some of this can be alleviated by keeping track of open edges and closed edges, and what type of open edges there are available in each iteration. But still, seems like a lot. Perhaps a section in the concluding remarks discussing how one could implement \textsc{Grow-B} would be valuable, rather than just claiming such a polynomial time algorithm exists (which is what I was thinking previously). Maybe we could even work out the specific order? Of course, we've got limited time, and this could well be introduced after a version is put on arXiv.}

\begin{figure}
\begin{algorithmic}[1]
\Procedure{Grow-$\hat{\mathcal{B}}_{\eps}$}{$H_1, H_2, \eps$}
    \State $F_0 \gets H_1$\label{line:growanyrV}
    \State $i \gets 0$\label{line:growi0V}
    \While {$\forall \tilde{F} \subseteq F_i: \lambda(\tilde{F}) > -\gamma$}\label{line:growwhileconditionV}
        \If {$\exists e \in E(F_i)$ s.t. $e$ is open in $F_i$}\label{line:growifopeneV}
            \State \textbf{either} choose $k$ vertices $v_1, \ldots, v_k \in V(F_i)$\label{line:vwopen}
                \State \ \ \ \ \ $F_{i+1} \gets$ \textsc{Attach-$H_1(F_i, v_1 \ldots, v_k)$}\label{line:attachh1open}
                \State \ \ \ \ \ $i \gets i + 1$\label{line:h1openito1+1}
            \State \textbf{or}   $e \gets \textsc{Eligible-Edge}(F_i)$\label{line:aveligible-edge}
                \State \ \ \ \ \ $F_{i+1} \gets$ \textsc{Close-$e$}($F_i, e$)\label{line:F_i+1getsextendV}
                \State \ \ \ \ \ $i \gets i + 1$ \label{line:eopenito1+1}
           \State \textbf{end either}%\label{line:firstendeither}
        \Else %\label{line:else}
             \State \Return{$F_i$}\label{line:returnfi}
             \State \textbf{either} choose $k$ vertices $v_1, \ldots, v_k \in V(F_i)$ \label{line:vwclosed}
                \State \ \ \ \ \ $F_{i+1} \gets$ \textsc{Attach-$H_1(F_i, v_1, \ldots, v_k)$}%\label{line:attachh1closed}
                \State \ \ \ \ \ $i \gets i + 1$\label{line:h1closeditoi+1}
             \State \textbf{or} $e \gets$ any edge of $F_i$ \label{line:growifclosedeV}
                \State \ \ \ \ \ $F_{i+1} \gets$ \textsc{Close-$e$-closededge}($F_i, e$)%\label{line:closedF_i+1getsextendV}
                \State \ \ \ \ \ $i \gets i + 1$\label{line:ecloseditoi+1}
            \State \textbf{end either}
        \EndIf
    \EndWhile
\EndProcedure
\end{algorithmic}\smallskip\smallskip

\begin{algorithmic}[1]
\Procedure{Close-$e$}{$F,e$}
    \State $L \gets$ a copy of $H_2$ containing $e$ such that for all $e' \in E(L)\setminus E(F)$ we have $|e' \cap V(F)| < k$ \label{line:copyofh2V}
    \State $F' \gets F \cup L$%\label{line:f'flV}
        \ForAll{$e' \in E(L)\setminus E(F)$}%\footnote{This is probably overkill. Indeed, we can achieve the same effect by only attaching copies of $H_1$ to the new edges created and then attaching copies of $H_1$ in lines~\ref{line:vwopen}-\ref{line:h1openito1+1} and/or lines~\ref{line:vwclosed}-\ref{line:h1closeditoi+1}.}
                \State $R_{e'} \gets$ a copy $R$ of $H_1$ such that $E(L) \cap E(R) = \{e'\}$ and $|V(R) \cap V(F)| \leq k-1$. \label{line:copyofh1V}
                \State $F' \gets F' \cup R_{e'}$%\label{line:f'f're'V}
        \EndFor
        \State \Return {$F'$}
\EndProcedure 
\end{algorithmic}\smallskip\smallskip

\begin{algorithmic}[1]
\Procedure{Close-$e$-closededge}{$F,e$}
    \State Perform procedure \textsc{Close-$e$}($F, e$), but ensure that at least one new edge is added.\label{line:closededgeclosee}
\EndProcedure 
\end{algorithmic}\smallskip\smallskip

\begin{algorithmic}[1]
\Procedure{Attach-$H_1$}{$F, v_1, \ldots, v_k$}
    \State $R \gets$ a copy of $H_1$ containing $v_1, \ldots, v_k$ such that $E(R) \nsubseteq E(F)$
    \State $F' \gets F \cup R$.
    \State \Return{$F'$}
\EndProcedure 
\end{algorithmic}
\caption{The implementation of algorithm \textsc{Grow-$\hat{\mathcal{B}}_{\eps}$}.}\label{fig:growb}
\end{figure}

Let us describe procedure \textsc{Grow-$\hat{\mathcal{B}}_{\eps}$}, given in Figure~\ref{fig:growb} (note that a detailed description is given in Section~\ref{sec:detaileddescriptiongrowb}).
Firstly, observe that \textsc{Grow-$\hat{\mathcal{B}}_{\eps}$} takes as input two $k$-graphs $H_1$ and $H_2$, with properties as described in Definition~\ref{def:bhatdef}, and a constant $\eps \geq 0$.
That is, there is no underlying $k$-graph determining at step $i$ what the possible structure of $F_i$ could be, as will be the case in \textsc{Grow} (in Section~\ref{sec:grow}) later. 
Thus whenever we have a choice in step $i$ of how to attach a $k$-graph we must consider every possible structure $F_{i+1}$ that could possibly result. 
%Utilising Claims~\ref{claim:non-degen} and \ref{claim:degenfull}, proved in the next section and Appendix~\ref{app:degenfull}, respectively, we will see that the majority of these choices result in making $F_{i+1}$ less likely to appear in $G^k_{n,p}$ than $F_{i}$.
Utilising Claims~\ref{claim:non-degen} and \ref{claim:degenfull},  %proved in the next section and Appendix~\ref{app:degenfull}, respectively, 
we will see that the majority of these choices result in making $F_{i+1}$ less likely to appear in $G^k_{n,p}$ than $F_{i}$.
This is the reason behind line~\ref{line:growwhileconditionV}:
While $\lambda(F) \geq 0$ for all subhypergraphs $F \subseteq F_i$, it is possible that $F_i$ appears in $G^k_{n,p}$ with positive probability, however once $\lambda(F) \leq -\gamma$ for some subhypergraph $F \subseteq F_i$,
we can show immediately using Markov's inequality that $F_i$ does not exist in $G^k_{n,p}$ a.a.s.

The outputs $F$ of \textsc{Grow-$\hat{\mathcal{B}}_{\eps}$} have several properties that will be useful to us. 
Firstly, all their edges are \emph{not} open; that is, $F \in \mathcal{C}^{*}(H_1 H_2)$. 
Secondly, we will see later that whatever the input $G$ is to algorithm \textsc{Grow} (see Section~\ref{sec:grow} and Section~\ref{sec:guidetoapp})\COMMENT{JH: I'm thinking that eventually we'll have a section that lays out what small changes we make in the Asym-Edge-Col, Grow, etc parts that are in the appendices.}, every $k$-graph $\hat{G}$ constructable by \textsc{Grow} 
 is constructable using \textsc{Grow-$\hat{\mathcal{B}}_{\eps}$}.
 %\footnote{Note that we will have $\eps > 0$ when using \textsc{Grow($G, \eps, n$)}.} 
Notably, we will see that \textsc{Grow} cannot construct any $k$-graph which is an output of \textsc{Grow-$\hat{\mathcal{B}}_{\eps}$}. This will be particularly useful in the proofs of Claims~\ref{claim:growv} and \ref{claim:conclusion1}.
\COMMENT{JH: These two results are the ones where the proofs have to change to accommodate the new Eligible-Edge definition.
The first is the result which says that \textsc{Grow} always adds an edge in each iteration, and the second is the beginning of the proof of the final 'whp $G^k_{n,p}$ doesn't contain any of the $k$-graphs that are constructed by \textsc{Grow}' result.}
In some sense, \textsc{Grow-$\hat{\mathcal{B}}_{\eps}$} outputs those $k$-graphs which the while-loop of \textsc{Grow} could get stuck on and which are the building blocks for those subhypergraphs of $G^k_{n,p}$ which one cannot trivially colour, using the while-loop of \textsc{Asym-Edge-Col-$\hat{\mathcal{B}}_{\eps}$} or otherwise.
\COMMENT{JH: Come back to this at some point to write properly something about why $\eps = 0$ can be `assumed'/`used' when thinking about \textsc{Asym-Edge-Col}, referring back to the discussion in Section~\ref{sec:overview}. Future JH: Nah, this is wrong. \textsc{Asym-Edge-Col} needs to feed Grow something that's using $\eps > 0$ so that $\gamma > 0$ and we can use this to apply Markov's inequality correctly e.g. in Claim~\ref{claim:conclusion1}.}
\COMMENT{JH: I'm not sure how happy I am referring so much to \textsc{Grow($G$)} here. But this is crucial to understanding why algorithm \textsc{Grow-$\hat{\mathcal{B}}$} looks like it does. Perhaps some of this could be moved down to where \textsc{Grow} is.}
Thirdly, we make the following claim.
\begin{claim}\label{claim:mgbound}
Let $F$ be an output of \textsc{Grow-$\hat{\mathcal{B}}_{\eps}$}. Then  $m(F) \leq m_k(H_1, H_2) + \eps$. 
\end{claim}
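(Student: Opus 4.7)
The plan is to directly exploit the loop-guard of \textsc{Grow-$\hat{\mathcal{B}}_{\eps}$}. Since the return statement at line~\ref{line:returnfi} sits inside the body of the while-loop governed by line~\ref{line:growwhileconditionV}, the loop condition necessarily still holds at the moment $F$ is output. Writing $M := m_k(H_1, H_2)$ and $F = F_i$, this means that
\[
\lambda(\tilde{F}) = v(\tilde{F}) - \frac{e(\tilde{F})}{M} > -\gamma
\]
for every subhypergraph $\tilde{F} \subseteq F$, where $\gamma = 1/M - 1/(M+\eps)$ as in \eqref{eq:gamma}.

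The next step is a short rearrangement. Using $M\gamma = 1 - M/(M+\eps) = \eps/(M+\eps)$, the inequality above becomes
\[
e(\tilde{F}) < M\,v(\tilde{F}) + \frac{\eps}{M+\eps}.
\]
Any $\tilde{F} \subseteq F$ containing an edge satisfies $v(\tilde{F}) \geq k \geq 2$, so dividing by $v(\tilde{F})$ yields
\[
d(\tilde{F}) < M + \frac{\eps}{(M+\eps)\,v(\tilde{F})} \leq M + \frac{\eps}{M+\eps} \leq M + \eps,
\]
where the last inequality uses $M + \eps \geq 1$, which follows from the standing assumption $m_k(H_2) > 1$ together with $M \geq m_k(H_2)$ (Proposition~\ref{prop:m2h1h2}). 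Subhypergraphs with no edges trivially have $d(\tilde{F}) = 0 \leq M + \eps$. Taking the maximum over $\tilde{F} \subseteq F$ gives $m(F) \leq M + \eps$, as required.

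I do not expect a genuine obstacle here: the claim is essentially a bookkeeping consequence of the design of \textsc{Grow-$\hat{\mathcal{B}}_{\eps}$}, where the parameter $\gamma$ was chosen precisely so that the while-loop condition encodes a density upper bound of $m_k(H_1,H_2)+\eps$ on $F$ and all of its subhypergraphs. The only mild subtlety is verifying that $M+\eps \geq 1$ so that the final chain of inequalities closes, and this is guaranteed by the hypotheses of Theorem~\ref{thm:colourb}.
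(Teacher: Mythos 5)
Your proof is correct and follows essentially the same route as the paper: both use the fact that the output $F$ satisfies the loop guard, i.e.\ $\lambda(\tilde{F}) > -\gamma$ for every subhypergraph $\tilde{F} \subseteq F$, and then rearrange this via the definition of $\gamma$ into the density bound $m(F) \leq m_k(H_1,H_2)+\eps$. The paper packages the algebra as a contradiction (assuming $d(F') > m_k(H_1,H_2)+\eps$ forces $e(F') < 1$, or $0>0$ when $\eps=0$), which sidesteps your extra (but valid, since $m_k(H_1,H_2) \geq m_k(H_2) > 1$) check that $m_k(H_1,H_2)+\eps \geq 1$; this is only a cosmetic difference.
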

\begin{proof}
Assume for a contradiction that there is some non-empty subgraph $F' \subseteq F$ such that $d(F') > m_k(H_1, H_2) + \eps$. Since $F$ was an output of \textsc{Grow-$\hat{\mathcal{B}}_{\eps}$}, we must have $\lambda(F')>-\gamma$. 
Combining these two facts yields
\begin{align*}
\frac{e(F')}{m_k(H_1,H_2)+\eps}-\frac{e(F')}{m_k(H_1,H_2)} & > v(F')-\frac{e(F')}{m_k(H_1,H_2)}  = \lambda(F') \\
& > -\gamma = \frac{1}{m_k(H_1, H_2) + \eps} - \frac{1}{m_k(H_1, H_2)},
\end{align*} 
which, if $\eps>0$, rearranges to say $e(F')<1$, a contradiction. If $\eps=0$, we obtain the contradiction $0>0$. 
%If $\eps=0$, then since $d(F') > m_k(H_1,H_2)$, we can find $\eta>0$ such that $d(F') > m_k(H_1,H_2)+\eta$. We then obtain $e(F')/(m_k(H_1,H_2)+\eta)-e(F')/m_k(H_1,H_2)>0$, which again yields a contradiction. 
%Using the definitions of $\lambda$ and $\gamma$, one can show with little effort that these two inequalities cannot hold simultaneously, producing a contradiction.\COMMENT{JH: Include the working?} Setting $\eps = 0$, and hence $\gamma = 0$, we have thus confirmed the second property needed for Theorem~\ref{thm:colourb} to hold when $m_k(H_1) > m_k(H_2)$.\COMMENT{JH: Maybe this should be a lemma or claim?}
\COMMENT{RH: Actually, we used two strict equalities here, but we only needed one... so actually does this mean we could claim $m(F)<m_k(H_1,H_2) + \eps$?? JH: Yes, but not for $\eps = 0$, as you'd be dividing by $0$. Hence I don't think saying $m(F)<m_k(H_1,H_2) + \eps$ in the claim would be useful, even if it is true for $\eps > 0$. RH: good point! Actually need to be slightly careful with $\eps=0$ case: added extra line.}
\end{proof}

\subsection{Detailed description of \textsc{Grow-$\hat{\mathcal{B}}_{\eps}$}}\label{sec:detaileddescriptiongrowb}
We finish the section by describing \textsc{Grow-$\hat{\mathcal{B}}_{\eps}$} in detail. In lines~\ref{line:growanyrV} and \ref{line:growi0V} we initialise the $k$-graph as a copy of $H_1$ and initialise $i$ as $0$.
We have $\lambda(H_1) = v_1 - e_1/m_k(H_1, H_2) = k - \frac{1}{m_k(H_2)} > 0$, since $H_1$ is (strictly) balanced with respect to $d_k(\cdot, H_2)$ and $m_k(H_2) > 1$, so we enter the while-loop on line~\ref{line:growwhileconditionV}. 
In the $i$th iteration of the while-loop, we first check in line~\ref{line:growifopeneV} whether there are any open edges in $F_i$.
If not, then every edge in $F_i$ is not open and we output this $k$-graph in line~\ref{line:returnfi};
if so, then in each iteration the algorithm makes a choice (arbitrarily) to do one of two operations. 
The first, in lines~\ref{line:vwopen}-\ref{line:h1openito1+1}, is to take $k$ vertices of $F_i$ (again, chosen arbitrarily) in line~\ref{line:vwopen} and perform \textsc{Attach-$H_1$} in line~\ref{line:attachh1open}, 
which takes a copy $R$ of $H_1$ and attaches it to $F_i$ such that $R$ contains $v_1, \ldots, v_k$ and also such that $F_{i+1}$ has at least one new edge compared to $F_i$.
Note that this copy $R$ of $H_1$ could possibly intersect anywhere in $F_{i}$. 
In line~\ref{line:h1openito1+1}, we iterate $i$ for the next iteration of the while-loop.

The second operation, in lines~\ref{line:aveligible-edge}-\ref{line:eopenito1+1}, 
invokes a procedure \textsc{Eligible-Edge} which functions as follows: for a $k$-graph $F$, 
\textsc{Eligible-Edge($F$)} returns a specific open edge $e \in E(F)$ that does not belong to a fully open flower or, if no such edge exists, just returns a specific edge $e \in E(F)$ which is open. \textsc{Eligible-Edge} selects this edge $e$ to be \textit{unique up to isomorphism of $F_i$}, that is, for any two isomorphic $k$-graphs $F$ and $F'$, there exists an isomorphism $\phi$ with $\phi(F) = F'$ such that \[\phi(\textsc{Eligible-Edge}(F)) = \textsc{Eligible-Edge}(F').\] \textsc{Eligible-Edge} can make sure it always takes the same specific edge of $F$ regardless of how it was constructed using a theoretical look-up table of all possible $k$-graphs,
that is, \textsc{Eligible-Edge} itself does not contribute in anyway to the number of outputs of \textsc{Grow-$\hat{\mathcal{B}}_{\eps}$}.
As observed earlier, whether an edge in $F$ is in a fully open flower or not is verifiable by inspection, hence \textsc{Eligible-Edge} is well-defined.\COMMENT{JH: Need I say more here?}\COMMENT{A reader familiar with \cite{h} will observe that this definition of \textsc{Eligible-Edge} differs from that the procedure of the same name in \cite{h}. Indeed, this is a subtle change we need to introduce, and will discuss this more in Section~\ref{sec:guidetoapp}. JH: This is very specific and I'm not sure any reader would care, so maybe this footnote should just be removed?}
In line~\ref{line:F_i+1getsextendV}, we invoke procedure \textsc{Close-$e$($F_i, e$)}, using the edge we just selected with \textsc{Eligible-Edge}.
Procedure \textsc{Close-$e$($F_i, e$)} attaches firstly a copy $L$ of $H_2$ at this edge, possibly intersecting over vertices of $F_i$,
but making sure that no edge of $L$ that is new in $F_{i+1}$ contains $k$ vertices of $F_i$. Then for each edge $e'$ in $L$ that was not already in $F$,
\textsc{Close-$e$($F_i, e$)} attaches a copy $R_{e'}$ of $H_1$ so that the edge-intersection of $R_{e'}$ with $L$ is precisely the edge $e'$ and the vertex-intersection of $R_{e'}$ and $F$ is at most $k-1$.
Note that $e$ was open when \textsc{Eligible-Edge} chose it, and hence $F_{i+1}$ must have at least one more edge than $F_i$.
In line~\ref{line:eopenito1+1}, we iterate $i$ in preparation for the next iteration of the while-loop. 
Overall, observe that procedure \textsc{Close-$e$} acts in such a way that no copy of $H_1$ it appends could have been appended in lines~\ref{line:vwopen}-\ref{line:h1openito1+1} (or lines~\ref{line:vwclosed}-\ref{line:h1closeditoi+1}).
This strong distinguishing between \textsc{Attach-$H_1$} and \textsc{Close-$e$} is primarily designed to replicate how the while loop of \textsc{Grow} functions later (see Section~\ref{sec:grow}) and will be essential in arguing that \textsc{Grow} always has an open edge at the start of each iteration of its while-loop.

If we enter line~\ref{line:returnfi}, then afterwards we again do one of two operations. The first, in lines~\ref{line:vwclosed}-\ref{line:h1closeditoi+1}, is identical to that in lines~\ref{line:vwopen}-\ref{line:h1openito1+1}. The second, in lines~\ref{line:growifclosedeV}-\ref{line:ecloseditoi+1}, is almost identical to that in lines~\ref{line:aveligible-edge}-\ref{line:eopenito1+1}, except we make sure that at least one new edge is added. We need to stipulate this since the edge $e$ we select in line~\ref{line:growifclosedeV} is closed.

\section{Proof of Lemma~\ref{lemma:noerror}: \texorpdfstring{$m_k(H_1) > m_k(H_2)$}{strict inequality} case }\label{sec:grow}

We will prove Case~1 of Lemma~\ref{lemma:noerror} using two auxiliary algorithms \textsc{Special}$=$\textsc{Special($G', \eps,n$)} and \textsc{Grow}$=$\textsc{Grow($G', \eps,n$)}, which are both defined for any $\eps \geq 0$ (see Figures~\ref{specialfig} and \ref{growvfig})
and each take input $G'$ to be the $k$-graph which \textsc{Asym-Edge-Col-$\hat{\mathcal{B}}_{\eps}$} got stuck on.
Note that if \textsc{Asym-Edge-Col-$\hat{\mathcal{B}}_{\eps}$} gets stuck, then it did not ever use \textsc{B-Colour($G',\eps$)}, so the processes in these algorithms are still well-defined for all $\eps \geq 0$, even though \textsc{B-Colour($G',\eps$)} may not exist for the given value of $\eps$. 
\COMMENT{RH: We really want a comment here about how both algorithms want to be defined for all $\eps \geq 0$, somehow circumnavigating that this description is `subject' to the existence of \textsc{B-Colour}($G,\eps$), which we only know exists for $\eps^*$, yet this doesn't affect validity of descriptions....: RH: response to myself - if asym-edge-col gets stuck then it never reaches B-Colour, so we are okay! Let me know if this new paragraph makes sense. JH: The paragraph makes sense, but part of me is unsure if its the full point. Is the idea overall to only use $\eps* > 0$ when its really needed, to illustrate which parts are dependent on $\eps*>0$ and which parts can just use $\eps\geq 0$. I guess this is somewhat brought across in the paragraph.}

If \textsc{Asym-Edge-Col-$\hat{\mathcal{B}}_{\eps^*}$} has an error, then either \textsc{Special($G',\eps^*,n$)} outputs one of a constant number of subhypergraphs of $G'$ which are too dense to appear in $G^k_{n,p}$ a.a.s.~(with $p$ as in Lemma~\ref{lemma:noerror}), or, if \textsc{Special($G',\eps^*,n$)} does not output any subhypergraph, we use \textsc{Grow($G',\eps^*,n$)} to compute a subhypergraph $F \subseteq G'$ which is either too large in size or too dense to appear in $G^k_{n,p}$ a.a.s.
Indeed, letting $\mathcal{F}_0$ be the class of all $k$-graphs which can possibly be returned by algorithm \textsc{Special($G',\eps^*,n$)} and $\tilde{\mathcal{F}}$ be the class of all $k$-graphs that can possibly be returned by algorithm \textsc{Grow($G',\eps^*,n$)}, we will show that the expected number of copies of $k$-graphs from $\mathcal{F} = \mathcal{F}_0 \cup \tilde{\mathcal{F}}$ contained in $G^k_{n,p}$ is $o(1)$, which with Markov's inequality implies that $G^k_{n,p}$ a.a.s.~contains no $k$-graph from $\mathcal{F}$. 
This in turn implies Case 1 of Lemma~\ref{lemma:noerror} by contradiction.

\subsection{Definitions of \textsc{Grow} and \text{Special} procedures}

\COMMENT{RH: commented out the full text of repeated definitions for now. JH: Yeah, I think what's written here now is sufficient.}
To state -- and discuss --- \textsc{Special} and \textsc{Grow}, recall the definitions of 
$\lambda(G)$, 
$\mathcal{R}_G$, 
$\mathcal{L}_G$,
$\mathcal{L}^*_G$,
$\mathcal{C}$,
$\mathcal{C}^*$,
$\mathcal{T}^B_G$,
$\gamma(H_1, H_2, \eps)$ and an edge being \emph{open in $G$} from previous sections. 

Algorithm \textsc{Special} has as input the $k$-graph $G' \subseteq G$ that \textsc{Asym-Edge-Col-$\hat{\mathcal{B}}_{\eps}$} got stuck on, as well as the constant $\eps \geq 0$ that was an input into \textsc{Asym-Edge-Col-$\hat{\mathcal{B}}_{\eps}$}
and $n$ (the $n$ from $G^k_{n,p}$). 
%Recall that since $\eps > 0$, here we will have $\gamma > 0$.
\COMMENT{RH: commented out saying $\eps>0$ here; I think it is really only necessary that $\eps>0$ for Markov application (to get the necessary space to apply it) right at the end.}
Let us consider the properties of $G'$ when \textsc{Asym-Edge-Col-$\hat{\mathcal{B}}_{\eps}$} got stuck. Because the condition in line~\ref{line:edgeremoval} of \textsc{Asym-Edge-Col-$\hat{\mathcal{B}}_{\eps}$} fails, $G'$ is in the family $\mathcal{C}$. In particular, every edge of $G'$ is contained in a copy $L \in \mathcal{L}_{G'}$ of $H_2$, and, because the condition in line~\ref{line:L*check} fails, we can assume in addition that $L$ belongs to $\mathcal{L}^*_{G'}$.
Hence, $G'$ is actually in the family $\mathcal{C}^*$.
Lastly, $G'$ is not an $(H_1, H_2)$-sparse $\hat{\mathcal{B}}_{\eps}$-graph because \textsc{Asym-Edge-Col-$\hat{\mathcal{B}}_{\eps}$} ended with an error.

We now outline algorithm \textsc{Special} which checks whether either of two special cases occur. 
The first case corresponds to when $G'$ is a $\hat{\mathcal{B}}_{\eps}$-graph, which is not $(H_1, H_2)$-sparse as it is a $k$-graph that \textsc{Asym-Edge-Col-$\hat{\mathcal{B}}_{\eps}$} got stuck on.
The second case happens if there are two $k$-graphs in $\mathcal{S}^B_{G'}(e)$ that overlap in (at least) the edge $e$. 
The outputs of these two special cases are $k$-graphs which the while-loop of the succeeding algorithm \textsc{Grow} could get stuck on. 

%For clarity, let us explain why \textsc{Special} works. The first case occurs if and only if $G'$ is an $\hat{\mathcal{B}}_{\eps}$-graph. 
%By assumption, $G'$ is not $(H_1, H_2)$-sparse, hence the family $\mathcal{T}^B_{G'}$ is not empty. 
%Hence the assignment in line~\ref{line:growt} is successful. 
%Clearly, the assignment in line~\ref{line:growanytwo} is always successful due to the if-condition in line~\ref{line:growspecial2}.

\begin{figure}[!ht]
\begin{algorithmic}[1]
\Procedure{Special}{$G' = (V,E), \eps, n$}
    \If {$\forall e \in E: |\mathcal{S}^B_{G'}(e)| = 1$}\label{line:growspecial1}
        \State $T \gets$ any member of $\mathcal{T}^B_{G'}$\label{line:growt}
        \State \Return $\bigcup_{e \in E(T)}\mathcal{S}^B_{G'}(e)$%\label{line:growspecial1end}
    \EndIf
    \If{$\exists e \in E : |\mathcal{S}^B_{G'}(e)| \geq 2$}\label{line:growspecial2}
        \State $S_1, S_2 \gets$ any two distinct members of $\mathcal{S}^B_{G'}(e)$\label{line:growanytwo}
        \State \Return $S_1 \cup S_2$\label{line:growspecial2end}
    \EndIf\label{line:endif}
\EndProcedure 
\end{algorithmic}
\caption{The implementation of algorithm \textsc{Special}.}\label{specialfig}
\end{figure}

If algorithm \textsc{Special} does not return a subhypergraph, we utilise algorithm \textsc{Grow}. \textsc{Grow} again takes as input the $k$-graph $G' \subseteq G$ that \textsc{Asym-Edge-Col-$\hat{\mathcal{B}}_{\eps}$} got stuck on, as well as the constant $\eps \geq 0$ that was an input into \textsc{Asym-Edge-Col-$\hat{\mathcal{B}}_{\eps}$}
and $n$ (the $n$ from $G^k_{n,p}$).  Let us outline algorithm \textsc{Grow}. Since algorithm \textsc{Special} did not return a subhypergraph, in line~\ref{line:growanye} we may choose an edge $e$ that does not belong to any $k$-graph in $\mathcal{S}^B_{G'}$; note that this further implies that $e$ does not belong to any copy of any $k$-graph $B\in \hat{\mathcal{B}}_{\eps}$ in $G'$.\COMMENT{JH: Do I need to elaborate on this more? RH: I don't think so...}
Crucially, algorithm \textsc{Grow} chooses a $k$-graph $R \in \mathcal{R}_{G'}$  which contains such an edge $e$ and makes it the seed $F_0$ for a growing procedure (line~\ref{line:growanyr}). 
This choice of $F_0$ will allow us to conclude later that there always exists an open edge (see the proof of Claim~\ref{claim:growv}),
that is, the while-loop of \textsc{Grow} operates as desired and does not get stuck. 

\begin{figure}[!ht]
\begin{algorithmic}[1]
\Procedure{Grow}{$G' = (V,E), \eps, n$}
    \State $e \gets$ any $e \in E : |\mathcal{S}^B_{G'}(e)| = 0$\label{line:growanye}
    \State $F_0 \gets$ any $R \in \mathcal{R}_{G'}:e \in E(R)$\label{line:growanyr}
    \State $i \gets 0$%\label{line:growito0}
    \While {$(i < \ln(n)) \land (\forall \tilde{F} \subseteq F_i : \lambda(\tilde{F}) > -\gamma)$}\label{line:growwhileconditions}
        \If {$\exists R \in \mathcal{R}_{G'}\setminus \mathcal{R}_{F_i} : |V(R) \cap V(F_i)| \geq k$}\label{line:growRline}
            \State $F_{i+1} \gets F_i \cup R$\label{line:growR-F_i+1}
        \Else   
            \State $e \gets \textsc{Eligible-Edge}(F_i)$\label{line:eligible-edge}
            \State $F_{i+1} \gets \textsc{Extend-L}(F_i, e, G')$\label{line:F_i+1getsextend}
        \EndIf
        \State $i \gets i + 1$
    \EndWhile
    \If {$i \geq \ln(n)$}
        \State \Return{$F_i$}\label{line:growreturnfi}
    \Else
        \State \Return{\textsc{Minimising-Subhypergraph}($F_i$)}\label{line:growreturnminsub}
    \EndIf
    
\EndProcedure
\end{algorithmic}\smallskip\smallskip

\begin{algorithmic}[1]
\Procedure{Extend-L}{$F,e,G'$}
    \State $L \gets$ any $L \in \mathcal{L}^*_{G'}$: $e \in E(L)$\label{line:linl*}
    \State $F' \gets F \cup L$\label{line:f'fl}
        \ForAll{$e' \in E(L)\setminus E(F)$}\label{line:e'inL}
                \State $R_{e'} \gets$ any $R \in \mathcal{R}_{G'} : E(L) \cap E(R) = \{e'\}$\label{line:re'}
                \State $F' \gets F' \cup R_{e'}$\label{line:f'f're'}
        \EndFor
        \State \Return {$F'$}
\EndProcedure 

\end{algorithmic}
\caption{The implementation of algorithm \textsc{Grow}.}\label{growvfig}
\end{figure}

In each iteration $i$ of the while-loop, the growing procedure extends $F_i$ to $F_{i+1}$ in one of two ways.
The first (lines~\ref{line:growRline}-\ref{line:growR-F_i+1}) is by attaching a copy of $H_1$ in $G'$ that intersects $F_i$ in at least $k$ vertices but is not contained in $F_i$.
The second is more involved and begins with calling the function \textsc{Eligible-Edge} that we introduced in procedure \textsc{Grow-$\hat{\mathcal{B}}_{\eps}$}. Recall that \textsc{Eligible-Edge} maps $F_i$ to an open edge $e \in E(F_i)$, prioritising open edges that do not belong to fully open flowers (we will show that such an edge exists in each iteration of the while-loop of algorithm \textsc{Grow}).
Further, recall that \textsc{Eligible-Edge} selects this edge $e$ to be unique up to isomorphism of $F_i$, that is, for any two isomorphic $k$-graphs $F$ and $F'$, there exists an isomorphism $\phi$ with $\phi(F) = F'$ such that \[\phi(\textsc{Eligible-Edge}(F)) = \textsc{Eligible-Edge}(F').\] 
In particular, our choice of $e$ depends only on $F_i$ and not on the surrounding $k$-graph $G'$ or any previous $k$-graph $F_j$ with $j < i$ (indeed, there may be many ways that \textsc{Grow} could construct a $k$-graph isomorphic to $F_i$).
One could implement \textsc{Eligible-Edge} by having an enormous table of representatives for all isomorphism classes of $k$-graphs with up to $n$ vertices.
Note that \textsc{Eligible-Edge} does not itself increase the number of $k$-graphs $F$ that \textsc{Grow} can output.

Once we have selected our open edge $e \in E(F_i)$ using \textsc{Eligible-Edge}, we apply a procedure called \textsc{Extend-L} which attaches a $k$-graph $L \in \mathcal{L}^*_{G'}$ that contains $e$ to $F_i$ (line~\ref{line:F_i+1getsextend}).
We then attach to each new edge $e' \in E(L)\setminus E(F_i)$ a $k$-graph $R_{e'} \in \mathcal{R}_{G'}$ such that $E(L) \cap E(R_{e'}) = \{e'\}$ (lines~\ref{line:e'inL}-\ref{line:f'f're'} of \textsc{Extend-L}).
(We will show later that such a $k$-graph $L$ and $k$-graphs $R_{e'}$ exist and that $E(L)\setminus E(F_i)$ is non-empty.) The algorithm comes to an end when either $i \geq \ln(n)$ or $\lambda(\tilde{F}) \leq -\gamma$ for some subhypergraph $\tilde{F} \subseteq F_i$.
In the former, the algorithm returns $F_i$ (line~\ref{line:growreturnfi}); in the latter, the algorithm returns a subhypergraph $\tilde{F} \subseteq F_i$ that minimises $\lambda(\tilde{F})$ (line~\ref{line:growreturnminsub}).
For each $k$-graph $F$, the function \textsc{Minimising-Subhypergraph($F$)} returns such a minimising subhypergraph that is \textit{unique up to isomorphism}.
Once again, this is to ensure that \textsc{Minimising-Subhypergraph($F$)} does not itself artificially increase the number of $k$-graphs that \textsc{Grow} can output.
As with function \textsc{Eligible-Edge}, one could implement \textsc{Minimising-Subhypergraph} using an enormous look-up table.

We will now argue that \textsc{Grow} terminates without error, that is, \textsc{Eligible-Edge} always finds an open edge in \textsc{Grow} and all `any'-assignments in \textsc{Grow} and \textsc{Extend-L} are always successful. This is the first place where the proof deviates slightly from the corresponding result in \cite{h}, notably when showing that the call to \textsc{Eligible-Edge} in line~\ref{line:eligible-edge} is always successful.

\begin{claim}\label{claim:growv}
For any $\eps \geq 0$, algorithm \textsc{Grow($G', \eps, n$)} terminates without error on any input $k$-graph $G' \in \mathcal{C}^*$ that is not a $(H_1, H_2)$-sparse $\hat{\mathcal{B}}_{\eps}$-graph,
for which \textsc{Special($G',\eps,n)$} did not output a subhypergraph.
\COMMENT{RH: Needed to add no output from special here! JH: Indeed, good spot!}
\footnote{See Definitions~\ref{def:avgraph} and \ref{def:h1h2sparse}.} Moreover, for every iteration $i$ of the while-loop, we have $e(F_{i+1}) > e(F_i)$.
\end{claim}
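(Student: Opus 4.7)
The plan is to establish three things: (i) all ``any''-assignments inside \textsc{Grow} and \textsc{Extend-L} succeed, (ii) \textsc{Eligible-Edge}$(F_i)$ always returns an open edge, and (iii) $e(F_{i+1}) > e(F_i)$ in every iteration. Ingredient (i) is straightforward. Line~\ref{line:growanye} is guaranteed by the failure of the first \texttt{if} of \textsc{Special}; the remaining choices (line~\ref{line:growanyr} and the ``any''-assignments inside \textsc{Extend-L}) follow from $G' \in \mathcal{C}^*$, since every edge of $G'$ lies in some $L \in \mathcal{L}^*_{G'}$, and by the definition of $\mathcal{L}^*_{G'}$ each edge of such an $L$ has a matching $R \in \mathcal{R}_{G'}$ meeting $L$ exactly there.

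For (ii), I will rely on the stronger auxiliary inductive claim that $F_i$ is constructible as the $i$th intermediate $k$-graph of some valid run of \textsc{Grow-$\hat{\mathcal{B}}_{\eps}$}. The base case $F_0 \cong H_1$ matches line~\ref{line:growanyrV} of \textsc{Grow-$\hat{\mathcal{B}}_{\eps}$}. For the inductive step, the branch at line~\ref{line:growR-F_i+1} of \textsc{Grow} corresponds to \textsc{Attach-$H_1$} (take $v_1, \ldots, v_k$ from $V(R) \cap V(F_i)$; since $R \notin \mathcal{R}_{F_i}$ we have $E(R) \not\subseteq E(F_i)$). The \textsc{Extend-L} branch corresponds to \textsc{Close-$e$}, provided I verify the two structural conditions $|e' \cap V(F_i)| < k$ for every new edge $e' \in E(L) \setminus E(F_i)$ and $|V(R_{e'}) \cap V(F_i)| \leq k - 1$. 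Both are forced by the failure of the guard on line~\ref{line:growRline}: any $R \in \mathcal{R}_{G'}$ containing a new edge lies in $\mathcal{R}_{G'} \setminus \mathcal{R}_{F_i}$ and hence satisfies $|V(R) \cap V(F_i)| < k$. Applied to $R_{e'}$ this gives the second bound; applied to the witness $R \in \mathcal{R}_{G'}$ certifying $L \in \mathcal{L}^*_{G'}$ at $e'$ it yields $|e' \cap V(F_i)| \leq |V(R) \cap V(F_i)| < k$. Since \textsc{Eligible-Edge} is canonical up to isomorphism, both algorithms make identical choices at the corresponding calls.

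With the auxiliary claim in hand, (ii) follows by contradiction. If some $F_i \in \mathcal{C}^*$ (no open edge) arises, the matching run of \textsc{Grow-$\hat{\mathcal{B}}_{\eps}$} still satisfies the while-loop condition $\lambda(\tilde{F}) > -\gamma$ for every $\tilde{F} \subseteq F_i$, and so may terminate and return $F_i$ at line~\ref{line:returnfi}; hence $F_i \in \hat{\mathcal{B}}_{\eps}$. Then $F_i \subseteq G'$ is contained in some maximal $S \in \mathcal{S}^B_{G'}$, and $F_0 \subseteq F_i \subseteq S$ forces the special edge $e$ chosen at line~\ref{line:growanye} to lie in $E(S)$, contradicting $|\mathcal{S}^B_{G'}(e)| = 0$. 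Finally, (iii) is immediate: the branch at line~\ref{line:growR-F_i+1} adds an edge since $R \notin \mathcal{R}_{F_i}$; and in the \textsc{Extend-L} branch, if $E(L) \subseteq E(F_i)$ then each witness $R \in \mathcal{R}_{G'}$ of $L \in \mathcal{L}^*_{G'}$ would (by the same argument via line~\ref{line:growRline}) lie in $\mathcal{R}_{F_i}$, placing $L$ in $\mathcal{L}^*_{F_i}$ and contradicting the openness of $e$. The principal obstacle is the correspondence step in (ii), where one must carefully reconcile the extra structural constraints of \textsc{Close-$e$} with those implicit in \textsc{Extend-L}.
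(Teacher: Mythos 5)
Your proof is correct and follows essentially the same route as the paper's: the ``any''-assignments are justified from the failure of \textsc{Special} and from $G' \in \mathcal{C}^*$, the success of \textsc{Eligible-Edge} is obtained by mirroring the run of \textsc{Grow} inside \textsc{Grow-$\hat{\mathcal{B}}_{\eps}$} and using the edge $e$ with $|\mathcal{S}^B_{G'}(e)|=0$ in $F_0$ to rule out $F_i$ being a copy of a member of $\hat{\mathcal{B}}_{\eps}$, and the edge increase uses the failed guard at line~\ref{line:growRline} together with openness of the edge returned by \textsc{Eligible-Edge}. Your explicit check of the side conditions of \textsc{Close-$e$} merely fills in the mirroring step the paper asserts, and your rearrangement of the final contradiction (concluding $F_i\in\hat{\mathcal{B}}_{\eps}$ directly instead of deriving a subhypergraph with $\lambda\leq-\gamma$ as the paper does) is logically equivalent.
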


\begin{proof}
One can easily see that the assignments in lines~\ref{line:growanye} and \ref{line:growanyr} are successful. 
Indeed, algorithm \textsc{Special} did not output a subhypergraph, hence we must have an edge $e \in E$ that is not contained in any $S \in \mathcal{S}^B_{G'}$. 
Also, there must exist a member of $\mathcal{R}_{G'}$ that contains $e$ because $G'$ is a member of $\mathcal{C}^* \subseteq \mathcal{C}$.

Next, we show that the call to \textsc{Eligible-Edge} in line~\ref{line:eligible-edge} is always successful.
Indeed, suppose for a contradiction that no edge is open in $F_i$ for some $i \geq 0$. 
Then every edge $e \in E(F_i)$ is in some $L \in \mathcal{L}^*_{F_i}$, by definition.
Hence $F \in \mathcal{C}^*$. Furthermore, observe that $F$ could possibly be constructed by algorithm \textsc{Grow-$\hat{\mathcal{B}}_{\eps}$}. 
Indeed, each step of the while loop of \textsc{Grow} can be mirrored by a step in \textsc{Grow-$\hat{\mathcal{B}}_{\eps}$}, regardless of the input $k$-graph $G' \in \mathcal{C}^*$ to \textsc{Grow}; in particular, \textsc{Eligible-Edge} acts in the same way in both algorithms, as \textsc{Eligible-Edge} depends purely on the structure of its input and nothing else.
However, our choice of $F_0$ in line~\ref{line:growanyr} guarantees that $F_i$ is not in $\hat{\mathcal{B}}_{\eps}$.
Indeed, the edge $e$ selected in line~\ref{line:growanye} satisfying $|\mathcal{S}^B_{G'}(e)| = 0$ is an edge of $F_0$ and $F_0 \subseteq F_i \subseteq G'$. 
Thus, $F$ is not an output of \textsc{Grow-$\hat{\mathcal{B}}_{\eps}$}. 
Together with the fact that $F$ could possibly be constructed by algorithm \textsc{Grow-$\hat{\mathcal{B}}_{\eps}$}, it must be the case that \textsc{Grow-$\hat{\mathcal{B}}_{\eps}$} would terminate before it is able to construct $F$.
Grow-$\hat{\mathcal{B}}_{\eps}$ only terminates in some step $j$ when there exists some subhypergraph $\tilde{F} \subseteq F_j$ such that $\lambda(\tilde{F}) \leq -\gamma$. % Furthermore, observe that $F$ can be constructed by algorithm \textsc{Grow-$\hat{\mathcal{B}}$}. then by the definition of $\hat{\mathcal{B}}$ and that $m_2(H_1) > m_2(H_2)$, we have that $m(F_i) > m_2(H_1, H_2) + \eps$. 
Hence there exists $\tilde{F} \subseteq F_i$ such that $\lambda(\tilde{F}) \leq -\gamma$ thus \textsc{Grow} terminates in line~\ref{line:growwhileconditions} without calling \textsc{Eligible-Edge}. Thus every call to \textsc{Eligible-Edge} is successful and returns an edge $e$.
Since $G' \in \mathcal{C^*}$, the call to \textsc{Extend-L}$(F_i, e, G')$ is also successful and thus there exist suitable $k$-graphs $L \in \mathcal{L}^*_{G'}$ with $e \in E(L)$ and $R_{e'}$ for each $e' \in E(L) \setminus E(F_i)$. 

It remains to show that for every iteration $i$ of the while-loop, we have $e(F_{i+1}) > e(F_i)$. 
Since a copy $R$ of $H_1$ found in line~\ref{line:growRline} is a copy of $H_1$ in $G'$ but not in $F_i$ (and $H_1$ is connected), we must have that $F_{i+1} = F_i \cup R$ contains at least one more edge than $F_i$.

So assume lines~\ref{line:eligible-edge} and \ref{line:F_i+1getsextend} are called in iteration $i$ and let $e$ be the edge chosen in line~\ref{line:eligible-edge} and $L$ the subhypergraph selected in line~\ref{line:linl*} of \textsc{Extend-L}$(F_i, e, G')$.
By the definition of $\mathcal{L}^*_{G'}$, for each $e' \in E(L)$ there exists $R_{e'} \in \mathcal{R}_{G'}$ such that $E(L) \cap E(R_{e'}) = \{e'\}$. If $|E(L) \setminus E(F_i)| > 0$, then $e(F_{i+1}) \geq e(F_i \cup L) > e(F_i)$.
Otherwise, $L \subseteq F_i$. But since $e$ is open in $F_i$, we must have $L \notin \mathcal{L}^*_{F_i}$.
Thus there exists $e' \in L$ such that $R_{e'} \in \mathcal{R}_{G'}\setminus \mathcal{R}_{F_i}$ and $|V(R_{e'}) \cap V(F_i)| \geq k$, contradicting that lines~\ref{line:eligible-edge} and \ref{line:F_i+1getsextend} are called in iteration $i$.\end{proof}

\subsection{Iterations of the \textsc{Grow} procedure}

In this section we describe crucial properties of \textsc{Grow} which are required for the proof of Lemma~\ref{lemma:noerror}. Later we will see that these properties transfer easily to \textsc{Grow-$\hat{\mathcal{B}}_{\eps}$} as well.

We consider the evolution of $F_i$ in more detail. 
We call iteration $i$ of the while-loop in algorithm \textsc{Grow} \emph{non-degenerate} if all of the following hold:

\begin{itemize}
    \item The condition in line~\ref{line:growRline} evaluates to false (and \textsc{Extend-L} is called); 
    \item In line~\ref{line:f'fl} of \textsc{Extend-L}, we have $V(F) \cap V(L) = e$;
    \item In every execution of line~\ref{line:f'f're'} of \textsc{Extend-L}, we have $V(F') \cap V(R_{e'}) = e'$.
\end{itemize}
Otherwise, we call iteration $i$ \emph{degenerate}.
Note that, in non-degenerate iterations, there are only a \textit{constant} number of $k$-graphs $F_{i+1}$ that can result from any given $F_i$ since \textsc{Eligible-Edge} determines the exact position where to attach the copy $L$ of $H_2$, $V(F_i) \cap V(L) = e$ and for every execution of line~\ref{line:f'f're'} of \textsc{Extend-L} we have $V(F') \cap V(R_{e'}) = e'$ (recall that the edge $e$ found by \textsc{Eligible-Edge}($F_i$) is \emph{unique up to isomorphism of $F_i$}). %Since the while-loops of \textsc{Grow} in \cite{h} and \textsc{Grow} are precisely the same, we may use several results from \cite{h} (where \textsc{Grow} has been replaced by \textsc{Grow}). 

\begin{claim}\label{claim:non-degen}
For any $\eps \geq 0$, if iteration $i$ of the while-loop in procedure \textsc{Grow($G', \eps, n$)} is non-degenerate, we have \[\lambda(F_{i+1}) = \lambda(F_i).\]
\end{claim}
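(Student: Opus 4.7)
The plan is a direct calculation: compute the increments $v(F_{i+1})-v(F_i)$ and $e(F_{i+1})-e(F_i)$ in a non-degenerate iteration, then use the defining identities of a heart to see these cancel perfectly in $\lambda$.

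First, I would carefully unpack what non-degeneracy gives. Since line~\ref{line:growRline} evaluated to false, iteration $i$ goes through \textsc{Eligible-Edge} and \textsc{Extend-L}. In \textsc{Extend-L}, $V(F_i) \cap V(L) = e$. Because $L$ is $k$-uniform and $|e|=k$, the only edge of $L$ that could lie inside $V(F_i)$ is $e$ itself, so $E(L) \cap E(F_i) = \{e\}$. Thus attaching $L$ adds exactly $v_2 - k$ new vertices and $e_2 - 1$ new edges. Similarly, for each of the $e_2-1$ new edges $e' \in E(L) \setminus E(F_i)$, the non-degeneracy condition $V(F') \cap V(R_{e'}) = e'$ forces $E(F') \cap E(R_{e'}) = \{e'\}$, so attaching each $R_{e'}$ adds $v_1 - k$ new vertices and $e_1 - 1$ new edges. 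Moreover, these attachments are internally disjoint by the same condition, so no double counting occurs. Hence
\[
\Delta v := v(F_{i+1}) - v(F_i) = (v_2 - k) + (e_2-1)(v_1 - k),
\]
\[
\Delta e := e(F_{i+1}) - e(F_i) = (e_2-1) + (e_2-1)(e_1-1) = (e_2-1)\, e_1.
\]

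Next I would invoke that $(H_1, H_2)$ is a heart with $m_k(H_1) > m_k(H_2)$. Then $H_2$ is strictly $k$-balanced, so $m_k(H_2) = (e_2-1)/(v_2-k)$, equivalently $v_2 - k = (e_2-1)/m_k(H_2)$. Also $H_1$ is strictly balanced with respect to $d_k(\cdot, H_2)$, so
\[
m_k(H_1, H_2) = \frac{e_1}{v_1 - k + \tfrac{1}{m_k(H_2)}}.
\]
It follows that
\[
\frac{\Delta e}{m_k(H_1,H_2)} = \frac{(e_2-1) e_1}{m_k(H_1,H_2)} = (e_2-1)\!\left(v_1 - k + \tfrac{1}{m_k(H_2)}\right) = (e_2-1)(v_1-k) + (v_2-k) = \Delta v.
\]
Therefore $\lambda(F_{i+1}) - \lambda(F_i) = \Delta v - \Delta e/m_k(H_1,H_2) = 0$, proving the claim.

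The calculation itself is routine; the only real step where one needs to be careful is translating the non-degeneracy conditions into the clean increments above — in particular, verifying that $V(F_i) \cap V(L) = e$ forces the edge intersection to be exactly $\{e\}$ in the $k$-uniform setting, and that the pairwise disjointness of the $R_{e'}$ over the vertex sets $V(F')$ is strong enough to prevent any hidden overlaps between distinct $R_{e'}$'s. Once that bookkeeping is done, the equality of $\lambda$-values is forced by the two balancedness identities of a heart, and there are no further obstacles.
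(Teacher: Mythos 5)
Your proof is correct and follows essentially the same route as the paper's: compute the vertex and edge increments $(v_2-k)+(e_2-1)(v_1-k)$ and $(e_2-1)e_1$ in a non-degenerate iteration, then cancel using the identities $m_k(H_2)=(e_2-1)/(v_2-k)$ and $m_k(H_1,H_2)=e_1/(v_1-k+1/m_k(H_2))$ supplied by the heart hypothesis. The only difference is that you spell out the bookkeeping (why $V(F_i)\cap V(L)=e$ forces $E(L)\cap E(F_i)=\{e\}$ in the $k$-uniform setting, and why the successive $R_{e'}$ attachments cannot overlap) that the paper leaves implicit.
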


\begin{proof}
In a non-degenerate iteration we add $v_2 - k$ new vertices and $e_2 - 1$ new  edges for the copy of $H_2$ and then $(e_2 - 1)(v_1 - k)$ new vertices and $(e_2 - 1)(e_1 - 1)$ new edges to complete the copies of $H_1$. This gives \begin{align*}
  \lambda(F_{i+1}) - \lambda(F_i)   & = v_2 - k + (e_2  - 1)(v_1 - k) - \frac{(e_2 - 1)e_1}{m_k(H_1,H_2)} \\
                                    & = v_2 - k + (e_2  - 1)(v_1 - k) - (e_2 - 1)\left(v_1 - k + \frac{1}{m_k(H_2)}\right) \\
                                    & = 0,
\end{align*}
where we have used in the penultimate equality that $H_1$ is (strictly) balanced with respect to $d_k(\cdot, H_2)$ and in the final inequality that $H_2$ is (strictly) $k$-balanced.\end{proof}

When we have a degenerate iteration $i$, the structure of $F_{i+1}$ may vary considerably and also depend on the structure of $G'$.
Indeed, if $F_i$ is extended by a copy $R$ of $H_1$ in line~\ref{line:growR-F_i+1}, then $R$ could intersect $F_i$ in a multitude of ways.
Moreover, there may be several copies of $H_1$ that satisfy the condition in line~\ref{line:growRline}.
The same is true for $k$-graphs added in lines~\ref{line:f'fl} and \ref{line:f'f're'} of \textsc{Extend-$L$}. 
Thus, degenerate iterations cause us difficulties since they enlarge the family of $k$-graphs that algorithm \textsc{Grow} can return.
However, we will show that at most a constant number of degenerate iterations can happen before algorithm \textsc{Grow} terminates, allowing us to bound from above sufficiently well the number of non-isomorphic $k$-graphs \textsc{Grow} can return.
Pivotal in proving this is the following claim.  

\begin{claim}\label{claim:degenfull}
There exists a constant $\kappa = \kappa(H_1,H_2) > 0$ such that,
for any $\eps \geq 0$, 
if iteration $i$ of the while-loop in procedure \textsc{Grow($G', \eps, n$)} is degenerate then we have \[\lambda(F_{i+1}) \leq \lambda(F_i) - \kappa.\]
\end{claim}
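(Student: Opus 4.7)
The plan is to partition degenerate iterations into three types -- by which of the three non-degeneracy clauses fails -- compute $\Delta\lambda := \lambda(F_{i+1}) - \lambda(F_i)$ in each, and verify $\Delta\lambda < 0$ strictly using a strict-balance hypothesis on $(H_1,H_2)$. Because the intersection patterns (up to isomorphism) between $F_i$ and the copies of $H_1$ and $H_2$ attached in a single iteration form a finite list depending only on $H_1$ and $H_2$, we will take $\kappa$ to be the minimum of the finitely many resulting strictly-negative values of $\Delta\lambda$.

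In the type where line~\ref{line:growRline} triggers, we add $R \cong H_1$ with $R \notin \mathcal{R}_{F_i}$ and $|V(R)\cap V(F_i)| \geq k$; letting $J := R \cap F_i$, we have $v(J) \geq k$ and $J \subsetneq H_1$ properly, and a direct calculation using $e_1/m_k(H_1,H_2) = v_1 - k + 1/m_k(H_2)$ shows that $\Delta\lambda < 0$ is equivalent to $d_k(J,H_2) < m_k(H_1,H_2)$, which is exactly the strict balance of $H_1$ with respect to $d_k(\cdot,H_2)$. In the type where $J' := L \cap F_i \supsetneq \{e\}$ with the $R_{e'}$'s attached non-degenerately, the identity $(v_1 - k) - e_1/m_k(H_1,H_2) = -1/m_k(H_2)$ simplifies $\Delta\lambda$ to $(v_2 - v(J')) - (e_2 - e(J'))/m_k(H_2)$, which after clearing denominators is strictly negative iff $(e(J')-1)/(v(J')-k) < (e_2-1)/(v_2-k) = m_k(H_2)$, i.e.\ by strict $k$-balance of $H_2$ applied to the proper subhypergraph $J' \subsetneq H_2$. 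In the type where $J' = \{e\}$ but some $K_{e'_0} := R_{e'_0} \cap F' \supsetneq \{e'_0\}$, writing $\Delta\lambda$ relative to the non-degenerate baseline of $0$ (Claim~\ref{claim:non-degen}), the deviation equals $\phi_R(K_{e'_0}) - \phi_R(\{e'_0\}) = -(v(K_{e'_0})-k) + (e(K_{e'_0})-1)/m_k(H_1,H_2)$, where $\phi_R(K) := \lambda(H_1) - \lambda(K)$, and this is strictly negative provided $d_k(K_{e'_0}) < m_k(H_1,H_2)$.

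The main obstacle will be the last type, since strict balance of $H_1$ with respect to $d_k(\cdot, H_2)$ bounds $d_k(\cdot, H_2)$ rather than $d_k(\cdot)$ itself. Here the algorithmic precondition becomes essential: the check in line~\ref{line:growRline} forces $|V(R) \cap V(F_i)| < k$ for every $R \in \mathcal{R}_{G'} \setminus \mathcal{R}_{F_i}$, so no edge of $R_{e'_0}$ lies entirely in $V(F_i)$; combined with $E(L) \cap E(R_{e'_0}) = \{e'_0\}$, this forces (when $R_{e'_0}$ is the first $R$ attached in \textsc{Extend-L}) that $E(K_{e'_0}) = \{e'_0\}$, whence $d_k(K_{e'_0}) = 0$. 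Simultaneous or iterated degeneracies -- several $K_{e'}$'s proper, overlaps among the $R_{e'}$'s themselves, or $J'$ and some $K_{e'}$ simultaneously proper -- are handled by ordering the attachments ($L$ first, then each $R_{e'}$ in sequence) and writing $\Delta\lambda$ as a telescoping sum over the attachments; each term is nonpositive by the relevant strict-balance inequality (for $H_1$ with respect to $d_k(\cdot, H_2)$ or for $H_2$), and at least one term -- the one recording the ``extra'' identification that makes the iteration degenerate -- is strictly negative.
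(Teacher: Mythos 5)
Your handling of the first two types is sound and essentially reproduces the paper's argument: the case where line~\ref{line:growRline} fires is the paper's Claim~\ref{claim:degen1} (your computation reducing $\Delta\lambda<0$ to $d_k(J,H_2)<m_k(H_1,H_2)$, with the $e(J)=0$ case giving slack $1/m_k(H_2)$, is the same), and the ``$J'=L\cap F_i\supsetneq\{e\}$, all $R_{e'}$ clean'' case reduces correctly to strict $k$-balance of $H_2$, matching the role of transformation (ii) inside Claim~\ref{claim:degen2}. You also correctly isolate the real obstacle ($d_k(\cdot)$ versus $d_k(\cdot,H_2)$) and your fix for the \emph{first} attached $R_{e'_0}$ is right: since no $R\in\mathcal{R}_{G'}\setminus\mathcal{R}_{F_i}$ meets $F_i$ in $k$ vertices and $E(L)\cap E(R_{e'_0})=\{e'_0\}$, the overlap has edge set $\{e'_0\}$ and $k$-density $0$.

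The gap is the final sentence. For a \emph{later} attachment $R_{e'}$, the overlap $K_{e'}:=R_{e'}\cap F'$ may contain, besides $e'$, edges of the previously attached copies $R_{e''}$, and the deviation of that step from the non-degenerate baseline is $-(v(K_{e'})-k)+(e(K_{e'})-1)/m_k(H_1,H_2)$, whose nonpositivity is equivalent to $d_k(K_{e'})\le m_k(H_1,H_2)$ --- precisely the inequality you flagged as unavailable. Neither strict balance of $H_1$ with respect to $d_k(\cdot,H_2)$ nor strict $k$-balance of $H_2$ gives it (note $d_k(H_1)>m_k(H_1,H_2)$ whenever $d_k(H_1)>m_k(H_2)$, so dense overlaps inside $H_1$ really are dangerous), and your argument that killed the first attachment ($E(K)=\{e'\}$) is exactly what fails once earlier $R_{e''}$'s are present. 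So ``each term is nonpositive by the relevant strict-balance inequality'' is not a proof but a restatement of the hard case; the paper explicitly warns that this per-step inequality ``does not necessarily hold'' and instead proves a \emph{global} comparison, Lemma~\ref{lemma:21}, on $F$-external densities. Its proof orders the inner edges via \textsc{Order-Edges}, tracks the vanishing quantities $\Delta_e(f),\Delta_v(f)$, and shows that ``bad'' edges (where the ratio can reach or exceed $m_k(H_1,H_2)$) are compensated \emph{collectively} inside blocks $A_i\subsetneq H_2$, using the $k$-balancedness of $H_2$ together with the isolated-vertex bookkeeping ($T'(f)$ and $(V(H^{-}_{\hat e}))_f$); Claim~\ref{claim:degen2} then needs the three transformations to reduce an arbitrary type-2 degeneracy to this lemma. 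Without an argument of this kind --- showing the \emph{total} deviation is strictly negative even though individual terms can be positive --- your proposal does not establish the claim for iterations in which the attached copies of $H_1$ overlap one another (or the new part of $L$) nontrivially.
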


The proof of Claim~\ref{claim:degenfull} can be found in \cite[Appendix~\ref{app:degenfull}]{bhh_kk_arxiv}. Its proof closely resembles that of \cite[Claim~6.3]{h}.
%We prove Claim~\ref{claim:degenfull} in Appendix~\ref{app:degenfull} as its proof closely resembles that of \cite[Claim~6.3]{h}. 

%With Claims~\ref{claim:non-degen} and \ref{claim:degenfull} in hand, 
%\COMMENT{RH: I think we might be able to be more explicit that these claims are true for both grow algorithms: somehow this feels detached from current proof, in a sense that it is used in two places. Maybe could be reordered/rewritten slightly...
%Actually, hold on: we don't even need these claims here now, in that they are only used within a proof that is now in the appendix (Proof of Claim 7.7)!! So this definitely needs reordering a bit somehow now!}
%we can prove Theorem~\ref{thm:bfinite}. Thus we assume $\hat{\mathcal{B}}_{\eps}$ is finite in what follows, which is needed for the proof of Claim~\ref{claim:conclusion1}. 

\subsection{Proof of Lemma~\ref{lemma:noerror}: Case 1}\label{subsec:lemnoerror}

\begin{define}\label{def:special}
    Let $\mathcal{F}_0 = \mathcal{F}_0(H_1,H_2, n, \eps)$ denote the class of $k$-graphs that can be outputted by \textsc{Special($G', \eps, n$)}.
\end{define}

Let us show that $G^k_{n,p}$ contains no graph from $\mathcal{F}_0(H_1,H_2,n,\eps^*)$ a.a.s.

\begin{claim}\label{claim:conclusion1}
There exists a constant $b = b(H_1,H_2) > 0$ such that for all $p \leq bn^{-1/m_k(H_1,H_2)}$, $G^k_{n,p}$ does not contain any $k$-graph from $\mathcal{F}_0(H_1,H_2,n,\eps^*)$ a.a.s.
\end{claim}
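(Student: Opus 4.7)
By Theorem~\ref{thm:bfinite} and the definition of $\eps^*$, the family $\hat{\mathcal{B}}_{\eps^*}$ is finite, so there is a constant $L = L(H_1, H_2)$ bounding $v(B) + e(B)$ for every $B \in \hat{\mathcal{B}}_{\eps^*}$. Every $F \in \mathcal{F}_0$ is a union of at most $\max(e_1, e_2)$ members of $\hat{\mathcal{B}}_{\eps^*}$ (Case~1 of \textsc{Special}, line~\ref{line:growspecial1}) or of exactly two of them (Case~2, lines~\ref{line:growspecial2}--\ref{line:growspecial2end}), so $v(F)$ and $e(F)$ are bounded in terms of $H_1, H_2$ alone, and hence $\mathcal{F}_0$ contains only finitely many non-isomorphic $k$-graphs.

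The crux is to show that every $F \in \mathcal{F}_0$ contains a subhypergraph $F^*$ with $\lambda(F^*) \leq -\gamma$ for $\gamma := \gamma(H_1, H_2, \eps^*) > 0$. I plan to prove this by contradiction. Write $F = S_1 \cup \cdots \cup S_t$ for distinct $S_j \in \mathcal{S}^B_{G'}$, each isomorphic to a member of $\hat{\mathcal{B}}_{\eps^*}$, with $t \geq 2$; in Case~1 the copy $T$ of $H_1$ or $H_2$ whose edges lie across at least two distinct $S_j$'s is automatically contained in $F$, while in Case~2 the two $S_j$'s share at least one edge. In either case $F \supsetneq S_1$. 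Assuming $\lambda(\tilde{F}) > -\gamma$ for every subhypergraph $\tilde{F} \subseteq F$, I will argue that $F$ can itself be built by \textsc{Grow-$\hat{\mathcal{B}}_{\eps^*}$}: first execute the construction witnessing $S_1 \in \hat{\mathcal{B}}_{\eps^*}$, and then for $j = 2, \ldots, t$, replay the construction of $S_j$ with its pieces attached at their prescribed positions in $F$ relative to the partial build $S_1 \cup \cdots \cup S_{j-1}$, exploiting the fact that \textsc{Attach-$H_1$} and \textsc{Close-$e$} admit arbitrary (possibly highly degenerate) vertex sharing with the current graph. Every intermediate $F_i$ is a subhypergraph of $F$ and so satisfies the while-loop condition $\lambda(\tilde{F}_i) > -\gamma$ by hypothesis. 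Since each edge of $F$ lies in some $S_j \in \mathcal{C}^*$ (an output of \textsc{Grow-$\hat{\mathcal{B}}_{\eps^*}$}), any witness $L \in \mathcal{L}^*_{S_j}$ remains a witness in $\mathcal{L}^*_F$, so $F \in \mathcal{C}^*$ and the algorithm may return $F$. Thus $F \in \hat{\mathcal{B}}_{\eps^*}$, contradicting the maximality of $S_1$ in $\mathcal{S}^B_{G'}$.

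Finally, for any constant $b > 0$ and $p \leq b n^{-1/m_k(H_1, H_2)}$, the expected number of labelled copies of $F^*$ in $G^k_{n,p}$ is at most $n^{v(F^*)} p^{e(F^*)} \leq b^{e(F^*)} n^{\lambda(F^*)} \leq b^{e(F^*)} n^{-\gamma} = o(1)$, since $\gamma > 0$ and $e(F^*)$ is bounded. By Markov's inequality, $F^* \nsubseteq G^k_{n,p}$ a.a.s., hence $F \nsubseteq G^k_{n,p}$ a.a.s. A union bound over the finitely many non-isomorphic $F \in \mathcal{F}_0$ completes the proof. The main obstacle is the constructability step in the contradiction argument: one must verify carefully that each step of $S_j$'s original construction can indeed be realised by \textsc{Attach-$H_1$} or \textsc{Close-$e$} when the vertex identifications with the previously built pieces are prescribed rather than freely chosen, and that the (permissive) vertex-sharing constraints of these procedures are respected throughout the replay.
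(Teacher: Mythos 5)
Your proposal is correct and takes essentially the same route as the paper: both arguments show $F \in \mathcal{F}_0$ is an edge-union of members of $\hat{\mathcal{B}}_{\eps^*}$, deduce from the maximality condition in $\mathcal{S}^B_{G'}$ that $F \notin \hat{\mathcal{B}}_{\eps^*}$, use the constructability of $F$ by \textsc{Grow-$\hat{\mathcal{B}}_{\eps^*}$} to force a subhypergraph $\tilde F$ with $\lambda(\tilde F) \leq -\gamma^*$, and finish with Markov plus a union bound over the finite family $\mathcal{F}_0$. The ``replay'' justification of constructability that you flag as the main obstacle is also the point the paper elides (it simply asserts constructability and points back to the proof of Claim~\ref{claim:growv}), so you have identified the same step that is left informal in the published argument.
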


\begin{proof}
We can see that any $F \in \mathcal{F}_0$ is either of the form \[F = \bigcup_{\substack{e \in E(T)}}\mathcal{S}^B_{G'}(e)\] for some $k$-graph $T \in \mathcal{T}^B_{G'}$, or of the form \[F = S_1 \cup S_2\] for some edge-intersecting $S_1, S_2 \in \mathcal{S}^B_{G'}$.
Whichever of these forms $F$ has, since every element of $\mathcal{S}^B_{G'}$ is in $\hat{\mathcal{B}}_{\eps^*}$, and $T$ is such that $T \cong H_1$ or $T \cong H_2$, 
we have that $F \subseteq G'$ is not in $\mathcal{S}^B_{G'}$, and thus not isomorphic to a $k$-graph in $\hat{\mathcal{B}}_{\eps^*}$.
Indeed, otherwise the $k$-graphs $S$ forming $F$ would not be in $\mathcal{S}^B_{G'}$ due to the maximality condition in the definition of $\mathcal{S}^B_{G'}$.
Observe that, in either form, $F$ is a $k$-graph which could be constructed by \textsc{Grow-$\hat{\mathcal{B}}_{\eps^*}$}. But since $F$ is not isomorphic to a $k$-graph in $\hat{\mathcal{B}}_{\eps^*}$, we have that $F$ is not an output of \textsc{Grow-$\hat{\mathcal{B}}_{\eps^*}$}.
Similarly as in the proof of Claim ~\ref{claim:growv}, it follows that there exists a subgraph $\tilde{F}\subseteq F$ such that $\lambda(\tilde{F}) \leq -\gamma^*:=\gamma(H_1,H_2,\eps^*)$. 
Since we assumed the conditions in Theorem~\ref{thm:colourb} hold (and by the definition of $\eps^*$) we have that the family $\mathcal{F}_0$ is finite.
%\COMMENT{JH: I think we need to be really clear with the reader what order things are proved in so that there's no circular proofs going on (or being incorrectly implied). We need the degenerate and non-degenerate proof stuff to prove the finiteness of $\hat{\mathcal{B}}$, which we then use here.}, and 
Using $\gamma^*>0$,\COMMENT{RH: Could add a footnote here saying `Indeed, this is why the definition of $\eps^*$ was necessary.' or something similar? Just to highlight it again! JH: I think enough has been said. In particular, $\eps*$ popping up instead of $\eps$ should immediately signal the reader or at least make them go back to read again what having the star is all about. Its hard to miss.} by Markov's inequality we have that $G^k_{n,p}$ contains no $k$-graph from $\mathcal{F}_0$ a.a.s.
\end{proof}
%\COMMENT{RH: needs rewording}
% Note that the proof of the following claim requires Conjecture~\ref{conj:bconj} to be true; in particular, we need that $\hat{\mathcal{B}}(H_1, H_2)$ is finite when $m_k(H_1) > m_k(H_2)$.
% As we are working particularly with $\hat{\mathcal{B}}(H_1, H_2)$, rather than $\hat{\mathcal{A}}(H_1, H_2, \eps)$, we will write out the proof in full for completeness.
%\COMMENT{RH: I think we should split up the definitions of types of input, i.e. $\mathcal{F}_0$ and $\tilde{\mathcal{F}}$ before the lemma statement, and then state lemma for each (or just Lemma part (i) and (ii)). Then move the repeated proof (for $\tilde{\mathcal{F}}$) to the appendix. Then we can also move Claims~\ref{claim:q_1} and~\ref{claim:polylog} and the text between them to the appendix. This would be very helpful as not only are these repeated bits, but also to a first time reader look like complicated calculations.}
% However, it is only in the first paragraph that the proof is significantly different from the proof of the analogous result \cite[Claim~6.6]{h}.

\begin{define}\label{def:growoutputs}
Let $\tilde{\mathcal{F}} = \tilde{\mathcal{F}}(H_1, H_2, n, \eps)$ denote a family of representatives for the isomorphism classes of all $k$-graphs that can be the output of \textsc{Grow($G', \eps, n$)} on any input instance $G'$ for which it enters the while-loop.
\end{define}

\begin{claim}\label{claim:conclusion2}
There exists a constant $b = b(H_1,H_2) > 0$ such that for all $p \leq bn^{-1/m_k(H_1,H_2)}$, $G^k_{n,p}$ does not contain any $k$-graph from $\tilde{\mathcal{F}}(H_1,H_2,n,\eps^*)$ a.a.s.
\end{claim}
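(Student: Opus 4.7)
The plan is to bound $E\bigl[\bigl|\{F \in \tilde{\mathcal{F}} : F \subseteq G^k_{n,p}\}\bigr|\bigr] = o(1)$ and conclude via Markov's inequality. By linearity and the union bound over labeled embeddings, this expectation is at most $\sum_{F \in \tilde{\mathcal{F}}} n^{v(F)} p^{e(F)} = \sum_{F} b^{e(F)} n^{\lambda(F)}$. I partition $\tilde{\mathcal{F}}$ into \textbf{Class A}, comprising outputs returned in line~\ref{line:growreturnfi} (so $F = F_{\ln n}$), and \textbf{Class B}, comprising outputs returned in line~\ref{line:growreturnminsub} via \textsc{Minimising-Subhypergraph}, which necessarily satisfy $\lambda(F) \leq -\gamma^*$, where $\gamma^* := \gamma(H_1, H_2, \eps^*) > 0$.

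The structural inputs come from Claims~\ref{claim:growv}, \ref{claim:non-degen}, and \ref{claim:degenfull}: each iteration of the while-loop adds at least one new edge; each non-degenerate iteration preserves $\lambda$ while providing only a constant number $K = K(H_1, H_2)$ of non-isomorphic extensions $F_{i+1}$ (since \textsc{Eligible-Edge} chooses $e$ unique up to isomorphism and the non-degenerate attachment is canonical); each degenerate iteration decreases $\lambda$ by at least $\kappa > 0$ and offers at most $v(F_i)^{O(1)} = \textrm{polylog}(n)$ non-isomorphic extensions. Since $\lambda(F_0) = \lambda(H_1) = O(1)$ and the algorithm only continues while $\lambda > -\gamma^*$, the total number of degenerate iterations is bounded by a constant $C_d := (\lambda(H_1) + \gamma^*)/\kappa + 1$. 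Propagating $n^{v(F_i)} p^{e(F_i)} = b^{e(F_i)} n^{\lambda(F_i)}$ across one iteration, a non-degenerate step multiplies this quantity by the constant factor $b^{(e_2-1)e_1}$ (mirroring the computation in Claim~\ref{claim:non-degen}), while a degenerate step multiplies it by at most $b \cdot n^{-\kappa}$.

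For Class A, summing over all execution paths of length $\ln n$ yields
\[
\sum_{F \in \textrm{Class A}} n^{v(F)} p^{e(F)} \leq n^{v_1} p^{e_1} \cdot \bigl(Kb^{(e_2-1)e_1}\bigr)^{\ln n - C_d} \cdot \bigl(\textrm{polylog}(n) \cdot n^{-\kappa}\bigr)^{C_d}.
\]
Choosing $b = b(H_1, H_2) > 0$ small enough that $K b^{(e_2-1)e_1} \leq e^{-M}$ for $M$ larger than any polynomial exponent appearing in the other factors, the dominant term $\bigl(Kb^{(e_2-1)e_1}\bigr)^{\ln n} = n^{-M}$ forces this sum to $o(1)$. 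For Class B, each output $F$ satisfies $n^{v(F)} p^{e(F)} \leq b^{e(F)} n^{-\gamma^*} \leq n^{-\gamma^*}$, providing an additional $n^{-\gamma^*}$ factor; summing this bound against the analogous propagated weight over all termination indices $i \leq \ln n$ and the $\textrm{polylog}(n) \cdot K^{O(\ln n)}$ non-isomorphic outputs at each iteration is again controlled by the same smallness of $b$.

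The main obstacle is selecting a single constant $b = b(H_1, H_2)$ that simultaneously handles both classes, absorbing the potentially polynomial-in-$n$ overhead $K^{\ln n} = n^{\ln K}$ coming from non-isomorphic non-degenerate extensions. This is possible precisely because $Kb^{(e_2-1)e_1}$ is a constant strictly less than $1$ that can be made arbitrarily small by shrinking $b$, and raised to the $\ln n$-th power it dominates any fixed polynomial overhead. The finiteness of $C_d$ is essential so that degenerate steps contribute only a polylogarithmic multiplicative factor overall, and the strict positivity of $\gamma^*$ (which is precisely why the dummy variable $\eps^* > 0$ was introduced in Definition~\ref{def:eps*}) is essential for the extra gain used in Class~B.
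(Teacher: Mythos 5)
Your proposal is correct and follows essentially the same route as the paper: the same split of the outputs of \textsc{Grow} into those returned after $\lceil\ln n\rceil$ iterations and those returned via \textsc{Minimising-Subhypergraph} with $\lambda\leq-\gamma^*$, the same inputs (Claims~\ref{claim:growv}, \ref{claim:non-degen}, \ref{claim:degenfull}, constantly many non-degenerate extensions versus polylogarithmically many degenerate ones with boundedly many degenerate iterations), the same choice of a small constant $b$ to beat the $A^{\ln n}$-type counting overhead, and Markov's inequality. The only difference is bookkeeping: you propagate the weight $b^{e(F_i)}n^{\lambda(F_i)}$ step by step, whereas the paper packages the counting into the bound $f(t,d)\leq\lceil\ln n\rceil^{(C_0+1)d}A^{t-d}$ of Claim~\ref{claim:polylog} and sums at the end; these are equivalent.
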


%\begin{proof} See Appendix~\ref{sec:omittedproofs}.
%\end{proof}
The proof of this claim is given in \cite[Appendix~\ref{sec:omittedproofs}]{bhh_kk_arxiv}.
%Note that the above proof relies on Claims~\ref{claim:non-degen} and~\ref{claim:degenfull},
%in particular see the proof of Claim~\ref{claim:q_1}.
%\begin{proofofnoerrorlemmacaseone}
Using Claims~\ref{claim:conclusion1} and~\ref{claim:conclusion2}, we can now easily finish the proof.
Suppose that the call to \textsc{Asym-Edge-Col-$\hat{\mathcal{B}}_{\eps^*}$}$(G)$ gets stuck for some $k$-graph $G$, and consider the subhypergraph $G' \subseteq G$ that \textsc{Asym-Edge-Col-$\hat{\mathcal{B}}_{\eps^*}$} gets stuck on at this moment. 
Then \textsc{Special}$(G', \eps^*, n)$ or \textsc{Grow}$(G', \eps^*, n)$ returns a copy of a $k$-graph $F$ in $\mathcal{F}_0(H_1, H_2, n, \eps^*)$ or $\tilde{\mathcal{F}}(H_1, H_2, n, \eps^*)$, respectively, that is contained in $G' \subseteq G$. 
By Claims~\ref{claim:conclusion1} and~\ref{claim:conclusion2}, this event a.a.s.~does not occur in $G = G^k_{n,p}$ with $p$ as claimed.
Thus \textsc{Asym-Edge-Col-$\hat{\mathcal{B}}_{\eps^*}$} does not get stuck a.a.s.~and, by Lemma~\ref{lemma:errorvalid}, finds a valid colouring for $H_1$ and $H_2$ of $G^k_{n,p}$ with $p \leq bn^{-1/m_k(H_1,H_2)}$ a.a.s.\qed
%\end{proofofnoerrorlemmacaseone}

\section{A connectivity result for hypergraphs}\label{sec:connectivity}

Nenadov and Steger~\cite{ns} proved that strictly $2$-balanced graphs $H$ with $m_2(H) > 1$ are always $2$-connected (see \cite[Lemma 3.3]{ns}). 
For $k$-graphs $H_1$ and $H_2$ with $m_k(H_1) \geq m_k(H_2)$, we now prove a generalisation of this result when $H_2$ is strictly $k$-balanced and $H_1$ is strictly balanced with respect to $d_k(\cdot, H_2)$ for the $m_k(H_1) > m_k(H_2)$ case and when $H_1$ and $H_2$ are both strictly $k$-balanced for the $m_k(H_1) = m_k(H_2)$ case. This will be crucial in proving Lemmas~\ref{lem:finitegreater} and \ref{lem:finiteequal},
in particular, Lemmas~\ref{lem:newfinitenesshyper1} and \ref{lem:newfiniteness1a}. The proof is similar to that of \cite[Lemma 3.3]{ns}.

\begin{lem}\label{lem:newconnectivity}
    Let $H_1$ and $H_2$ be $k$-graphs such that $m_k(H_1) \geq m_k(H_2) \geq 1/(k-1)$, $H_2$ is strictly $k$-balanced and $H_1$ is strictly balanced with respect to $d_k(\cdot, H_2)$ if $m_k(H_1) > m_k(H_2)$ and $H_1$ and $H_2$ are both strictly $k$-balanced if $m_k(H_1) = m_k(H_2)$.
    Then $H_1$ and $H_2$ are connected. Moreover, for $i \in \{1,2\}$, if $m_k(H_2) > 1$,\footnote{Note that we could have $m_k(H_2)\geq 1$ here instead, but with the strictly $k$-balanced condition this forces $H_2$ to be two edges intersecting in $k-1$ vertices. Thus, the first property for $J_1$ and $J_2$ vacuously does not hold. Moreover, $H_2$ clearly contains vertices $x$ with $\deg(x) = 1$ and we wish to avoid $H_2$ having such a property in this paper (see Corollary~\ref{col:newconcor} and its applications). Hence, since we assume $m_k(H_2) > 1$ throughout this paper, we make this assumption here also.} then the following holds: 
    $V(H_i)$ does not contain a cut set $C$ of size at most $k-1$ such that there exist two $k$-graphs $J_1, J_2 \subsetneq H_i$ with all the following properties.
    \begin{itemize}
        \item $e_{J_1} \geq 1$, $e_{J_2} \geq 2$,
        \item $E(J_1) \cap E(J_2) = \emptyset$,
        \item $E(J_1) \cup E(J_2) = E(H_1)$, and
        \item $V(J_1) \cap V(J_2) \subseteq C$.
    \end{itemize}
\end{lem}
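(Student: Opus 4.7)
The plan is to adapt the Nenadov--Steger argument \cite{ns} by playing the strict $k$-balancedness (or strict balancedness w.r.t.\ $d_k(\cdot,H_2)$) against a density computation that uses Fact~\ref{fact:ineq} in reverse: from two strict inequalities of the form $d_k(J_j) < d_k(H)$ on the pieces we reconstruct a strict inequality on $H$ that contradicts either $m_k(H_2)\geq 1/(k-1)$ (for connectivity) or $m_k(H_2)>1$ (for the cut set).

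\emph{Connectivity.} I first treat $H_2$. If $H_2$ is disconnected, pick any decomposition $H_2 = J_1 \cup J_2$ with $V(J_1)\cap V(J_2)=\emptyset$. If some $J_j$ has no edges, then $J_{3-j}\subsetneq H_2$ has the same edges on strictly fewer vertices, so $d_k(J_{3-j})>d_k(H_2)$, contradicting strict $k$-balancedness. Hence both $J_j$ contain an edge and in particular $v(J_j)\geq k$. If both satisfy $v(J_j)\geq k+1$, then strict $k$-balancedness gives $e_{J_j}-1 < (v_{J_j}-k)d_k(H_2)$ for $j=1,2$; summing and using $e_{H_2}=e_{J_1}+e_{J_2}$, $v_{H_2}=v_{J_1}+v_{J_2}$ together with $e_{H_2}-1=(v_{H_2}-k)d_k(H_2)$ yields $k\,d_k(H_2)<1$, contradicting $d_k(H_2)\geq 1/(k-1)$. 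The degenerate sub-case $(v_{J_1},e_{J_1})=(k,1)$ is handled by substituting $d_k(J_1)=1/k$ directly and running the same algebra. The argument for $H_1$ is identical when $m_k(H_1)=m_k(H_2)$; when $m_k(H_1)>m_k(H_2)$, it is instead run with $d_k(\cdot,H_2)$ in place of $d_k(\cdot)$, using that $H_1$ is strictly balanced with respect to $d_k(\cdot,H_2)$, and the analogous calculation yields $m_k(H_2)<1/k$, again a contradiction.

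\emph{No small cut set.} Fix $i\in\{1,2\}$ and suppose such $C,J_1,J_2$ exist. Then $e_{H_i}=e_{J_1}+e_{J_2}$ and $v_{H_i}=v_{J_1}+v_{J_2}-|C|$. In the easier case (either $i=2$, or $i=1$ with $m_k(H_1)=m_k(H_2)$), $H_i$ is strictly $k$-balanced, and since $e_{J_2}\geq 2$ forces $v_{J_2}\geq k+1$, adding the two strict inequalities $e_{J_j}-1<(v_{J_j}-k)d_k(H_i)$ and simplifying produces $(k-|C|)\,d_k(H_i)<1$; with $|C|\leq k-1$ this gives $d_k(H_i)<1$, contradicting $m_k(H_2)>1$ (together with $m_k(H_i)\geq m_k(H_2)$). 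The edge case $(v_{J_1},e_{J_1})=(k,1)$ is handled as before by substituting $d_k(J_1)=1/k$. In the remaining case $i=1$ with $m_k(H_1)>m_k(H_2)$, I use instead $d_k(J_j,H_2)<d_k(H_1,H_2)$: adding the inequalities $e_{J_j}<(v_{J_j}-k+1/m_k(H_2))d_k(H_1,H_2)$ and comparing with $e_{H_1}=(v_{H_1}-k+1/m_k(H_2))d_k(H_1,H_2)$ gives $m_k(H_2)<1/(k-|C|)\leq 1$, again a contradiction.

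The bulk of the work is the generic density computation, which closely mirrors \cite[Lemma~3.3]{ns}; the main obstacle is simply the bookkeeping to cover the degenerate sub-case where $J_1$ is a single edge (and, for the connectivity part, the sub-case where a component contains no edges), but in each instance the algebra collapses to the same contradiction.
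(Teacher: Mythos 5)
Your proposal is correct and follows essentially the same route as the paper's proof (itself modelled on Nenadov--Steger): play the strict $k$-balancedness, resp.\ strict balancedness with respect to $d_k(\cdot,H_2)$, of the two pieces against the (balanced) density of the whole graph, sum the resulting strict inequalities, and contradict $m_k(H_2)\geq 1/(k-1)$ for connectivity, resp.\ $m_k(H_2)>1$ (via $0<|C|-k+1/m_k(H_2)$ in the asymmetric case) for the cut-set statement. The only cosmetic difference is that where you treat the single-edge piece $(v_{J_1},e_{J_1})=(k,1)$ as a separate degenerate case, the paper first deduces $\delta(H_i)\geq 2$ from strict $k$-balancedness and $m_k(H_2)>1$ to force $e_{J_1},e_{J_2}\geq 2$ (so $v_{J_1},v_{J_2}\geq k+1$); the algebra is otherwise the same.
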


\begin{proof}
    We will first show that $H_1$ is connected in the case when $m_k(H_1) > m_k(H_2)$. 
    Assume for a contradiction that $H_1$ has $\ell_1 \geq 2$ components and denote the number of vertices and edges in each component by $u_1, \ldots, u_{\ell_1}$ and $c_1, \ldots, c_{\ell_1}$, respectively. 
    Then, since $H_1$ is strictly balanced with respect to $d_k(\cdot, H_2)$, we must have that \[\frac{\sum_{i=1}^{\ell_1} c_i}{\sum_{i=1}^{\ell_1} u_i - k + \frac{1}{m_k(H_2)}} > \frac{c_1}{u_1 - k + \frac{1}{m_k(H_2)}}\] and \[\frac{\sum_{i=1}^{\ell_1} c_i}{\sum_{i=1}^{\ell_1} u_i - k + \frac{1}{m_k(H_2)}} > \frac{\sum_{i=2}^{\ell_1} c_i}{\sum_{i=2}^{\ell_1} u_i - k + \frac{1}{m_k(H_2)}}.\] Since $m_k(H_2) \geq 1/(k-1) > 1/k$, 
    by Fact~\ref{fact:ineq}(i) we get that\footnote{Note that if $a/b < C$ or $c/d < C$ in Fact~\ref{fact:ineq}(i), as is the case here, then $\frac{a+c}{b+d} < C$.}
    \[\frac{\sum_{i=1}^{\ell_1} c_i}{\sum_{i=1}^{\ell_1} u_i - k + \frac{1}{m_k(H_2)}} > \frac{\sum_{i=1}^{\ell_1} c_i}{\sum_{i=1}^{\ell_1} u_i - 2k + \frac{2}{m_k(H_2)}} > \frac{\sum_{i=1}^{\ell_1} c_i}{\sum_{i=1}^{\ell_1} u_i - k + \frac{1}{m_k(H_2)}},\] a contradiction.

    Now let us show similarly that $H_2$ is connected when $m_k(H_1) > m_k(H_2)$. Note that this proof also covers the $m_k(H_1) = m_k(H_2)$ case for both $H_1$ and $H_2$. 
    Assume for a contradiction that $H_2$ has $\ell_2 \geq 2$ components and denote the number of vertices and edges in each component by $v_1, \ldots, v_{\ell_2}$ and $d_1, \ldots, d_{\ell_2}$, respectively.
    Then, since $H_2$ is strictly $k$-balanced we get that
    \[\frac{\sum_{i=1}^{\ell_2}d_i - 1}{\sum_{i=1}^{\ell_2}v_i - k} > \frac{d_1 - 1}{v_1 - k}\] and 
    \[\frac{\sum_{i=1}^{\ell_2}d_i - 1}{\sum_{i=1}^{\ell_2}v_i - k} > \frac{\sum_{i=2}^{\ell_2}d_i - 1}{\sum_{i=2}^{\ell_2}v_i - k}.\]  By Fact~\ref{fact:ineq}(i) we get that \[\frac{\sum_{i=1}^{\ell_2}d_i - 1}{\sum_{i=1}^{\ell_2}v_i - k} \geq \frac{\sum_{i=1}^{\ell_2}d_i - 2}{\sum_{i=1}^{\ell_2}v_i - 2k}.\] Together with \[\frac{\sum_{i=1}^{\ell_2}d_i - 1}{\sum_{i=1}^{\ell_2}v_i - k} = m_k(H_2) \geq \frac{1}{k-1},\] by Fact~\ref{fact:ineq}(i) we see that \[\frac{\sum_{i=1}^{\ell_2}d_i - 1}{\sum_{i=1}^{\ell_2}v_i - k} \geq \frac{\sum_{i=1}^{\ell_2}d_i - 1}{\sum_{i=1}^{\ell_2}v_i - k - 1} >  \frac{\sum_{i=1}^{\ell_2}d_i - 1}{\sum_{i=1}^{\ell_2}v_i - k},\] a contradiction.

    Let us now assume $m_k(H_1) > m_k(H_2) > 1$ and, for a contradiction, that there exists a cut set $C$ and non-empty $k$-graphs $J_1, J_2 \subsetneq H_1$ with the properties given in the statement of Lemma~\ref{lem:newconnectivity}. 
    Since $H_1$ is strictly balanced with respect to $d_k(\cdot, H_2)$, these properties together with Fact~\ref{fact:ineq}(i) imply that \begin{align} e_1 = e_{J_1} + e_{J_2} & < m_k(H_1, H_2)\left(v_{J_1} - k + \frac{1}{m_k(H_2)} + v_{J_2} - k + \frac{1}{m_k(H_2)}\right)\nonumber\\ & = m_k(H_1, H_2)\left(v_1 + |C| - 2k + \frac{2}{m_k(H_2)}\right).\nonumber\end{align}
    Since $H_1$ is (strictly) balanced with respect to $d_k(\cdot, H_2)$ and $H_2$ is strictly $k$-balanced, we may substitute in $e_1/(v_1 - k + 1/m_k(H_2))$ for $m_k(H_1, H_2)$ into the above and  rearrange to get \[0 < |C| - k + \frac{1}{m_k(H_2)},\] a contradiction since $m_k(H_2) \geq 1$ and $|C| \leq k-1$. 

    % If $m_k(H_2) = 1$ and $H_2$ is strictly $k$-balanced, then since we know $H_2$ is connected we must have that $H_2$ is precisely two edges intersecting in $k-1$ vertices. Notice then that the second part of the statement of Lemma~\ref{lem:newconnectivity} is vacuously true for $H_2$ as we cannot choose $k$-graphs $J_1, J_2 \subset H_2$ such that $E(J_1) \geq 1, E(J_2) \geq 2$ and $E(J_1) \cap E(J_2) = \emptyset$.
    (The following covers $H_1$ and $H_2$ in the $m_k(H_1) = m_k(H_2)$ case as well.) Since $m_k(H_2) > 1$ and $H_2$ is (strictly) $k$-balanced, we have $e_2 \geq 3$. 
    Moreover, we have \begin{equation}\label{eq:nodegree1vertex}
    \frac{e_2 - 2}{v_2-k-1} > \frac{e_2 - 1}{v_2-k}
    \end{equation} and so $\delta(H_2) \geq 2$
    %\footnote{For a hypergraph $H$, we use $\delta_1(H)$ for the minimum $1$-degree of $H$. REMOVED FOOTNOTE AS NOW IN NOTATION.}
    since $H_2$ is \emph{strictly} $k$-balanced. Let us assume for a contradiction that there exists a cut set $C$ and $J_1, J_2 \subsetneq H_2$ with the properties in Lemma~\ref{lem:newconnectivity}.
    Observe that since $\delta(H_2) \geq 2$, we must also have that $e_{J_1}, e_{J_2} \geq 2$, and thus $v_{J_1}, v_{J_2} \geq k+1$. 
    Thus, using that $H_2$ is strictly $k$-balanced together with Fact~\ref{fact:ineq}(i) we have that \[e_2 - 2 = e_{J_1} + e_{J_2} - 2 < m_k(H_2)(v_{J_1} - k + v_{J_2} - k) = m_k(H_2)(v_2 + |C| - 2k).\] Thus \[\frac{e_2 - 2}{v_2 + |C| - 2k} < \frac{e_2 - 1}{v_2 - k} \stackrel{\eqref{eq:nodegree1vertex}}{<} \frac{e_2 - 2}{v_2-k-1},\] a contradiction since $|C|\leq k-1$. \COMMENT{JH: All these proofs are slightly different, so I think we should keep them all in.}
\end{proof}

We will need the following corollary in the proof of Lemma~\ref{lem:newfinitenesshyper1}.

\begin{col}\label{col:newconcor}
    Let $H_1$ and $H_2$ be $k$-graphs such that $m_k(H_1) \geq m_k(H_2) > 1$, 
    $H_2$ is strictly $k$-balanced and $H_1$ is strictly balanced with respect to $d_k(\cdot, H_2)$ if $m_k(H_1) > m_k(H_2)$ and $H_1$ and $H_2$ are both strictly $k$-balanced if $m_k(H_1) = m_k(H_2)$. Then $\delta(H_1), \delta(H_2) \geq 2$.
\end{col}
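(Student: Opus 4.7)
The plan is to prove $\delta(H_2) \geq 2$ directly from the proof of Lemma~\ref{lem:newconnectivity} (it is actually established there as an intermediate step via inequality~\eqref{eq:nodegree1vertex}), and then argue the same for $H_1$ by splitting on the two cases of the hypothesis. Throughout, I suppose for a contradiction that some $v \in V(H_i)$ has $\deg(v) = 1$ and let $e$ be the unique edge containing $v$; I then consider the proper subhypergraph $J := H_i - v$ with $v_J = v_i - 1$ and $e_J = e_i - 1$.

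For $H_2$ and for $H_1$ in the case $m_k(H_1) = m_k(H_2)$, strict $k$-balancedness of $H_i$ gives $d_k(J) < d_k(H_i) = m_k(H_i)$, which after clearing denominators rearranges to $e_i < v_i - k + 1$, while $m_k(H_i) > 1$ forces $e_i > v_i - k + 1$, a contradiction. (This is literally the calculation already appearing in the proof of Lemma~\ref{lem:newconnectivity}, now applied to $H_1$ in place of $H_2$.)

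For $H_1$ in the case $m_k(H_1) > m_k(H_2)$, I use that $H_1$ is strictly balanced with respect to $d_k(\cdot, H_2)$. Write $a := v_1 - k + 1/m_k(H_2)$; note $a > 0$ since $v_1 \geq k+1$ (else $H_1 = K_k^{(k)}$ would give $m_k(H_1) = 1/k < 1$). Strict balancedness gives
\[
\frac{e_1 - 1}{a - 1} \;=\; d_k(J, H_2) \;<\; d_k(H_1, H_2) \;=\; \frac{e_1}{a},
\]
and cross-multiplying yields $e_1 < a$. On the other hand, by Proposition~\ref{prop:m2h1h2} and the hypothesis, $d_k(H_1, H_2) = m_k(H_1, H_2) \geq m_k(H_2) > 1$, so $e_1/a > 1$ and hence $e_1 > a$, a contradiction.

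The main (minor) obstacle is just bookkeeping around degenerate edge cases, specifically ensuring $v_1 \geq k+1$ and $e_i \geq 3$ so that $J$ is really a non-trivial proper subhypergraph and the strict inequalities from strict balancedness actually apply; both follow immediately from $m_k(H_i) > 1$. Alternatively, one could phrase the argument as a direct application of Lemma~\ref{lem:newconnectivity} with the cut set $C := e \setminus \{v\}$ of size $k-1$, $J_1$ equal to the single edge $e$ (so $e_{J_1} = 1$), and $J_2 := H_i - v$ (so $e_{J_2} = e_i - 1 \geq 2$ once one verifies $e_i \geq 3$), but the direct computation above is equally short and self-contained.
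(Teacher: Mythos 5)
Your proof is correct, and it only partially follows the paper's route. The part for $H_2$ (and for both graphs when $m_k(H_1)=m_k(H_2)$) is exactly the computation the paper relies on — the inequality~\eqref{eq:nodegree1vertex} established inside the proof of Lemma~\ref{lem:newconnectivity} — so there you and the paper coincide (the paper simply cites that argument). The difference is in the remaining case, $H_1$ with $m_k(H_1) > m_k(H_2)$: the paper first verifies $e_1 \geq 3$ and connectivity and then invokes the second (cut-set) part of Lemma~\ref{lem:newconnectivity} with $J_1 = e$, $J_2 = H_1[E(H_1)\setminus e]$ and $C = V(J_1)\cap V(J_2)$, whereas your primary argument inlines a direct density computation: writing $a := v_1 - k + \tfrac{1}{m_k(H_2)}$ and $J := H_1 - v$, strict balancedness with respect to $d_k(\cdot,H_2)$ gives $\tfrac{e_1-1}{a-1} < \tfrac{e_1}{a}$, hence $e_1 < a$, while $m_k(H_1,H_2) \geq m_k(H_2) > 1$ (Proposition~\ref{prop:m2h1h2}) gives $e_1 > a$. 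This is a genuinely different and slightly leaner route: it never needs the cut-set statement, nor the preliminary claim $e_1 \geq 3$ that the paper proves before applying the lemma; the only side condition is $v_1 \geq k+1$ (so that $a-1>0$ and $J$ is a legitimate proper subhypergraph with $v_J\geq k$), which you correctly dispose of via $m_k(H_1)>1$, and similarly the $v_J\geq k+1$ issue in the balanced case follows from $e_i\geq 3$ under the degree-one assumption, as you indicate. The alternative you sketch at the end (cut set $C = e\setminus\{v\}$, $J_1 = e$, $J_2 = H_1 - v$) is essentially the paper's proof; the paper's choice buys reuse of the already-proved structural lemma, while your direct calculation buys self-containment and fewer preliminary verifications.
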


\begin{proof}
    The result for $H_1$ and $H_2$ in the $m_k(H_1) = m_k(H_2)$ case and $H_2$ in the $m_k(H_1) > m_k(H_2)$ case was already shown in the proof of Lemma~\ref{lem:newconnectivity}. It remains to prove the result for $H_1$ in the case when $m_k(H_1) > m_k(H_2)$. For a contradiction, assume there exists a vertex $v$ of $1$-degree $1$ in $H_1$ and let $e$ be the edge containing $v$.
    By the first part of Lemma~\ref{lem:newconnectivity} we know that $H_1$ is connected.
    Further, since $m_k(H_1) > m_k(H_1, H_2) > m_k(H_2) > 1$, we have that $e_1\geq 3$. Indeed, if $e_1 = 1$ then $m_k(H_1) = 1/k < 1$, and if $e_1 = 2$ then, using that $H_1$ is (strictly) balanced with respect to $d_k(\cdot, H_2)$, we get that $k+2 > v_1 + 1/m_k(H_2)$ which with $m_k(H_2) > 1$ implies that $v_1 = k+1$. But then $m_k(H_1) = 1$.
    Hence we can choose $J_1 = e$, $J_2 = H_1[E(H_1) \setminus e]$ and $C = V(J_1) \cap V(J_2)$, observing that then $|C| \leq k-1$, $E(J_1) \cap E(J_2) = \emptyset$, $E(J_1) \cup E(J_2) = E(H_1)$, $e_{J_1} \geq 1$ and $e_{J_2} \geq 2$. But this contradicts the second part of Lemma~\ref{lem:newconnectivity}. 
\end{proof}

\section{Proof of finiteness of \texorpdfstring{$\hat{\mathcal{B}}_{\eps}$}{B family}}\label{sec:bfinite}

In this section we prove Theorem~\ref{thm:bfinite}. Naturally, we split the proof into two cases: (1) $m_k(H_1) > m_k(H_2)$ and (2) $m_k(H_1) = m_k(H_2)$.

\begin{lem}\label{lem:finitegreater}
    For all $\eps \geq 0$ and all pairs of $k$-graphs $H_1, H_2$ with $m_k(H_1) > m_k(H_2)$ that satisfy the conditions in Conjecture~\ref{conj:kk}, 
    we have that $\hat{\mathcal{B}}(H_1, H_2, \eps)$ is finite.
\end{lem}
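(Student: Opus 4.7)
The plan is to show that the while-loop of \textsc{Grow-$\hat{\mathcal{B}}_{\eps}$} can run for only a bounded (in $H_1$, $H_2$, $\eps$) number of iterations, so that every output has bounded size and $\hat{\mathcal{B}}(H_1, H_2, \eps)$ is finite up to isomorphism.

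First I would bound the number of \emph{degenerate} iterations. Since $H_1$ is balanced with respect to $d_k(\cdot, H_2)$, we have $\lambda(F_0) = \lambda(H_1) = k - 1/m_k(H_2)$, a positive constant. By Claim~\ref{claim:non-degen}, non-degenerate iterations preserve $\lambda(F_i)$, while by Claim~\ref{claim:degenfull}, every degenerate iteration decreases $\lambda(F_i)$ by at least the constant $\kappa = \kappa(H_1, H_2) > 0$. The while-loop condition forces $\lambda(\tilde F) > -\gamma$ for every subhypergraph $\tilde F \subseteq F_i$ throughout; since $\lambda$ is non-increasing along the run, the number of degenerate iterations is at most $D := (\lambda(H_1) + \gamma)/\kappa$, a constant.

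The main obstacle is controlling the \emph{non-degenerate} iterations, which neither shrink $\lambda$ nor directly push towards termination. To handle these I would introduce an auxiliary counter $\phi(F)$ equal to the number of fully open flowers in $F$. Since the petal edges of any fully open flower are by definition open, any $F$ with no open edges satisfies $\phi(F)=0$; in particular every output of \textsc{Grow-$\hat{\mathcal{B}}_{\eps}$} (which is returned via line~\ref{line:returnfi}) has $\phi = 0$, while $\phi(F_0) = \phi(H_1)$ is a constant. The core claim to verify is: any two consecutive non-degenerate iterations together strictly increase $\phi$. The mechanism: \textsc{Eligible-Edge} prefers open edges not lying in a fully open flower, so if such an edge exists, applying \textsc{Close-$e$} attaches a fresh $L$ and pendants $R_{e'}$ that together form a new fully open flower centred at $e$ while every previously existing fully open flower remains intact, giving $\phi$ a strict increase; otherwise, every open edge lies in some flower (``petal mode''), in which case one shows (using Corollary~\ref{col:newconcor}, so that the attached copies of $H_1$, $H_2$ contribute genuinely new structure rather than collapsing into $F_i$) that the new petals created in the current step furnish an open edge outside every flower in $F_{i+1}$, so the subsequent iteration falls into the first case.

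To conclude, I would bound the total number of flowers that can be destroyed throughout the run. Each degenerate iteration adds at most a constant number of edges (one copy of $H_1$, or the bounded structure of a \textsc{Close-$e$} call), hence can close the petals of, and thus destroy, only $O(1)$ existing flowers; the same is true of the initial $\phi(F_0)$. Over all $D = O(1)$ degenerate iterations, the total decrease of $\phi$ is thus $O(1)$. Combined with the ``strict increase over every pair of consecutive non-degenerate iterations'' claim and the fact that $\phi$ must return to $0$ on output, this forces the number of non-degenerate iterations to be bounded by a constant $N = N(H_1, H_2, \eps)$. Hence every $F \in \hat{\mathcal{B}}(H_1, H_2, \eps)$ is obtained from $H_1$ by at most $D + N$ iterations, each attaching a bounded number of copies of $H_1$ and $H_2$; consequently $v(F)$ is bounded, and there are only finitely many $F$ up to isomorphism. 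The hardest step will be the ``two consecutive non-degenerate iterations strictly increase $\phi$'' claim, which requires a careful edge-by-edge audit of how \textsc{Close-$e$}, \textsc{Eligible-Edge} and the preference for open edges outside flowers interact.
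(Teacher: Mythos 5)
Your overall strategy is the same as the paper's: bound the number of degenerate iterations via $\lambda$ using Claims~\ref{claim:non-degen} and~\ref{claim:degenfull}, and control the non-degenerate iterations with a potential function counting fully open flowers, which must vanish on any output and which increases over pairs of consecutive non-degenerate iterations while dropping by only $O(1)$ per degenerate iteration. This is exactly the skeleton of the paper's proof of Lemma~\ref{lem:finitegreater}.

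However, there is one genuine gap at the technical heart of the argument. You assert that ``the petal edges of any fully open flower are by definition open''; this is not definitional. A fully open flower is defined only by the non-degeneracy of its creation and by no later edge touching its internal vertices; that its petal edges are actually open is Lemma~\ref{lem:newfinitenesshyper1}, whose proof is a density computation using that $H_1$ is \emph{strictly} balanced with respect to $d_k(\cdot,H_2)$, that $H_2$ is $k$-balanced, and $\delta(H_1),\delta(H_2)\geq 2$ (Corollary~\ref{col:newconcor}); an intersecting copy of $H_1$ closing a petal edge must be ruled out by this calculation, not by fiat. Without it, neither ``output $\Rightarrow \phi=0$'' nor your ``petal mode'' analysis closes. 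Relatedly, in petal mode your rescue is misplaced: the ``new petals created in the current step'' belong to the newly attached flower, which is itself fully open, so they do not provide an open edge outside every fully open flower. The edge that makes the next iteration safe is a \emph{surviving} petal of the flower that was just destroyed, and showing that this petal is still open is precisely the delicate extra step (the paper's Claim~\ref{claim:newfinitenessc3}, which reruns the density computation of Lemma~\ref{lem:newfinitenesshyper1} in the presence of the newly attached flower) before invoking \textsc{Eligible-Edge}'s priority. As stated, your claim ``any two consecutive non-degenerate iterations strictly increase $\phi$'' would fail if both iterations landed in petal mode; the correct statement, which the missing openness arguments deliver, is $f(i+1)\geq f(i-1)+1$. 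The rest of your accounting (at most $O(1)$ flowers destroyed per degenerate iteration since their internal vertex sets are disjoint and each attachment has bounded size, hence boundedly many iterations before any output) matches the paper's conclusion.
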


\begin{lem}\label{lem:finiteequal}
    For all $\eps \geq 0$ and all pairs of $k$-graphs $H_1, H_2$ with $m_k(H_1) = m_k(H_2)$ that satisfy the conditions in Conjecture~\ref{conj:kk},
    we have that $\hat{\mathcal{B}}(H_1, H_2, \eps)$ is finite.
\end{lem}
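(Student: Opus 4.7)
The plan is to adapt the finiteness argument outlined for Lemma~\ref{lem:finitegreater} to the symmetric setting $m_k(H_1) = m_k(H_2)$, working with the algorithm \textsc{Grow-$\hat{\mathcal{B}}_{\eps}$-Alt} defined in Appendix~\ref{app:mh1mh2equal}. Let $F_0, F_1, \ldots$ denote the sequence of $k$-graphs produced by a run of the algorithm on input $(H_1, H_2, \eps)$, and recall from \eqref{eq:gamma} that $\gamma = \gamma(H_1, H_2, \eps) \geq 0$. The while-loop terminates as soon as some $\tilde{F} \subseteq F_i$ satisfies $\lambda(\tilde{F}) \leq -\gamma$, so to prove finiteness it suffices to show that on every execution path the number of iterations is bounded by a constant depending only on $H_1, H_2, \eps$, and that in each iteration there are only boundedly many choices (up to isomorphism) for how $F_{i+1}$ extends $F_i$.

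First I would establish symmetric analogues of Claims~\ref{claim:non-degen} and~\ref{claim:degenfull}. Because both $H_1$ and $H_2$ are strictly $k$-balanced with the common value $m_k(H_1) = m_k(H_2) = m_k(H_1, H_2)$, a short computation (essentially identical to the one in the proof of Claim~\ref{claim:non-degen}, but now carried out for either graph that \textsc{Grow-$\hat{\mathcal{B}}_{\eps}$-Alt} may attach when closing an edge) shows $\lambda(F_{i+1}) = \lambda(F_i)$ in every non-degenerate iteration, while degenerate iterations decrease $\lambda$ by at least a constant $\kappa = \kappa(H_1, H_2) > 0$. Combined with the stopping rule $\lambda(\tilde{F}) > -\gamma$, this bounds the total number of degenerate iterations by $\lceil (\lambda(H_1) + \gamma)/\kappa \rceil$.

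Next I would bound the number of consecutive non-degenerate iterations via the fully open flower machinery of Section~\ref{sec:bhatdef}, suitably extended so that a flower is now allowed to be built from a copy of either $H_1$ or $H_2$. As in the overview, the algorithm's use of \textsc{Eligible-Edge} prioritises open edges \emph{not} lying in a fully open flower; this lets one show that, across any block of non-degenerate iterations, the number of fully open flowers in $F_i$ strictly increases after every two steps. Since each fully open flower occupies a bounded number of vertices and edges, and Claim~\ref{claim:mgbound} (which goes through unchanged in the equal case, again because $H_1, H_2$ share the same $k$-density) forces $v(F_i)$ and $e(F_i)$ to grow at controlled rates during non-degenerate stretches, any such stretch has length bounded by a constant. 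Lemma~\ref{lem:newconnectivity} and Corollary~\ref{col:newconcor}, which apply precisely under the hypotheses of Lemma~\ref{lem:finiteequal}, guarantee $\delta(H_1), \delta(H_2) \geq 2$ and rule out degenerate attachments through small cut sets, so attachments genuinely create the structure the flower argument requires.

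The main obstacle I expect is the symmetry between $H_1$ and $H_2$: in the equal case the algorithm \textsc{Grow-$\hat{\mathcal{B}}_{\eps}$-Alt} may, at each closing step, choose to attach either a copy of $H_1$ or a copy of $H_2$, and both choices contribute equally to the density arithmetic. Consequently the flower bookkeeping behind the two-iteration strict-increase step needs to be carried out with flower types indexed by $\{1,2\}$, and one must verify that Lemma~\ref{lem:newconnectivity} still forbids the pathological `collapsing' attachments that would otherwise allow a fully open flower to be destroyed without creating a new one. Once this is handled, combining the degenerate bound, the non-degenerate bound, and the constant number of isomorphism classes of attachments per step yields finiteness of $\hat{\mathcal{B}}(H_1, H_2, \eps)$.
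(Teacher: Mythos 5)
Your proposal follows essentially the same route as the paper's proof in Appendix~\ref{sec:finiteequal}: symmetric analogues of Claims~\ref{claim:non-degen} and~\ref{claim:degenfull} (Claims~\ref{claim:non-degensym} and~\ref{claim:degenfullsym}) bound the degenerate iterations, and the fully open flower machinery—with flowers now being single attached copies of $H_1$ or $H_2$, and with Lemma~\ref{lem:newconnectivity}, Corollary~\ref{col:newconcor} and strict $k$-balancedness ruling out the collapsing attachments (Lemma~\ref{lem:newfiniteness1a} and Claims~\ref{claim:newfinitenessc1a}--\ref{claim:newfinitenessc3a})—forces the fully open flower count to grow along non-degenerate stretches, so that outputs can only occur within a bounded number of iterations. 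One small remark: the appeal to Claim~\ref{claim:mgbound} for "controlled growth rates" is unnecessary—the bounded size of each flower already supplies the constant $Y$ used in the final counting—but this does not affect the correctness of the plan.
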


Clearly, Lemmas~\ref{lem:finitegreater} and \ref{lem:finiteequal} together imply Theorem~\ref{thm:bfinite}. We prove Lemma~\ref{lem:finitegreater} in Section~\ref{sec:finitegreater}. The proof of Lemma~\ref{lem:finiteequal} is found in \cite[Appendix~\ref{sec:finiteequal}]{bhh_kk_arxiv} and is very similar to that of Lemma~\ref{lem:finitegreater}.

\subsection{Proof of Lemma~\ref{lem:finitegreater}}\label{sec:finitegreater}

We call iteration $i$ of the while loop of algorithm \textsc{Grow-$\hat{\mathcal{B}}_{\eps}$} \emph{non-degenerate} if all of the following hold:

\begin{itemize}
    \item \textsc{Close-$e$} is called (in line~\ref{line:F_i+1getsextendV} of \textsc{Grow-$\hat{\mathcal{B}}_{\eps}$} or line~\ref{line:closededgeclosee} of \textsc{Close-$e$-Closededge}); 
    \item In line~\ref{line:copyofh2V} of \textsc{Close-$e$}, we have $V(F) \cap V(L) = e$;
    \item In every execution of line~\ref{line:copyofh1V} of \textsc{Close-$e$}, we have $V(F') \cap V(R_{e'}) = e'$.
\end{itemize}

Otherwise, we call iteration $i$ \emph{degenerate}. 
One can observe that a non-degenerate iteration in algorithm \textsc{Grow-$\hat{\mathcal{B}}_{\eps}$} is essentially equivalent to a non-degenerate iteration in algorithm \textsc{Grow} in Section~\ref{sec:grow}, specifically in terms of the exact structure of the attached $k$-graph which, importantly,
means the same number of edges and vertices are added. A degenerate iteration in algorithm \textsc{Grow-$\hat{\mathcal{B}}_{\eps}$} is similarly equivalent to a degenerate iteration of \textsc{Grow}. 

In order to conclude that \textsc{Grow-$\hat{\mathcal{B}}_{\eps}$} outputs only a finite number of $k$-graphs, 
    it is sufficient to prove that there exists a constant $K$ such that there are no outputs of \textsc{Grow-$\hat{\mathcal{B}}_{\eps}$} after $K$ iterations of the while-loop.
In order to prove this, we employ Claims~\ref{claim:non-degen} and \ref{claim:degenfull} (see Section~\ref{subsec:lemnoerror}). Since degenerate and non-degenerate iterations operate in the same way (as described above) in algorithms \textsc{Grow} and \textsc{Grow-$\hat{\mathcal{B}}_{\eps}$}, we may write Claims~\ref{claim:non-degen} and \ref{claim:degenfull} with reference to \textsc{Grow-$\hat{\mathcal{B}}_{\eps}$}.

\begin{claim:non-degen}
For all $\eps \geq 0$, if iteration $i$ of the while-loop in procedure \textsc{Grow-$\hat{\mathcal{B}}_{\eps}$} is non-degenerate, we have \[\lambda(F_{i+1}) = \lambda(F_i).\]
\end{claim:non-degen}

\begin{claim:degenfull}
There exists a constant $\kappa_1 = \kappa_1(H_1,H_2) > 0$ such that, for all $\eps \geq 0$, if iteration $i$ of the while-loop in procedure \textsc{Grow-$\hat{\mathcal{B}}_{\eps}$} is degenerate then we have \[\lambda(F_{i+1}) \leq \lambda(F_i) - \kappa_1.\]
\end{claim:degenfull}

For any iteration $i$, if  \textsc{Close-$e(F,e)$} is used, then let $J_i:=L \cup_{e' \in E(L) \setminus E(F)} R_{e'}$, where $L$ is chosen in line~\ref{line:copyofh2V} and the $R_{e'}$ are chosen in line~\ref{line:copyofh1V}.
If \textsc{Attach-$H_1(F,v,w)$} is used, then let $J_i:=R$, where $R$ is chosen in  line~\ref{line:copyofh2V}.

If $i$ is non-degenerate, then we call $J_i$ a {\it flower}, and we call the edge $e$ for which \textsc{Close-$e(F,e)$} was run on the {\it attachment edge} of $J_i$.
For a flower $J_i$ with attachment edge $e=\{x_1, \ldots, x_k\}$, call $V(J_i) \setminus \{x_1, \ldots, x_k\}$ the {\it internal vertices}
and $E(J_i) \setminus E(L)$ the {\it petal edges}. %where $L$ is the copy of $H_2$ chosen in line~\ref{line:copyofh2V} of \textsc{Close-$e(F,e)$}.

Call a flower $J_{i}$ {\it fully open at time $\ell$}
if it was produced in a non-degenerate iteration at time $i$ with $i \leq \ell$, and no internal vertex of $J_i$ intersects with an edge added after time $i$.
\COMMENT{RH: Maybe need to make more clear: $F_i$'s are the full $k$-graphs at step $i$. JH: Its such a fundamental thing in the algorithm, it would be weird for a reader to miss that. Its the same in Grow and similar algorithms.}

\begin{lem}\label{lem:newfinitenesshyper1}
Let $G$ be any $k$-graph and let $e \in E(G)$ be any edge. 
Suppose $J_i$ is a flower attached at $e$.
Then every petal edge of $J_i$ is open at time $i$. 
\end{lem}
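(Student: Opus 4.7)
The plan is to argue by contradiction. Suppose some petal edge $f \in E(R_{e'}) \setminus \{e'\}$ of $J_i$ is not open at time $i$; then there is $L'' \in \mathcal{L}^*_{G \cup J_i}$ with $f \in E(L'')$, meaning that for every $e^* \in E(L'')$ there exists a copy $R^*_{e^*} \cong H_1$ in $G \cup J_i$ with $E(L'') \cap E(R^*_{e^*}) = \{e^*\}$. The crucial structural input, coming from non-degeneracy of the flower attachment, is the \emph{isolation property}
\[V(R_{e'}) \cap V\bigl((G \cup J_i) \setminus E(R_{e'})\bigr) \subseteq e',\]
so $R_{e'}$ meets the rest of $G \cup J_i$ only through the $k$ vertices of the attachment edge $e'$. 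Throughout I will use $\delta(H_1), \delta(H_2) \geq 2$ from Corollary~\ref{col:newconcor}, together with $e_2 \geq 3$ (which is forced by $H_2$ being strictly $k$-balanced with $m_k(H_2) > 1$).

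First I would handle the sub-case $R^*_f = R_{e'}$. Here $E(L'') \cap E(R_{e'}) = \{f\}$, so every edge in $E(L'') \setminus \{f\}$ lies outside $R_{e'}$, and the isolation property forces the crossing vertex set of the partition $E(L'') = \{f\} \sqcup (E(L'') \setminus \{f\})$ to lie in $V(f) \cap e'$, which has size at most $k-1$ since $f$ and $e'$ are distinct edges of $R_{e'}$. Combined with $|E(L'') \setminus \{f\}| = e_2 - 1 \geq 2$ and $\delta(H_2) \geq 2$ (the latter absorbing the $e_{J_2} = 1$ branch of the lemma), this contradicts Lemma~\ref{lem:newconnectivity} applied to $L'' \cong H_2$.

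Next assume $R^*_f \ne R_{e'}$. A short edge count gives $R^*_f \not\subseteq R_{e'}$: both are copies of $H_1$ with $e_1$ edges and $R_{e'}$ has no isolated vertices, so containment would force equality. Hence $R^*_f$ has at least one edge outside $R_{e'}$, and writing $E^*_1 = E(R^*_f) \cap E(R_{e'}) \ni f$ and $E^*_2 = E(R^*_f) \setminus E(R_{e'}) \ne \emptyset$, the isolation property forces $V(E^*_1) \cap V(E^*_2) \subseteq e'$. Lemma~\ref{lem:newconnectivity} applied to $R^*_f \cong H_1$, with $\delta(H_1) \geq 2$ absorbing the $|E^*_2| = 1$ branch, then rules out this cut having size at most $k-1$.

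The remaining obstacle, and the hard part, is the residual case in which this cut equals all of $e'$, so $e' \subseteq V(R^*_f)$ and $e'$ acts as a $k$-vertex separator of $R^*_f$. I would resolve this by re-running the analysis on a neighbour $e^* \in E(L'')$ of $f$, which exists because $\delta(H_2) \geq 2$. Note that $R^*_{e^*} \ne R_{e'}$ automatically: since $f \in E(L'') \cap E(R_{e'})$, equality would give $f \in E(L'') \cap E(R^*_{e^*}) = \{e^*\}$ and hence $f = e^*$, which is excluded. Running the preceding argument on $R^*_{e^*}$ either immediately yields a $(k-1)$-sized cut contradicting Lemma~\ref{lem:newconnectivity}, or forces $e' \subseteq V(R^*_{e^*})$ as well; in this final scenario I intend to apply the strict balance of $H_1$ with respect to $d_k(\cdot, H_2)$, together with Fact~\ref{fact:ineq}, to the sub-hypergraphs of $R^*_f$ and $R^*_{e^*}$ induced on the two sides of the separator $V(e')$, producing a density violation. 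The principal difficulty will be this final density computation, where the delicate threefold interaction of $R_{e'}$, $R^*_f$, and $R^*_{e^*}$ along $V(e')$ must be tracked carefully to deliver a genuine contradiction.
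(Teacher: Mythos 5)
Your proposal takes a genuinely different route from the paper's and, as you yourself flag, leaves a gap that I do not think closes along the lines you sketch. The paper's proof of Lemma~\ref{lem:newfinitenesshyper1} is a \emph{global} density argument: taking the single copy $P$ of $H_1$ with $E(L_1) \cap E(P) = \{f\}$, it decomposes $P$ across \emph{all} the petals $R_1,\dots,R_{e_2-1}$ together with the ambient graph $G$, writing $P_j := P \cap R_j$, and it records a subhypergraph $M \subseteq L \cong H_2$ consisting of the attachment edges that $P$ actually uses. The chain of inequalities in~\eqref{eq:openedgecalc} then closes because (a) each $P_j \subsetneq R_j \cong H_1$ satisfies $d_k(P_j,H_2) < m_k(H_1,H_2)$ by strict balance, and (b) $M$ being a subhypergraph of the $k$-balanced $H_2$ gives exactly the slack $v_M - k + \frac{1-e_M}{m_k(H_2)} \geq 0$ needed to absorb the surplus $\frac{1}{m_k(H_2)}$-terms that appear once per petal rather than once for $P$. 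The cut lemma (Lemma~\ref{lem:newconnectivity}) enters only through Claim~\ref{claim:allkvertices}, to force $V(P_j) \cap V(M)$ to be the full attachment edge, which is what makes the vertex count in the display exact.

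Your proposal instead pursues local cut arguments on $R^*_f$ and $R^*_{e^*}$, trying to apply Lemma~\ref{lem:newconnectivity} directly to copies of $H_1$. Your Case $R^*_f = R_{e'}$ is correct and parallels Claim~\ref{claim:PnotR1}. Your Case $R^*_f \neq R_{e'}$ is correct as far as it goes, but you correctly notice that Lemma~\ref{lem:newconnectivity} is silent when the cut is exactly $e'$, of size $k$, and that is precisely where the argument must actually do work. The proposed escape — re-running on a neighbour $e^*$ and then ``applying strict balance \ldots together with Fact~\ref{fact:ineq}'' — is left open, and I do not see how to complete it in the stated framework, for two reasons. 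First, $R^*_{e^*}$ need not meet $R_{e'}$ at all: $e^*$ is just some other edge of $L''$, and $L''$ can leave $R_{e'}$ through the cut $e'$ and never return, so your dichotomy ``either yields a $(k-1)$-sized cut or forces $e' \subseteq V(R^*_{e^*})$'' omits the generic third possibility that $R^*_{e^*}$ lives entirely outside $R_{e'}$ and gives nothing. Second, even in the case where $e'$ genuinely separates $R^*_f$, a two-piece split does not yield a density violation: the denominator of $d_k(\cdot,H_2)$ contains $\frac{1}{m_k(H_2)}$ exactly once, and there is no way to distribute that single term between the two halves so that Fact~\ref{fact:ineq}(i) plus strict balance closes. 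The paper's proof avoids this precisely by decomposing $P$ into as many pieces as there are touched petals and letting the $k$-balance of $M \subseteq H_2$ supply the missing slack; your two-copy, two-piece analysis does not track that information, and the ``delicate threefold interaction'' you allude to is not merely delicate — the accounting is structurally incomplete without the $M$-term.
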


\begin{proof} 
Denote the flower by $$J_i := L \cup \bigcup_{j=1}^{e_2-1} R_i.$$
Suppose for a contradiction that there exists an edge $f \in E(R_i) \setminus E(L)$ for some $j \in [e_2-1]$ which is closed.
Without loss of generality suppose $j=1$.
Let $e' = L \cap R_1$.
Since we assumed $f$ is closed, there exist copies $L_1$ of $H_2$ and $P$ of $H_1$ such that $E(L_1) \cap E(P) = \{f\}$.

\begin{claim}\label{claim:PnotR1}
    $P \not = R_1$.
\end{claim}  

\begin{proof} 
Suppose not.
Then as $f \not= e'$, we may define $w$ to be a vertex of $f$ which is not in $e'$.
Since all other edges of $L_1$ aside from $f$ must be outside of $E(R_1)$, and no vertex outside of $V(R_1)$ is contained in an edge with $w$, 
we have $\deg_{L_1}(w)=1$, which is a contradiction as $\delta(H_2)\geq 2$ by Corollary~\ref{col:newconcor}.
\end{proof}

Let $R_{e_2}=G$, and let $P_j:=R_j \cap P$ for all $j \in [e_2]$.
We may now write $P=\cup_{j \in [e_2]} P_j$.
Let $I:=\{j: |E(R_j) \cap E(P)| \geq 1\}$, that is, the set of indices of $R_j$ which $P$ has non-empty edge-intersection with.
Let $M$ be the subhypergraph of $L$ with edge set $\{e \in E(L): {e}=E(L) \cap E(R_j), j \in I\}$. 
We have $P=\cup_{j \in I} P_i$ and $|I|=e_M$. 
We also have $e_1=e_P=\sum_{j \in I} e_{P_i}$. 

\begin{claim}\label{claim:allkvertices}
    For each $j \in I$, we have $V(P_j) \cap V(M) = V(R_j) \cap V(L)  $.
\end{claim} 
\begin{proof} For any $j\in I$, if $V(P_j) = e$ for some $e \in M$ then the claim holds for that $j$. So assume otherwise for some $j'\in I$. %  is the edge Indeed, these $k$ vertices form a cutset of size $k$ separating the rest of $R_i$ from $L$, 
% and since $P$ is a copy of $H_1$ and contains an edge from $R_i$, by Lemma~\ref{lem:newconnectivity} we must have that $P$ (and therefore $P_i$) contains $V(R_i) \cap V(L)$. 
Then we must have that $|E(P_{j'})| \geq 2$ since $\delta(H_1)\geq 2$ by Corollary~\ref{col:newconcor}; then choosing $C := V(P_{j'}) \cap V(M)$, $J_1 := P[E(P)\setminus E(P_{j'})]$ and $J_2 := P_{j'}$, 
we have that $C$, $J_1$, $J_2$ have the properties in the second part of Lemma~\ref{lem:newconnectivity}, a contradiction to $H_1$ being strictly balanced with respect to $d_k(\cdot, H_2)$.\end{proof}
It follows from Claim~\ref{claim:allkvertices} that $v_1 = v_P = \sum_{j \in I} \left(v_{P_j}-k\right) +v_M$.

Further, each $P_j$ is a strict subhypergraph of $R_j$ since $P \not=R_1$ and $|E(P) \cap E(R_1)|\geq 1$.
Therefore, since $H_1$ is strictly balanced with respect to $d_k(\cdot, H_2)$, for each $j \in I$ we have
\begin{align}\label{eq:pstrict}
\frac{e_{P_j}}{v_{P_j}-k+\frac{1}{m_k(H_2)}} < \frac{e_1}{v_1-k+\frac{1}{m_k(H_2)}}=m_k(H_1,H_2).
\end{align}
Using Claim~\ref{claim:allkvertices} and \eqref{eq:pstrict}, we obtain
\begin{align}\label{eq:openedgecalc}
m_k(H_1,H_2) = m_k(P,H_2) 
& = \frac{e_P}{v_P-k+\frac{1}{m_k(H_2)}} \nonumber \\
& = \frac{\sum_{j \in I} e_{P_j}}{\sum_{j \in I} \left(v_{P_j}-k\right) +v_M -k + \frac{1}{m_k(H_2)}} \nonumber \\
& = \frac{\sum_{j \in I} e_{P_j}}{\sum_{j \in I} \left(v_{P_j}-k+\frac{1}{m_k(H_2)}\right) +v_M -k + \frac{1-e_M}{m_k(H_2)}} \nonumber \\
& \leq \frac{\sum_{j \in I} e_{P_j}}{\sum_{j \in I} \left(v_{P_j}-k+\frac{1}{m_k(H_2)}\right)} \nonumber \\
& < m_k(H_1,H_2),
\end{align}
a contradiction. 
The first inequality uses that $M$ is a subhypergraph of $H_2$ and $H_2$ is (strictly) $k$-balanced,
and the second inequality follows from Fact~\ref{fact:ineq}(i) and \eqref{eq:pstrict}.
\end{proof}

\begin{claim}\label{claim:newfinitenessc1} 
Suppose $J_i$ is a fully open flower at time $\ell-1$ and $J_{\ell}$ is such that $V(J_{\ell})$ has empty intersection with the internal vertices of $J_i$. 
Then $J_i$ is fully open at time $\ell$.
\end{claim}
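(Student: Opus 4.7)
The plan is to unwind the definitions directly: $J_i$ being fully open at time $\ell-1$ already gives everything we need about times $i+1, \ldots, \ell-1$, so it remains only to verify the new condition at time $\ell$, which follows from the hypothesis on $V(J_\ell)$.

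First I would recall that by definition $J_i$ fully open at time $\ell - 1$ means (a) $i \leq \ell - 1$, and (b) no internal vertex of $J_i$ lies in an edge added at any time $j$ with $i < j \leq \ell - 1$. To upgrade this to being fully open at time $\ell$, I need $i \leq \ell$ (immediate from $i \leq \ell -1$) and the condition (b) extended to include $j = \ell$, i.e.\ that no internal vertex of $J_i$ is contained in any edge of $E(F_\ell) \setminus E(F_{\ell-1})$.

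The key (and only substantive) observation is that every edge in $E(F_\ell) \setminus E(F_{\ell-1})$ is an edge of $J_\ell$, so all of its vertices lie in $V(J_\ell)$. Since the hypothesis states that $V(J_\ell)$ is disjoint from the set of internal vertices of $J_i$, no such edge can contain an internal vertex of $J_i$. Combined with the inductive hypothesis (that no edge added at time $j$ with $i < j \leq \ell - 1$ contains an internal vertex of $J_i$), this yields the required extension, and hence $J_i$ is fully open at time $\ell$.

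There is no real obstacle here — the claim is essentially a one-line consequence of the definition of \emph{fully open at time $\ell$} combined with the hypothesis that the new vertices appearing at step $\ell$ avoid the internal vertices of $J_i$. The work in the surrounding argument is packaged into the preceding Lemma~\ref{lem:newfinitenesshyper1} and into how $J_\ell$ is constructed (i.e.\ that all genuinely new edges live inside $V(J_\ell)$); this claim itself just records the immediate corollary that avoidance of internal vertices propagates one step forward.
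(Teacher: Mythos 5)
Your proof is correct, and is a clean direct unwinding of the definition of \emph{fully open at time $\ell$}: the new edges at step $\ell$ lie in $E(J_\ell)\subseteq$ (edges spanned by $V(J_\ell)$), and the hypothesis keeps $V(J_\ell)$ off the internal vertices of $J_i$, so the vertex-avoidance condition propagates. The paper's proof, however, takes a genuinely different route: it does not explicitly check the vertex-avoidance condition at all (treating that as immediate) but instead sets $G := F_\ell$ with the interior of $J_i$ deleted, observes that the fully-open hypothesis makes $J_i$ a flower attached to $G$ at its attachment edge, and then invokes Lemma~\ref{lem:newfinitenesshyper1} to conclude that the \emph{petal edges of $J_i$ are still open in $F_\ell$}. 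That openness is the substantive content behind the name ``fully open flower'' and is what is tacitly relied upon downstream (e.g.\ in Claim~\ref{claim:newfinitenessc3}, where one uses that the attachment edge of $J_\ell$, being an edge of $J_i$ that \textsc{Eligible-Edge} selected, must have been an open petal edge of $J_i$). Your argument proves the claim as literally stated and in that sense is complete; what the paper's route buys is an explicit record of the openness consequence, rather than leaving it to be re-derived via Lemma~\ref{lem:newfinitenesshyper1} each time it is needed. It would be worth adding a sentence to your write-up noting that, by this same disjointness, Lemma~\ref{lem:newfinitenesshyper1} applies to $J_i$ viewed as attached to $F_\ell\setminus(E(J_i)\setminus\{e\})$, so the petal edges of $J_i$ remain open at time $\ell$ --- otherwise the reader may wonder why the word ``open'' appears in the name at all, and the downstream use of openness would rest on an unstated step.
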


\begin{proof}
Let $e$ be the attachment edge of $J_i$. 
Since none of the $k$-graphs $J_{i+1}, \dots, J_{\ell}$ touch any of the internal vertices of $J_i$, 
the $k$-graph $G$ with edge set $E(G):=E(F_{\ell}) \setminus (E(J_i) \setminus \{e\})$ fully contains each of $J_{i+1},\dots,J_{\ell}$. 
Therefore, we may apply Lemma~\ref{lem:newfinitenesshyper1} to $G$ and $J_i$, and deduce that the petal edges of $J_i$ are still open. 
It follows that $J_i$ is fully open at time $\ell$.
\end{proof}

\begin{claim}\label{claim:newfinitenessc2}
Suppose $J_{\ell}$ is a flower. 
Then at most one fully open flower is no longer fully open.
\end{claim}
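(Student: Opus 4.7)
The plan is to exploit the non-degeneracy condition built into the definition of a flower: when step $\ell$ produces a flower $J_{\ell}$ with attachment edge $e^{*}$, we have $V(J_{\ell}) \cap V(F_{\ell-1}) = e^{*}$, so the only vertices of $J_{\ell}$ that already lived in $F_{\ell-1}$ are those of $e^{*}$. I will also want the analogous fact for edges: the only edge of $J_{\ell}$ already present in $F_{\ell-1}$ is $e^{*}$ itself. I plan to justify this by combining $V(L)\cap V(F)=e^{*}$ with the rule in \textsc{Close-$e$} that every edge of $L$ in $E(L)\setminus E(F)$ misses some vertex of $F$; any edge of $L$ already in $E(F)$ would have to lie inside $V(L)\cap V(F)=e^{*}$ and hence equal $e^{*}$, and the same reasoning handles each petal copy $R_{e'}$.

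Next, I would observe that if a previously fully open flower $J_i$ (with $i<\ell$) ceases to be fully open at step $\ell$, then some internal vertex $v$ of $J_i$ must lie in a new edge of $J_{\ell}$, and in particular $v\in V(J_{\ell})$. Since $v$ already exists in $F_{\ell-1}$, the first paragraph forces $v\in e^{*}$. Because $J_i$ is fully open at time $\ell-1$ and $v$ was introduced as a brand-new vertex at step $i$ (again by non-degeneracy, this time of step $i$), the vertex $v$ appears only in edges of $J_i$, and so $e^{*}\in E(J_i)$.

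To finish, I would suppose for contradiction that two distinct fully open flowers $J_{i_1}$ and $J_{i_2}$, with $i_1<i_2$, both cease to be fully open after step $\ell$. By the previous paragraph, $e^{*}\in E(J_{i_1})\cap E(J_{i_2})$, and since $E(J_{i_1})\subseteq E(F_{i_1})\subseteq E(F_{i_2-1})$, the edge $e^{*}$ already belonged to $F_{i_2-1}$. Applying the first-paragraph edge principle at step $i_2$, the only edge of $J_{i_2}$ present in $F_{i_2-1}$ is the attachment edge $e_{i_2}$, forcing $e^{*}=e_{i_2}$. But then the witnessing internal vertex $v_2\in e^{*}=e_{i_2}$ lies in the attachment edge of $J_{i_2}$, contradicting that $v_2$ is an internal vertex of $J_{i_2}$.

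The main subtlety I expect to handle carefully is the edge-level strengthening of non-degeneracy in the first paragraph, namely that a non-degenerate flower $J_i$ brings along exactly one pre-existing edge, its attachment edge $e_i$. Once this is recorded cleanly, the rest is a short bookkeeping about when each vertex and edge first appears, and the proof should fit in a few lines.
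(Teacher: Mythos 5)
Your proof is correct, and at heart it is the paper's argument: use the non-degeneracy of step $\ell$ to localise everything to the attachment edge $e^{*}$, and then show that $e^{*}$ can be ``claimed'' by at most one fully open flower. The differences are small but worth noting. The paper's one-line justification, ``no two fully open flowers share any internal vertices, and therefore $e$ contains the internal vertices of at most one fully open flower,'' is not a valid inference on its own: $e$ is an edge of $k$ vertices, and nothing about vertex-disjointness prevents it from picking up one internal vertex from $J_{i_1}$ and another from a disjoint $J_{i_2}$. What is actually needed is the bookkeeping you supply: any internal vertex $v$ of a flower $J_i$ that is still fully open at time $\ell-1$ is a brand-new vertex at step $i$ whose incident edges in $F_{\ell-1}$ are all edges of $J_i$; thus $e^{*}\in E(J_i)$, and since it contains an internal vertex it is a \emph{new} edge of $J_i$. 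Since an edge is new at only one step, at most one fully open flower can be affected. You express the final contradiction by invoking the edge-level non-degeneracy at step $i_2$ (concluding $e^{*}=e_{i_2}$, which cannot contain an internal vertex of $J_{i_2}$); this is equivalent to the simpler observation that $e^{*}$ cannot be new at both $i_1$ and $i_2$. The paper, by contrast, handles the unaffected flowers by citing Claim~\ref{claim:newfinitenessc1}, which you avoid by arguing directly from the definition of ``fully open.'' Either way the content is the same; your version is more self-contained and fills in a step the paper treats as immediate.

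One small caution: in your first paragraph you derive the edge-level non-degeneracy statement $E(J_\ell)\cap E(F_{\ell-1})=\{e^{*}\}$, which is correct, but for the petal copies $R_{e'}$ the clean reason is that $V(F_{\ell-1})\cap V(R_{e'})\subsetneq e'$ (because $e'$ has a vertex outside $F_{\ell-1}$), so this intersection has fewer than $k$ vertices and hence contains no edge at all; you should state it that way rather than appealing to ``lies inside $e'$ and hence equals $e'$,'' since $e'$ itself is not in $F_{\ell-1}$.
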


\begin{proof}
Let $e$ be the attachment edge of $J_{\ell}$. 
Note that no two fully open flowers share any internal vertices, and therefore $e$ contains the internal vertices of at most one fully open flower. 
Let $J_i$ be any of the other fully open flowers (clearly $i<\ell$). 
Now we can apply Claim~\ref{claim:newfinitenessc1}, and deduce that $J_i$ is still fully open, as required.
\end{proof}

\begin{claim}\label{claim:newfinitenessc3}
Suppose $J_{\ell}$ and $J_{\ell+1}$ are each flowers, and further after $J_{\ell}$ is added, one fully open flower is no longer fully open.
Then after $J_{\ell+1}$ is added, the total number of fully open flowers does not decrease.
\end{claim}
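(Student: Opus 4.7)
The plan is to track the number $N_t$ of fully open flowers at the three consecutive times $t=\ell-1,\ell,\ell+1$ and to conclude by two applications of Claim~\ref{claim:newfinitenessc2}; I read the conclusion as the assertion that $N_{\ell+1}\geq N_{\ell-1}$.

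First I would observe that any flower $J_t$ produced in a non-degenerate iteration is automatically fully open at the very moment it is created. Indeed, by the definition $J_t$ fails to be fully open at time $t$ precisely when some internal vertex of $J_t$ is contained in an edge added to $F_{\cdot}$ strictly after time $t$, and at the instant $J_t$ is produced no such edges exist. Hence both $J_\ell$ and $J_{\ell+1}$ each contribute $+1$ to the fully open count at times $\ell$ and $\ell+1$ respectively.

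Next I would apply Claim~\ref{claim:newfinitenessc2} to the addition of $J_\ell$: it bounds above by one the number of flowers fully open at time $\ell-1$ that cease to be fully open at time $\ell$. By the hypothesis of Claim~\ref{claim:newfinitenessc3}, \emph{exactly} one such flower does lose its status, so together with the $+1$ from $J_\ell$ itself we obtain $N_\ell = N_{\ell-1}-1+1 = N_{\ell-1}$. Applying Claim~\ref{claim:newfinitenessc2} a second time, now to $J_{\ell+1}$, at most one flower fully open at time $\ell$ fails to remain fully open at time $\ell+1$; combined with the $+1$ from $J_{\ell+1}$ being newly fully open this yields $N_{\ell+1}\geq N_\ell-1+1 = N_\ell$. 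Chaining the two estimates gives $N_{\ell+1}\geq N_\ell = N_{\ell-1}$, the required non-decrease.

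I anticipate the only delicate point to be the interpretation of the phrase ``the total number of fully open flowers does not decrease'': under the naive reading $N_{\ell+1}\geq N_\ell$ the conclusion follows immediately from Claim~\ref{claim:newfinitenessc2} applied to $J_{\ell+1}$ alone and does not invoke the hypothesis, so the substantive reading is $N_{\ell+1}\geq N_{\ell-1}$, in which the hypothesis is used to pin $N_\ell$ down to $N_{\ell-1}$ exactly rather than settling for $N_\ell\geq N_{\ell-1}$. In either reading, the combinatorial heart of the argument is already absorbed into Claim~\ref{claim:newfinitenessc2}, whose proof in turn relies on the fact (via Claim~\ref{claim:newfinitenessc1} and Lemma~\ref{lem:newfinitenesshyper1}) that internal vertices of distinct fully open flowers are disjoint, so that a single new attachment edge meets the internal vertices of at most one fully open flower.
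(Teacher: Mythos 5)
Your proof establishes $N_{\ell+1}\geq N_\ell$, which (as you yourself observe) is already an immediate consequence of Claim~\ref{claim:newfinitenessc2} applied to $J_{\ell+1}$. Under the hypothesis $N_\ell=N_{\ell-1}$, so your ``substantive'' reading $N_{\ell+1}\geq N_{\ell-1}$ is actually the \emph{same} inequality, not a stronger one. Neither version is what the paper needs, and neither is what the paper's proof actually establishes.

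Look at how Claim~\ref{claim:newfinitenessc3} is used in the proof of Lemma~\ref{lem:finitegreater}: the conclusion drawn from combining Lemma~\ref{lem:newfinitenesshyper1} with Claims~\ref{claim:newfinitenessc2} and~\ref{claim:newfinitenessc3} is ``if $J_i$ and $J_{i+1}$ are both non-degenerate iterations, then $f(i+1)\geq f(i-1)+1$.'' In the case where $J_\ell$ destroys a fully open flower --- precisely the hypothesis of the present claim --- two applications of Claim~\ref{claim:newfinitenessc2} yield only $f(\ell+1)\geq f(\ell-1)$, which leaves open the possibility of an infinite run of non-degenerate steps with $f$ stuck at a constant. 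The strict gain $+1$ over two steps is essential to the finiteness argument. What the paper's proof of Claim~\ref{claim:newfinitenessc3} really shows is that $J_{\ell+1}$ destroys \emph{no} fully open flower at all, i.e.\ $N_{\ell+1}=N_\ell+1=N_{\ell-1}+1$; the phrase ``does not decrease'' is unfortunately weaker than the content of the proof.

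The ingredient you are missing is the combinatorial heart of the claim: one must show that the flower $J_i$ destroyed by $J_\ell$ \emph{still contains an open edge} in $F_\ell$. This is established by a density argument parallel to Lemma~\ref{lem:newfinitenesshyper1} (constructing subhypergraphs $P_j$ of a putative closing copy $P$ of $H_1$, invoking strict balancedness, and repeating the calculation in~\eqref{eq:openedgecalc} to get a contradiction). Once one knows that a no-longer-fully-open flower retains an open edge, the priority rule built into \textsc{Eligible-Edge} --- pick, when possible, an open edge not belonging to any fully open flower --- guarantees that the attachment edge of $J_{\ell+1}$ avoids every fully open flower, and hence that $J_{\ell+1}$ does not demote anything. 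Your proof neither proves this openness nor invokes \textsc{Eligible-Edge} at all, so it cannot deliver the strengthening that Lemma~\ref{lem:finitegreater} requires.

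Your auxiliary observation that a flower produced in a non-degenerate iteration is automatically fully open at its own creation time is correct and is also used (implicitly) by the paper; the counting identities $N_\ell=N_{\ell-1}$ and $N_{\ell+1}\geq N_\ell$ are likewise correct. The gap is entirely in the missing ``destroyed flower keeps an open edge'' step and the appeal to the definition of \textsc{Eligible-Edge}.
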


\begin{proof}
Suppose $J_i$, $i<\ell$ is the flower which is no longer fully open after $J_{\ell}$ is added. 

We claim that $J_i$ still contains an open edge in $F_{\ell}$. 

Denote $J_i$ by $L \cup_{j=1}^{e_2-1} R_j$.
Let $e$ be the attachment edge of $J_{\ell}$. 
We note that $e \in R_j$ for some $j \in [e_2-1]$ -- suppose without loss of generality that $j=1$ --
and further, since $e$ was an open edge previously, it must be a petal edge contained in $R_1$ (i.e., it is not an edge of $L$). Indeed, by construction the attachment edge for $J_i$ is in some copy of $R$ in $F_{i-1}$. Hence $L \in \mathcal{L}^{*}_F$ and every edge of $L$ is closed.
Now let $f$ be a petal edge contained in $R_2$ and suppose for a contradiction that it is closed.
This implies that there exist copies $L_1$ of $H_2$ and $P$ of $H_1$ such that $E(L_1) \cap E(P) = \{f\}$.
Exactly as in Claim~\ref{claim:PnotR1}, we can deduce that $P \not = R_2$. 
Let $g$ be the attachment edge of $J_i$.
Let $P_{e_2} = P \cap (F_{\ell} \setminus (E(J_i \cup J_{\ell}) \setminus \{g\}))$.     %side rest of $G$ not including these flowers $J_i, J_{\ell}$.
Let $P_1 = P \cap (R_1 \cup F_{\ell})$.
Let $P_i = P \cap R_i$ for each $i \in \{2,\dots,e_2-1\}$.
Now defining $I$ and $M$ as in the proof of Lemma~\ref{lem:newfinitenesshyper1} and doing the same calculation (see~(\ref{eq:openedgecalc})) yields a contradiction.
\COMMENT{RH: commented out the repeated calculation.}

Since \textsc{Eligible-Edge} prioritises open edges which do not belong to fully open flowers, the next iteration will select an edge, possibly of $J_i$, to close which does not destroy\footnote{That is, make \emph{not} fully open.} any fully open copies. 
That is, the attachment edge of $J_{\ell+1}$ must be either an open edge from $J_i$ or some other open edge not belonging to a fully open flower at time $\ell$.
%The argument from Claim~\ref{claim:newfinitenessc2} shows that $J_i$ is the only fully open flower which $J_{\ell+1}$ can destroy. 
Since $J_i$ has already been destroyed, we have that $J_{\ell+1}$ does not destroy any fully open flowers using the argument from Claim~\ref{claim:newfinitenessc2}.\COMMENT{JH: Possibly a nicer way of writing this. I don't think it is possible to stipulate that Eligible-Edge selects an open edge exactly in $J_i$. One could think up very symmetrical looking $k$-graphs and Eligible-Edge won't be able to distinguish which `$J_i$' we're talking about here.}
\end{proof}

We can now prove Lemma~\ref{lem:finitegreater}.

\begin{proof}
By Claims~\ref{claim:non-degen} and~\ref{claim:degenfull} 
it easily follows that the number of degenerate iterations is finite, specifically at most $X:=\lambda(H_1) + \gamma/ \kappa_1$ (see the proof of Claim~\ref{claim:q_1}) to maintain that $\lambda \geq -\gamma$.

Let $f(\ell)$ be the number of fully open flowers at time $\ell$. 
For the algorithm to output a $k$-graph $G$ at time $\ell$, we must have $f(\ell)=0$. 
By Claim~\ref{claim:newfinitenessc1}, if iteration $i$ is degenerate, then $f(i) \geq f(i-1)-Y$, where $Y:=v_2+(e_2 - 1)(v_1 - k)$ (note $v(J_i) \leq Y$ for all $i$). 
By Lemma~\ref{lem:newfinitenesshyper1} and Claims~\ref{claim:newfinitenessc2} and \ref{claim:newfinitenessc3},
if $J_i$ and $J_{i+1}$ are both
non-degenerate iterations,
then $f(i+1) \geq f(i-1)+1$.

Now assume for a contradiction that there are $2XY+2$ consecutive non-degenerate iterations.
It follows that after all $X$ degenerate iterations are performed, we will still have
$f(\ell) \geq (2XY+2)/2 -XY>0$. 
Since any further iteration cannot decrease $f(\ell)$ while still maintaining that $\lambda \geq -\gamma$, the algorithm will not output any $k$-graph $G$ at any time $\ell' \geq \ell$.
Thus \textsc{Grow-$\hat{\mathcal{B}}_{\eps}$} outputs only a finite number of $k$-graphs.
\COMMENT{RH: This looks quite a bit different to the Nenadov--Steger proof, but its doing exactly the same job and I'm not sure it is necessary to elaborate any further.}
\end{proof}

\section{Proof of the colouring results Lemmas~\ref{lem:maincolouring} and~\ref{lem:hypergraphcolouring}}\label{sec:colour}

In Section~\ref{sec:usefultoolsforcolouring} we gather together colouring results which will be used to prove Lemmas~\ref{lem:maincolouring} and \ref{lem:hypergraphcolouring} in Section~\ref{sec:proofofcolouring}.

%First we observe that if the number of graphs $G$ with $m_2(H_1,H_2)<m(G) \leq m_2(H_1,H_2)+ \eps$ is finite, 
%then there exists $\eps_0$ such that there are no graphs with $m_2(H_1,H_2)<m(G) \leq m_2(H_1,H_2)+ \eps_0$. 
%Hence we may assume that every graph $G$ in $\mathcal{A}_V$ satisfies $m(G) \leq m_2(H_1,H_2)$.

\subsection{Useful tools and colouring results for graphs and hypergraphs}\label{sec:usefultoolsforcolouring}

We begin with a collection of colouring results which can be viewed as an asymmetric $k$-graph version of~\cite[Lemma A.1]{ns},
which was the key tool for proving the symmetric graph version (i.e. where $H_1=H_2$) of Lemma~\ref{lem:maincolouring} (for all cases).
A symmetric version of Lemma~\ref{lem:hypergraphcolouring} already appeared in~\cite{t}.
These results will be used to prove Lemma~\ref{lem:maincolouring}(i)--(v), (viii) and (ix),
and Lemma~\ref{lem:hypergraphcolouring}.
The proof of these statements follow by reading through the proofs of the symmetric graph versions and appropriately generalising them.

Recall that for $k$-graphs $H_1,\dots,H_r$, we define $\delta_i:=\delta(H_i)$.
For a $k$-graph $G$, define $$\delta_{\max}(G):= \max\{\delta(G'): G' \subseteq G\}.$$ 

% is the maximum minimum degree of all subhypergraphs $G' \subseteq G$,
%(recalling that the minimum degree in a $k$-graph is just the minimum number of edges a vertex is contained in).
%We denote by $\chi(H)$ the \emph{weak chromatic number} of $H$, 
%\COMMENT{RH: should move this def up next to where it first appears I guess.} 
%i.e. the minimum number $t$ of colours required such that there exists a $t$-colouring of the vertices which does not produce a monochromatic edge.
We will use the following hypergraph generalisation of the Nash-Williams arboricity theorem~\cite{n} due to Frank, Kir\'{a}ly and Kriesell~\cite{fkk}. Here we define a $k$-forest with vertex set $V$ and edge set $E$ to be a $k$-graph $H=(V,E)$ such that 
for all $X \subseteq V$ we have $e(H[X]) \leq |X|-1$.
\COMMENT{RH: changed it as otherwise it is just a $k$-tree, not a $k$-forest? JH: Agreed, particularly for $k=2$, but the other $k$'s it didn't make sense for either.}
%$e(H) \leq v(H) - 1$ and $E$ spans $V$. % For a subset X \subset V of some k-graph (V,\mathcal{E}), we let i_{\mathcal{E}}(X) be the number of k-edges induced by X.

\begin{thm}[\cite{fkk}, Theorem~2.10]\label{thm:nashwilliamshyper}
    The edge set of a $k$-graph $H$ can be decomposed into $\ell$ many $k$-forests if and only if \[ar(H) \leq \ell.\]
\end{thm}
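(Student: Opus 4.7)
The \emph{only if} direction is a direct counting argument: if $E(H)=E(F_1)\sqcup\dots\sqcup E(F_\ell)$ with each $F_i$ a $k$-forest, then for any $X\subseteq V(H)$ with $|X|\geq 2$ we have
\[e(H[X])=\sum_{i=1}^\ell e(F_i[X])\leq \sum_{i=1}^\ell (|X|-1)=\ell(|X|-1),\]
so $d_1(H[X])\leq \ell$ for every such $X$, giving $ar(H)\leq \ell$. All the content therefore lies in the converse.

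For the converse, the plan is to apply Edmonds' matroid partition theorem to the matroid $M_H$ on the ground set $E(H)$ whose independent sets are precisely the edge sets of $k$-forests contained in $H$. That this is indeed a matroid is a standard fact about hypergraphic count matroids: from the equivalent formulation ``$|F'|\leq |V(F')|-1$ for all non-empty $F'\subseteq F$'' one verifies the augmentation axiom directly. Edmonds' theorem then says that $E(H)$ partitions into $\ell$ independent sets of $M_H$ if and only if $|A|\leq \ell\cdot r_{M_H}(A)$ for every $A\subseteq E(H)$, where $r_{M_H}$ is the rank function of $M_H$.

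To check this condition under the hypothesis $ar(H)\leq \ell$, I would invoke the standard rank formula for count matroids, namely
\[r_{M_H}(A)=\min_{\mathcal{P}}\sum_{X\in\mathcal{P}}(|X|-1),\]
with the minimum over partitions $\mathcal{P}$ of $V(A)$ for which every edge of $A$ is contained in some part $X\in\mathcal{P}$. Fixing an optimal $\mathcal{P}$ and writing $e_A(X)$ for the number of edges of $A$ that lie inside $X$, the arboricity bound yields
\[|A|=\sum_{X\in\mathcal{P}} e_A(X)\leq \sum_{X\in\mathcal{P}} e(H[X])\leq \ell\sum_{X\in\mathcal{P}}(|X|-1)=\ell\cdot r_{M_H}(A),\]
which is exactly Edmonds' condition. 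The principal obstacle is supplying the rank formula (and more broadly the matroid structure) in the hypergraph setting: both are classical when $k=2$, where $M_H$ is the cycle matroid with $r_{M_H}(A)=|V(A)|-c(A)$ for $c(A)$ the number of components of $(V(A),A)$, but the general hypergraph case takes some care. A reader wishing to bypass the matroid machinery could instead argue by edge-swap: start from an arbitrary $\ell$-colouring of $E(H)$, locate a colour class violating the $k$-forest condition on some vertex set $X$, use an alternating-sequence argument to transfer one of its edges to another colour class, and appeal to $ar(H)\leq \ell$ (together with a suitable potential function) to guarantee that the process terminates.
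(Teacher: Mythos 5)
The paper does not prove this statement at all: Theorem~\ref{thm:nashwilliamshyper} is quoted verbatim from Frank, Kir\'aly and Kriesell~\cite{fkk}, so there is no internal proof to compare against. Your argument is essentially a reconstruction of the proof in that source: the ``only if'' direction is the obvious counting bound (and is correct as written), and the converse goes through matroid union --- Edmonds' partition theorem applied to the hypergraphic matroid of Lorea whose independent sets are exactly the $k$-forests --- which is precisely the route taken in~\cite{fkk}. Your final verification of Edmonds' condition is fine: every edge of $A$ lies in exactly one class of the vertex partition, and $e(H[X])\leq \ell(|X|-1)$ follows from $ar(H)\leq \ell$ (trivially for $|X|\leq 1$). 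Two caveats, both of which you correctly flag as the real content being imported: (a) that the $k$-forest system is a matroid and (b) the rank formula are not things one ``verifies directly'' in a line --- they amount to Lorea's theorem, i.e.\ the Dilworth truncation of the intersecting submodular function $A\mapsto |V(A)|-1$; and your rank formula is stated over vertex partitions rather than the more standard minimum of $\sum_i(|V(A_i)|-1)$ over partitions $\{A_i\}$ of the edge set $A$, but the two are equivalent (merge edge classes whose vertex sets intersect, then pad with singleton vertex classes), so nothing is lost. With those classical inputs granted, the proposal is sound; the sketched ``edge-swap'' alternative at the end is too vague to assess but is not load-bearing.
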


Note that we will not utilise (i) and (iii) below with $k \geq 3$ in the proofs of our colouring results, but we still state Lemma~\ref{lem:asymmcol} completely in terms of $k$-graphs since (given Theorem~\ref{thm:nashwilliamshyper} for (i)) the proofs of the graph cases extend easily to the $k$-graph case.
\COMMENT{RH: Need comment such as this to acknowledge we are not using them. Is this comment okay? JH: I think its fine}

\begin{lem}\label{lem:asymmcol}
For any $k$-graphs $G,H_1,\dots,H_r$, we have $G \not \to (H_1,\dots,H_r)$ if any of the following hold:
\begin{enumerate}
\item[(i)] There exists $\eps>0$ such that $ar(G) \leq \sum_{i=1}^r \lfloor ar(H_i)-\eps \rfloor$;
\item[(ii)] There exists $\eps>0$ such that $m(G) \leq \sum_{i=1}^r \lfloor m(H_i)-\eps \rfloor$;
\item[(iii)] We have $\delta_{\max}(G) \leq \sum_{i=1}^r (\delta_i-1)$;
\item[(iv)] We have $m(G) < \delta_{\max}(H_1)$ and $\chi(H_2) \geq k+1$.
\end{enumerate}
\end{lem}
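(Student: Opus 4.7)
The plan is to prove each of (i)--(iv) by exhibiting an explicit $r$-edge-colouring of $G$ that witnesses $G\not\to(H_1,\ldots,H_r)$, following the graph-case templates of (the $k$-graph analogue of) \cite[Lemma~A.1]{ns}. In each case the hypothesis on $G$ is converted into a structural decomposition (a $k$-forest decomposition, a low-out-degree orientation, or a degeneracy order), which is then paired with the matching invariant of $H_i$ to rule out a monochromatic $H_i$ in colour $i$.

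For (i), I would apply Theorem~\ref{thm:nashwilliamshyper} to decompose $E(G)$ into at most $\sum_{i=1}^r\lfloor ar(H_i)-\eps\rfloor$ $k$-forests, bundle these into $r$ groups of sizes at most $\lfloor ar(H_i)-\eps\rfloor$, and colour each group with its own colour; the colour-$i$ subhypergraph then has arboricity at most $\lfloor ar(H_i)-\eps\rfloor<ar(H_i)$, precluding $H_i$. For (ii), I would invoke the Hakimi-type equivalence (which lifts cleanly to $k$-graphs by designating an ``out-vertex'' for each edge) that $m(G)\le D$ is equivalent to $G$ having an orientation with maximum out-degree at most $D$ (for integer $D$), apply it with $D=\sum_{i=1}^r\lfloor m(H_i)-\eps\rfloor$, and partition each vertex's out-edges so colour $i$ receives at most $\lfloor m(H_i)-\eps\rfloor$; counting out-edges within any non-empty subhypergraph $J$ of the colour-$i$ class gives $e(J)\le\lfloor m(H_i)-\eps\rfloor v(J)$, whence its $m$-density is strictly less than $m(H_i)$. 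For (iii), I would use the standard degeneracy-style greedy argument: order the vertices $v_1,\ldots,v_n$ so that $v_j$ has minimum degree in $G_j:=G[\{v_j,\ldots,v_n\}]$ (so $\deg_{G_j}(v_j)\le\delta_{\max}(G)\le\sum_i(\delta_i-1)$), split the back-edges of $v_j$ into $r$ colour classes of sizes at most $\delta_i-1$, and at the first vertex $v$ of any purported monochromatic copy of $H_i$ derive a contradiction from $\deg_{H_i}(v)\ge\delta_i$ and the $\delta_i-1$ cap.

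For (iv), which I expect to be the main obstacle, set $d:=\delta_{\max}(H_1)$, so that $m(G)<d$ yields an orientation of $G$ with maximum out-degree at most $d-1$. For a vertex colouring $\phi:V(G)\to[k]$, colour an edge $e$ blue if $\phi|_e$ is injective (``rainbow'') and red otherwise. Then $\phi$ is a weakly proper $k$-colouring of the blue subhypergraph, so $\chi(H_2)\ge k+1$ immediately forbids a blue $H_2$ for every $\phi$. It therefore suffices to choose $\phi$ so that the red subhypergraph contains no subhypergraph $H_1^*\subseteq H_1$ with $\delta(H_1^*)=d$ (one exists by definition of $\delta_{\max}$, and ruling these out is equivalent to ruling out a red $H_1$). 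The hard part is constructing $\phi$: the plan is to proceed greedily along a vertex order compatible with the orientation, using the $d-1$ bound on out-edges together with a first-vertex argument (in the spirit of (iii)) to block red $H_1^*$-copies, and adapting the graph-case argument of \cite[Lemma~A.1]{ns} --- which for $k=2$ exploits non-bipartiteness of $H_2$ --- to the $k$-graph setting via the weak chromatic number.
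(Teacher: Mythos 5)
Parts (i)--(iii) of your proposal are correct and essentially the paper's own arguments; for (ii) you phrase things via an orientation with bounded out-degree rather than the paper's Hall-matching partition into $\lceil m(G)\rceil$ parts of $m$-density at most $1$, but these are the same statement and the colouring works identically.

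Case (iv) has genuine gaps. First, $m(G)<d$ with $d:=\delta_{\max}(H_1)$ does \emph{not} yield an orientation of maximum out-degree at most $d-1$: that requires $m(G)\le d-1$, and $m(G)$ need not be an integer (e.g.\ $m(G)=d-\tfrac12$ admits no such orientation already for $k=2$). At best you get out-degree at most $\lceil m(G)\rceil\le d$, and the $d-1$ cap is precisely what your greedy blocking step leans on. Second, and more seriously, your red/blue rule is stronger than needed for $k\ge 3$ and the construction of $\phi$ --- which is the entire content of this case --- is only sketched. Declaring an edge red whenever it is merely non-rainbow makes the red hypergraph much larger than necessary: for the blue side one only needs that no blue edge is vertex-monochromatic, so it suffices to colour red exactly the edges lying inside a single colour class. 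With your rule the natural pigeonhole/first-vertex argument breaks down: an edge at the current vertex $v$ whose already-coloured vertices repeat a colour is red regardless of which colour $v$ receives, so such edges are ``dangerous for every colour'' and cannot be blocked using a bound of the form $\delta_i-1$ per colour; one would need a hypothesis far stronger than $m(G)<\delta_{\max}(H_1)$.

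The paper's route avoids all of this. It colours red the edges contained in one class and blue the crossing edges (so the vertex colouring is already a weak proper $k$-colouring of the blue hypergraph, killing $H_2$ by $\chi(H_2)\ge k+1$), and then it only needs a $k$-colouring of $V(G)$ with no colour class containing a copy of $H'\subseteq H_1$ with $\delta(H')=\delta_{\max}(H_1)$. This is obtained by a minimal-counterexample argument rather than a greedy one: in a vertex-minimal counterexample $G_0$, if some $v$ had $\deg(v)<k\delta(H')$ then each edge at $v$ threatens at most one colour (the common colour of its other vertices, if any), so some colour is safe for $v$ and the colouring of $G_0-v$ extends; hence $\delta(G_0)\ge k\delta(H')$, giving $m(G_0)\ge\delta(H')$ and contradicting $m(G)<\delta_{\max}(H_1)$. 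To repair your write-up, either switch to this red/blue rule and supply the minimal-counterexample argument, or find a genuinely new argument that handles non-rainbow red edges; as it stands, the step you label ``the hard part'' is exactly what is missing, and the orientation you invoke for it does not exist.
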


\begin{proof}
\begin{enumerate}
\item[(i)] 
By Theorem~\ref{thm:nashwilliamshyper} every $k$-graph $G$ has a partition of its edge set into $\lceil ar(G) \rceil$ parts such that each part is a $k$-forest. 
Now colour $\lfloor ar(H_i)-\eps \rfloor$ parts with the colour $i$, for each $i \in [r]$. 
By Theorem~\ref{thm:nashwilliamshyper} applied to each $H_i$, we thus avoid a monochromatic copy of $H_i$ in colour $i$ for each $i \in [r]$.
\medskip
    
\item[(ii)]
Let $A$ be $\lceil m(G) \rceil$ copies $v_1, \ldots, v_{\lceil m(G) \rceil}$ of each vertex 
$v \in V(G)$, and let $B:=E(G)$.
Join $a \in A$ to $b \in B$ if $a \in b$.
The definition of $m$-density tells us that for any
$X \subseteq B$, we have $\lceil m(G) \rceil \geq |X|/v(H[X])$.
Therefore $|N_A(X)|= \lceil m(G) \rceil \cdot v(H[X]) \geq |X|$, so by Hall's marriage theorem we can find a matching $M$ which covers $B$. 
Split the matching into $\lceil m(G) \rceil$ parts corresponding to which index of copy of $v$ was taken.
It follows that for all $k$-graphs there exists a partition of its edges into $\lceil m(G) \rceil$ parts such that each component in each part contains no more edges than vertices, in other words each part $P$ has $m(P) \leq 1$.
Similarly to the last case, colour $\lfloor m(H_i)-\eps \rfloor$ parts with the colour $i$, for each $i \in [r]$. 
We thus avoid a monochromatic copy of $H_i$ in colour $i$ for each $i \in [r]$.
\medskip

\item[(iii)] 
Construct a sequence $v_1,v_2,\dots,v_n$ of the vertices of $G$ as follows: let $v_i$ be a vertex of minimum degree in $G-\{v_1,\dots,v_{i-1}\}$. 
Then every vertex $v_i$ has degree at most $\delta_{\max}(G)$ into $G[\{v_{i+1},\dots,v_n\}]$, the $k$-graph induced by vertices `to the right'. 
Colour the vertices of $G$ backwards, i.e. starting with $v_n$. 
As every vertex $v_i$ has degree at most $\delta_{\max}(G) \leq (\delta_1-1)+\dots+(\delta_r-1)$ into the part that is already coloured, 
we can colour $(\delta_j-1)$ of these edges with colour $j$ for each $j \in [r]$. 
Clearly, the coloured part cannot contain a monochromatic copy of $H_j$ in colour $j$ for each $j \in [r]$ that contains $v_i$. 
By repeating this procedure for every vertex $v_i$, we obtain the desired colouring.
\medskip

\item[(iv)] 
Suppose there exists a $k$-colouring of the \emph{vertices} of $G$ without a monochromatic copy of $H_1$ (in any colour), and suppose that $X_1,\dots,X_k$ are the vertex classes of this colouring. 
Then colour all edges contained in $G[X_i]$ for some $i \in [k]$ with colour $1$ and all edges containing vertices from more than one $X_i$ colour $2$. 
Clearly there is no red $H_1$. Further, since the blue $k$-graph has weak chromatic number at most $k$ and $\chi(H_2) \geq k+1$, there is no blue $H_2$. 
    
Hence it suffices to find the vertex-colouring without  a monochromatic $H_1$. 
Choose $H' \subseteq H_1$ such that $\delta(H')=\delta_{\max}(H_1)$. 
We need to show that for every $k$-graph $G$ with $m(G)<\delta(H')$ we can find a vertex-colouring of $G$ without a monochromatic $H'$. 
Assume this is not true, and so there exists a minimal counterexample $G_0$. 
As $G_0$ is minimal, we know for every $v \in V(G_0)$, the $k$-graph $G_0-v$ does have a vertex-colouring without a monochromatic $H'$. 
Clearly if $\deg(v)<k\delta(H')$ then such a colouring can be extended to $v$. 
So we know that in $G_0$ every vertex has degree at least $k\delta(H')$, that is 
\begin{align*}
m(G_0) \geq \sum_{v\in G_0} \frac{\deg(v)}{k|G_0|}\geq \delta(H'),
\end{align*}
a contradiction.\end{enumerate}\end{proof}

It will be useful to note the following, which can be easily shown from the definitions: for any graph $G$ we have
\begin{align}\label{eq:ineqs}
m_2(G) \leq ar(G) + \frac{1}{2} \leq m(G)+1;
\end{align}
\begin{align}\label{eq:ineqs2}
m_2(G) > 1 \implies ar(G)>1.
\end{align}

For any $k$-graph $H$ we have
\begin{align}\label{eq:mkHbound}
m_k(H) & \leq m(H) + \binom{v_H}{k-2}.
%m_k(H) & \leq ar(H) + ..... 
\end{align}
Indeed, for $k=2$, 
%using the convention that $\binom{s}{0}=1$ for any $s \in \mathbb{N}$, 
this is the same as the bound in~(\ref{eq:ineqs}). 
For $k \geq 3$, let $J$ be a $k$-graph which maximises the expression in $m_k(H)$, 
and then choose $t>0$ so that $m_k(H) = \frac{e_J-1}{v_J-k} = \frac{e_J}{v_J}+t$ and note that $m_k(H) \leq m(H)+t$.
We have $t=\frac{e_J-1}{v_J-k} - \frac{e_J}{v_J}$, which is maximised by taking $e_J=\binom{v_J}{k}$.
A simple calculation shows $t \leq \binom{v_J}{k-2}$ 
\COMMENT{RH: I think we can probably omit this calculation, but here it is for future reference (e.g. in case referee says we should include it):
Using $v=v_J$ and $e=e_J$ to save space. 
First note $\frac{(v-1)(v-k+1)}{(k-1)(v-k)} \leq v$, since this rearranges to say $v \geq k+\frac{k-2}{v(k-2)}$ which is true since $v \geq k+1$.
We have $t \leq \frac{\binom{v}{k}-1}{v-k} - \frac{\binom{v}{k}}{v} = \frac{(v-1)(v-2)\cdots(v-k+1)}{(k-1)! (v-k)}-\frac{v}{v(v-k)}$ which if $k\geq 4$ is less than $\frac{v(v-2)\dots(v-k+2)}{(k-2)!} \leq \binom{v}{k-2}$. If $k=3$ then it is less than $v=\binom{v}{k-2}$ as well.
} 
and then we have $t \leq \binom{v_H}{k-2}$, as required.

(The rest of this subsection refers only to graphs.) 

For the proof of Lemma~\ref{lem:maincolouring}(vi), we will use an easy modification of a result on the minimum degree of Ramsey-minimal graphs
by Bishnoi et al.
%, Boyadzhiyska, Clemens, Gupta, Lesgourgues and Liebenau
~\cite[Proposition 3.4]{bbcgll}.
Their result states that any graph $G$ which satisfies $G \to (K_a, C_b)$ for $a \geq 3$ and $b \geq 4$ such that $G$ is minimal with this property (i.e. Ramsey-minimal), satisfies $\delta(G) \geq 2(a-1)$. 
The same result holds when replacing $C_b$ by any graph with minimum degree $2$.
This follows easily since having minimum degree $2$ is the only property of cycles used within their proof.
We include a proof here for completeness.

\begin{lem}\label{lem:cliqueminimal}
Suppose $G$ is Ramsey-minimal for $(K_a, H_2)$, where $a \geq 3$ and $H_2$ has minimum degree at least $2$,
i.e. $G \to (K_a,H_2)$ but $G' \not \to (K_a,H_2)$ for any proper subgraph $G' \subsetneq G$.
Then $\delta(G) \geq 2(a-1)$.
\end{lem}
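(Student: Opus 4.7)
The plan is to follow the argument of \cite[Proposition 3.4]{bbcgll} (the $H_2 = C_b$ case), exploiting the fact that the only property of $C_b$ used there is $\delta(C_b) = 2$, which is exactly the assumption we place on $H_2$.

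Suppose, for a contradiction, that some vertex $v \in V(G)$ has $\deg_G(v) \leq 2(a-1) - 1 = 2a-3$, and write $N(v) = \{u_1, \ldots, u_d\}$ with $d \leq 2a-3$. Ramsey-minimality of $G$ yields a valid red/blue edge-colouring $\chi$ of the proper subgraph $G - v$, i.e.\ one with no red $K_a$ and no blue $H_2$. The goal is to extend $\chi$ to a valid edge-colouring of all of $G$, contradicting $G \to (K_a, H_2)$.

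I first observe that $\chi$ must contain a red $K_{a-1}$ inside $N(v)$, for otherwise colouring every edge at $v$ red already produces a valid edge-colouring of $G$. Fix one such red $K_{a-1}$ on vertices $\{u_1, \ldots, u_{a-1}\}$, colour $vu_1, \ldots, vu_{a-1}$ blue, and colour the remaining $d - (a-1) \leq a-2$ edges at $v$ red. Since fewer than $a-1$ edges at $v$ are red, no red $K_a$ through $v$ can arise in the extended colouring. Hence, if the extension fails, there must be an entirely blue $H_2$-copy $H^{\circ}$ through $v$; by $\delta(H_2) \geq 2$ this copy uses at least two of the edges $vu_1, \ldots, vu_{a-1}$, and every edge of $H^{\circ}$ not incident to $v$ is blue in $\chi$.

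Next, I plan to argue that such a bad blue $H_2$-copy cannot persist under every choice of red $K_{a-1}$ in $N(v)$ and every valid colouring of $G - v$. The tool is Ramsey-minimality applied edge-by-edge: for each edge $vu_j$ lying in $H^{\circ}$ with $j \in \{1, \ldots, a-1\}$, fix a valid colouring $\chi_j$ of $G - vu_j$. Extending $\chi_j$ by colouring $vu_j$ red must produce a red $K_a$ through $v$, hence yields a red $K_{a-1}$ in $N(v)$ containing $u_j$. Because $|N(v)| \leq 2a-3 < 2(a-1)$, any two such red $K_{a-1}$'s in $N(v)$ are forced to overlap, which allows me to perform a local recolouring of $\chi$---flipping a single edge inside $N(v)$---that either removes the offending $H^{\circ}$ (by turning one of its non-$v$ edges red) or strictly reduces the number of red $K_{a-1}$'s in $N(v)$. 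Iterating this finite process terminates in a valid extension of $G$, yielding the desired contradiction.

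The hard part will be controlling the local recolourings carefully, ensuring that they do not introduce a new red $K_a$ or a new blue $H_2$ elsewhere in the graph. This is precisely where the hypothesis $\delta(H_2) \geq 2$ is used: it guarantees that recolouring any single blue edge at $v$ to red destroys every blue $H_2$ through that edge, exactly matching the role played by $\delta(C_b) = 2$ in \cite[Proposition 3.4]{bbcgll}. This is why no further property of $C_b$ is needed in that argument, and why the proof transfers verbatim to any $H_2$ of minimum degree at least two.
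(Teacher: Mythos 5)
Your proposal diverges from the paper's proof at a crucial point, and the divergence leads to a genuine gap.

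The paper's argument is short and clean: after observing that $G[N(v)]$ must contain a red $K_{a-1}$, it invokes \cite[Lemma~3.3]{bbcgll}, which says that a graph on fewer than $2(a-1)$ vertices containing a $K_{a-1}$ but no $K_a$ must have a vertex $u$ lying in \emph{every} copy of $K_{a-1}$. One then colours the single edge $uv$ blue and \emph{all other} edges at $v$ red. The red choices are safe because every red $K_{a-1}$ in $N(v)$ passes through $u$, whose edge to $v$ is blue, so no red $K_a$ through $v$ can form; and the blue choice is safe because $v$ is incident to exactly one blue edge, so $v$ cannot sit in a blue copy of $H_2$ since $\delta(H_2)\geq 2$. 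The hypothesis $\delta(H_2)\geq 2$ is used once, precisely here, and the proof closes immediately.

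Your proof takes the opposite tack at the recolouring step: you fix a red $K_{a-1}$ on $\{u_1,\dots,u_{a-1}\}$ and colour all $a-1$ edges $vu_1,\dots,vu_{a-1}$ blue, colouring only the remaining $\leq a-2$ edges at $v$ red. This handles the red $K_a$ side, but creates the opposite problem: $v$ now has $a-1\geq 2$ blue edges, so $v$ may well lie in a blue $H_2$. You then propose an iterative local recolouring of edges \emph{inside} $N(v)$ to fix this, but this is not carried out: you do not show that a recolouring move exists that both kills the offending blue $H_2$ and avoids creating a new red $K_a$ or a new blue $H_2$ elsewhere, nor do you give a potential function or monovariant justifying termination. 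You also invoke only pairwise overlap of red $K_{a-1}$'s from the pigeonhole on $|N(v)|<2(a-1)$, whereas what the argument actually needs — and what \cite[Lemma~3.3]{bbcgll} supplies — is the much stronger conclusion that all red $K_{a-1}$'s share a \emph{single common vertex}. Without that stronger statement the ``flip one edge'' step has no chance of working uniformly. In short, the proposal correctly identifies the setup and the role of $\delta(H_2)\geq 2$, but commits to a suboptimal colouring at $v$ and leaves the resulting repair procedure unverified; the paper avoids all of this by using one blue edge rather than $a-1$.
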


\begin{proof}
Suppose for a contradiction that $v$ is a vertex of degree at most $2(a-1)-1$ in $G$. 
By the minimality of $G$, there exists a red/blue-colouring $c$ of $G-v$ with no red copy of $K_a$ and no blue copy of $H_2$.
If $G[N(v)]$ contains no red copy of $K_{a-1}$, then we can extend the colouring $c$ to $G$ by colouring all edges incident to $v$ red,
to obtain a colouring with no red $K_{a}$ or blue $H_2$, a contradiction.

Therefore, assume that there is at least one red copy of $K_{a-1}$ in $G[N(v)]$.
By~\cite[Lemma 3.3]{bbcgll}, any graph on $n<2(a-1)$ vertices containing $K_{a-1}$, such that $$\bigcap_{\stackrel{H \cong K_{a-1}}{H \subseteq G}} V(H) = \emptyset$$ contains $K_a$.
Since $G[N(v)]$ does not contain a red copy of $K_a$, there must be at least one vertex $u$ which lies in the intersection of all red copies of $K_{a-1}$ in $G[N(v)]$.
Extend the colouring $c$ by colouring $uv$ blue and all other edges from $v$ to $N(v) \setminus \{u\}$ red.
The blue edge ensures that there are no red copies of $K_a$, and further $v$ only lies in one blue edge, and so cannot lie in a blue copy of $H_2$ since $H_2$ has minimum degree $2$.
The obtained colouring again yields a contradiction. 
\end{proof}
\COMMENT{RH: feels slightly strange to quote Lemma 3.3 here in a ``complete proof'' but it also doesn't make sense to copy out its proof...}

For the proof of Lemma~\ref{lem:maincolouring}(vii), we wish to exploit the fact $K_{a,b}$ does not have edges between vertices of low degree; we design a graph parameter to quantify this property.
Define $z(G):=0$ if $G$ contains two vertices of minimum degree which are adjacent; 
otherwise, define $z(G)$ to be the maximum $t$ such that $$G[\{ v \in V(G):  \deg(v) \leq \delta(G)-1+t \}]$$ forms an independent set. 
In particular, we have $z(K_{a,b})=b-a$ (assuming $b \geq a$). %For graphs $H_1, H_2$, let $\delta_i := \delta(H_i)$ for $i \in \{1,2\}$. 
Recall that \[\mathcal{C} = \mathcal{C}(H_1, H_2) := \{G = (V,E) : \forall e \in E\ \exists(L,R) \in \mathcal{L}_G \times \mathcal{R}_G\ \mbox{s.t.}\ E(L) \cap E(R) = \{e\}\}.\]

\begin{lem}\label{lem:z}
Let $A$ be a graph such that $A \in \mathcal{C}(H_1, H_2)$.
Let $x=\delta_1+\delta_2-1$ and $z=\max\{z(H_1),z(H_2)\}$. Then $$d(A) \geq \frac{x(x+z)}{2x+z}.$$
\end{lem}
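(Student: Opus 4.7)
The plan is to exploit the $\mathcal{C}(H_1, H_2)$ structure to pin down the minimum-degree profile of $A$, then carry out a double count of degrees combined with a case split. First I would observe that every vertex $v$ of $A$ has $\deg_A(v) \geq x$. Indeed, pick any edge $e \ni v$ (we may discard isolated vertices since removing them only increases $d(A)$). By $A \in \mathcal{C}$, there exist copies $L_e \cong H_2$ and $R_e \cong H_1$ in $A$ with $E(L_e) \cap E(R_e) = \{e\}$; so $\deg_A(v) \geq \deg_{L_e}(v) + \deg_{R_e}(v) - 1 \geq \delta_2 + \delta_1 - 1 = x$.

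Next I would partition $V(A) = B \cup S$, where $B = \{v : \deg_A(v) \geq x+z\}$ (``big'') and $S$ collects the remaining (``small'') vertices; set $b = |B|$, $s = |S|$. I claim $S$ is an independent set: WLOG $z = z(H_1)$, and for any edge $uv \in A$ the witnessing copy $R_{uv} \cong H_1$ has at least one endpoint, say $u$, with $\deg_{R_{uv}}(u) \geq \delta_1 + z$ (by the definition of $z(H_1)$). Combining with $L_{uv}$ gives $\deg_A(u) \geq (\delta_1 + z) + \delta_2 - 1 = x+z$, so $u \in B$.

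The key step is to notice that $\sum_{v \in B} \deg_A(v)$ admits \emph{two} independent lower bounds: the trivial $\sum_{v \in B} \deg_A(v) \geq (x+z)b$ from the big-threshold, and the ``flow'' bound $\sum_{v \in B} \deg_A(v) \geq e_A(B, S) = \sum_{u \in S} \deg_A(u) \geq xs$ (using that $S$ is independent, so every edge at a small vertex crosses to $B$). Combined with $\sum_{u \in S} \deg_A(u) \geq xs$ this yields
\[
2 e(A) \;=\; \sum_{v \in B} \deg_A(v) + \sum_{u \in S} \deg_A(u) \;\geq\; \max\{(x+z)b,\ xs\} + xs.
\]

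Finally I split on which term attains the maximum. In Case A, $(x+z)b \geq xs$, and the inequality
\[
\frac{(x+z)b + xs}{2(b+s)} \;\geq\; \frac{x(x+z)}{2x+z}
\]
reduces after cross-multiplication to $z[(x+z)b - xs] \geq 0$, which holds. In Case B, $xs \geq (x+z)b$, and
\[
\frac{2xs}{2(b+s)} \;=\; \frac{xs}{b+s} \;\geq\; \frac{x(x+z)}{2x+z}
\]
reduces to $sx \geq (x+z)b$, which again holds. Hence $d(A) \geq x(x+z)/(2x+z)$ in either case. The main obstacle I anticipate is precisely the need for this case split: the naive minimum-degree estimate $2e(A) \geq (x+z)b + xs$ alone is insufficient when $s$ is much larger than $b$ (it only yields $d(A) \gtrsim x/2$), and the fix is to carry along the auxiliary lower bound $\sum_{v \in B} \deg_A(v) \geq xs$ supplied by the independence of $S$.
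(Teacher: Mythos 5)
Your proof is correct and follows essentially the same route as the paper: the same key claim that the low-degree vertices form an independent set (argued via the parameter $z$ exactly as in the paper), followed by a two-case degree count whose cases correspond to the paper's Cases 1 and 2. The only difference is bookkeeping — you double-count $2e(A)=\sum_v \deg_A(v)$ together with the auxiliary bound $\sum_{v\in B}\deg_A(v)\geq \sum_{u\in S}\deg_A(u)$, while the paper writes $e(A)$ as the cross edges plus $e(A[C])$ per degree class — and the resulting algebra is equivalent.
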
 

\begin{proof} We begin with the following key claim.
\begin{claim}\label{claim:independentset} In $A$, the set $B$ of vertices with degrees in $\{x, \dots, x+z-1\}$ form an independent set. \end{claim} \begin{proof} Assume not. Then there are two vertices $u,v$ in $A$ of degree at most $x + z - 1$ which are adjacent. Let $L$ and $R$ be the copies of $H_2$ and $H_1$, respectively, such that $E(L) \cap E(R) = \{uv\}$. Without loss of generality, assume $z = z(H_1)$. Then since $uv\in E(R)$ we can also assume without loss of generality that $u \not\in \{w\in V(R): \deg_R(w)\leq \delta_1+z-1\}$. Hence $\deg_A(u) \geq \delta_1 + z + \delta_2 - 1$. But at the start we said $\deg_A(u) \leq x-1+z = \delta_1 + z + \delta_2 - 2$, so this is a contradiction.\end{proof}
Let $C := V(A)\setminus B$.
Suppose for each $i \in \{0, \dots, z-1\}$ there are $b_i$ vertices of degree $x+i$ in $B$, and $c$ vertices in $C$.

We have $e(A)=b_0 x + b_1 (x+1) + \dots + b_{z-1} (x+z-1) + r$, where $r=e(A[C])$.
We also have $\sum_{v \in C} \deg(v) = e(A)+r \geq (x+z)c$.

From this we have $r \geq \max \{ 0,\frac{1}{2} ((x+z)c - (b_0 x + \dots + b_{z-1} (x+z-1))) \}$.

{\bf Case 1: $b_0 x + \dots + b_{z-1}(x+z-1) \geq (x+z)c$.}

We have $r \geq 0$, hence
\begin{align*}
d(A) & = \frac{e(A)}{v(A)} \geq \frac{b_0 x + b_1 (x+1) + \dots + b_{z-1} (x+z-1)}{\sum b_i+c} \\
& \geq \frac{b_0 x + \dots + b_{z-1} (x+z-1)}{\sum b_i+ \frac{b_0 x + \dots + b_{z-1} (x+z-1)}{x+z}} \\
& \geq \frac{b_0 x + \dots + b_{z-1} (x+z-1)}{\frac{b_0 x + \dots + b_{z-1}(x+z-1)}{x} + \frac{b_0 x + \dots + b_{z-1} (x+z-1)}{x+z}} \\
& = \frac{1}{\frac{1}{x} + \frac{1}{x+z}} = \frac{x(x+z)}{2x+z}.
\end{align*}

{\bf Case 2: $(x+z)c > b_0 x + \dots + b_{z-1}(x+z-1)$.}

Clearly, we have $r \geq 1/2((x+z)c - (b_0 x + \dots + b_{z-1}(x+z-1)))$, hence
\begin{align*}
d(A) & = \frac{e(A)}{v(A)} = \frac{b_0 x + b_1 (x+1) + \dots + b_{z-1} (x+z-1) + r}{\sum b_i+c} \\
& \geq  \frac{b_0 x + \dots + b_{z-1} (x+z-1) + (x+z)c}{2(\sum b_i+c)} \\
& \geq \frac{ x \sum b_i + (x+z)c}{2(\sum b_i +c)} = \frac{x}{2} + \frac{zc}{2(\sum b_i+c)}\\
& \geq \frac{x}{2} + \frac{zc}{2 ( \frac{(x+z)c}{x} + c)} = \frac{x}{2} + \frac{z}{2 (\frac{2x+z}{x})} = \frac{x(x+z)}{2x+z}.
\end{align*}
\end{proof}

\subsection{Proof of colouring results}\label{sec:proofofcolouring}
\begin{proofofmaincolouring}
%First in order to not worry about circular arguments, note that case (ii) relies on cases (iii), (iv) and (viii), and cases (vi) and (vii) rely on case (ii).
First, in order be clear that the following contains no circular arguments, note that case (i) relies on cases (ii), (iii) and (vii), and cases (v) and (vi) rely on case (i).

%{\bf Case (i):} We have $r \geq 3$, i.e. at least $3$ graphs $H_1, H_2, H_3$. 
%
%For each $i \in [r]$ we have $m_2(H_i)>1$ , so by~(\ref{eq:ineqs2}), 
%there exists $\eps>0$ such that $\lfloor ar(H_i) - \eps \rfloor \geq 1$.
%Further we can choose $\eps$ so that also 
%$\lfloor ar(H_1)-\eps \rfloor \geq ar(H_1)-1$.
%Therefore we have
%\begin{align*}
%ar(G) & \stackrel{(\ref{eq:ineqs})}{\leq} m(G)+\frac{1}{2}
%\leq m_2(H_1,H_2) + \frac{1}{2}
%\leq m_2(H_1) + \frac{1}{2} \\
%& \stackrel{(\ref{eq:ineqs})}{\leq} ar(H_1) + 1
%\leq \lfloor ar(H_1)-\eps \rfloor + 2
%\leq \sum_{i=1}^3 \lfloor ar(H_i)-\eps \rfloor,
%\end{align*}
%and so we can apply Lemma~\ref{lem:asymmcol}(i).
%\medskip

{\bf Case (i):} We have $m_2(H_1)=m_2(H_2)$.

First, using Lemma~\ref{lem:maincolouring}(ii) and (iii), we may assume that $H_2$ is bipartite and $m(H_2) \leq 2$.
Further, since $m_2(H_1)=m_2(H_2)$, we may swap the roles of $H_1$ and $H_2$, 
so we may also assume that $H_1$ is bipartite and has $m(H_1) \leq 2$ as well.
Each $H_i$ being bipartite means they satisfy
\begin{align}\label{eq:ineq3}
e_i \leq \frac{1}{4} v_i^2 \quad \text{ and } \quad m_2(H_i) \leq m(H_i) + \frac{1}{2},
\end{align}
and further we have equality in the first inequality above if and only if we have equality in the second.
Write $m_2(H_i)=t+x$ where $t \in \mathbb{N}$ and $x \in [0,1)$. 

{\bf Subcase 1}: 
We have $x \in [1/2,1)$, and also either $x>1/2$ or $e_i < \frac{1}{4} v_i^2$ for each $i \in [2]$.

Then by~(\ref{eq:ineq3}), there exists $\eps$ such that $\lfloor m(H_i) - \eps \rfloor \geq t$ for each $i \in [2]$.
Therefore we have 
\begin{align*}
m(G) \leq m_2(H_1, H_2) = m_2(H_i) \leq t+1 \leq 2t \leq \lfloor m(H_1)-\eps \rfloor + \lfloor m(H_2) - \eps \rfloor,
\end{align*}
and so we can apply Lemma~\ref{lem:asymmcol}(ii).

{\bf Subcase 2}: We have $x=\frac{1}{2}$ and $e_i=\frac{1}{4} v_i^2$ for some $i \in [2]$.

Without loss of generality assume it is $i=1$. 
Then $H_1$ is a complete balanced bipartite graph and we must have 
$v_1=2\ell$ for some $\ell \in \{2,4\}$ since $m(H_1) \leq 2$. 
(To be precise, $H_1$ is either $K_{2,2}$ or $K_{4,4}$. Indeed, $m_2(K_{3,3}) = 2$, but $m_2(K_{2,2}) = 1.5$ and $m_2(K_{4,4}) = 2.5$.)
Therefore $m_2(H_1)=\frac{1}{2}(\ell+1)$ and so $t=\frac{1}{2} \ell$, which implies $ar(H_1)\geq e_1/(v_1-1)>\frac{1}{4}v_1=t$. 
We therefore see that Lemma~\ref{lem:maincolouring}(vii) covers this case.

{\bf Subcase 3}: We have $x \in [0,1/2)$.

Let $G'$ be a subgraph of $G$ with minimum degree $\delta_{\max}(G)$. 
Then $e(G') \geq \frac{1}{2} v(G') \cdot \delta_{\max}(G)$ and so $2m(G) \geq \delta_{\max}(G)$.
Thus we have 
\begin{align*}
\delta_{\max}(G) \leq 2m(G) \leq 2m_2(H_1) < 2t+1.
\end{align*} 
Since $\delta_{\max}(G)$ is integral, it must be at most $2t$.  
For each $i \in [2]$, we have that $H_i$ is strictly $2$-balanced, and so we have $\delta_i > m_2(H_i)$.
Further, since $m_2(H_i) \geq t$ and $\delta_i$ is integral, we must have $\delta_i \geq t+1$.
Overall, we have 
\begin{align*}
\delta_{\max}(G) \leq 2t \leq \delta_1+ \delta_2-2,
\end{align*} 
and so we can apply Lemma~\ref{lem:asymmcol}(iii). 
\medskip

{\bf Case (ii):} We have $\chi(H_2) \geq 3$.

See the proof of case (i) of Lemma~\ref{lem:hypergraphcolouring}.
\medskip

{\bf Case (iii):} We have $m(H_2) >2$.

See the proof of case (ii) of Lemma~\ref{lem:hypergraphcolouring}.
\medskip

%Clearly we may assume that $(H_1,H_2)$ is a heart. 
%Since $H_1$ is strictly $2$-balanced with respect to $m_2(\cdot, H_2)$, we have $\delta_1 > m_2(H_1, H_2)$.
%Therefore we have
%\begin{align*}
%m(G) \leq m_2(H_1,H_2) < \delta_1 \leq \delta_{\max}(H_1),
%\end{align*} 
%and so we can apply Lemma~\ref{lem:asymmcol}(iv).
%\medskip

%Since $m(H_2)>2$, there exists $\eps>0$ such that $\lfloor m(H_2)-\eps \rfloor \geq 2$.
%Further we can choose $\eps$ so that $\lfloor m(H_1)-\eps \rfloor \geq m(H_1)-1$.  
%Therefore we have 
%\begin{align*}
%m(G) \leq m_2(H_1,H_2) \leq m_2(H_1)  \stackrel{(\ref{eq:ineqs})}{\leq} m(H_1)+1  \leq \lfloor m(H_1)-\eps \rfloor + \lfloor m(H_2)-\eps \rfloor,
%\end{align*}
%and so we can apply Lemma~\ref{lem:asymmcol}(ii).

{\bf Case (iv):} We have $ar(H_2)>2$.

Since $ar(H_2)>2$, there exists $\eps>0$ such that $\lfloor ar(H_2)-\eps \rfloor \geq 2$.
Further we can choose $\eps$ so that $\lfloor ar(H_1)-\eps \rfloor \geq ar(H_1)-1$.  
Therefore we have 
\begin{align*}
ar(G) & \stackrel{(\ref{eq:ineqs})}{\leq} m(G)+\frac{1}{2}
\leq m_2(H_1,H_2) + \frac{1}{2}
\stackrel{(P. \ref{prop:m2h1h2})}{\leq} m_2(H_1) + \frac{1}{2} \\
& \stackrel{(\ref{eq:ineqs})}{\leq} ar(H_1) + 1
\leq \lfloor ar(H_1)-\eps \rfloor + \lfloor ar(H_2)-\eps \rfloor,
\end{align*}
and so we can apply Lemma~\ref{lem:asymmcol}(i).
\medskip

{\bf Case (v):} We have $H_1=K_a$ for some integer $a \geq 3$.

Suppose for a contradiction that $G \to (K_a,H_2)$, and let $G'$ be a Ramsey-minimal subgraph of $G$.
By Lemma~\ref{lem:cliqueminimal}, we have $\delta(G') \geq 2(a-1)$.
It follows that $m(G) \geq \frac{e(G')}{v(G')} \geq \frac{\delta(G')}{2} \geq (a-1)$.
Either, we have $m_2(H_1)=m_2(H_2)$ and so Lemma~\ref{lem:maincolouring}(i) covers this case,
or we have $m(G) \leq m_2(H_1,H_2) < m_2(K_a) = \frac{a+1}{2}$, which is a contradiction as $a \geq 3$.
\medskip

{\bf Case (vi):} We have $H_1=K_{a,b}$ for some integers $b \geq a \geq 2$.
If $a=b=2$ and $m_2(H_2)=3/2$, then we have $m_2(H_1)=m_2(H_2)$ and so this case is covered by Lemma~\ref{lem:maincolouring}(i).

So assume otherwise. 
Assume for a contradiction that $G \to (K_{a,b},H_2)$.
Let $G'$ be a Ramsey-minimal subgraph of $G$.
It is not hard to see that $G' \in \mathcal{C}$.
We will use Lemma~\ref{lem:z} to show that $d(G')>m_2(K_{a,b},H_2)$, which is a contradiction since then $m(G) \geq d(G') >m_2(K_{a,b},H_2)$.
Note that since all graphs in $\hat{\mathcal{B}}(K_{a,b},H_2)$ are in $\mathcal{C}$, this argument also shows that $\hat{\mathcal{B}}(K_{a,b},H_2)=\emptyset$.
%By Theorems~\ref{thm:colourb} and \ref{thm:bfinite}, we can assume $G \in \hat{\mathcal{B}}$, however, the only property of $G$ which we will use is that every edge $e \in E(A)$ satisfies $\{e\}=E(R) \cap E(L)$, for some $R \cong H_1$, $L \cong H_2$, with $R,L \subseteq G$.
%Suppose for a contradiction that $G \to (H_1,H_2)$.
%%Whether we take a Ramsey-minimal subgraph of $G$, or whether we use the definition of $\mathcal{B}$, 
%we obtain a graph such that every edge $e$ satisfies $e=R \cap L$, for some $R \cong H_1$, $L \cong H_2$, with $R,L \subseteq A$.

By Lemma~\ref{lem:z}, it suffices to prove that $m_2(K_{a,b},H_2) < x(x+z)/(2x+z)$, where $x=a+\delta_2-1$ and $z=b-a$. %i.e. we show that $\hat{\mathcal{B}}(K_{a,b}, H_2)$ is always empty.
Note that $(K_{a,b}, H_2)$ is a heart by assumption, hence it suffices to prove that
\begin{align*}
\frac{ab}{a+b-2+\frac{1}{m_2(H_2)}} < \frac{(a+\delta_2-1)(b+\delta_2-1)}{a+b+2\delta_2-2} = \frac{ab+\delta_2(a+b+\delta_2-2)-a-b+1}{a+b-2+\frac{1}{m_2(H_2)}+2\delta_2-\frac{1}{m_2(H_2)}}.
\end{align*}
Now using Fact~\ref{fact:ineq}(ii),
\COMMENT{RH: the $(a+b)/(c+d) \geq C$ one.}
it suffices to show that
\begin{align*}
\frac{ab}{a+b-2+\frac{1}{m_2(H_2)}} < \frac{\delta_2(a+b+\delta_2-2)-a-b+1}{2\delta_2-\frac{1}{m_2(H_2)}}.
\end{align*}

{\bf Subcase 1:} $a,b \geq 2$ and $\delta_2 \geq 3$.

Since the right hand side is an increasing function of $\delta_2$, we use $\delta_2 \geq 3$ and obtain that it is at least $(2a+2b+4)/(6-1/m_2(H_2))$.
So it suffices to show that 
\begin{align*}
& \left ( 6-\frac{1}{m_2(H_2)} \right ) ab  < 2(a+b)^2+4(a+b)-\left ( 2-\frac{1}{m_2(H_2)} \right ) (2a+2b+4) \\
\Longleftarrow \ & 0  < \left ( 2+\frac{1}{m_2(H_2)} \right ) ab + 4(a+b) - \left (2-\frac{1}{m_2(H_2)} \right ) (2a+2b+4) \\
\Longleftarrow \ & 0  < \left ( 2+\frac{1}{m_2(H_2)} \right ) ab -8,
\end{align*}
where we used $(a+b)^2 \geq 4ab$ going from the first to second line. Now since $a,b \geq 2$, the last line is true.

{\bf Subcase 2:} $b \geq 3, a \geq 2$ and $\delta_2=2$.

We still use that the right hand side is an increasing function of $\delta_2$, and so here is at least $(a+b+1)/(4-1/m_2(H_2))$.
So it suffices to show that
\begin{align*}
& \left ( 4-\frac{1}{m_2(H_2)} \right ) ab  < (a+b)^2+(a+b)-\left ( 2-\frac{1}{m_2(H_2)} \right ) (a+b+1) \\
\Longleftarrow \ & 0  \leq \frac{1}{2} ab + (a+b) - \left (2-\frac{1}{2} \right ) (a+b+1) + (b-a)^2\\
 &   = \frac{1}{2} (ab-a-b-3+2(b-a)^2),
\end{align*}
where we used $(a+b)^2=4ab+(b-a)^2$ and $m_2(H_2)<\delta_2=2$ (since $H_2$ is strictly $2$-balanced as $(H_1, H_2)$ is a heart) going from the first to second line. If $a,b \geq 3$, the last line is true.
Otherwise $a=2$, so we require $2b^2-7b+3 \geq 0$ which is true for $b \geq 3$.

{\bf Subcase 3:} $a=b=2$, $\delta_2=2$ and $m_2(H_2) < \frac{3}{2}$. 

Here note that $m_2(K_{a,b})=\frac{3}{2}$.
Then plugging values straight into both sides, we require
\begin{align*}
\frac{2 \times 2}{2+2-2+\frac{1}{m_2(H_2)}} \leq \frac{2(4)-2-2+1}{2(2)-\frac{1}{m_2(H_2)}},
\end{align*}
which rearranges to say $m_2(H_2) < \frac{3}{2}$, which we assumed. %This completes the case analysis.
%{\bf Subcase 4:} $a=b=2$, $\delta_2=2$ and $m_2(H_2)=\frac{3}{2}$.
%
%In this case we have $m_2(H_1)=m_2(H_2)=\frac{3}{2}$, so this case is covered by Lemma~\ref{lem:maincolouring}(i).
\medskip

{\bf Case (vii):} There exists $\ell \in \mathbb{N}$ such that $\ell<ar(H_1)$ and $m_2(H_1,H_2) \leq \ell+\frac{1}{2}$.

We have $m_2(H_2)>1$ , so by~(\ref{eq:ineqs2}), 
there exists $\eps>0$ such that $\lfloor ar(H_2) - \eps \rfloor \geq 1$.
Further we can choose $\eps$ so that also 
$\lfloor ar(H_1) - \eps \rfloor = \ell$.
Therefore we have
\begin{align*}
ar(G) \stackrel{(\ref{eq:ineqs})}{\leq} m(G) + \frac{1}{2} \leq m_2(H_1,H_2) + \frac{1}{2} \leq \ell+1 \leq \lfloor ar(H_1) - \eps \rfloor +  \lfloor ar(H_2) - \eps \rfloor,
\end{align*}
and so we can apply Lemma~\ref{lem:asymmcol}(i).
\medskip

{\bf Case (viii):} There exists $\ell \in \mathbb{N}$ such that $\ell<m(H_1)$ and $m_2(H_1,H_2) \leq \ell+1$, and $H_2$ is not a cycle.

Since $H_2$ is connected, contains a cycle, but is not itself a cycle, we have $m(H_2)>1$.
Thus there exists $\eps>0$ such that $ \lfloor m(H_2) - \eps \rfloor \geq 1$.
Further we can choose $\eps$ so that also $\lfloor m(H_1) - \eps \rfloor = \ell$.
Therefore we have
\begin{align*} 
m(G) \leq m_2(H_1,H_2) \leq \ell+1 \leq \lfloor m(H_1) - \eps \rfloor  + \lfloor m(H_2) - \eps \rfloor,
\end{align*}
and so we can apply Lemma~\ref{lem:asymmcol}(ii).
\endproof
\end{proofofmaincolouring}

\begin{proofofhypergraphcolouring}
{\bf Case (i)}: We have $\chi(H_2) \geq k+1$.

\COMMENT{RH: Could add the previously observed `since for any $k$-graph $H$, a colouring avoiding a monochromatic subhypergraph of $H$ also avoids a monochromatic copy of $H$ itself'. JH: This was actually somewhere where not being a heart was a problem, as $\chi(H_2) \geq k+1$ may not carry at all to the `heart part' of $H_2'\subseteq H_2$. So the `we can assume $(H_1, H_2)$ is a heart' that was here before I think wasn't correct.}
Since $(H_1, H_2)$ is a heart, we have that $H_1$ is strictly $k$-balanced with respect to $m_k(\cdot, H_2)$ if $m_k(H_1) > m_k(H_2)$ or $H_1$ is strictly $k$-balanced if $m_k(H_1) = m_k(H_2)$. Thus we have $\delta_1 > m_k(H_1, H_2)$.
%\COMMENT{JH: See the beginning of Section~\ref{sec:nearlysymmetric}.} 
Therefore we have
\begin{align*}
m(G) \leq m_k(H_1,H_2) < \delta_1 \leq \delta_{\max}(H_1),
\end{align*} 
and so we can apply Lemma~\ref{lem:asymmcol}(iv).
\medskip

{\bf Case (ii)}: We have $m(H_2) > \binom{v_1}{k-2}+1$.

Since $m(H_2)>\binom{v_1}{k-2}+1$, there exists $\varepsilon>0$ such that $\lfloor m(H_2)-\varepsilon \rfloor \geq \binom{v_1}{k-2}+1$. 
Further we can choose $\varepsilon$ so that also $\lfloor m(H_1)-\varepsilon \rfloor \geq m(H_1)-1$.
Therefore we have
\begin{align*}
m(G) \leq m_k(H_1,H_2) \leq m_k(H_1) \stackrel{(\text{\ref{eq:mkHbound}})}{\leq} m(H_1) + \binom{v_1}{k-2} \leq \lfloor m(H_1)-\varepsilon \rfloor + \lfloor m(H_2)-\varepsilon \rfloor,
\end{align*}
and so we can apply Lemma~\ref{lem:asymmcol}(ii).
\endproof
\end{proofofhypergraphcolouring}

\section{Concluding remarks}\label{sec:conclude}
%\todo{Write concluding remarks section; taking account of the symmetric KK results(?); further questions (mention a meta theorem???); what's left to be done for colouring (in particular, I think this is a good place to point out that we believe, at least for graphs, that $\hat{\mathcal{B}}$ is empty, evidencing by talking about the previously known cases, including ones we have done ($K_{a,b}$ vs anything uses the `conclude its empty' method?); hypergraph KK counterexample to just assuming $m_k(H_2) > 1$, even though this condition is certainly enough to conclude the reduction (point this out); }

Recall that we assumed $m_2(H_2)>1$ in Conjecture~\ref{conj:kk} in order to avoid possible complications arising from $H_1$ or $H_2$ being certain forests, in particular those failing to satisfy the statement `for all graphs $G$ we have $m(G) \leq m_2(H)$ implies $G \not\to (H,H)$'.
One may have noticed earlier that we did not explicitly state a version of the Kohayakawa--Kreuter conjecture for hypergraphs. This is because although the reduction of the natural $0$-statement to the statement `for all $k$-graphs $G$ we have $m(G) \leq m_k(H_1, H_2)$ implies $G \not\to (H_1, H_2)$' (see Theorem~\ref{thm:coloursuffices}) holds when $m_k(H_2) > 1$, Gugelmann et al.~\cite{gnpsst} showed that there are hypergraphs with $m_k(H) > 1$ which have different thresholds than $n^{-1/m_k(H)}$.
Indeed, for a graph $G$, denote by $G^{+k}$ the $k$-uniform hypergraph obtained from $G$ by adding the same set of $k-2$ additional vertices to every edge of $G$.
For $k \geq 4$, we have $K_6^{+k} \to (K_3^{+k},K_3^{+k})$ and $m(K_6^{+k})=15/(k+4)<2=m_k(K_3^{+k})$, so the statement fails when taking $H_1=H_2=K_3^{+k}$. 
On the other hand, Thomas~\cite{t} proved that if $H$ is a $k$-graph containing a strictly $k$-balanced subhypergraph $H'$ with $m_k(H')=m_k(H)$ such that $m(H') \geq 2+\frac{2k-1}{v_{H'}-2k}$, then the threshold is $n^{-1/m_k(H)}$.
It is therefore tempting to speculate that $m_k(H)>2$ might be sufficient to prove $n^{-1/m_k(H)}$ is a threshold for $G^k_{n,p} \to (H,H)$.
%To suggest a sufficient lower bound on $m_k(H_1,H_2)$, or just on $m_k(H_2)$, would be even more ambitious, given that both the symmetric hypergraph and asymmetric graph cases of 
%proving the deterministic colouring statement seem difficult in general.
%\COMMENT{RH: First draft of what I think we should say...} The hypergraph case is likely to be even more complex, 
To suggest a sufficient lower bound on $m_k(H_1,H_2)$, or just on $m_k(H_2)$, would be even more ambitious. Indeed, the complexity of the hypergraph case is even more evident given the following observation. Gugelmann et al.~\cite{gnpsst} and Thomas~\cite[Section 6.5]{t} showed that there exist $k$-graphs $H$ for the symmetric problem -- that is, $H_1 = \cdots = H_r = H$ -- such that a threshold for $G^{k}_{n,p} \to (H, \ldots, H)$ is neither determined by $m_{k}(H)$ nor by a density $m(G) \leq m_k(H_1, H_2)$ of some $k$-graph $G$ (see \cite[Theorem~9]{gnpsst}). 

In \cite{npss}, Nenadov, Person, \v{S}kori\'{c} and Steger prove a `meta' version of Theorem~\ref{thm:coloursuffices}, 
up to a deterministic condition of a particular subgraph of $G^k_{n,p}$ being so-called `$H$-closed' (see Definition~1.10 in~\cite{npss}).
They prove this condition holds in the various Ramsey properties they consider. 
For $r \in \mathbb{N}$ and graphs $G, H_1, \ldots, H_r$, we say that \emph{$G$ has the anti-Ramsey property for $(H_1, \ldots, H_r)$} and write $G \overset{ar}{\to}$ ($H_1, \ldots, H_r$) if for every \emph{proper} colouring of the edges of $G$, there does not exist a rainbow copy of $H_i$ for any $i\in [r]$.
%proving several results that, regardless of the Ramsey-like property $P$ being investigated, function as key pieces of a proof that that a threshold for $G_{n,p}$ having this property $P$ with respect to some graph $H$ is $n^{-1/m_2(H)}$ (see e.g. Section~1.3 and Theorem~12 from \cite{npss}).
In a paper of the second and third authors together with Behague, Letzter and Morrison~\cite{bhhlm},
we prove the $H$-closed property holds for any strictly $2$-balanced $H$ with at least $5$ edges, thereby proving a version of Theorem~\ref{thm:coloursuffices} with the anti-Ramsey property replacing the usual Ramsey property and $H_1 = \cdots = H_r = H$.
%i.e. a symmetric version, will appear 
It would also be very interesting to see if a similar approach to~\cite{npss} could be taken for general asymmetric Ramsey-like properties to obtain a version of Theorem~1.8 with the Ramsey property there replaced with some general Ramsey property $P$.

The reader will observe that we technically prove a slightly stronger result than Theorem~\ref{thm:coloursuffices}: we can replace the condition `we have that $m(G) \leq m_k(H_1,H_2)$ implies $G \not \to (H_1,H_2)$' with `for all $G \in \hat{\mathcal{B}}$, we have $G \not \to (H_1,H_2)$'. The `triforce' graph $G$ in Figure~\ref{fig:k3k12badgraph} satisfies $G \in \hat{\mathcal{B}}$ and $G \to (K_3, K_{1,2})$, hence any threshold for the $0$-statement to hold for $(H_1, H_2) = (K_3, K_{1,2})$ is indeed still asymptotically below $n^{-1/m_2(K_3, K_{1,2})}$.\COMMENT{JH: Not entirely sure where I'm going with this. I want to say something about $\hat{\mathcal{B}}$ not containing the `cycle' like graphs discussed in Appendix~\ref{app:hydeconj}, but I'm not sure this is really that interesting. I almost want to say we're characterising with $\hat{\mathcal{B}}$ those graphs $G$ with $m(G) \leq m_k(H_1, H_2)$ which are `interesting to colour' though I guess that might be blown out of the water by Gugelmann et also discovery of the disconnected $k$-graph with a different threshold not governed by the appearance of a particular $k$-graph with low density. CB: Not sure what is best here, but think if not expanded upon the sentence should be deleted, but then perhaps, too, the paragraph seems odd. So could be worth commenting whole paragraph out for sending draft to KSW, even if want to say more for the arxiv version and beyond. JH: This observation is actually relevant for Lemma~\ref{lem:maincolouring}(vi), so I think its good to put here. Added a touch to the last sentence to get across the meaning. Slightly future JH: I also think my thoughts on `characterising with $\hat{\mathcal{B}}$ those graphs $G$ with $m(G) \leq m_k(H_1, H_2)$ which are `interesting to colour' ' is likely wrong, given that the correct thing is probably the $(H_1, H_2)$-cores of Kuperwasser, Samotij and Wigdersons' paper.} 

Let us speak to the strangeness of some of the conditions in Lemma~\ref{lem:maincolouring},
many involving using floor functions and/or proof methods that only give the existence of a colouring and do not construct it. 
Indeed, this seems particularly curious when considering how Algorithms \textsc{Grow-$\hat{\mathcal{B}}_{\eps}$}$(H_1, H_2,\eps)$ and \textsc{Grow-$\hat{\mathcal{B}}_{\eps}$-Alt}$(H_1, H_2,\eps)$
are readily programmable and one can construct $\hat{\mathcal{B}}(H_1, H_2,\eps)$ using the appropriate algorithm in polynomial time with respect to $e_1$, $e_2$, $v_1$ and $v_2$.
Moreover, knowing that $\hat{\mathcal{B}}(H_1, H_2)$ is finite with sizes again in terms of $e_1$, $e_2$, $v_1$ and $v_2$ we could hope to check all red/blue colourings of graphs in $\hat{\mathcal{B}}(H_1, H_2)$ as well.
This is achieved in the paper of Marciniszyn, Skokan, Sp\"{o}hel and Steger~\cite{msss} which deals with the case when $H_1$ and $H_2$ are both cliques (and not both $K_3$), albeit with different methods. 
In \cite{kk} and \cite{lmms}, which deal with the cases when $H_1$ and $H_2$ are both cycles and $H_1$ and $H_2$ are a clique and a cycle,
respectively, the authors of both papers show implicitly that $\hat{\mathcal{B}}(H_1, H_2) = \emptyset$.
Let us also remark that in \cite{msss} 
it is implicitly shown that $\hat{\mathcal{B}}(H_1, H_2)$, with each $H_i$ a clique, is non-empty only when $H_2 = K_3$ or $H_1 = H_2 = K_4$, 
and moreover, $|\hat{\mathcal{B}}(H_1, H_2)| \leq 3$ in each case. A reader can also observe that in the proof of Lemma~\ref{lem:maincolouring}(vi) we show that $\hat{\mathcal{B}}(H_1,H_2)$ is empty. \COMMENT{JH: Add in anything relevant from our paper and Samotij and Kuperwasser's paper here.}
We raise studying whether $\hat{\mathcal{B}}(H_1,H_2)=\emptyset$ as an intriguing question in its own right. 
\begin{ques}
For which pairs of hypergraphs $(H_1,H_2)$ do we have $\hat{B}(H_1,H_2)=\emptyset$?
\end{ques}
In the graph case this would provide an alternate route to that taken by Christoph et al.~\cite{cmsw} in completing the proof of the Kohayakawa--Kreuter conjecture, whereas in the hypergraph case it may provide a possible pathway to obtaining new thresholds.

%PREVIOUS STUFF FROM VERSION_1:
%Thus it is our belief that for almost all pairs of graphs $(H_1, H_2)$ that $\hat{\mathcal{B}}(H_1, H_2) = \emptyset$. 
%If true, together with $\hat{\mathcal{B}}(H_1, H_2)$ being finite we could be hopeful of producing a constructive proof covering all pairs of graphs. 
%Of course, a major difficulty with producing a constructive proof for all pairs for which $\hat{\mathcal{B}}(H_1,H_2)$ is non-empty would require proving all other pairs have empty $\hat{\mathcal{B}}(H_1,H_2)$.
%This seems daunting.%%\COMMENT{JH: Might be too rambly at the moment, though I think one can get away with that a bit in concluding remarks. The last bit there of `we should be able to produce a total constructive proof' seems an ambitious statement, but one I think that we believe is essentially true.
%RH: I think it's fine, I like it.}

Let us remark that easy adaptions to our methods would allow us to prove Conjecture~\ref{conj:kk} in the case where $r \geq 3$ and $H_2=H_3$. In particular, %graphs that are $(H_1,H_2)$-sparse are also in a sense `$H_3$-sparse', and moreover 
an easy adaption of the proof of Lemma~\ref{lem:maincolouring}(iv) yields a $3$-colouring of $G$ which avoids monochromatic $H_i$ in colour $i$ for each $i \in [3]$.\footnote{Note that in~\cite{bhh} we claimed we could prove Conjecture~\ref{conj:kk} in the case where $r \geq 3$. However for this, we require $G \not \to (H_1,\dots,H_r)$ in place of $G \not \to (H_1,H_2)$ in Theorem~\ref{thm:coloursuffices}, which we cannot prove yet.}
%However we do not pursue the case of $r \geq 3$ any further since we believe that Conjecture~\ref{conj:kk} is true, that is, the threshold should depend only on $H_1$ and $H_2$.
\COMMENT{RH: new paragraph here!! JH: I'm not sure talking about the sparseness is necessary here. To accurately communicate to a reader why having $H_3$ sparseness is needed, and why $H_2 = H_3$ is (trivially) good for this but $H_2 \neq H_3$ possibly creates problems would take a good bit of time to explain (and a reviewer may well want to know), so I think its best that we just mention the colouring argument and leave it there. We can come back to this though if we want to add in more later.}

Finally, note that in the symmetric hypergraph case, where we have $H=H_1=\dots=H_r$ for $r\geq 3$, our methods show that to prove $n^{-1/m_k(H)}$ is the threshold, it suffices to prove a deterministic colouring result with the appropriate number of colours $r$, that is, for all $G$ with $m(G) \leq m_2(H)$, we have $G \not \to (\underbrace{H, \ldots, H}_{r \ times})$.
Since the validity of this statement could depend on the number of colours, this raises the following fascinating question.

\begin{ques}
Does there exist a hypergraph $H$ such that $n^{-1/m_k(H)}$ is not the threshold for the symmetric Ramsey property for $H$ with $2$ colours, but is the threshold when using $r$ colours for some $r \geq 3$?
\end{ques}
In the graph case, when $H$ is the path of three edges, while always having a threshold at $n^{-1/m_2(H)}$ for any $r\geq 2$, the type of threshold changes, going from a coarse threshold\footnote{A property $P$ has a coarse threshold at $f(n)$ if, for $p(n)$ such that $\lim_{n \to \infty} p(n)/f(n)=0$, $P$ does not occur w.h.p., while for $p(n)$ such that $\lim_{n \to \infty} p(n)/f(n)=\infty$, $P$ occurs w.h.p.}
 at $r=2$ to a threshold as defined in Section~\ref{sec:intro} for $r\geq 3$.

\section{Guide to appendices}\label{sec:guidetoapp}%\todo{Write a short guide to the appendices. Stuff here can mostly be taken from around the paper where we reference the various appendices, so this section shouldn't be too hard to write.}

Here we lay out a brief guide to what is contained in the various appendices. In Appendix~\ref{app:hydeconj}, we consider the family of graphs $\hat{\mathcal{A}}$ introduced in \cite{h} and discuss why in this paper we choose to work with $\hat{\mathcal{B}}$ instead of $\hat{\mathcal{A}}$ for the interested reader. Appendix~\ref{sec:omittedproofs} contains several proofs which were sufficiently similar to their counterparts in \cite{h} to warrant removing from the main text for brevity and so to not trouble the reader with calculations already performed in \cite{h}. Appendix~\ref{app:degenfull} contains almost verbatim the proof of \cite[Claim~6.3]{h}, except here it is generalised to hypergraphs. Since the hypergraph version differs very little from the graph version presented in \cite{h}, we have added the proof to the appendices for completeness. Finally, in Appendix~\ref{app:mh1mh2equal}, we prove the $m_k(H_1) = m_k(H_2)$ case of Lemma~\ref{lemma:noerror}, as well as proving Lemma~\ref{lem:finiteequal}. The proof of the $m_k(H_1) = m_k(H_2)$ case of Lemma~\ref{lemma:noerror} is both sufficiently similar in parts to the proof of the $m_k(H_1) > m_k(H_2)$ case and similar to work done in \cite{h} that we defer it to this appendix. The analysis of $\hat{\mathcal{B}}_{\eps}$ in this case, and proving it is finite, is very similar to the proof of Lemma~\ref{lem:finitegreater}. Moreover, Kuperwasser and Samotij~\cite{ks} already proved this case for graphs in a stronger form (see Section~\ref{sec:intro}), further justifying the move of this work to the appendices.

\section*{Acknowledgements}
We are very grateful to Eden Kuperwasser, Wojciech Samotij and Yuval Wigderson for sharing an early draft of~\cite{ksw}, and for many interesting discussions.

\bibliographystyle{plain}
\bibliography{bibliography}

\appendix{}

\section{The conjecture of the third author}\label{app:hydeconj}

In order to state \cite[Conjecture~1.8]{h}, the conjecture of the third author~\cite{h} similar to Theorem~\ref{thm:colourb} in flavour, recall the definitions of 
%$\lambda(G)$, 
$\mathcal{R}_G$, 
$\mathcal{L}_G$,
$\mathcal{L}^*_G$,
$\mathcal{C}$,
$\mathcal{C}^*$,
%$\mathcal{T}^B_G$,
%$\gamma(H_1, H_2, \eps)$ and an edge being \emph{open in $G$} 
from previous sections.

The following sets out the central set of graphs that \cite[Conjecture~1.8]{h} is concerned with.

\begin{define}%\label{def:a*}
\textnormal{Let $H_1$ and $H_2$ be non-empty graphs such that $m_2(H_1) \geq m_2(H_2) > 1$. Let $\eps := \eps(H_1, H_2) > 0$ be a constant. Define $\hat{\mathcal{A}} = \hat{\mathcal{A}}(H_1,H_2, \eps)$ to be \begin{small}\[\hat{\mathcal{A}} := \begin{cases}
    \{A \in \mathcal{C}^*(H_1, H_2): m(A) \leq m_2(H_1, H_2) + \eps \land A \ \mbox{is $2$-connected}\} \ \mbox{if} \ m_2(H_1) > m_2(H_2),\\
    \{A \in \mathcal{C}(H_1, H_2): m(A) \leq m_2(H_1, H_2) + \eps \land A \ \mbox{is $2$-connected}\} \ \mbox{if} \ m_2(H_1) = m_2(H_2).
\end{cases}\]\end{small}}
\end{define} 

\begin{conj}{\cite[Conjecture~1.8]{h}}\label{conj:aconj}
Let $H_1$ and $H_2$ be non-empty graphs such that $H_1 \neq H_2$ and $m_2(H_1) \geq m_2(H_2)>1$.\COMMENT{JH: I actually didn't write `$> 1$' in my old conjecture, but its implicit since we are trying to prove Conjecture~\ref{conj:kk} ultimately.} Assume $H_2$ is strictly $2$-balanced. Moreover, assume $H_1$ is strictly balanced with respect to $d_2(\cdot, H_2)$ if $m_2(H_1) > m_2(H_2)$ and strictly $2$-balanced if $m_2(H_1) = m_2(H_2)$.
% Further, let at least one of $F_1$ and $F_2$ not be a triangle.\footnote{The case $F_1 = F_2 = K_3$ of Conjecture~\ref{conj:kk} is, of course, covered by Theorem~\ref{thm:rr} (more precisely, the symmetric triangle case of Theorem~\ref{thm:rr} was proved in \cite{rrrandom}).} 
Then there exists a constant $\eps := \eps(H_1, H_2) > 0$ such that the set $\hat{\mathcal{A}}$ is finite and every graph in $\hat{\mathcal{A}}$ has a valid edge-colouring for $H_1$ and $H_2$.
\end{conj}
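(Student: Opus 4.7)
The plan is to split Conjecture~\ref{conj:aconj} into two parts: (a) finiteness of $\hat{\mathcal{A}}(H_1,H_2,\eps)$, and (b) the existence of valid edge-colourings for every member of $\hat{\mathcal{A}}$.

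For (a), I would mirror the argument used to prove Theorem~\ref{thm:bfinite} (specifically Lemma~\ref{lem:finitegreater}). The key reduction is to show that every $A \in \hat{\mathcal{A}}$ can be constructed by a growing procedure analogous to \textsc{Grow-$\hat{\mathcal{B}}_{\eps}$}, starting from a copy of $H_1$ and repeatedly attaching copies of $H_2$ (with copies of $H_1$ at each newly created edge) at successive open edges. This is possible because $A \in \mathcal{C}^*$ guarantees that every edge is contained in a suitable copy of $H_2$ with the required $H_1$-attachments, and $2$-connectedness restricts how these attachments can overlap, so that the classification of steps into \emph{degenerate} and \emph{non-degenerate} iterations from Claims~\ref{claim:non-degen} and~\ref{claim:degenfull} applies verbatim. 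Combining $\lambda(F_{i+1}) = \lambda(F_i)$ in non-degenerate steps with $\lambda(F_{i+1}) \leq \lambda(F_i) - \kappa$ in degenerate ones, the density bound $m(A) \leq m_2(H_1,H_2) + \eps$ gives $\lambda(\tilde{A}) > -\gamma$ for all subgraphs $\tilde{A} \subseteq A$, and so caps the number of degenerate steps by a constant. The fully-open-flower technology developed in Lemma~\ref{lem:newfinitenesshyper1} and Claims~\ref{claim:newfinitenessc1}--\ref{claim:newfinitenessc3} then bounds the number of consecutive non-degenerate steps, yielding the desired finite bound on $v(A)$ and hence on $|\hat{\mathcal{A}}|$.

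For (b), the plan is to exploit the fact that graphs in $\hat{\mathcal{A}}$ are structurally very restricted: they are $2$-connected, every edge is uniquely the intersection of a copy of $H_1$ and a copy of $H_2$ in $\mathcal{L}^*$, and their $m$-density is essentially $m_2(H_1,H_2)$. Under these constraints, one could try to mimic the colouring methods of Lemma~\ref{lem:maincolouring}: for instance, Hall/Nash-Williams-style decomposition arguments (as in Lemma~\ref{lem:asymmcol}(i),(ii)), minimum-degree peeling arguments (Lemma~\ref{lem:asymmcol}(iii),(iv)), or case-specific tricks such as the one in Lemma~\ref{lem:cliqueminimal} for $H_1 = K_a$ and Lemma~\ref{lem:z} for $H_1 = K_{a,b}$. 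In each such case one verifies that the density upper bound together with the $\mathcal{C}^{(*)}$ structure forces the edge-set of $A$ to be partitionable into a subhypergraph avoiding $H_1$ and one avoiding $H_2$.

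The hard part will be providing a unified colouring argument that handles \emph{all} pairs $(H_1,H_2)$ simultaneously; this is precisely the remaining obstruction to Conjecture~\ref{conj:kk}. Every existing colouring proof (see Lemma~\ref{lem:maincolouring}) exploits a different feature---chromatic number, arboricity, regularity, or a specific $H_1$ family---and no single technique yet covers all heart pairs. I would therefore expect (a) to be routine given the machinery built in Sections~\ref{sec:bfinite} and~\ref{sec:connectivity}, but (b) to require genuinely new structural insight. A promising direction is to look at the refined $(H_1,H_2)$-core approach of~\cite{ksw}, or to conjecture that for most pairs $\hat{\mathcal{A}} = \emptyset$, and attempt a dichotomy theorem: either $\hat{\mathcal{A}}$ is empty (in which case the colouring statement is vacuous), or $\hat{\mathcal{A}}$ is very restricted (like the triforce graph for $(K_3, K_{1,2})$) and can be tackled by hand.
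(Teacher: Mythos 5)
The statement you are trying to prove is a \emph{conjecture} which the paper does \emph{not} prove; Appendix~\ref{app:hydeconj} explains explicitly that the authors switched from $\hat{\mathcal{A}}$ to $\hat{\mathcal{B}}$ precisely because they could not establish Conjecture~\ref{conj:aconj}, and that its truth value is still unknown. Your part (a) rests on the claim that ``every $A \in \hat{\mathcal{A}}$ can be constructed by a growing procedure analogous to \textsc{Grow-$\hat{\mathcal{B}}_{\eps}$},'' with $2$-connectedness cited as the reason the growing steps match the \textsc{Grow} analysis. This is exactly the step the paper shows may fail. The family $\hat{\mathcal{A}}$ is defined by \emph{structural} properties (membership in $\mathcal{C}^*$, the density bound, $2$-connectedness), whereas $\hat{\mathcal{B}}$ is defined \emph{procedurally} as the set of outputs of \textsc{Grow-$\hat{\mathcal{B}}_{\eps}$}. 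These are genuinely different. The cyclic concatenations $C^A_\ell$ constructed in Appendix~\ref{app:hydeconj} satisfy all the defining properties of $\hat{\mathcal{A}}$ (they are $2$-connected, lie in $\mathcal{C}^*$, and if $\lambda(A) \ge 1$ and $C^A_\ell$ is balanced they satisfy the density bound for every $\ell$), yet they cannot be produced by \textsc{Grow-$\hat{\mathcal{B}}_{\eps}$}: since $H_1$ and $H_2$ are both $2$-connected (Lemma~\ref{lem:newconnectivity}), every copy of $H_1$ or $H_2$ in $C^A_\ell$ lies entirely inside one block $A_i$, so the procedure, which only ever attaches at an edge, can never cross the single shared vertex from $A_i$ to $A_{i+1}$. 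Thus $2$-connectedness of $A$ does not rule these gadgets out --- $C^A_\ell$ \emph{is} $2$-connected --- and your degenerate/non-degenerate bookkeeping cannot even be started because the constructibility reduction is false for these graphs. This is not a small technical gap: it is the central reason the paper introduces $\hat{\mathcal{B}}$ and proves Theorems~\ref{thm:colourb} and~\ref{thm:bfinite} for that family instead. Your part (b) inherits the same problem, since if $\hat{\mathcal{A}}$ is infinite for some heart pair the colouring statement would have to be verified for infinitely many graphs. The honest conclusion is that Conjecture~\ref{conj:aconj} remains open, and the machinery of Sections~\ref{sec:bfinite} and~\ref{sec:connectivity} does not settle it without a new idea that excludes (or colours) the potential cyclic-gadget counterexamples.
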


The third author proved the following in \cite{h}.

\begin{thm}\label{thm:olda}
If Conjecture~\ref{conj:aconj} is true then the $0$-statement of Conjecture~\ref{conj:kk} is true.
\end{thm}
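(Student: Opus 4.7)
The plan is to mirror the proof of Theorem~\ref{thm:colourb} with $\hat{\mathcal{A}}$ in place of $\hat{\mathcal{B}}$ throughout, leveraging that Conjecture~\ref{conj:aconj} supplies exactly the two ingredients (finiteness and colourability) on which that proof depends. First I would repeat the edge-disjoint-union/sparsity setup: define $\mathcal{S}^A_G$, $(H_1,H_2)$-sparse $\hat{\mathcal{A}}$-graphs, and $\mathcal{T}^A_G$ in direct analogy with Definitions~\ref{def:avgraph} and~\ref{def:h1h2sparse}. The colouring clause of Conjecture~\ref{conj:aconj} then yields a routine \textsc{A-Colour} that colours any $(H_1,H_2)$-sparse $\hat{\mathcal{A}}$-graph block-by-block, giving the $\hat{\mathcal{A}}$-analogue of Lemma~\ref{lemma:avcolour}.

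Next I would design \textsc{Asym-Edge-Col-$\hat{\mathcal{A}}$} identically to \textsc{Asym-Edge-Col-$\hat{\mathcal{B}}_\eps$} (Figure~\ref{asymedgecolfig}) but invoking \textsc{A-Colour} at the end. The deterministic correctness proof (analogue of Lemma~\ref{lemma:errorvalid}) carries over unchanged: either the algorithm aborts on a subgraph $G' \in \mathcal{C}^*$ which is not a sparse $\hat{\mathcal{A}}$-graph, or else the push/pop colour-swap produces a valid edge-colouring of $G$.

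The crux is the probabilistic step, an analogue of Lemma~\ref{lemma:noerror}: to show a.a.s.~no such obstruction $G'$ sits inside $G_{n,p}$ for $p \le b\, n^{-1/m_2(H_1,H_2)}$. Here I would introduce the two auxiliary procedures \textsc{Special} and \textsc{Grow} exactly as in Section~\ref{sec:grow} but built around $\hat{\mathcal{A}}$. The outputs of \textsc{Special} are edge-intersections of two distinct elements of $\mathcal{S}^A_{G'}$ or unions of elements covering a non-trivial copy of $H_1$ or $H_2$; by the finiteness half of Conjecture~\ref{conj:aconj} these fall into a finite family, and each has strictly more than $m_2(H_1,H_2)+\eps$ density since by maximality of $\mathcal{S}^A_{G'}$ they cannot themselves be in $\hat{\mathcal{A}}$. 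Markov handles this family. For \textsc{Grow}, the analogues of Claims~\ref{claim:non-degen} and~\ref{claim:degenfull} give that $\lambda(F_{i+1})=\lambda(F_i)$ in non-degenerate steps and $\lambda$ drops by a constant in degenerate steps; combined with a truncation at $i=\ln n$, a union bound over the at-most-polynomial number of iso-classes constructible by \textsc{Grow} in each step shows that a.a.s.~no output of \textsc{Grow} lies in $G_{n,p}$.

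The main obstacle is showing that if \textsc{Grow} fails to produce a subhypergraph of low enough density forcing a contradiction, then the graph $F$ it has built up must in fact be a member of $\hat{\mathcal{A}}$, which would contradict the initial choice of seed edge outside $\mathcal{S}^A_{G'}$. For this one needs three properties of $F$: that $F\in \mathcal{C}^*$ (automatic once no open edge exists), that $m(F)\le m_2(H_1,H_2)+\eps$ (automatic from the while-loop guard on $\lambda$, as in Claim~\ref{claim:mgbound}), and crucially that $F$ is $2$-connected. The $2$-connectivity is the delicate point and would be proved by induction on the iterations: using that $H_1,H_2$ are themselves $2$-connected (a consequence of being strictly balanced with $m_2(H_2)>1$, as in Lemma~\ref{lem:newconnectivity}), one verifies that every attachment step in \textsc{Grow}, whether a copy of $H_1$ sharing $\ge 2$ vertices with $F_i$ or a \textsc{Close-$e$} step around an existing edge, preserves $2$-connectivity. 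This closes the contradiction, shows \textsc{Grow} terminates in the density-drop branch a.a.s., and hence \textsc{Asym-Edge-Col-$\hat{\mathcal{A}}$} a.a.s.~succeeds, giving the $0$-statement. \qed
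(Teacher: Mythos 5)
Your proposal is essentially correct and mirrors the approach the paper takes: the paper itself only cites~\cite{h} for Theorem~\ref{thm:olda}, but it explicitly states that its proof of Theorem~\ref{thm:colourb} (finiteness plus colourability of $\hat{\mathcal{B}}_\eps$ implies the $0$-statement, via \textsc{Asym-Edge-Col}, \textsc{Special}, and \textsc{Grow}) is a refinement of the $\hat{\mathcal{A}}$-based argument from~\cite{h}, and your sketch reconstructs exactly that $\hat{\mathcal{A}}$-version. The one place you gloss over but correctly identify as the crux is checking that the \textsc{Special} outputs and the would-be closed \textsc{Grow} output are $2$-connected and in $\mathcal{C}^*$ so that failing to lie in $\hat{\mathcal{A}}$ forces $m(\cdot) > m_2(H_1,H_2)+\eps$; both checks go through since each attachment step adds a $2$-connected $H_i$ sharing at least two vertices with the current graph, and membership in $\mathcal{C}^*$ is preserved under unions of $\mathcal{C}^*$-pieces.
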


Notice that we can assume $H_1 \neq H_2$ as the $H_1 = H_2$ case of Conjecture~\ref{conj:kk} is handled by Theorem~\ref{thm:rr}. Note that the statement of Theorem~\ref{thm:colourb} is the same as the statement of Conjecture~\ref{conj:aconj} combined with Theorem~\ref{thm:olda}, except we replace $\hat{\mathcal{A}}$ with $\hat{\mathcal{B}}$. %and also add in that $k$-graphs $G \in \hat{\mathcal{B}}$ have $m(G) \leq m_k(H_1, H_2)$.

\subsection{Why use $\hat{\mathcal{B}}$ instead of $\hat{\mathcal{A}}$?}%\label{sec:why}

The central concern with using $\hat{\mathcal{A}}$ instead of $\hat{\mathcal{B}}$ in order to ultimately prove the $0$-statement of Conjecture~\ref{conj:kk} is that $\hat{\mathcal{A}}$ could theoretically contain an infinite number of a certain type of structure which would render the finiteness part of Conjecture~\ref{conj:aconj} false. To illustrate this, let $H_1 = H_2 = K_3$. 
For $\ell \geq 4$, define $C^{K_4}_{\ell}$ to be the graph constructed by taking $\ell$ copies $A_1, \ldots, A_{\ell}$ of $K_4$ and concatenating them into a `cyclic' structure where $|V(A_i) \cap V(A_j)| = 1$ if and only if $j = i+1$ (mod $\ell$) and $|E(A_i) \cap E(A_j)| = 0$ for all $i,j \in [\ell]$ (see Figure~\ref{fig:badcyclegraph}).

\begin{figure}[!ht]
\begin{center}
\begin{tikzpicture}
\draw [line width=2pt] (7.75,3.5)-- (7.75,2.25);
\draw [line width=2pt] (7.75,3.5)-- (8.75,4.25);
\draw [line width=2pt] (8.75,4.25)-- (9.75,3.5);
\draw [line width=2pt] (7.75,2.25)-- (8.75,1.5);
\draw [line width=2pt] (8.75,1.5)-- (9.75,2.25);
\draw [line width=2pt] (9.75,2.25)-- (9.75,3.5);
\draw [line width=2pt] (7.75,3.5)-- (6.5,3.5);
\draw [line width=2pt] (6.5,3.5)-- (6.5,2.25);
\draw [line width=2pt] (6.5,2.25)-- (7.75,2.25);
\draw [line width=2pt] (7.75,2.25)-- (6.5,3.5);
\draw [line width=2pt] (6.5,2.25)-- (7.75,3.5);
\draw [line width=2pt] (9.75,3.5)-- (11,3.5);
\draw [line width=2pt] (11,3.5)-- (11,2.25);
\draw [line width=2pt] (11,2.25)-- (9.75,2.25);
\draw [line width=2pt] (9.75,2.25)-- (11,3.5);
\draw [line width=2pt] (9.75,3.5)-- (11,2.25);
\draw [line width=2pt] (7.75,3.5)-- (7,4.5);
\draw [line width=2pt] (8.75,4.25)-- (8,5.25);
\draw [line width=2pt] (8,5.25)-- (7,4.5);
\draw [line width=2pt] (7,4.5)-- (8.75,4.25);
\draw [line width=2pt] (8,5.25)-- (7.75,3.5);
\draw [line width=2pt] (8.75,4.25)-- (9.5,5.25);
\draw [line width=2pt] (9.75,3.5)-- (10.5,4.5);
\draw [line width=2pt] (10.5,4.5)-- (9.5,5.25);
\draw [line width=2pt] (9.5,5.25)-- (9.75,3.5);
\draw [line width=2pt] (8.75,4.25)-- (10.5,4.5);
\draw [line width=2pt] (7.75,2.25)-- (7,1.25);
\draw [line width=2pt] (8.75,1.5)-- (8,0.5);
\draw [line width=2pt] (8,0.5)-- (7,1.25);
\draw [line width=2pt] (7.75,2.25)-- (8,0.5);
\draw [line width=2pt] (7,1.25)-- (8.75,1.5);
\draw [line width=2pt] (9.75,2.25)-- (10.5,1.25);
\draw [line width=2pt] (8.75,1.5)-- (9.5,0.5);
\draw [line width=2pt] (9.5,0.5)-- (10.5,1.25);
\draw [line width=2pt] (9.75,2.25)-- (9.5,0.5);
\draw [line width=2pt] (8.75,1.5)-- (10.5,1.25);
\begin{scriptsize}
\draw [fill=black] (8,0.5) circle (2.5pt);
\draw [fill=black] (7.75,3.5) circle (2.5pt);
\draw [fill=black] (7.75,2.25) circle (2.5pt);
\draw [fill=black] (8.75,4.25) circle (2.5pt);
\draw [fill=black] (9.75,3.5) circle (2.5pt);
\draw [fill=black] (8.75,1.5) circle (2.5pt);
\draw [fill=black] (9.75,2.25) circle (2.5pt);
\draw [fill=black] (6.5,3.5) circle (2.5pt);
\draw [fill=black] (6.5,2.25) circle (2.5pt);
\draw [fill=black] (11,3.5) circle (2.5pt);
\draw [fill=black] (11,2.25) circle (2.5pt);
\draw [fill=black] (7,4.5) circle (2.5pt);
\draw [fill=black] (8,5.25) circle (2.5pt);
\draw [fill=black] (9.5,5.25) circle (2.5pt);
\draw [fill=black] (10.5,4.5) circle (2.5pt);
\draw [fill=black] (7,1.25) circle (2.5pt);
\draw [fill=black] (10.5,1.25) circle (2.5pt);
\draw [fill=black] (9.5,0.5) circle (2.5pt);
\end{scriptsize}
\end{tikzpicture}
\caption{The graph $C_{6}^{K_4}$.}\label{fig:badcyclegraph}
\end{center}
\end{figure}

One can observe that for all $\ell \geq 4$ we have that \[\lambda(C^{K_4}_{\ell}) = 3\ell - \frac{6\ell}{m_2(K_3, K_3)} = 0.\]
That is $e(C^{K_4}_{\ell})/v(C^{K_4}_{\ell}) = m_2(K_3, K_3)$, and so $d(C^{K_4}_{\ell}) = e(C^{K_4}_{\ell})/v(C^{K_4}_{\ell}) \leq m_2(K_3, K_3) + \eps$ for all $\eps \geq 0$.
Further, we can observe that $C^{K_4}_{\ell} \in \mathcal{C}^*(K_3, K_3)$ and $C^{K_4}_{\ell}$ is $2$-connected and balanced\COMMENT{JH: Haven't explicitly checked this. RH: Checked, its true.}, that is, $d(C^{K_4}_{\ell}) = m(C^{K_4}_{\ell})$.
Thus $C^{K_4}_{\ell} \in \hat{\mathcal{A}}(K_3, K_3, \eps)$ for all $\eps \geq 0$ and all $\ell \geq 4$, and so $\hat{\mathcal{A}}(K_3, K_3, \eps)$ contains infinitely many graphs. 

Now, in Conjecture~\ref{conj:aconj} we assume that $H_1 \neq H_2$, so the above is not a counterexample to Conjecture~\ref{conj:aconj}.
However, it may well be possible that a similar structure does provide a counterexample.
Indeed, say for a pair of graphs $H_1$ and $H_2$ with properties as in Conjecture~\ref{conj:aconj} that there exists a graph $A \in \hat{\mathcal{A}}(H_1, H_2, \eps)$ with $\lambda(A) \geq 1$.
Generalising our structure from earlier, for $\ell \geq 2$ define $C^{A}_{\ell}$ to be the graph constructed by taking $\ell$ copies $A_1, \ldots, A_{\ell}$ of $A$ and concatenating them into a `cyclic' structure where $|V(A_i) \cap V(A_j)| = 1$ if and only if $i = j+1$ (mod $\ell$) and $|E(A_i) \cap E(A_j)| = 0$ for all $i,j \in [\ell]$.
Then $$\lambda(C^{A}_{\ell}) = \sum_{i=1}^{\ell}(|V(A_i)| - 1) - \frac{\sum_{i=1}^{\ell} |E(A_i)|}{m_2(H_1, H_2)} = \ell\lambda(A) - \ell \geq 0,$$ which, as before, implies that $d(C^{A}_{\ell}) \leq m_2(H_1, H_2) + \eps$ for all $\eps \geq 0$.
If $C^{A}_{\ell}$ is also balanced, then we would now have a contradiction to Conjecture~\ref{conj:aconj}.

The authors do not know if such a structure exists for any particular pair of graphs $H_1$ and $H_2$ (with properties as in Conjecture~\ref{conj:aconj}).
Hence it may well be the case that the finiteness part of Conjecture~\ref{conj:aconj} holds for all suitable pairs of graphs $H_1$ and $H_2$. Note that the graph in Figure~\ref{fig:k3k12badgraph} is in fact $C^{K_3}_{3}$ and that $m(C^{K_3}_{\ell}) = 3/2 = m_2(K_3, K_{1,2})$ for all $\ell \geq 4$, so the condition $m_2(H_2) > 1$ in Conjecture~\ref{conj:aconj} must be included.

So why did we consider $\hat{\mathcal{B}}$ instead of $\hat{\mathcal{A}}$? Observe that for $\ell > \max\{E(H_1), E(H_2)\}$, there do not exist $i, j \in \{1, \ldots, \ell\}$ and a copy $H$ of $H_1$ or $H_2$ in $C^{A}_{\ell}$ such that $|E(H) \cap A_i|, |E(H) \cap A_j| \geq 1$. Indeed, $H_1$ and $H_2$ are both $2$-connected by Lemma~~\ref{lem:newconnectivity} applied with $k = 2$.\footnote{Note that one could also use \cite[Lemma~4.2]{h}, which is equivalent to Lemma~~\ref{lem:newconnectivity} when $k = 2$.}
That is, for every copy $H$ of $H_1$ and $H_2$ in $C^{A}_{\ell}$ we have $H \subseteq A_i$. Thus algorithm \textsc{Grow-$\hat{\mathcal{B}}_{\eps}$} -- or \textsc{Grow-$\hat{\mathcal{B}}_{\eps}$-Alt}, but for brevity we'll assume $m_2(H_1) > m_2(H_2)$, and so \textsc{Grow-$\hat{\mathcal{B}}_{\eps}$} is the relevant algorithm -- cannot construct $C^{A}_{\ell}$ for any $\ell \geq 2$ or $\eps \geq 0$.
Indeed, \textsc{Grow-$\hat{\mathcal{B}}_{\eps}$} can construct $A$, but there is no process in \textsc{Grow-$\hat{\mathcal{B}}_{\eps}$} that can attach a copy of $H_1$ or $H_2$ at a single vertex of $A$. 
Thus $C^{A}_{\ell} \notin \hat{\mathcal{B}}$ for all $\ell > \max\{E(H_1), E(H_2)\}$, and so there are not infinitely many of this type of structure in $\hat{\mathcal{B}}$.\COMMENT{JH: I do not think there is anything from the explanatory stuff from the old KK paper to add into this section. Section $1.4.3$ from that paper is maybe worth adding in, but I think it won't make sense here since we relegate the $m_2(H_1) = m_2(H_2)$ case to the appendices for the most part.}

\section{Proofs of results in Sections~\ref{sec:asymedgecolb} and \ref{subsec:lemnoerror}}\label{sec:omittedproofs}

In this appendix we provide the proofs omitted from Sections~\ref{sec:asymedgecolb} and~\ref{subsec:lemnoerror}. We repeat the results for context. Define $\gamma^*:=\gamma(H_1,H_2,\eps^*)$.

\begin{lemma:errorvalid}
Algorithm \textsc{Asym-Edge-Col-$\hat{\mathcal{B}}_{\eps^*}$} either terminates with an error in line~\ref{line:error} or finds a valid edge-colouring for $H_1$ and $H_2$ of $G$.
\end{lemma:errorvalid}

\begin{proof}
    Let $G^*$ denote the argument in the call to \textsc{B-Colour} in line~\ref{line:acolourcall}.
By Lemma~\ref{lemma:avcolour}, there is a valid edge-colouring for $H_1$ and $H_2$ of $G^*$. 
It remains to show that no forbidden monochromatic copies of $H_1$ or $H_2$ are created when this colouring is extended to a colouring of $G$ in lines~\ref{line:while2start}-\ref{line:while2end}. 

Firstly, we argue that the algorithm never creates a blue copy of $H_2$.
Observe that \emph{every} copy of $H_2$ that does not lie entirely in $G^*$ is pushed on the stack in the first while-loop (lines~\ref{line:while1start}-\ref{line:while1end}). 
Therefore, in the execution of the second loop, the algorithm checks the colouring of every such copy.
By the order of the elements on the stack, each such test is performed only after all edges of the corresponding copy of $H_2$ were inserted and coloured. 
For every blue copy of $H_2$, one particular edge $f$ (see line~\ref{line:getf}) is recoloured to red. Since red edges are never flipped back to blue, no blue copy of $H_2$ can occur.

We need to show that the edge $f$ in line~\ref{line:getf} always exists. 
Since the second loop inserts edges into $G'$ in the reverse order in which they were deleted during the first loop, when we select $f$ in line~\ref{line:getf}, $G'$ has the same structure as at the time when $L$ was pushed on the stack. 
This happened either in line~\ref{line:Lpush1} when there exists no copy of $H_1$ in $G'$ whose edge set intersects with $L$ on some particular edge $e \in E(L)$, or in line~\ref{line:Lpush2} when $L$ is not in $\mathcal{L}^*_{G'}$ due to the if-clause in line~\ref{line:L*check}. 
In both cases we have $L \notin \mathcal{L}^*_{G'}$, and hence there exists an edge $e \in E(L)$ such that the edge sets of all copies of $H_1$ in $G'$ do not intersect with $L$ exactly in $e$.

It remains to prove that changing the colour of some edges from blue to red by the algorithm never creates an entirely red copy of $H_1$.
By the condition on $f$ in line~\ref{line:getf} of the algorithm, at the moment $f$ is recoloured there exists no copy of $H_1$ in $G'$ whose edge set intersects $L$ exactly in $f$.
So there is either no copy of $H_1$ containing $f$ at all, or every such copy contains also another edge from $L$.
In the latter case, those copies cannot become entirely red since $L$ is entirely blue. \end{proof}

In order to prove Claim~\ref{claim:conclusion2}, we need the following two claims.

\begin{claim}\label{claim:q_1}
There exists a constant $q_1 = q_1(H_1,H_2)$ such that, for any input $G'$, algorithm \textsc{Grow($G', \eps^*, n$)} performs at most $q_1$ degenerate iterations before it terminates.%, regardless of the input instance $G'$.
\end{claim}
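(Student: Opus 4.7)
The plan is to combine Claims~\ref{claim:non-degen} and~\ref{claim:degenfull} with the while-loop's termination condition in line~\ref{line:growwhileconditions}, and observe that $\lambda(F_0)$ is a constant depending only on $H_1, H_2$.

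First I would note that $F_0$ is a copy of $H_1$, so
\[
\lambda(F_0) = \lambda(H_1) = v_1 - \frac{e_1}{m_k(H_1,H_2)} = k - \frac{1}{m_k(H_2)},
\]
using that $H_1$ is (strictly) balanced with respect to $d_k(\cdot,H_2)$. In particular, $\lambda(F_0)$ is a constant that depends only on $H_1$ and $H_2$.

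Next, let $D_i$ denote the number of degenerate iterations among the first $i$ iterations of the while-loop. By Claim~\ref{claim:non-degen} a non-degenerate iteration preserves the value of $\lambda$, while by Claim~\ref{claim:degenfull} each degenerate iteration decreases $\lambda$ by at least the constant $\kappa = \kappa(H_1,H_2) > 0$. Therefore, after $i$ iterations of the while-loop we have
\[
\lambda(F_i) \leq \lambda(F_0) - \kappa D_i = \lambda(H_1) - \kappa D_i.
\]

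Now I would use the termination condition. While the while-loop is still executing at the start of iteration $i+1$, the condition in line~\ref{line:growwhileconditions} holds, so in particular (taking $\tilde F = F_i \subseteq F_i$) we have $\lambda(F_i) > -\gamma^*$, where $\gamma^* = \gamma(H_1,H_2,\eps^*) > 0$ by the definition of $\eps^*$ (Definition~\ref{def:eps*}). Combining this with the bound above yields
\[
-\gamma^* < \lambda(F_i) \leq \lambda(H_1) - \kappa D_i,
\]
so $D_i < (\lambda(H_1) + \gamma^*)/\kappa$. Setting $q_1 := \lceil (\lambda(H_1) + \gamma^*)/\kappa \rceil$, which depends only on $H_1$ and $H_2$ (as $\gamma^*$, $\kappa$ and $\lambda(H_1)$ do), gives the claim. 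There is no real obstacle here — all the work was already done in Claims~\ref{claim:non-degen} and~\ref{claim:degenfull}, and the only thing to verify is that $\gamma^* > 0$, which is guaranteed by the choice $\eps^* > 0$.
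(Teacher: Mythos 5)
Your proof is correct and takes essentially the same route as the paper: combine Claim~\ref{claim:non-degen} (non-degenerate iterations preserve $\lambda$) with Claim~\ref{claim:degenfull} (degenerate iterations decrease $\lambda$ by at least $\kappa$) and the while-loop guard $\lambda(\tilde F) > -\gamma^*$ to bound the number of degenerate iterations by $(\lambda(F_0)+\gamma^*)/\kappa$. The only cosmetic differences are that you explicitly evaluate $\lambda(F_0)=\lambda(H_1)$ and add a ceiling, neither of which changes the argument.
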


%\begin{proof} See Appendix~\ref{sec:omittedproofs}.
%\end{proof}
%\begin{claim:q_1}
%There exists a constant $q_1 = q_1(H_1,H_2)$ such that algorithm \textsc{Grow} performs at most $q_1$ degenerate iterations before it terminates, regardless of the input instance $G'$.
%\end{claim:q_1}

\begin{proof}
By Claim~\ref{claim:non-degen}, the value of the function $\lambda$ remains the same in every non-degenerate iteration of the while-loop of algorithm \textsc{Grow}. 
However, Claim~\ref{claim:degenfull} yields a constant $\kappa$, which depends solely on $H_1$ and $H_2$, such that \[\lambda(F_{i+1}) \leq \lambda(F_i) - \kappa\] for every degenerate iteration $i$.

Hence, after at most \[q_1 := \frac{\lambda(F_0) + \gamma^*}{\kappa}\] degenerate iterations, we have $\lambda(F_i) \leq -\gamma^*$, and algorithm \textsc{Grow} terminates.\end{proof}

For $0 \leq d \leq t < \lceil \ln(n) \rceil$, let $\mathcal{F}(t,d)$ denote a family of representatives for the isomorphism classes of all $k$-graphs $F_t$ that algorithm \textsc{Grow} can possibly generate after exactly $t$ iterations of the while-loop with exactly $d$ of those $t$ iterations being degenerate. Let $f(t,d) := |\mathcal{F}(t,d)|$. 

\begin{claim}\label{claim:polylog}
There exist constants $C_0 = C_0(H_1, H_2)$ and $A = A(H_1,H_2)$ such that,
for any input $G'$ and any $\eps \geq 0$,
\[f(t,d)~\leq\lceil\ln(n)\rceil^{(C_0 +1)d}\cdot~A^{t-d}\] for $n$ sufficiently large.
\end{claim}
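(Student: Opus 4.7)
The plan is to prove this by induction on $t$, with the base case $f(0,0) = 1$ trivial since $F_0$ is simply a copy of $H_1$. For the inductive step, I will bound separately the number of ways to extend $F_i$ to $F_{i+1}$ in a non-degenerate iteration and in a degenerate iteration, and combine these bounds using that $v(F_i)$ grows only linearly with $i$.

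For a non-degenerate iteration, the structure of $F_{i+1}$ is essentially forced by $F_i$: procedure \textsc{Eligible-Edge} canonically selects the attachment edge $e$ (unique up to isomorphism of $F_i$), and by definition of non-degeneracy the copy $L$ of $H_2$ meets $V(F_i)$ only in $e$, and each $R_{e'}$ meets $V(F_i) \cup V(L) \cup \bigcup_{e''} V(R_{e''})$ only in $e'$. Hence, up to isomorphism, there are only finitely many options for the attached flower (bounded by the number of non-isomorphic extensions of a labelled edge by $H_2$ together with copies of $H_1$ on its edges). Define $A = A(H_1,H_2)$ to be a constant upper bound on this count (absorbing also the at most constant number of choices in case line~\ref{line:growRline} evaluates to true in a non-degenerate way, which does not occur by definition, so really only the \textsc{Extend-L} branch matters).

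For a degenerate iteration from $F_i$, the extension consists either of attaching a copy of $H_1$ intersecting $F_i$ in at least $k$ vertices (line~\ref{line:growRline}) or of performing \textsc{Extend-L} where $L$ and/or some $R_{e'}$ intersects the existing $k$-graph in additional vertices. In all these cases, the only new freedom, up to isomorphism, lies in how the new vertices of the attached copies are identified with vertices of $F_i$. Since each attached copy contributes at most $v_1 + v_2$ vertices, the number of ways to specify these identifications is bounded by $v(F_i)^{C_0}$ for some constant $C_0 = C_0(H_1, H_2)$ depending only on $v_1$ and $v_2$. Crucially, because each iteration of the while-loop adds at most $v_1 + (e_2-1)v_1$ new vertices, a straightforward induction yields $v(F_i) \leq v_1 + i \cdot C'$ for some constant $C'$; since the while-loop only executes while $i < \ln(n)$, we obtain $v(F_i) \leq \lceil \ln(n) \rceil$ for $n$ sufficiently large, possibly after absorbing the multiplicative constant into a further $\lceil \ln(n) \rceil$ factor (this is where the extra $+1$ in the exponent $(C_0+1)d$ comes from).

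Combining these bounds through the recursion
\[
f(t+1, d+1) \leq f(t, d) \cdot \lceil \ln(n) \rceil^{C_0+1} \quad \text{and} \quad f(t+1, d) \leq f(t, d) \cdot A,
\]
and summing over how the $(t+1)$-th iteration was classified, a routine induction yields the claimed bound $f(t,d) \leq \lceil \ln(n) \rceil^{(C_0+1)d} \cdot A^{t-d}$. The main (minor) obstacle is to justify carefully that the canonicity of \textsc{Eligible-Edge} and of \textsc{Minimising-Subhypergraph} indeed prevents these procedures from artificially inflating the count beyond the constant factor $A$ per non-degenerate step; this is the reason those procedures were defined to act uniquely up to isomorphism, and this fact has already been built into our algorithms precisely to make the present counting argument go through.
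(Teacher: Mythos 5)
Your overall strategy is the same as the paper's: non-degenerate iterations are bounded by a constant $A$ (using that \textsc{Eligible-Edge} fixes the attachment position up to isomorphism and the attached copies are pairwise disjoint), degenerate iterations are bounded polylogarithmically via $v(F_i)\leq v_1+Kt$ and $t<\ln(n)$, and the two are combined over the $t$ iterations (the paper does this with a direct product formula carrying a $\binom{t}{d}$ factor for the placement of the degenerate iterations; your recursion on $t$ is an equivalent packaging).

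There is, however, a bookkeeping slip in where the ``$+1$'' in the exponent is spent, and as written your induction does not quite deliver the stated bound. The correct recursion is
\[
f(t+1,d)\;\leq\; f(t,d-1)\cdot B \;+\; f(t,d)\cdot A, \qquad B:=\text{(bound per degenerate step)},
\]
since a $k$-graph counted by $f(t+1,d)$ may arise from either classification of the last iteration; inducting on this gives $f(t,d)\leq \binom{t}{d}B^{d}A^{t-d}$, i.e.\ an extra factor $\binom{t}{d}$ compared with your target. In the paper this binomial coefficient is exactly what the extra $+1$ in the exponent absorbs, via $\binom{t}{d}\leq t^{d}\leq\lceil\ln(n)\rceil^{d}$. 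You instead spend the $+1$ on absorbing the multiplicative constant in $v(F_i)\leq v_1+Kt$, which is unnecessary: since $t<\ln(n)$, the whole per-step degenerate bound (number of possible new-edge configurations on at most $K$ vertices, times the at most $v(F_i)^{K}$ ways to map intersection vertices into $F_i$ --- note the freedom also includes how the attached copies meet each other, not only $F_i$, but this is a constant factor) is at most $\lceil\ln(n)\rceil^{C_0}$ for $n$ large, with $C_0=C_0(H_1,H_2)$. With $B=\lceil\ln(n)\rceil^{C_0}$ the induction gives $f(t,d)\leq\binom{t}{d}\lceil\ln(n)\rceil^{C_0 d}A^{t-d}\leq\lceil\ln(n)\rceil^{(C_0+1)d}A^{t-d}$, which is the claim. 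So the argument is correct after this reallocation; as written, the final ``routine induction'' step would leave an unabsorbed $\binom{t}{d}$. (Your aside about \textsc{Minimising-Subhypergraph} is not needed here: $f(t,d)$ counts the graphs $F_t$ produced by the while-loop, before any minimising subhypergraph is extracted.)
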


%\begin{proof} See Appendix~\ref{sec:omittedproofs}.\end{proof}
%\begin{claim:polylog}
%There exist constants $C_0 = C_0(H_1, H_2)$ and $A = A(H_1,H_2)$ such that \[f(t,d)~\leq\lceil\ln(n)\rceil^{(C_0 +1)d}\cdot~A^{t-d}\] for $n$ sufficiently large.
%\end{claim:polylog}

\begin{proof}
By Claim~\ref{claim:growv}, in every iteration $i$ of the while-loop of \textsc{Grow}, we add new edges onto $F_i$. These new edges span a $k$-graph on at most \[K := v_2 + (e_2 - 1)(v_1 - k)\] vertices.  
Thus $v(F_t) \leq v_1 + Kt$. 
Let $\mathcal{G}_K$ denote the set of all $k$-graphs on at most $K$ vertices. 
In iteration $i$ of the while-loop, $F_{i+1}$ is uniquely defined if one specifies the $k$-graph $G \in \mathcal{G}_K$ with edges $E(F_{i+1})\setminus E(F_{i})$, the number $y$ of vertices in which $G$ intersects $F_i$,
and two ordered lists of vertices from $G$ and $F_i$ respectively of length $y$, which specify the mapping of the intersection vertices from $G$ onto $F_i$. 
Thus, the number of ways that $F_i$ can be extended to $F_{i+1}$ is bounded from above by \[\sum_{G \in \mathcal{G}_K}\sum_{y = 2}^{v(G)}v(G)^yv(F_i)^y \leq |\mathcal{G}_K|\cdot K \cdot K^K(v_1 + Kt)^K \leq \lceil \ln(n) \rceil^{C_0},\] where $C_0$ depends only on $v_1$, $v_2$ and $e_2$, and $n$ is sufficiently large.
The last inequality follows from the fact that $t < \ln(n)$ as otherwise the while-loop would have already ended. 

Recall that, since \textsc{Eligible-Edge} determines the exact position where to attach the copy of $H_2$, in non-degenerate iterations $i$ there are at most \[k!e_2(k!e_1)^{e_2-1} =: A\] ways to extend $F_i$ to $F_{i+1}$, 
where the coefficients of $k!$ correspond with the orientations of the edge of the copy of $H_2$ we attach to $F_i$ and the edges of the copies of $H_1$ we attach to said copy of $H_2$. 
Hence, for $0 \leq d \leq t < \lceil \ln(n) \rceil$, \[f(t,d) \leq \binom{t}{d}(\lceil \ln(n) \rceil^{C_0})^d \cdot A^{t-d} \leq \lceil \ln(n) \rceil^{(C_0 +1)d} \cdot A^{t-d},\] 
where the binomial coefficient corresponds to the choice of when in the $t$ iterations the $d$ degenerate iterations happen.\end{proof}

\begin{claim:conclusion2}
There exists a constant $b = b(H_1,H_2) > 0$ such that for all $p \leq bn^{-1/m_k(H_1,H_2)}$, $G^k_{n,p}$ does not contain any $k$-graph from $\tilde{\mathcal{F}}(H_1,H_2,n,\eps^*)$ a.a.s.
\end{claim:conclusion2}

\begin{proof}
Let $\mathcal{F}_1$ and $\mathcal{F}_2$ denote the classes of $k$-graphs that algorithm \textsc{Grow} can output in lines~\ref{line:growreturnfi} and \ref{line:growreturnminsub}, respectively. 
For each $F \in \mathcal{F}_1$, we have that $e(F) \geq \ln(n)$, as $F$ was generated in $\lceil \ln(n) \rceil$ iterations, each of which introduces at least one new edge by Claim~\ref{claim:growv}.
Moreover, Claims~\ref{claim:non-degen} and \ref{claim:degenfull} imply that $\lambda(F_i)$ is non-increasing. Thus, we have that $\lambda(F) \leq \lambda(F_0)$ for all $F \in \mathcal{F}_1$. 
For all $F \in \mathcal{F}_2$, we have that $\lambda(F) \leq -\gamma^* < 0$ due to the condition in line~\ref{line:growwhileconditions} of \textsc{Grow}.
Let $A := A(H_1, H_2)$ be the constant found in the proof of Claim~\ref{claim:polylog}.
Since we have chosen $F_0 \cong H_1$ as the seed of the growing procedure, it follows that for \[b := (Ae)^{-\lambda(F_0)-1} \leq 1,\] the expected number of copies of $k$-graphs from $\tilde{\mathcal{F}}$ in $G^k_{n,p}$ with $p \leq bn^{-1/m_k(H_1,H_2)}$ is bounded by 

\begin{align}
\sum_{F \in \tilde{\mathcal{F}}}n^{v(F)}p^{e(F)} & \leq  \sum_{F \in \tilde{\mathcal{F}}}b^{e(F)}n^{\lambda(F)} \label{eq:finalstart}                           \\
                                                 & \leq \sum_{F \in \mathcal{F}_1}(eA)^{(-\lambda(F_0) - 1)\ln(n)}n^{\lambda(F_0)} + \sum_{F \in \mathcal{F}_2}b^{e(F)}n^{-\gamma^*}\nonumber  \\ 
                                                 & = \sum_{F \in \mathcal{F}_1}A^{(-\lambda(F_0) - 1)\ln(n)}n^{-1} + \sum_{F \in \mathcal{F}_2}b^{e(F)}n^{-\gamma^*}. \nonumber                     
\end{align}
Observe that, \begin{equation}\label{eq:lambdaF_0}\lambda(F_0) = v_1 - \frac{e_1}{m_k(F_1,F_2)} = k - \frac{1}{m_k(F_2)} \geq 1.\end{equation}

By Claims~\ref{claim:growv}, \ref{claim:q_1} and \ref{claim:polylog}, and \eqref{eq:lambdaF_0}, we have that
\begin{align}
     \sum_{F \in \mathcal{F}_1}A^{(-\lambda(F_0) - 1)\ln(n)}n^{-1} & \leq \sum_{d = 0}^{\min\{t, q_1\}}f(\lceil \ln(n) \rceil,d)A^{(-\lambda(F_0) - 1)\ln(n)}n^{-1}\label{eq:final1} \\ 
     & \leq (q_1 + 1)\lceil \ln(n) \rceil^{(C_0 + 1)q_1} \cdot A^{\lceil \ln(n) \rceil}A^{(-\lambda(F_0) - 1)\ln(n)}n^{-1}\nonumber \\ 
     & \leq (\ln(n))^{2(C_0 + 1)q_1}n^{-1}.\nonumber
\end{align}
Observe that, by Claim~\ref{claim:growv}, if some $k$-graph $F \in \mathcal{F}_2$ is the output of \textsc{Grow} after precisely $t$ iterations of the while-loop then $e(F) \geq t$. Since $b < 1$, this implies 
\begin{equation}\label{eq:befbt}
    b^{e(F)} \leq b^t
\end{equation}
for such a $k$-graph $F$.
Using \eqref{eq:befbt} and Claims~\ref{claim:growv}, \ref{claim:q_1} and \ref{claim:polylog}, we have that 
\begin{align}
    \sum_{F \in \mathcal{F}_2}b^{e(F)}n^{-\gamma^*} & \leq \sum_{t = 0}^{\lceil \ln(n)  \rceil}\sum_{d = 0}^{\min\{t, q_1\}}f(t,d)b^tn^{-\gamma^*}\label{eq:final2} \\ 
    & \leq \sum_{t = 0}^{\lceil \ln(n)  \rceil}\sum_{d = 0}^{\min\{t, q_1\}}\lceil \ln(n) \rceil^{(C_0 +1)d} \cdot A^{t-d}(Ae)^{(-\lambda(F_0)-1)t}n^{-\gamma^*}\nonumber \\
    & \leq (\lceil \ln(n) \rceil + 1)(q_1 + 1)\lceil \ln(n) \rceil^{(C_0 +1)q_1} n^{-\gamma^*}\nonumber \\
    & \leq (\ln(n))^{2(C_0 + 1)q_1}n^{-\gamma^*}. \nonumber
\end{align}
Thus, by \eqref{eq:finalstart}, \eqref{eq:final1} and \eqref{eq:final2}, we have that $\sum_{F \in \tilde{\mathcal{F}}}n^{v(F)}p^{e(F)} = o(1)$.
\COMMENT{RH: Again we strictly need def of $\eps^*$ here so that we can use $\gamma^*>0$.}
Consequently, Markov's inequality implies that $G^k_{n,p}$ a.a.s.~contains no $k$-graph from $\tilde{\mathcal{F}}$.
%Combined with the earlier observation that $G^k_{n,p}$ a.a.s.~contains no $k$-graph from $\mathcal{F}_0$, we have that $G^k_{n,p}$ a.a.s.~contains no $k$-graph from $\mathcal{F} = \mathcal{F}_0 \cup \tilde{\mathcal{F}}$.
\end{proof}

\section{Proof of Claim~\ref{claim:degenfull}}\label{app:degenfull}
Our strategy for proving Claim~\ref{claim:degenfull} revolves around comparing our degenerate iteration $i$ of the while-loop of algorithm \textsc{Grow} with any non-degenerate iteration which could have occurred instead.
In accordance with this strategy, we have the following technical lemma which will be crucial in proving Claim~\ref{claim:degenfull}.\footnote{More specifically, in proving Claim~\ref{claim:degen2}, stated later.} 
%The lemma will play the same role as Lemma 21 does in \cite{msss}, but is considerably different.
In order to state our technical lemma, we define the following families of $k$-graphs.

\begin{define}%\label{def:keyfam}
\textnormal{Let $F$, $H_1$ and $H_2$ be $k$-graphs and $\hat{e} \in E(F)$. We define $\mathcal{H}(F, \hat{e}, H_1, H_2)$ to be the family of $k$-graphs constructed from $F$ in the following way: 
Attach a copy $H_{\hat{e}}$ of $H_2$ to $F$ such that $E(H_{\hat{e}}) \cap E(F) = \{\hat{e}\}$ and $V(H_{\hat{e}}) \cap V(F) = \hat{e}$. 
Then, for each edge $f \in E(H_{\hat{e}})\setminus\{\hat{e}\}$, attach a copy $H_f$ of $H_1$ to $F \cup H_{\hat{e}}$ such that $E(F \cup H_{\hat{e}}) \cap E(H_f) = \{f\}$ and $(V(F)\setminus \hat{e}) \cap V(H_f) = \emptyset$. }
\end{define}

\begin{figure}[!ht]
\begin{center}
\definecolor{ffqqqq}{rgb}{1,0,0}
\definecolor{qqqqff}{rgb}{0,0,1}
\definecolor{qqzzqq}{rgb}{0,0.6,0}
\definecolor{ffxfqq}{rgb}{1,0.4980392156862745,0}
\definecolor{yqqqyq}{rgb}{0.5019607843137255,0,0.5019607843137255}
\begin{tikzpicture}[line cap=round,line join=round,>=triangle 45,x=1cm,y=1cm]
\draw [line width=2pt] (6,5)-- (5,2);
\draw [line width=2pt] (6,5)-- (6,2);
\draw [line width=2pt] (6,5)-- (7,2);
\draw [line width=2pt] (9,5)-- (8,2);
\draw [line width=2pt] (9,5)-- (9,2);
\draw [line width=2pt] (9,5)-- (10,2);
\draw (7.25,1.75) node[anchor=north west] {\Large $F$};
\draw [line width=2pt] (6,5)-- (9,5);
\draw [line width=2pt,color=yqqqyq] (6,5)-- (5,7);
\draw [line width=2pt,color=ffxfqq] (5,7)-- (6,9);
\draw [line width=2pt,color=qqzzqq] (6,9)-- (9,9);
\draw [line width=2pt,color=qqqqff] (9,9)-- (10,7);
\draw [line width=2pt,color=ffqqqq] (10,7)-- (9,5);
\draw [line width=2pt,color=ffqqqq, dash pattern= on 8pt off 8pt] (10,7)-- (13,7);
\draw [line width=2pt,color=qqqqff, dash pattern= on 8pt off 8pt,dash phase=8pt] (10,7)-- (13,7);
\draw [line width=2pt,color=ffqqqq] (9,5)-- (11,4);
\draw [line width=2pt,color=ffqqqq] (11,4)-- (13,5);
\draw [line width=2pt,color=ffqqqq] (13,7)-- (13,5);
\draw [line width=2pt,color=qqqqff] (13,7)-- (14,10);
\draw [line width=2pt,color=qqqqff] (14,10)-- (11,11);
\draw [line width=2pt,color=qqqqff] (11,11)-- (9,9);
\draw [line width=2pt,color=qqzzqq] (6,9)-- (6,5);
\draw [line width=2pt,color=qqzzqq] (6,5)-- (3,8);
\draw [line width=2pt,color=qqzzqq] (9,9)-- (5,11);
\draw [line width=2pt,color=qqzzqq] (3,8)-- (5,11);
\draw [line width=2pt,color=ffxfqq, dash pattern= on 8pt off 8pt] (5,7)-- (3,8);
\draw [line width=2pt,color=yqqqyq, dash pattern= on 8pt off 8pt,dash phase=8pt] (5,7)-- (3,8);
\draw [line width=2pt,color=ffxfqq] (3,8)-- (2,11);
\draw [line width=2pt,color=ffxfqq] (2,11)-- (5,11);
\draw [line width=2pt,color=ffxfqq] (5,11)-- (6,9);
\draw [line width=2pt,color=yqqqyq] (3,8)-- (2,6);
\draw [line width=2pt,color=yqqqyq] (3,4)-- (2,6);
\draw [line width=2pt,color=yqqqyq] (3,4)-- (6,5);
\draw [color=yqqqyq](3.25,6.25) node[anchor=north west] {\Large $H_{f_5}$};
\draw [color=qqqqff](11.08,9.25) node[anchor=north west] {\Large $H_{f_2}$};
\draw [color=ffxfqq](2.9,10.5) node[anchor=north west] {\Large $H_{f_4}$};
\draw (7.1,7.4) node[anchor=north west] {\Large $H_{\hat{e}}$};
\draw [color=ffqqqq](10.75,6) node[anchor=north west] {\Large $H_{f_1}$};
\draw [color=qqzzqq](4.35,9.15) node[anchor=north west] {\Large $H_{f_3}$};
\draw (7.25,5) node[anchor=north west] {$\hat{e}$};
\begin{scriptsize}
\draw [fill=black] (6,5) circle (2.5pt);
\draw [fill=black] (9,5) circle (2.5pt);
\draw [fill=black] (5,7) circle (2.5pt);
\draw[color=yqqqyq] (5.7,6.25) node {$f_5$};
\draw [fill=black] (6,9) circle (2.5pt);
\draw[color=ffxfqq] (5.7,7.9) node {$f_4$};
\draw [fill=black] (9,9) circle (2.5pt);
\draw[color=qqzzqq] (7.62,8.67) node {$f_3$};
\draw [fill=black] (10,7) circle (2.5pt);
\draw[color=qqqqff] (9.2,7.9) node {$f_2$};
\draw[color=ffqqqq] (9.29,6.25) node {$f_1$};
\draw [fill=black] (13,7) circle (2.5pt);
\draw [fill=black] (11,4) circle (2.5pt);
\draw [fill=black] (13,5) circle (2.5pt);
\draw [fill=black] (14,10) circle (2.5pt);
\draw [fill=black] (11,11) circle (2.5pt);
\draw [fill=black] (3,8) circle (2.5pt);
\draw [fill=black] (5,11) circle (2.5pt);
\draw [fill=black] (2,11) circle (2.5pt);
\draw [fill=black] (2,6) circle (2.5pt);
\draw [fill=black] (3,4) circle (2.5pt);
\end{scriptsize}
\end{tikzpicture}
\caption{A $2$-graph $J \in \mathcal{H}(F, \hat{e}, C_5, C_6)\setminus \mathcal{H}^*(F, \hat{e}, C_5, C_6)$.}\label{fig:jexample}
\end{center}
\end{figure}

Notice that, during construction of a $k$-graph $J \in \mathcal{H}(F, \hat{e}, H_1, H_2)$, the edge of $H_{\hat{e}}$ intersecting at $\hat{e}$ and the edge of each copy $H_f$ of $H_1$ intersecting at an edge $f \in E(H_{\hat{e}})\setminus\{\hat{e}\}$
are not stipulated. 
That is, we may end up with different $k$-graphs after the construction process if we choose different edges of $H_{\hat{e}}$ to intersect $F$ at $\hat{e}$ and different edges of the copies $H_f$ of $H_1$ to intersect the edges in $E(H_{\hat{e}})\setminus\{\hat{e}\}$.
Observe that although $E(F \cup H_{\hat{e}}) \cap E(H_f) = \{f\}$ and $(V(F)\setminus \hat{e}) \cap V(H_f) = \emptyset$ for each $f \in E(H_{\hat{e}}) \setminus \{\hat{e}\}$, 
the construction may result in one or more $k$-graphs $H_f$ intersecting $H_{\hat{e}}$ in more than $k$ vertices, including possibly in vertices of $\hat{e}$ (e.g. $H_{f_3}$ in Figure~\ref{fig:jexample}).
Also, the $k$-graphs $H_f$ may intersect with each other in vertices and/or edges (e.g.~$H_{f_1}$ and $H_{f_2}$ in Figure~\ref{fig:jexample}).

Borrowing notation and language from \cite{msss}, for any $J \in \mathcal{H}(F, \hat{e}, H_1, H_2)$ we call $V_J := V(H_{\hat{e}})\setminus \hat{e}$ the \emph{inner vertices of $J$} and $E_J := E(H_{\hat{e}})\setminus\{\hat{e}\}$ the \emph{inner edges of $J$}. %\label{page:J*} 
Let $H_{\hat{e}}^{J}$ be the \emph{inner $k$-graph} on vertex set $V_J \ \dot{\cup} \ \hat{e}$ and edge set $E_J$, and observe that this $k$-graph $H_{\hat{e}}^{J}$ is isomorphic to a copy of $H_2$ minus some edge.
Further, for each copy $H_f$ of $H_1$, we define $U_J(f) := V(H_f)\setminus f$ and $D_J(f) := E(H_f) \setminus\{f\}$ and call \[U_J := \bigcup_{\substack{{f \in E_J}}} U_J(f)\] the set of \emph{outer vertices} of $J$ and \[D_J := \bigcup_{\substack{{f \in E_J}}} D_J(f)\] the set of \emph{outer edges} of $J$.
Observe that the sets $U_J(f)$ may overlap with each other and, as noted earlier, with $V(H_{\hat{e}}^{J})$. However, the sets $D_J(f)$ may overlap only with each other. 
Further, define $\mathcal{H}^*(F, \hat{e}, H_1, H_2) \subseteq \mathcal{H}(F, \hat{e}, H_1, H_2)$ such that for any $J^* \in \mathcal{H}^*(F, \hat{e}, H_1, H_2)$ we have $U_{J^*}(f_1) \cap U_{J^*}(f_2) = \emptyset$ and $D_{J^*}(f_1) \cap D_{J^*}(f_2) = \emptyset$ for all $f_1, f_2 \in E_{J^*}$, $f_1 \neq f_2$, and $U_{J^*}(f) \cap V(H_{\hat{e}}^{J^*}) = \emptyset$ for all $f \in E_{J^*}$; that is, the copies of $H_1$ are, in some sense, pairwise disjoint.
Note that each $J^* \in \mathcal{H}^*(F, \hat{e}, H_1, H_2)$ corresponds with a non-degenerate iteration $i$ of the while loop of algorithm \textsc{Grow} when $F = F_i$, $J^* = F_{i+1}$ and $\hat{e}$ is the edge chosen by \textsc{Eligible-Edge}($F_i$). This observation will be very helpful several times later.
For any $J \in \mathcal{H}(F, \hat{e}, H_1, H_2)$, define \[v^{+}(J) := |V(J)\setminus V(F)| = v(J) - v(F)\] and \[e^{+}(J) := |E(J)\setminus E(F)| = e(J) - e(F),\] and call $\frac{e^{+}(J)}{v^{+}(J)}$ the \emph{$F$-external density of $J$}.
The following lemma relates the $F$-external density of any $J^* \in \mathcal{H}^*(F, \hat{e}, H_1, H_2)$ to that of any $J \in \mathcal{H}(F, \hat{e}, H_1, H_2)\setminus \mathcal{H}^*(F, \hat{e}, H_1, H_2)$.

\begin{lem}\label{lemma:21}
Let $F$ be a $k$-graph and $\hat{e} \in E(F)$. 
Then for any $J \in \mathcal{H}(F, \hat{e}, H_1, H_2)\setminus \mathcal{H}^*(F, \hat{e}, H_1, H_2)$ and any $J^* \in \mathcal{H}^*(F, \hat{e}, H_1, H_2)$, we have \[\frac{e^{+}(J)}{v^{+}(J)} > \frac{e^{+}(J^*)}{v^{+}(J^*)}.\]
\end{lem}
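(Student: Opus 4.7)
The plan is to first compute $r(J^*) := e^+(J^*)/v^+(J^*)$ explicitly and obtain $r(J^*) = m_k(H_1, H_2)$, and then show $r(J) > r(J^*)$ for $J \in \mathcal{H} \setminus \mathcal{H}^*$ via an algebraic identity together with a case analysis on the overlap structure.

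For the first step, direct counting gives $v^+(J^*) = (v_2 - k) + (e_2 - 1)(v_1 - k)$ and $e^+(J^*) = (e_2 - 1) + (e_2 - 1)(e_1 - 1) = (e_2 - 1) e_1$. The heart hypothesis yields $v_2 - k = (e_2 - 1)/m_k(H_2)$ (since $H_2$ is strictly $k$-balanced) and $v_1 - k + 1/m_k(H_2) = e_1/m_k(H_1, H_2)$ (since $H_1$ is strictly balanced with respect to $d_k(\cdot, H_2)$); substituting simplifies $v^+(J^*)$ to $(e_2 - 1) e_1/m_k(H_1, H_2)$, so $r(J^*) = m_k(H_1, H_2)$.

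For the second step, fix any ordering of the attached copies $R_1, \ldots, R_{e_2 - 1}$ of $H_1$ and set $M_j := R_j \cap (F \cup L \cup R_1 \cup \cdots \cup R_{j-1}) \subseteq R_j$, so that $\{f_j\} \subseteq M_j$. Writing $\mu := m_k(H_1, H_2)$ and $\nu := m_k(H_2)$, an algebraic rearrangement using the heart identities above will yield
\begin{equation*}
\mu v^+(J) - e^+(J) = \left(\tfrac{\mu}{\nu} - 1\right)(e_2 - 1) + \sum_{j=1}^{e_2 - 1}\bigl[e(M_j) - \mu(v(M_j) - k + 1/\nu)\bigr].
\end{equation*}
For $J^*$, each $M_j = \{f_j\}$ and the right-hand side telescopes to $0$, consistent with $r(J^*) = \mu$. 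Since $J \not\in \mathcal{H}^*$, a case analysis on which of the three defining conditions fails (shared outer edges, shared outer vertices between distinct $R_j$'s, or an outer vertex coinciding with an inner vertex of $L$) shows that, under a suitable ordering, some $M_{j_0}$ properly contains $\{f_{j_0}\}$.

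The hard part will be establishing $\mu v^+(J) - e^+(J) < 0$ strictly. Strict balance of $H_1$ with respect to $d_k(\cdot, H_2)$ applied to each proper $M_j \subsetneq R_j$ gives $e(M_j) - \mu(v(M_j) - k + 1/\nu) < 0$ (by Fact~\ref{fact:ineq}(iv) or direct rearrangement), so every summand in the formula is strictly negative whenever $M_j$ is a proper subhypergraph of $R_j$. The delicate point is ensuring the summand at $j_0$ is strictly more negative than the non-degenerate baseline value $1 - \mu/\nu$ contributed when $M_j = \{f_j\}$, which is equivalent to the summand-wise inequality $d_k(M_{j_0}) < \mu$. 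In tame cases (for instance $M_{j_0} = \{f_{j_0}\}$ together with a single extra vertex, or two edges sharing $k-1$ vertices), this is immediate from the explicit values. For the remaining cases, where $M_{j_0}$ could be a dense subhypergraph of $R_{j_0}$, the plan is to exploit its internal decomposition into parts lying in $L$, in earlier $R_i$'s, and in additional overlaps, and to apply strict balance of $H_1$ with respect to $d_k(\cdot, H_2)$ to these components while combining via Fact~\ref{fact:ineq}; the residue $(\mu/\nu - 1)(e_2 - 1)$ is precisely cancelled by the baseline non-degenerate contributions, so even a small strict improvement at $j_0$ suffices to complete the argument.
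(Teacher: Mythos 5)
Your setup is sound, and in fact your reduction coincides with the paper's: the identity you derive is exact, your quantities $e(M_j)-1$ and $v(M_j)-k$ are precisely the paper's $\Delta_e(f_j)$ and $\Delta_v(f_j)$, and after cancelling the baseline contributions the statement you must prove is $\sum_j\bigl[(e(M_j)-1)-\mu(v(M_j)-k)\bigr]<0$, which is exactly the paper's inequality~\eqref{eq:conclusion2}. The computation $e^{+}(J^*)/v^{+}(J^*)=m_k(H_1,H_2)$ and the per-summand bound $e(M_j)-\mu\bigl(v(M_j)-k+1/\nu\bigr)<0$ for proper $M_j\subsetneq R_j$ (strict balance with respect to $d_k(\cdot,H_2)$) are also correct.

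The gap is in the final step. Your plan rests on the assertion that it suffices to beat the non-degenerate baseline at a single index $j_0$, i.e.\ to prove the one-argument inequality $d_k(M_{j_0})<\mu$, with all other summands implicitly at most the baseline $1-\mu/\nu$. Both halves of this are false in general. Since $H_1$ is only assumed strictly balanced with respect to $d_k(\cdot,H_2)$ (not $k$-balanced), and since $m_k(H_1)>m_k(H_1,H_2)$ whenever $m_k(H_1)>m_k(H_2)$ (Proposition~\ref{prop:m2h1h2}), a proper subhypergraph $M\subsetneq H_1$ containing the attachment edge can satisfy $d_k(M)>\mu$ even though $d_k(M,H_2)<\mu$: the two-argument strict balance only pins $e(M)-1$ into a window of width $\mu/\nu-1>0$ above $\mu(v(M)-k)$. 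Concretely, take the paper's own example of a heart with $H_1$ the $C_4$ with a triangle appended to each of two adjacent edges and $H_2=C_7$; then $\mu=48/29\approx1.66$ while a triangle $M\subsetneq H_1$ through the attachment edge has $d_2(M)=2>\mu$ (similarly, $M_j=R_j$ in its entirety with $H_1=K_5$, $H_2=K_4$ gives $d_2=3>50/17$). So individual summands can carry \emph{positive} excess, there may be several such indices, and no argument confined to the internal structure of a single $M_{j_0}$ can close the inequality: what saves the lemma is that such dense edge-overlaps are only realisable if they force additional vertex coincidences that are charged to \emph{other} indices $j$, and the bookkeeping that makes this precise is genuinely global. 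This is exactly what the paper's proof supplies: the \textsc{Order-Edges} grouping of the interacting attachment edges into edge-disjoint subhypergraphs $A_i\subseteq H_2$, the distinction between vertices isolated in $T(f)$ (including the sets $(V(H_{\hat e}^{-}))_f$) and those in $T'(f)$, and, crucially, an application of the strict $k$-balancedness of $H_2$ to each $A_i$ (inequalities \eqref{eq:f22balancedbound} and \eqref{eq:aifinal2}) to absorb the accumulated $+1/\nu$ excesses within a group, together with Claim~\ref{claim:notinei} for the remaining edges. Your proposal never invokes any property of $H_2$ beyond its $k$-density value, and without that ingredient the cross-index compensation cannot be established; as written, the "small strict improvement at $j_0$ suffices" conclusion does not follow.
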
  We prove Lemma~\ref{lemma:21} in Section~\ref{sec:lemma21}.\smallskip

Claim~\ref{claim:degenfull} will follow from the next two claims. We say that algorithm \textsc{Grow} encounters a \emph{degeneracy of type 1} in iteration $i$ of the while-loop if line~\ref{line:growRline} returns true, that is, $\exists R \in \mathcal{R}_{G'}\setminus \mathcal{R}_{F_i} : |V(R) \cap V(F_i)| \geq k$.
Note that the following claim requires that $m_k(H_1) > m_k(H_2)$.

\begin{claim}\label{claim:degen1}
There exists a constant $\kappa_1 = \kappa_1(H_1, H_2) > 0$ such that if procedure \textsc{Grow} encounters a degeneracy of type $1$ in iteration $i$ of the while-loop, we have \[\lambda(F_{i+1}) \leq \lambda(F_i) - \kappa_1.\]
\end{claim}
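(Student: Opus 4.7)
The plan is to carry out a direct vertex/edge count in the attachment step. In a degeneracy of type 1 at iteration $i$, \textsc{Grow} sets $F_{i+1}=F_i\cup R$ for some $R\in\mathcal{R}_{G'}\setminus\mathcal{R}_{F_i}$ with $a:=|V(R)\cap V(F_i)|\geq k$. Set also $b:=|E(R)\cap E(F_i)|$, and observe that $R\not\subseteq F_i$ forces $b\leq e_1-1$. Then $F_{i+1}$ contains exactly $v_1-a$ new vertices and $e_1-b$ new edges, so
\[
\lambda(F_{i+1})-\lambda(F_i) \;=\; (v_1-a)-\frac{e_1-b}{m_k(H_1,H_2)}.
\]
Because $(H_1,H_2)$ is a heart with $m_k(H_1)>m_k(H_2)$, the hypergraph $H_1$ is balanced with respect to $d_k(\cdot,H_2)$, which gives the identity $e_1/m_k(H_1,H_2)=v_1-k+1/m_k(H_2)$. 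Substituting, and writing $S$ for the subhypergraph of $R$ on vertex set $V(R)\cap V(F_i)$ and edge set $E(R)\cap E(F_i)$ (so $v(S)=a$, $e(S)=b$), the expression rearranges to
\[
\lambda(F_{i+1})-\lambda(F_i) \;=\; -\left(a-k+\frac{1}{m_k(H_2)}\right)\left(1-\frac{d_k(S,H_2)}{m_k(H_1,H_2)}\right).
\]

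Next I would argue both factors are positive and bounded away from zero by constants depending only on $H_1$ and $H_2$. The first factor is at least $1/m_k(H_2)>0$ since $a\geq k$. For the second factor, the crucial point is that $S\subsetneq R\cong H_1$ is a \emph{proper} subhypergraph (as $b\leq e_1-1$), so the strict balance of $H_1$ with respect to $d_k(\cdot,H_2)$ gives $d_k(S,H_2)<m_k(H_1,H_2)$, making the second factor strictly positive. Since $S$ is a subhypergraph of the fixed hypergraph $H_1$, there are only finitely many possibilities for $S$ up to isomorphism, so
\[
\kappa_1 \;:=\; \min_{\substack{S\subsetneq H_1\\ v(S)\geq k}}\left(v(S)-k+\frac{1}{m_k(H_2)}\right)\left(1-\frac{d_k(S,H_2)}{m_k(H_1,H_2)}\right)
\]
is a strictly positive constant depending only on $H_1$ and $H_2$, yielding the claim.

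There is no serious obstacle, only a minor verification at the boundary cases $v(S)=k$ and $e(S)=0$. When $e(S)=0$ the definition of $d_k(\cdot,H_2)$ gives $d_k(S,H_2)=0$, so the second factor equals $1$ and the resulting bound $1/m_k(H_2)$ is immediate from $m_k(H_2)>1$. When $v(S)=k$ and $e(S)=1$ we have $S\cong K_k$ and $d_k(S,H_2)=m_k(H_2)$, which is strictly less than $m_k(H_1,H_2)$ by Proposition~\ref{prop:m2h1h2} precisely because $m_k(H_1)>m_k(H_2)$; this explains why the $m_k(H_1)>m_k(H_2)$ hypothesis is indispensable. Structurally this proof mirrors Claim~\ref{claim:non-degen}, but with the degeneracy forcing $S$ to be a \emph{proper} sub-hypergraph of $H_1$, which is what upgrades the equality there to a strict decrease here.
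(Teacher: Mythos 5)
Your proof is correct and follows essentially the same path as the paper: both reduce to the expression $v'-k+\frac{1}{m_k(H_2)}-\frac{e'}{m_k(H_1,H_2)}$ for the overlap subhypergraph $R'=S$ of $R$ with $F_i$, and then invoke strict balance of $H_1$ with respect to $d_k(\cdot,H_2)$ together with finiteness of the proper subhypergraphs of $H_1$ to get a uniform positive gap. The only difference is presentational: you compute $\lambda(F_{i+1})-\lambda(F_i)$ directly via the identity $e_1/m_k(H_1,H_2)=v_1-k+1/m_k(H_2)$, whereas the paper detours through an auxiliary non-degenerate outcome $F^*$ and Claim~\ref{claim:non-degen} (a device really needed only for the companion Claim~\ref{claim:degen2} and used here for parallelism), and your factored form neatly packages the paper's two cases $e'=0$ and $e'\geq 1$ into a single expression.
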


\begin{proof}
Let $F := F_i$ be the $k$-graph before the operation in line~\ref{line:growR-F_i+1} is carried out (that is, before $F_{i+1} \gets F_i \cup R$), 
let $R$ be the copy of $H_1$ merged with $F$ in line~\ref{line:growR-F_i+1} and let $F' := F_{i+1}$ be the output from line~\ref{line:growR-F_i+1}.
We aim to show there exists a constant $\kappa_1 = \kappa_1(H_1, H_2) > 0$ such that
\begin{equation*}
    \lambda(F) - \lambda(F') = v(F) - v(F') - \frac{e(F) - e(F')}{m_k(H_1, H_2)} \geq \kappa_1.
\end{equation*}
Choose any edge $\hat{e} \in E(F)$ (the edge $\hat{e}$ need not be in the intersection of $R$ and $F$).
Choose any $F^* \in \mathcal{H}^*(F, \hat{e}, H_1, H_2)$.
Our strategy is to compare our degenerate outcome $F'$ with $F^*$. 
As noted earlier, $F^*$ corresponds to a non-degenerate iteration of the while loop of algorithm \textsc{Grow} (if $\hat{e}$ was the edge chosen by \textsc{Eligible-Edge}). 
Then Claim~\ref{claim:non-degen} gives us that $\lambda(F) = \lambda(F^*)$. Then 
\begin{eqnarray*}
      \lambda(F) - \lambda(F') & = \lambda(F^*) - \lambda(F')
     = v(F^*) - v(F') - \frac{e(F^*) - e(F')}{m_k(H_1, H_2)}.
\end{eqnarray*}
Hence we aim to show that there exists $\kappa_1 = \kappa_1(H_1, H_2) > 0$ such that
\begin{equation}\label{eq:kappa1bound}
    v(F^*) - v(F') - \frac{e(F^*) - e(F')}{m_k(H_1, H_2)} \geq \kappa_1.
\end{equation}
Define $R'$ to be the $k$-graph with vertex set $V' := V(R) \cap V(F)$ and edge set $E' :=  E(R) \cap E(F)$, and let $v' := |V'|$ and $e' := |E'|$. Observe that $R' \subsetneq R$.
Since $F^*$ corresponds with a non-degenerate iteration of the while-loop of algorithm \textsc{Grow}, $H_2$ is (strictly) $k$-balanced and $H_1$ is (strictly) balanced with respect to $d_k(\cdot, H_2)$, we have
\begin{eqnarray}
  v(F^*) - v(F') - \frac{e(F^*) - e(F')}{m_k(H_1, H_2)} & = &  (e_2 - 1)(v_1 - k) + (v_2 - k) - (v_1 - v')\nonumber \\ & &  - \frac{(e_2 - 1)e_1 - (e_1 - e')}{m_k(H_1, H_2)}\nonumber \\
  & = & (e_2 - 1)(v_1 - k) + (v_2 - k) \nonumber \\
  & & - (e_2 - 1)\left(v_1 - k + \frac{1}{m_k(H_2)}\right) \nonumber \\
  & & + \frac{e_1 - e'}{m_k(H_1, H_2)} - (v_1 - v')\nonumber \\
  & = &  \frac{e_1 - e'}{m_k(H_1, H_2)} - (v_1 - v')\nonumber \\ 
%  & =  v_1 - k + \frac{1}{m_k(H_2)} - \frac{e'}{m_k(H_1, H_2)} - (v_1 - v')\nonumber \\ 
  & = &  v' - k + \frac{1}{m_k(H_2)} - \frac{e'}{m_k(H_1, H_2)}. \label{eq:f1end}
\end{eqnarray}
Also, since \textsc{Grow} encountered a degeneracy of type 1, we must have $v' \geq k$. 
Hence, if $e' = 0$ then \[v' - k + \frac{1}{m_k(H_2)} - \frac{e'}{m_k(H_1, H_2)} \geq \frac{1}{m_k(H_2)} > 0.\] 
If $e' \geq 1$, then since $R$ is a copy of $H_1$, $H_1$ is \emph{strictly} balanced with respect to $d_k(\cdot, H_2)$ and $R' \subsetneq R$ with $|E(R')| = e' \geq 1$, we have that $0 < d_k(R', H_2) < m_k(H_1, H_2)$, and so 
\begin{equation}\label{eq:m2d2}
-\frac{1}{m_k(H_1, H_2)} > -\frac{1}{d_k(R', H_2)}.
\end{equation}
Then by \eqref{eq:f1end} and \eqref{eq:m2d2}, we have that 
\begin{equation*}
    v(F^*) - v(F') - \frac{e(F^*) - e(F')}{m_k(H_1, H_2)} = v' - k + \frac{1}{m_k(H_2)} - \frac{e'}{m_k(H_1, H_2)} > v' - k + \frac{1}{m_k(H_2)} - \frac{e'}{d_k(R', H_2)} = 0,
\end{equation*}
using the definition of $d_k(R', H_2)$. Thus \eqref{eq:kappa1bound} holds for \[\kappa_1 = \min_{\stackrel{R' \subsetneq R:}{e(R')\geq 1}}
\left\{ \frac{1}{m_k(H_2)}, \ v' - k + \frac{1}{m_k(H_2)} - \frac{e'}{m_k(H_1, H_2)}\right\}.\]\end{proof} 
We say that algorithm \textsc{Grow} encounters a \emph{degeneracy of type 2} in iteration $i$ of the while-loop if, when we call \textsc{Extend-L}$(F_i, e, G')$, the $k$-graph $L$ found in line~\ref{line:linl*} overlaps with $F_i$ in more than $k$ vertices, 
or if there exists some edge $e' \in E(L)\setminus E(F_i)$ such that the $k$-graph $R_{e'}$ found in line~\ref{line:re'} overlaps in more than $k$ vertices with $F'$.
In the following result, we transform $F'$ into the output of a non-degenerate iteration $F^*$ in three steps.
%However, we swap the order of the latter two steps in our proof.
%More precisely, we transform $F'$ into a $k$-graph $F^2 \in \mathcal{H}(F_i, e, H_1, H_2)$ in the first two steps, then transform $F^2$ into a $k$-graph $F^3 := F^* \in \mathcal{H}^*(F_i, e, H_1, H_2)$.
In the last step we require Lemma~\ref{lemma:21}.

\begin{claim}\label{claim:degen2}
    There exists a constant $\kappa_2 = \kappa_2(H_1, H_2) > 0$ such that if procedure \textsc{Grow} encounters a degeneracy of type 2 in iteration $i$ of the while-loop, we have \[\lambda(F_{i+1}) \leq \lambda(F_i) - \kappa_2.\]
\end{claim}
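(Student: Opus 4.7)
\textbf{Proof plan for Claim~\ref{claim:degen2}.} Write $F := F_i$ and $F' := F_{i+1}$, let $e$ be the open edge selected by \textsc{Eligible-Edge}, let $L$ be the copy of $H_2$ chosen in line~\ref{line:linl*} and let $R_{e'}$ be the copy of $H_1$ chosen in line~\ref{line:re'} for each $e' \in E(L) \setminus E(F)$. Fix any $F^* \in \mathcal{H}^*(F, e, H_1, H_2)$. Since $F^*$ corresponds to a non-degenerate iteration on the same seed, Claim~\ref{claim:non-degen} gives $\lambda(F^*) = \lambda(F)$, so it suffices to find $\kappa_2 = \kappa_2(H_1, H_2) > 0$ with
$$\lambda(F^*) - \lambda(F') = \bigl(v(F^*) - v(F')\bigr) - \tfrac{e(F^*) - e(F')}{m_k(H_1, H_2)} \geq \kappa_2.$$
The strategy, mirroring that of Claim~\ref{claim:degen1}, is to interpolate between $F'$ and $F^*$ by three intermediate hypergraphs and show that $\lambda$ never increases along the chain, with a strict drop at (at least) one step forced by the type~2 degeneracy. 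Since $H_1$ and $H_2$ are fixed and there are only finitely many combinatorial ``overlap patterns'' that a degenerate iteration can realise, a uniform positive $\kappa_2$ will fall out by taking a minimum.

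\textbf{Step 1 (detach $L$ from $F$ beyond $e$).} Let $\tilde F_1$ be obtained by attaching a fresh copy of $L$ to $F$ along precisely the edge $e$ (so $V(\tilde L) \cap V(F) = e$ and $E(\tilde L) \cap E(F) = \{e\}$), and then attaching the same $R_{e'}$'s with the same mutual overlap pattern they had in $F'$. Passing from $\tilde F_1$ to $F'$ corresponds to identifying some proper subhypergraph $J \subsetneq L$ containing $e$ with a subhypergraph of $F$. Since $H_2$ is strictly $k$-balanced, the density of $J$ is strictly below $m_k(H_2)$, and a direct calculation using Fact~\ref{fact:ineq}(i) and the definition of $m_k(H_1, H_2)$ (together with $H_1$ being strictly balanced with respect to $d_k(\cdot, H_2)$, mirroring the computation at the end of the proof of Claim~\ref{claim:degen1}) gives $\lambda(F') \leq \lambda(\tilde F_1)$ with strict inequality whenever $V(L) \cap V(F) \supsetneq e$.

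\textbf{Step 2 (detach each $R_{e'}$ from $F$ beyond $e'$).} In $\tilde F_1$ a copy $R_{e'}$ may still meet $V(F) \setminus e$ in extra vertices (or share extra edges with $F$). Let $\tilde F_2$ be obtained by detaching each such excess overlap, while preserving how the $R_{e'}$'s overlap with one another and with $V(L) \setminus e$. Since $H_1$ is strictly balanced with respect to $d_k(\cdot, H_2)$, the same kind of calculation as in Step~1 (now applied to a proper subhypergraph of each $R_{e'}$, identified with part of $F$) yields $\lambda(\tilde F_1) \leq \lambda(\tilde F_2)$, with strict inequality whenever any such extra overlap was present.

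\textbf{Step 3 (separate the $R_{e'}$'s from one another and from $V(L) \setminus e$).} By construction $\tilde F_2 \in \mathcal{H}(F, e, H_1, H_2)$. If $\tilde F_2 \in \mathcal{H}^*(F, e, H_1, H_2)$, then Claim~\ref{claim:non-degen} gives $\lambda(\tilde F_2) = \lambda(F^*)$. Otherwise Lemma~\ref{lemma:21} applies and yields
$$\frac{e^+(\tilde F_2)}{v^+(\tilde F_2)} > \frac{e^+(F^*)}{v^+(F^*)} = m_k(H_1, H_2),$$
where the last equality uses Claim~\ref{claim:non-degen}. Rearranging gives $\lambda(\tilde F_2) < \lambda(F^*)$. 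Combining the three steps yields $\lambda(F') \leq \lambda(\tilde F_1) \leq \lambda(\tilde F_2) \leq \lambda(F^*)$, and since the iteration is a type~2 degeneracy, at least one of the three witnesses (extra overlap of $L$ with $F$, extra overlap of some $R_{e'}$ with $F$, or non-``pairwise-disjoint'' attachment of the $R_{e'}$'s) occurs, so at least one of the three inequalities is strict. Because the degenerate overlap pattern is described by a choice among finitely many sub-structures of $L$ and of the $R_{e'}$'s (depending only on $H_1$ and $H_2$), the minimum such strict drop is a positive constant $\kappa_2 = \kappa_2(H_1, H_2)$, as required.

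\textbf{Expected obstacle.} Steps~1 and~2 are the most delicate: the identified subhypergraph can involve extra edges of $L$ or $R_{e'}$ coinciding with edges of $F$, and one must bookkeep vertices and edges carefully to show that the density of the identified piece is strictly less than $m_k(H_2)$ (resp.\ $m_k(H_1, H_2)$). The compensation of lost vertices against lost edges must produce a net positive quantity; this is where strict $k$-balancedness of $H_2$ (resp.\ strict balancedness of $H_1$ with respect to $d_k(\cdot, H_2)$) enters crucially, together with the standard algebraic manipulation in Fact~\ref{fact:ineq}. Once the chain $F' \to \tilde F_1 \to \tilde F_2 \to F^*$ is set up, Step~3 is a clean application of Lemma~\ref{lemma:21}, and the passage from a strict inequality to a quantitative lower bound $\kappa_2$ is routine by compactness of the parameter space.
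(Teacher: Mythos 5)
Your proposal follows essentially the same three-step interpolation strategy as the paper's proof, and you correctly identify all the key ingredients: Claim~\ref{claim:non-degen} to equate $\lambda(F)$ with $\lambda(F^*)$, the use of strict $k$-balancedness of $H_2$ and strict balancedness of $H_1$ with respect to $d_k(\cdot, H_2)$ in the intermediate density bounds, and Lemma~\ref{lemma:21} for the final passage from $\mathcal{H}$ to $\mathcal{H}^*$. There are two points worth flagging.

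First, you reverse the order of the first two steps relative to the paper: the paper first cleans up extra vertex-overlap of the $R_{e'}$'s with $F$ (its transformation~(i), which is elementary because $E(F)\cap E(L_R')=\emptyset$, so the drop in $\lambda$ is literally $|V(F)\cap(V(L_R')\setminus V(L'))|\geq 1$), and only then lifts $L$ off $F$. This order is chosen because it keeps the step that requires the delicate $H_2$-balancedness argument isolated. Your reversed order can work, but it makes the vertex bookkeeping in your Step~2 more delicate, since after Step~1 the $R_{e'}$'s no longer sit inside the original $F'$; you would need to re-justify that the resulting hypergraph is well-defined and that the relevant edge/vertex counts still line up.

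Second, and more importantly, your Step~1 as written has a gap. When $L$ shares edges with $F$ beyond $e$ (i.e.\ $E(F)\cap E(L)\neq\{e\}$), the algorithm \textsc{Extend-L} does \emph{not} attach $R_{e'}$'s at those shared edges (it only iterates over $e'\in E(L)\setminus E(F)$). So when you attach a fresh copy $\tilde L$ touching $F$ only at $e$, some of its new edges $e'$ have no $R_{e'}$ attached, and the resulting $\tilde F_1$ is \emph{not} in $\mathcal{H}(F,e,H_1,H_2)$. The paper's transformation~(ii) handles this by simultaneously lifting $L$ \emph{and} attaching fresh pairwise-disjoint copies $R_{e'}$ of $H_1$ for each newly-exposed edge, and the density calculation there (giving $\alpha_1>0$) crucially accounts for the $(e(\hat L_F)-1)(v_1-k)$ new vertices and $(e(\hat L_F)-1)e_1$ new edges contributed by these fresh copies. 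You will need to incorporate these fresh $R_{e'}$'s into your Step~1 (or add an extra step for them) before Lemma~\ref{lemma:21} becomes applicable in Step~3. Once that is done, your argument matches the paper's, and the uniform lower bound $\kappa_2$ does follow by finiteness of the overlap patterns as you say.
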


\begin{proof}
Let $F := F_i$ be the $k$-graph passed to \textsc{Extend-L} and let $F' := F_{i+1}$ be its output.
We aim to show that there exists a constant $\kappa_2 = \kappa_2(H_1, H_2) > 0$ such that 
\begin{equation}\label{eq:kappa2full}
    \lambda(F) - \lambda(F') = v(F) - v(F') - \frac{e(F) - e(F')}{m_k(H_1, H_2)} \geq \kappa_2.
\end{equation}
Recall that $F'$ would be one of a constant number of $k$-graphs if iteration $i$ was non-degenerate.
Our strategy is to transform $F'$ into the output of such a non-degenerate iteration $F^*$ in three steps
\[F' =: F^0 \overset{(i)}{\to} F^1 \overset{(ii)}{\to} F^2 \overset{(iii)}{\to} F^3 := F^*,\] with each step carefully resolving a different facet of a degeneracy of type 2. By Claim~\ref{claim:non-degen}, we have $\lambda(F) = \lambda(F^*)$, hence we have that 
\begin{eqnarray*}
      \lambda(F) - \lambda(F') & = \lambda(F^*) - \lambda(F') = \sum_{j=1}^3\left(\lambda(F^j) - \lambda(F^{j-1})\right) \\
     & = \sum_{j=1}^3\left(v(F^j) - v(F^{j-1}) - \frac{e(F^j) - e(F^{j-1})}{m_k(H_1, H_2)}\right).
\end{eqnarray*}
We shall show that there exists $\kappa_2 = \kappa_2(H_1, H_2) > 0$ such that 
\begin{equation}\label{eq:kappa2partial}
    \left(v(F^j) - v(F^{j-1}) - \frac{e(F^j) - e(F^{j-1})}{m_k(H_1, H_2)}\right) \geq \kappa_2
\end{equation}
for each $j \in \{1,2,3\}$, whenever $F^j$ and $F^{j-1}$ are not isomorphic. 
In each step we will look at a different structural property of $F'$ that may result from a degeneracy of type 2. 
We do not know the exact structure of $F'$, and so, for each $j$, step $j$ may not necessarily modify $F^{j-1}$. 
However, since $F'$ is not isomorphic to $F^*$, as $F'$ resulted from a degeneracy of type 2, we know for at least one $j$ that $F^j$ is not isomorphic to $F^{j-1}$. This will allow us to conclude \eqref{eq:kappa2full} from \eqref{eq:kappa2partial}.

We will now analyse the $k$-graph that \textsc{Extend-L} attaches to $F$ when a degeneracy of type 2 occurs. 
First of all, \textsc{Extend-L} attaches a $k$-graph $L \cong H_2$ to $F$ such that $L \in \mathcal{L}^*_{G'}$. 
Let $x$ be the number of new vertices that are added onto $F$ when $L$ is attached, that is, $x = |V(L)\setminus V(F)|$.
Since $L$ overlaps with the edge $e$ determined by \textsc{Eligible-Edge} in line~\ref{line:eligible-edge} of \textsc{Grow}, we must have that $x \leq v_2 - k$. 
Further, as $L \in \mathcal{L}_{G'}^*$, every edge of $L$ is covered by a copy of $H_1$.
Thus, since the condition in line~\ref{line:growRline} of \textsc{Grow} came out as false in iteration $i$, we must have that 

\begin{equation}\label{eq:uv}
    \mbox{for all}\ u_1, \ldots, u_k \in V(F) \cap V(L),\ \mbox{if}\ \{u_1, \ldots, u_k\} \in E(L)\ \mbox{then}\ \{u_1, \ldots, u_k\} \in E(F).
\end{equation} 
(By Claim~\ref{claim:growv}, \eqref{eq:uv} implies that $x \geq 1$ since $F$ must be extended by at least one edge.)

Let $L' \subseteq L$ denote the subhypergraph of $L$ obtained by removing every edge in $E(F) \cap E(L)$. Observe that $|V(L')| = |V(L)| = v_2$ and $|E(L')| \geq 1$ (see the remark above). 
\textsc{Extend-L} attaches to each edge $e' \in E(L')$ a copy $R_{e'}$ of $H_1$ in line~\ref{line:f'f're'} such that $E(L') \cap E(R_{e'}) = \{e'\}$.  
As the condition in line~\ref{line:growRline} of \textsc{Grow} came out as false, each $k$-graph $R_{e'}$ intersects $F$ in at most $k-1$ vertices and, hence, zero edges. Let \[L_R' := L' \cup \bigcup_{e' \in E(L')} R_{e'}.\] 
Then $F'$ is the same as $F \cup L_R'$, and since every $k$-graph $R_{e'}$ contains at most $k-1$ vertices of $F$, we have that $E(F') = E(F)\ \dot{\cup}\ E(L_R')$. Therefore, \[e(F') - e(F) = e(L_R').\]
Observe that $|V(F) \cap V(L')| = v_2 - x$ and so
\begin{alignat*}{2}
      v(F') - v(F)  & = v(L_R') - |V(F) \cap V(L_R')| \\
                    & = v(L_R') - (v_2 - x) - |V(F) \cap (V(L_R')\setminus V(L'))|. 
\end{alignat*}
\textbf{Transformation (i): $F^0 \to F^1$.} If $|V(F) \cap (V(L_R')\setminus V(L'))| \geq 1$, then we apply transformation (i), mapping $F^0$ to $F^1$:
For each vertex $v \in V(F) \cap (V(L_R')\setminus V(L'))$, transformation (i) introduces a new vertex $v'$. 
Every edge incident to $v$ in $E(F)$ remains connected to $v$ and all those edges incident to $v$ in $E(L_R')$ are redirected to $v'$. 
In $L_R'$ we replace the vertices in $V(F) \cap (V(L_R')\setminus V(L'))$ with the new vertices.
So now we have $|V(F) \cap (V(L_R')\setminus V(L'))| = 0$.  
Since $E(F) \cap E(L_R') = \emptyset$, the output of this transformation is uniquely defined. 
Moreover, the structure of $L_R'$ is completely unchanged. Hence, since $|V(F) \cap (V(L_R')\setminus V(L'))| \geq 1$, and  $|E(F')| = |E(F)\ \dot{\cup}\ E(L_R')|$ remained the same after transformation (i), we have that
\begin{equation*}
    v(F^1) - v(F^{0}) - \frac{e(F^1) - e(F^{0})}{m_k(H_1, H_2)} = |V(F) \cap (V(L_R')\setminus V(L'))| \geq 1.
\end{equation*}
\textbf{Transformation (ii): $F^1 \to F^2$.} Recall the definition of $\mathcal{H}(F, e, H_1, H_2)$. If $x \leq v_2 - (k + 1)$, then we apply transformation (ii), mapping $F^1$ to $F^2$ by replacing $L_{R}'$ with a $k$-graph $L_{R}''$ such that $F \cup L_{R}'' \in \mathcal{H}(F, e, H_1, H_2)$.

If $x = v_2 - k$, observe that already $F \cup L_{R}' \in \mathcal{H}(F, e, H_1, H_2)$ and we continue to transformation (iii). So assume $x \leq v_2 - (k+1)$.
Consider the proper subhypergraph $L_F := L[V(F) \cap V(L)] \subsetneq L$ obtained by removing all $x$ vertices in $V(L) \setminus V(F)$ and their incident edges from $L$.
Observe that $v(L_F) = v_2 - x \geq k+1$ and also that $L_F \subseteq F$ by \eqref{eq:uv}.
Assign labels to $V(L_F)$ so that $V(L_F) = \{y_1, \ldots, y_k,w_1, \ldots, w_{v_2 - (x+k)}\}$ where $e = \{y_1, \ldots, y_k\}$ and $w_1, \ldots, w_{v_2 - (x+k)}$ are arbitrarily assigned.
At the start of transformation (ii), we create $v_2 - (x+k)$ new vertices $w_1', \ldots, w'_{v_2 - (x+k)}$ and also new edges such that $\{y_1, \ldots, y_k,w_1', \ldots, w'_{v_2 - (x+k)}\}$ induces a copy $\hat{L}_F$ of $L_F$, and the following holds. 
For $W \subsetneq \{w_1, \ldots, w_{v_2 - (x+k)}\}$, define $W' := \{w'_i:w_i \in W\}$. For all $i_1, \ldots, i_k \in \{1, \ldots, v_2 - (x+k)\}$, $i_j \neq i_{j'}$ for all $j \neq j'$, and all subsets $W \subsetneq \{w_1, \ldots, w_{v_2 - (x+k)}\}$ and $Y \subsetneq \{y_1, \ldots, y_k\}$ with $|W \cup Y| = k$, we have \begin{alignat*}{2}
     & \mbox{if} \ \{w_{i_1},\ldots, w_{i_k}\} \in E(L_F)\ \mbox{then} \ \{w'_{i_1},\ldots ,w'_{i_k}\} \in E(\hat{L}_F); \\
     & \mbox{if} \ W \cup Y \in E(L_F)\ \mbox{then} \ W' \cup Y \in E(\hat{L}_F); \\
     % & \mbox{if} \ \{z, w_{i_2}\ldots w_{i_k}\} \in E(L_F)\ \mbox{then} \ \{z, w'_{i_2}\ldots w'_{i_k}\} \in E(\hat{L}_F); \\
     & \mbox{and} \ e = \{y_1, \ldots, y_k\} \in E(\hat{L}_F).
\end{alignat*} We also transform $L_R'$. For each edge in $E(L_{R}')$ containing some $W \subsetneq \{w_1, \ldots, w_{v_2 - (x+k)}\}$ in $L_F$, replace $W$ with $W'$, and remove $w_1, \ldots, w_{v_2 - (x+k)}$ from $V(L_R')$. 
Hence the structure of $L_R'$ remains the same except for the vertices $w_1, \ldots, w_{v_2 - (x+k)}$ that we removed.
Define $L'' := L_R' \cup \hat{L}_F$ and observe that $V(L'') \cap V(F) = e$.  

Continuing transformation (ii), for each $e' \in E(\hat{L}_{F})\setminus \{e\}$ , attach a copy $R_{e'}$ of $H_1$ to $L''$ such that $E(R_{e'}) \cap E(L'' \cup F) = \{e'\}$ and $V(R_{e'}) \cap V(L'' \cup F) = e'$.
That is, all these new copies $R_{e'}$ of $H_1$ are, in some sense, pairwise disjoint. Observe that $E(L'') \cap E(F) = \{e\}$ and
define $$L_R'' := L'' \cup \bigcup_{e' \in E(\hat{L}_F)\setminus \{e\}} R_{e'}.$$ Then $F \cup L_R'' \in \mathcal{H}(F, e, H_1, H_2)$. (See Figure~\ref{fig:trans2} for an example of transformation (ii) when $k = 2$.)
Let $F^2 := F \cup L_R''$. Then, \begin{alignat*}{2}
        \                            & v(F^2) - v(F^1) - \frac{e(F^2) - e(F^1)}{m_k(H_1, H_2)} \\
    = \                               & (e(\hat{L}_F) - 1)(v_1 - k) + v(\hat{L}_F) - k - \frac{(e(\hat{L}_F) - 1)e_1}{m_k(H_1, H_2)} \\
    = \                               & v(\hat{L}_F) - k - \frac{(e(\hat{L}_F) - 1)}{m_k(H_2)} \\
     =  \                              & \frac{(v(\hat{L}_F) - k)\left(m_k(H_2) - \frac{e(\hat{L}_F) - 1}{v(\hat{L}_F) - k}\right)}{m_k(H_2)} \\
    \geq \                               & \alpha_1
\end{alignat*} for some $\alpha_1 = \alpha_1(H_1, H_2) > 0$,
where the second equality follows from $H_1$ being (strictly) balanced with respect to $d_k(\cdot, H_2)$,
the third equality follows from $v(\hat{L}_F) = v(L_F) \geq k+1$ and the last inequality follows from $\hat{L}_F$ being a copy of $L_F \subsetneq L \cong H_2$ and $H_2$ being \emph{strictly} $k$-balanced.

\begin{figure}[!ht]
\begin{center}
\definecolor{ffqqqq}{rgb}{1,0,0}
\definecolor{yqqqyq}{rgb}{0.8,0,0.8}
\begin{tikzpicture}[line cap=round,line join=round,>=triangle 45,x=1cm,y=1cm]
\draw [line width=2pt,color=yqqqyq] (-8,0)-- (-7,0);
\draw [line width=2pt] (-7,0)-- (-6,0);
\draw [line width=2pt,color=yqqqyq] (-6,0)-- (-5,-1);
\draw [line width=2pt,color=yqqqyq] (-5,-1)-- (-5,-2);
\draw [line width=2pt,color=yqqqyq] (-5,-2)-- (-7,0);
\draw [line width=2pt,color=ffqqqq] (-9,1)-- (-8,0);
\draw [line width=2pt,color=ffqqqq] (-9,1)-- (-9,2);
\draw [line width=2pt,color=ffqqqq] (-9,2)-- (-8,3);
\draw [line width=2pt,color=ffqqqq] (-8,3)-- (-6,0);
\draw [line width=2pt] (-10.5,1.5)-- (-9,1);
\draw [line width=2pt] (-10.5,1.5)-- (-9,2);
\draw [line width=2pt] (-10.5,1.5)-- (-8,0);
\draw [line width=2pt] (-10.5,1.5)-- (-8,3);
\draw [line width=2pt] (-8,3)-- (-7.5,4);
\draw [line width=2pt] (-7.5,4)-- (-6,0);
\draw [line width=2pt] (-2.5,1.5)-- (-1,2);
\draw [line width=2pt] (-2.5,1.5)-- (-1,1);
\draw [line width=2pt] (-2.5,1.5)-- (0,0);
\draw [line width=2pt] (-2.5,1.5)-- (0,3);
\draw [line width=2pt,color=ffqqqq] (0,3)-- (-1,2);
\draw [line width=2pt,color=ffqqqq] (-1,2)-- (-1,1);
\draw [line width=2pt,color=ffqqqq] (-1,1)-- (0,0);
\draw [line width=2pt,color=ffqqqq] (0,3)-- (1,3);
\draw [line width=2pt,color=yqqqyq] (1,3)-- (2,2);
\draw [line width=2pt,color=yqqqyq] (2,2)-- (2,1);
\draw [line width=2pt,color=yqqqyq] (2,1)-- (1,0);
\draw [line width=2pt,color=yqqqyq] (0,0)-- (1,0);
\draw [line width=2pt] (1,0)-- (2,0);
\draw [line width=2pt] (2,0)-- (3,-1);
\draw [line width=2pt] (3,-1)-- (3,-2);
\draw [line width=2pt] (3,-2)-- (1,0);
\draw [line width=2pt] (1,0)-- (2.5,0.5);
\draw [line width=2pt] (2,1)-- (2.5,0.5);
\draw [line width=2pt] (2,1)-- (3,1.5);
\draw [line width=2pt] (2,2)-- (3,1.5);
\draw [line width=2pt] (0,3)-- (0.5,4);
\draw [line width=2pt] (1,3)-- (0.5,4);
\draw [line width=2pt] (1,3)-- (2,3);
\draw [line width=2pt] (2,2)-- (2,3);
\draw (0.5,-1.8) node[anchor=north west] {\LARGE $F$};
\draw [color=yqqqyq](0.45,2) node[anchor=north west] {\LARGE $\hat{L}_{F}$};
\draw [color=yqqqyq](-5.25,0) node[anchor=north west] {\LARGE $L_{F}$};
\draw (-7.5,-1.8) node[anchor=north west] {\LARGE $F$};
\draw [line width=2pt] (-8,0)-- (-8.5,-1.5);
\draw [line width=2pt] (-8,0)-- (-8,-1.5);
\draw [line width=2pt] (-8,0)-- (-7.5,-1.5);
\draw [line width=2pt] (-7,0)-- (-7,-1.5);
\draw [line width=2pt] (-7,0)-- (-6.5,-1.5);
\draw [line width=2pt] (-5,-2)-- (-6,-2);
\draw[color=black] (-3.85,2.25) node {\bf (ii)};
\draw [->,line width=5.2pt] (-4.5,1.5) -- (-3,1.5);
\draw [line width=2pt] (0,0)-- (-0.5,-1.5);
\draw [line width=2pt] (0,0)-- (0,-1.5);
\draw [line width=2pt] (0,0)-- (0.5,-1.5);
\draw [line width=2pt] (1,0)-- (1,-1.5);
\draw [line width=2pt] (1,0)-- (1.5,-1.5);
\draw [line width=2pt] (3,-2)-- (2,-2);
\begin{scriptsize}
\draw [fill=black] (-8,0) circle (2.5pt);
\draw[color=black] (-8,0.4) node {$y_1$};
\draw [fill=black] (-7,0) circle (2.5pt);
\draw[color=black] (-7,0.4) node {$y_2$};
\draw [fill=black] (-6,0) circle (2.5pt);
\draw[color=black] (-5.55,0) node {$w_{1}$};
\draw [fill=black] (-5,-1) circle (2.5pt);
\draw[color=black] (-4.55,-1) node {$w_{2}$};
\draw [fill=black] (-5,-2) circle (2.5pt);
\draw[color=black] (-4.55,-2) node {$w_{3}$};
\draw [fill=black] (-9,1) circle (2.5pt);
\draw [fill=black] (-9,2) circle (2.5pt);
\draw [fill=black] (-8,3) circle (2.5pt);
\draw[color=black] (-8.1,3.3) node {$a$};
\draw [fill=black] (-10.5,1.5) circle (2.5pt);
\draw [fill=black] (-7.5,4) circle (2.5pt);
\draw[color=black] (-7.65,4.3) node {$b$};
\draw [fill=black] (-2.5,1.5) circle (2.5pt);
\draw [fill=black] (-1,2) circle (2.5pt);
\draw [fill=black] (-1,1) circle (2.5pt);
\draw [fill=black] (0,0) circle (2.5pt);
\draw[color=black] (0,0.4) node {$y_1$};
\draw [fill=black] (0,3) circle (2.5pt);
\draw[color=black] (-0.1,3.3) node {$a$};
\draw [fill=black] (1,3) circle (2.5pt);
\draw[color=black] (0.35,4.3) node {$b$};
\draw[color=black] (1.25,3.25) node {$w'_{1}$};
\draw [fill=black] (2,2) circle (2.5pt);
\draw[color=black] (1.55,2) node {$w'_{2}$};
\draw [fill=black] (2,1) circle (2.5pt);
\draw[color=black] (1.55,1) node {$w'_{3}$};
\draw [fill=black] (1,0) circle (2.5pt);
\draw[color=black] (1,0.4) node {$y_2$};
\draw [fill=black] (2,0) circle (2.5pt);
\draw[color=black] (2.45,0) node {$w_1$};
\draw [fill=black] (3,-1) circle (2.5pt);
\draw[color=black] (3.45,-1) node {$w_2$};
\draw [fill=black] (3,-2) circle (2.5pt);
\draw[color=black] (3.45,-2) node {$w_3$};
\draw [fill=black] (2.5,0.5) circle (2.5pt);
\draw [fill=black] (3,1.5) circle (2.5pt);
\draw [fill=black] (0.5,4) circle (2.5pt);
\draw [fill=black] (2,3) circle (2.5pt);
\end{scriptsize}
\end{tikzpicture}
\caption{An example of transformation (ii) for $k = 2$, where $H_1 = K_3$ and $H_2 = C_8$. Observe that edges $aw_1$ and $bw_1$ are replaced by edges $aw'_1$ and $bw'_1$.}\label{fig:trans2}
\end{center}
\end{figure}

\textbf{Transformation (iii): $F^2 \to F^3$.} Recall that for any $J \in \mathcal{H}(F, \hat{e}, H_1, H_2)$, we define $v^{+}(J) := |V(J)\setminus V(F)| = v(J) - v(F)$ and $e^{+}(J) := |E(J)\setminus E(F)| = e(J) - e(F)$. 
Remove the edge $e$ from $E(L'')$ to give $E(L'') \cap E(F) = \emptyset$. 
Then \[e^+(F \cup L_R'') = e(L_R'')\] and \[v^+(F \cup L_R'') = v(L_R'') - k.\]

If $F^2 = F \cup L_R'' \in \mathcal{H}^*(F, e, H_1, H_2)$, then transformation (iii) sets $F^3 := F^2$.
Otherwise we have that $F \cup L_R'' \in \mathcal{H}(F, e, H_1, H_2)\setminus \mathcal{H}^*(F, e, H_1, H_2)$.
Let $F^3 := J^*$ where $J^*$ is any member of $\mathcal{H}^*(F, e, H_1, H_2)$ and recall that,
indeed, $J^*$ is equivalent to a non-degenerate iteration of the while-loop of \textsc{Grow}. 

Then, in transformation (iii), we replace $F \cup L_R''$ with the $k$-graph $J^*$. 
Since $H_2$ is (strictly) $k$-balanced and $H_1$ is (strictly) balanced with respect to $d_k(\cdot, H_2)$, we have that 
\begin{equation}\label{eq:m2h1h2}
    m_k(H_1, H_2) = \frac{e_1}{v_1 - k + \frac{1}{m_k(H_2)}} = \frac{e_1(e_2 - 1)}{(v_1 - k)(e_2 - 1) + v_2 - k} = \frac{e^{+}(J^*)}{v^{+}(J^*)}.
\end{equation}
Using \eqref{eq:m2h1h2} and Lemma~\ref{lemma:21}, together with $H_2$ being (strictly) $k$-balanced and $H_1$ being (strictly) balanced with respect to $d_k(\cdot, H_2)$, we have that
\begin{alignat*}{2}
      \                              & v(F^3) - v(F^2) - \frac{e(F^3) - e(F^2)}{m_k(H_1, H_2)} \\
    = \                              & v(J^*) - v(F \cup L_R'') - \frac{e(J^*) - e(F \cup L_R'')}{m_k(H_1, H_2)}  \\
    = \                              & v^+(J^*) - v^+(F \cup L_R'') - \frac{e^+(J^*) - e^+(F \cup L_R'')}{m_k(H_1, H_2)}  \\
    \overset{L.\ref{lemma:21}}{>}  \  & v^+(J^*) - v^+(F \cup L_R'') - \frac{e^+(J^*) - e^+(J^*)\left(\frac{v^+(F \cup L_R'')}{v^+(J^*)}\right)}{m_k(H_1, H_2)}  \\
     = \                              & \left(v^+(J^*) - v^+(F \cup L_R'')\right) \left(1 - \frac{e^+(J^*)}{v^+(J^*)m_k(H_1, H_2)}\right) \\
    = \                              & 0.  \end{alignat*} Since $v^+(J^*)$, $v^+(F \cup L_R'')$, $e^+(J^*)$, $e^+(F \cup L_R'')$ and $m_k(H_1, H_2)$ only rely on $H_1$ and $H_2$, there exists $\alpha_2 = \alpha_2(H_1, H_2) > 0$ such that \[v(F^3) - v(F^2) - \frac{e(F^3) - e(F^2)}{m_k(H_1, H_2)} \geq \alpha_2.\]
Taking \[\kappa_2 := \min\{1, \alpha_1, \alpha_2\}\] we see that \eqref{eq:kappa2partial} holds.\end{proof}

As stated earlier, Claim~\ref{claim:degenfull} follows from Claims~\ref{claim:degen1} and \ref{claim:degen2}. All that remains to prove Case~1 of Lemma~\ref{lemma:noerror} is to prove Lemma~\ref{lemma:21}.

\subsection{Proof of Lemma~\ref{lemma:21}}\label{sec:lemma21}

Let $J \in \mathcal{H}(F, \hat{e}, H_1, H_2)\setminus \mathcal{H}^*(F, \hat{e}, H_1, H_2)$.
We choose the $k$-graph $J^* \in \mathcal{H}^*(F, \hat{e}, H_1, H_2)$ with the following properties.

\begin{itemize}
    \item[1)] The edge of the copy $H_{\hat{e}}$ of $H_2$ in $J$ attached at $\hat{e}$, and which vertices of this edge intersect which vertices of $\hat{e}$ when attached, are the same as the edge of the copy $H^*_{\hat{e}}$ of $H_2$ in $J^*$ attached at $\hat{e}$ and which vertices of this edge intersect which vertices of $\hat{e}$ when attached; 
    \item[2)] for each $f \in E_{J}$, the edge of the copy $H_f$ of $H_1$ in $J$ attached at $f$, and which vertices of this edge intersect which vertices of $f$ when attached, are the same as the edge of the copy $H^*_f$ of $H_1$ in $J^*$ attached at $f$ and which vertices of this edge intersect which vertices of $f$ when attached.\COMMENT{JH: Maybe a little too wordy?}
\end{itemize}

Then, recalling definitions from the beginning of Section~\ref{app:degenfull}, we have that $V_J = V_{J^*}$ and $E_J = E_{J^*}$; that is, $H_{\hat{e}}^{J} = H_{\hat{e}}^{J^*}$. 
From now on, let $V := V_J$, $E := E_J$ and $H_{\hat{e}}^{-} := H_{\hat{e}}^{J}$.
Observe for \emph{all} $J' \in \mathcal{H}^*(F, \hat{e}, H_1, H_2)$, that
\begin{equation}%\label{eq:j'density}
    \frac{e^{+}(J')}{v^{+}(J')} = \frac{e_1(e_2 - 1)}{(v_1 - k)(e_2 - 1) + v_2 - k}.
\end{equation} 
Hence, to prove Lemma~\ref{lemma:21} it suffices to show
\begin{equation*}
\frac{e^{+}(J)}{v^{+}(J)} > \frac{e^{+}(J^*)}{v^{+}(J^*)}.
\end{equation*}
The intuition behind our proof is that $J^*$ can be transformed into $J$ by successively merging the copies $H^*_f$ of $H_1$ in $J^*$ with each other and vertices in $H^{-}_{\hat{e}}$.
We do this in $e_2 - 1$ steps, fixing carefully a total ordering of the inner edges $E$.
For every edge $f \in E$, we merge the attached outer copy $H^*_f$ of $H_1$ in $J^*$ with copies of $H_1$ (attached to edges preceding $f$ in our ordering) and vertices of $H^{-}_{\hat{e}}$.
Throughout, we keep track of the number of edges $\Delta_e(f)$ and the number of vertices $\Delta_v(f)$ vanishing in this process.
One could hope that the $F$-external density of $J$ increases in every step of this process, or, even slightly stronger,
that $\Delta_e(f) / \Delta_v(f) < e^{+}(J^*) / v^{+}(J^*)$. 
This does not necessarily hold, but we will show that there exists a collection of edge-disjoint subhypergraphs $A_i$ of $H^{-}_{\hat{e}}$ such that, for each $i$, the edges of $E(A_i)$ are `collectively good' for this process and every edge not belonging to one of these $A_i$ is also `good' for this process.

Recalling definitions from the beginning of Section~\ref{app:degenfull}, let $H^{-}_f := (U_J(f)\ \dot{\cup}\ f, D_J(f))$ denote the subhypergraph obtained by removing the edge $f$ from the copy $H_f$ of $H_1$ in $J$. 

Later, we will carefully define a (total) ordering $\prec$ on the inner edges $E$.\footnote{For clarity, for any $f \in E$, $f \not\prec f$ in this ordering $\prec$.} For such an ordering $\prec$
and each $f \in E$, define \[\Delta_E(f) := D_J(f) \cap \left(\bigcup_{f' \prec f} D_J(f')\right),\] and \[\Delta_V(f) := U_J(f) \cap \left(\left(\bigcup_{f' \prec f} U_J(f')\right) \cup V(H_{\hat{e}}^{-})\right),\] 
and set $\Delta_e(f) := |\Delta_E(f)|$ and $\Delta_v(f) := |\Delta_V(f)|$. We emphasise here that the definition of $\Delta_v(f)$ takes into account how vertices of outer vertex sets can intersect with the inner $k$-graph $H_{\hat{e}}^{-}$.
One can see that $\Delta_e(f)$ ($\Delta_v(f)$) is the number of edges (vertices) vanishing from $H^*_f$ when it is merged with preceding attached copies of $H_1$ and $V(H_{\hat{e}}^{-})$.

By our choice of $J^*$, one can quickly see that
\begin{equation}\label{eq:sume}
    e^{+}(J) = e^{+}(J^*) - \sum_{f \in E} \Delta_e(f)
\end{equation}
and
\begin{equation}\label{eq:sumv}
    v^{+}(J) = v^{+}(J^*) - \sum_{f \in E} \Delta_v(f).
\end{equation}
By \eqref{eq:sume} and \eqref{eq:sumv}, we have
\begin{equation*}
    \frac{e^{+}(J)}{v^{+}(J)} = \frac{e^{+}(J^*) - \sum_{f \in E} \Delta_e(f)}{v^{+}(J^*) - \sum_{f \in E} \Delta_v(f)}.
\end{equation*}
Then, by Fact~\ref{fact:ineq}(iv), to show that 
\begin{equation*}
\frac{e^{+}(J)}{v^{+}(J)} > \frac{e^{+}(J^*)}{v^{+}(J^*)}
\end{equation*}
it suffices to prove that 
\begin{equation}\label{eq:conclusion2}
    \frac{\sum_{f \in E} \Delta_e(f)}{\sum_{f \in E} \Delta_v(f)} < \frac{e^{+}(J^*)}{v^{+}(J^*)}.\footnote{Note that $\sum_{f \in E} \Delta_v(f) \geq 1$ as otherwise $J = J^*$.}
\end{equation}
To show \eqref{eq:conclusion2}, we will now carefully order the edges of $E$ using an algorithm \textsc{Order-Edges} (Figure~\ref{orderedgesfig}).
The algorithm takes as input the $k$-graph $H_{\hat{e}}^{-} = (V \ \dot{\cup} \ \hat{e},E)$ and outputs a stack $s$ containing every edge from $E$ and a collection of edge-disjoint edge sets $E_i$ in $E$ and (not necessarily disjoint) vertex sets $V_i$ in $V \ \dot{\cup} \ \hat{e}$. 
We take our total ordering $\prec$ of $E$ to be that induced by the order in which edges of $E$ were placed onto the stack $s$ (that is, $f \prec f'$ if and only if $f$ was placed onto the stack $s$ before $f'$).
Also, for each $i$, we define $A_i := (V_i, E_i)$ to be the $k$-graph on vertex set $V_i$ and edge set $E_i$ and observe that $A_i \subsetneq H_2$.
We will utilise this ordering and our choice of $E_i$ and $V_i$ for each $i$, alongside $H_2$ being (strictly) $k$-balanced and $H_1$ being \emph{strictly} balanced with respect to $d_k(\cdot, H_2)$, in order to conclude \eqref{eq:conclusion2}. 

\begin{figure}
\begin{algorithmic}[1]
\Procedure{\sc Order-Edges}{$H_{\hat{e}}^{-} = (V\  \dot{\cup}\  \hat{e},E)$}
    \State $s\gets$ {\sc empty-stack}()\label{line:oes}
    \ForAll {$i \in [\lfloor e_2/2 \rfloor]$}%\label{line:oeforalli}
        \State $E_i \gets \emptyset$%\label{line:oeej}
        \State $V_i \gets \emptyset$%\label{line:oevj}
    \EndFor%\label{line:oeendfor1}
    \State $j \gets 1$%\label{line:oeigets1}
    \State $E'\gets E$\label{line:oee'getse}
    \While{$E' \neq \emptyset$}\label{line:oee'notempty}
            \If{$\exists f, f' \in E' \ \mbox{s.t.} \ (f \neq f') \land (D_J(f) \cap D_J(f') \neq \emptyset)$}\label{line:oeaiconstructionstart}
                \State $s$.{\sc push}($f$)\label{line:oespushf}
                \State $E_j$.{\sc push}($f$)\label{line:oeeipushf}
                \State $E'$.{\sc remove}($f$)% \label{line:oee'removef}
                \State $V_j \gets f \cup (U_J(f) \cap V(H_{\hat{e}}^{-}))$\label{line:oeviupdate1}
                \While{$\exists \ u \in E' \ \mbox{s.t.} \ \left(\{v\in u\} \in V_j\right) \lor \left(D_J(u) \cap \bigcup_{f \in E_j} D_J(f) \neq \emptyset\right)$}\label{line:oeaibadedgescollectedup}
                    \State $s$.{\sc push}($u$)%\label{line:oespushuw}
                    \State $E_j$.{\sc push}($u$)\label{line:oeeipushuw}
                    \State $E'$.{\sc remove}($u$)% \label{line:oee'removeuw}
                    \State $V_j \gets \bigcup_{f \in E_j}\left(f \cup (U_J(f) \cap V(H_{\hat{e}}^{-})) \right)$%\label{line:oeviupdate3}
                \EndWhile\label{line:oeendwhile1}
                \State $j \gets j + 1$\label{line:oeigetsiplus1}    
            \Else%\label{line:oeelse2}
                \ForAll {$f \in E'$}\label{line:oeconclusion}
                \State $s$.{\sc push}($f$)\label{line:oespushlastf}
                 \State $E'$.{\sc remove}($f$)% \label{line:oee'removelastf}
                \EndFor \label{line:oeendfor2}
            \EndIf%\label{line:oeendif2}
    \EndWhile%\label{line:oeendwhile2}
    \State\Return $s$\label{line:oereturns}
    \ForAll {$i \in [\lfloor e_2/2 \rfloor]$ s.t. $E_{i} \neq \emptyset$}\label{line:oelastforalli}
    \State    \Return $E_i$%\label{line:oereturnej}
    \State    \Return $V_i$\label{line:oereturnvj}
    \EndFor%\label{line:oelastendfor}
\EndProcedure%\label{line:oeendprocedure}
\end{algorithmic}
\caption{The implementation of algorithm \textsc{Order-Edges}.}\label{orderedgesfig}
\end{figure}

Let us describe algorithm \textsc{Order-Edges} (Figure~\ref{orderedgesfig}) in detail. 
In lines~\ref{line:oes}-\ref{line:oee'getse}, we initialise several parameters: a stack $s$, which we will place edges of $E$ on during our algorithm; sets $E_i$ and $V_i$ for each $i \in [\lfloor e_2/2 \rfloor]$,\footnote{\textsc{Order-Edges} can output at most $\lfloor e_2/2 \rfloor$ pairs of sets $E_i$ and $V_i$.} which we will add edges of $E$ and vertices of $V \ \dot{\cup} \ \hat{e}$ into, respectively;
an index $j$, which will correspond to whichever $k$-graph $A_j$ we consider constructing next; and a set $E'$, which will keep track of those edges of $E$ we have not yet placed onto the stack $s$. 
Line~\ref{line:oee'notempty} ensures the algorithm continues until $E' = \emptyset$, that is, until all the edges of $E$ have been placed onto $s$.

In line~\ref{line:oeaiconstructionstart}, we begin constructing $A_j$ by finding a pair of distinct edges in $E'$ whose outer edge sets (in $J$) intersect.
In lines~\ref{line:oespushf}-\ref{line:oeviupdate1}, we place one of these edges, $f$, onto $s$, into $E_j$ and remove it from $E'$. We also set $V_j$ to be the $k$ vertices in $f$ alongside any vertices in the outer vertex set $U_J(f)$ that intersect $V(H^{-}_{\hat{e}})$.

In lines~\ref{line:oeaibadedgescollectedup}-\ref{line:oeendwhile1}, we iteratively add onto $s$, into $E_j$ and remove from $E'$ any edge $u \in E'$ which either contains $k$ vertices previously added to $V_j$ or has an outer edge set $D_J(u)$ that intersects the collection of outer edge sets of edges previously added to $E_j$.
We also update $V_j$ in each step of this process.

In line~\ref{line:oeigetsiplus1}, we increment $j$ in preparation for the next check at line~\ref{line:oeaiconstructionstart} (if we still have $E' \neq \emptyset$). 
If the condition in line~\ref{line:oeaiconstructionstart} fails then in lines~\ref{line:oeconclusion}-\ref{line:oeendfor2} we arbitrarily place the remaining edges of $E'$ onto the stack $s$. In line~\ref{line:oereturns}, we output the stack $s$ and in lines~\ref{line:oelastforalli}-\ref{line:oereturnvj} we output each non-empty $E_i$ and $V_i$.

We will now argue that each proper subhypergraph $A_i = (V_i, E_i)$ of $H_2$ and each edge placed onto $s$ in line~\ref{line:oespushlastf} are `good', in some sense, for us to conclude \eqref{eq:conclusion2}. 

For each  $f \in E$, define the $k$-graph \[T(f) := (\Delta_V(f) \ \dot{\cup} \ f, \Delta_E(f)) \subseteq H_f^- \subsetneq H_1.\]
Observe that one or more vertices of $f$ may be isolated in $T(f)$. This observation will be very useful later.

For each $i$ and $f \in E_i$, define \[(V(H_{\hat{e}}^{-}))_f := (V(H_{\hat{e}}^{-}) \cap U_J(f))\setminus \left(\bigcup_{\substack{f' \in E_i: \\ f' \prec f}}f' \cup \left(\bigcup_{\substack{f' \in E_i: \\ f' \prec f}} \left(V(H_{\hat{e}}^{-}) \cap U_J(f')\right)\right)\right) \subseteq \Delta_V(f)\] since $(V(H_{\hat{e}}^{-}))_f$ is a subset of $V(H_{\hat{e}}^{-}) \cap U_J(f)$. 
One can see that $(V(H_{\hat{e}}^{-}))_f$ consists of those vertices of $V(H_{\hat{e}}^{-})$ which are new to $V_i$ at the point when $f$ is added to $E_i$ but are not contained in $f$. 
Importantly for our purposes, every vertex in $(V(H_{\hat{e}}^{-}))_f$ is isolated in $T(f)$. 
Indeed, otherwise there exists $\ell < i$ and $f'' \in E_{\ell}$ such that $D_J(f) \cap D_J(f'') \neq \emptyset$ and $f$ would have been previously added to $E_{\ell}$ in line~\ref{line:oeeipushuw}. 

For each $f \in E$, let $T'(f)$ be the $k$-graph obtained from $T(f)$ by removing all isolated vertices from $V(T(f))$.
Crucially for our proof, since vertices of $f$ may be isolated in $T(f)$, one or more of them may not belong to $V(T'(f))$. 
Further, no vertex of $(V(H_{\hat{e}}^{-}))_f$ is contained in $V(T'(f))$.

For all $f \in E$ with $\Delta_e(f) \geq 1$, since $T'(f) \subsetneq H_1$ and $H_1$ is \emph{strictly} balanced with respect to $d_k(\cdot, H_2)$, we have that 
\begin{equation}\label{eq:centralobservation}
    m_k(H_1, H_2) > d_k(T'(f), H_2) = \frac{|E(T'(f))|}{|V(T'(f))| - k + \frac{1}{m_k(H_2)}}.
\end{equation} 
Recall \eqref{eq:m2h1h2}, that is, \[m_k(H_1, H_2) = \frac{e^+(J^*)}{v^+(J^*)}.\]
We now make the key observation of our proof: Since vertices of $f$ may be isolated in $T(f)$, and so not contained in $V(T'(f))$, and no vertex of $(V(H_{\hat{e}}^{-}))_f$ is contained in $V(T'(f))$, we have that 
\begin{equation}\label{eq:vt'fbound}
    |V(T'(f))| \leq \Delta_v(f) + |f \cap V(T'(f))| - |(V(H_{\hat{e}}))_f|.
\end{equation} 
Hence, from \eqref{eq:centralobservation} and \eqref{eq:vt'fbound} we have that \begin{alignat}{2}
    \Delta_e(f)     =  \ &|E(T'(f))| \nonumber\\
                    < \ &m_k(H_1, H_2)\left(\Delta_v(f) - (k - |f \cap V(T'(f))|) - |(V(H_{\hat{e}}^{-}))_f| + \frac{1}{m_k(H_2)}\right).\label{eq:deltaef}
\end{alignat}
Those edges $f$ such that $|f \cap V(T'(f))| = k$ will be, in some sense, `bad' for us when trying to conclude \eqref{eq:conclusion2}.
Indeed, if $|(V(H_{\hat{e}}^{-}))_f| = 0$ then we may have that $\frac{\Delta_e(f)}{\Delta_v(f)} \geq m_k(H_1, H_2) = \frac{e^{+}(J^*)}{v^{+}(J^*)}$, by \eqref{eq:m2h1h2}.
However, edges $f$ such that $|f \cap V(T'(f))| < k$ will be, in some sense, `good' for us when trying to conclude \eqref{eq:conclusion2}. 
Indeed, since $m_k(H_2) \geq 1$, we have for such edges $f$ that $\frac{\Delta_e(f)}{\Delta_v(f)} < m_k(H_1, H_2) = \frac{e^{+}(J^*)}{v^{+}(J^*)}$.\COMMENT{JH: I don't know if we've observed this before, but this part of the proof here is also somewhere where we definitely need $m_k(H_2) \geq 1$, and so can't have just the assumption $m_k(H_2) \geq 1/(k-1)$ - before I was curious whether you only needed it when proving the finiteness of $\hat{\mathcal{B}}$.}

We show in the following claim that our choice of ordering $\prec$, our choice of each $A_i$ and the fact that $H_2$ is (strictly) $k$-balanced ensure that for each $i$ there are enough `good' edges in $A_i$ to compensate for any `bad' edges that may appear in $A_i$.
 
\begin{claim}\label{claim:inei}
For each $i$, 
\begin{equation*}
    \sum_{f \in E_i} \Delta_e(f) < m_k(H_1, H_2) \sum_{f \in E_i} \Delta_v(f).
\end{equation*}
\end{claim}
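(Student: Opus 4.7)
\textbf{Proof proposal for Claim~\ref{claim:inei}.}

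The plan is to split $E_i$ into $E_i^+ := \{f \in E_i : \Delta_e(f) \geq 1\}$ and $E_i^0 := E_i \setminus E_i^+$, and bound each contribution to $\sum \Delta_e(f)$ separately. For $f \in E_i^+$ we invoke the strict inequality $(\ref{eq:deltaef})$; for $f \in E_i^0$, since $V(T'(f)) = \emptyset$, we use the trivial bound
\[
\Delta_e(f) = 0 \;\leq\; m_k(H_1,H_2)\!\left(\Delta_v(f) - |(V(H_{\hat{e}}^-))_f|\right),
\]
which is valid because $|(V(H_{\hat{e}}^-))_f| \leq \Delta_v(f)$. A crucial preliminary point is that $E_i^+ \neq \emptyset$: by the construction in lines~\ref{line:oeaiconstructionstart}--\ref{line:oeendwhile1} of \textsc{Order-Edges}, the first two edges pushed into $E_i$ either share an outer edge of $J$ or are forced together by a collision in $V_j$ that propagates through a previously added outer edge, so the second of them satisfies $\Delta_e \geq 1$. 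Summing the pointwise bounds, with strict inequality from this edge, would give
\[
\sum_{f\in E_i}\!\Delta_e(f) \;<\; m_k(H_1,H_2)\!\sum_{f\in E_i}\!\Delta_v(f) \;-\; m_k(H_1,H_2)\cdot \mathrm{Corr}(E_i),
\]
where
\[
\mathrm{Corr}(E_i) := \sum_{f\in E_i^+}\!\Bigl((k - |f\cap V(T'(f))|) - \tfrac{1}{m_k(H_2)}\Bigr) \;+\; \sum_{f\in E_i}\!|(V(H_{\hat{e}}^-))_f|.
\]
So the claim would reduce to showing $\mathrm{Corr}(E_i) \geq 0$.

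To handle $\mathrm{Corr}(E_i)$, I would first rewrite the second sum as $|V_i| - |U|$, where $U := \bigcup_{f\in E_i} f$, since the sets $(V(H_{\hat{e}}^-))_f$ partition $V_i \setminus U$ by construction. Then, since $A_i = (V_i, E_i) \subsetneq H_2$ and $H_2$ is strictly $k$-balanced, we have the slack
\[
|V_i| - k \;>\; \frac{|E_i|-1}{m_k(H_2)}.
\]
This is the source from which the $1/m_k(H_2)$ terms in $\mathrm{Corr}(E_i)$ must be paid. The goal becomes: combine the slack $|V_i| - k - (|E_i|-1)/m_k(H_2)$ with the contribution $\sum_{f\in E_i^+}(k - |f\cap V(T'(f))|)$ to cover $|E_i^+|/m_k(H_2) + (|U| - k)$.

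The main obstacle will be the vertex bookkeeping that links the algorithmic construction of $A_i$ to the quantity $\sum_{f\in E_i^+}(k - |f \cap V(T'(f))|)$. The key structural observation I expect to use is the following: a vertex $v \in f$ fails to lie in $V(T'(f))$ precisely when no outer edge of $H_f$ incident to $v$ is shared with some earlier $H_{f'}$; hence $v$ is ``new to the merging'' relative to earlier edges of $E_i$. I would like to argue, tracking the order in which edges enter $E_i$ via \textsc{Order-Edges}, that each time an edge $f \in E_i^+$ is added either $f$ introduces a ``new'' vertex to $U$ (contributing $1$ to the growth of $|U|$ matched by a fresh $k - |f\cap V(T'(f))|$ on the left) or it inherits enough overlap structure to make $k - |f\cap V(T'(f))|$ compensate a $1/m_k(H_2)$ term via the strict $k$-balancedness slack. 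The bookkeeping should close precisely because $A_i$ is a \emph{proper} subhypergraph of the strictly $k$-balanced $H_2$, giving the strict density gap, while the fact that every edge added after the first in $E_i$ contributes either a new endpoint collision or a new outer-edge collision produces exactly one unit of ``merging overlap'' per step. Finally, the case where $E_i^0 = \emptyset$ must be treated with extra care, as one cannot then use the ``slack'' from $E_i^0$ contributions; here I expect to use strict $k$-balancedness tightly together with the observation that whenever $|f \cap V(T'(f))| = k$ (the ``bad'' case), the edge $f$ must have been forced into $E_i$ by an outer-edge collision rather than a vertex collision, which in turn forces an already-shared outer edge and hence a prior $f' \in E_i^+$ that provides compensating slack.
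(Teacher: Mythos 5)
Your plan starts in the right place — apply the pointwise strict bound $(\ref{eq:deltaef})$, and cash in the strict $k$-balancedness of $H_2$ via $d_k(A_i) < m_k(H_2)$ — but the vertex bookkeeping you propose contains an error, and the decomposition by $\Delta_e(f)\geq 1$ versus $\Delta_e(f)=0$ is not the one that makes the argument close.

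The specific error: you claim the sets $(V(H_{\hat{e}}^-))_f$ partition $V_i\setminus U$ with $U:=\bigcup_{f\in E_i}f$, hence $\sum_f|(V(H_{\hat{e}}^-))_f|=|V_i|-|U|$. This is false in general. The set $(V(H_{\hat{e}}^-))_f$ excludes vertices of $f'\prec f$ and of $U_J(f')\cap V(H_{\hat{e}}^-)$ for $f'\prec f$ only — nothing prevents a vertex of $(V(H_{\hat{e}}^-))_f$ from later appearing in some $f''\succ f$ with $f''\in E_i$, i.e.\ from lying in $U$. So the $(V(H_{\hat{e}}^-))_f$ partition a \emph{superset} of $V_i\setminus U$, and your identity fails. (You only need the inequality $\geq$, but then the subsequent reduction does not close: you would still need $\sum_{f\in E_i^+}(k-|f\cap V(T'(f))|)+\frac{|E_i^0|-1}{m_k(H_2)}\geq |U|-k$, which your sketch concedes is not established.)

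What the paper actually does avoids $U$ entirely. It does not split by $\Delta_e(f)\geq 1$; it splits by the \emph{algorithmic role} of $f$: (a) the initial edge pushed in line~\ref{line:oeeipushf}, for which $\Delta_e(f)=0$ automatically; (b) edges $u$ of $E_i$ added via the vertex-collision clause in line~\ref{line:oeaibadedgescollectedup}, which have all endpoints already in $V_i$; (c) edges $u$ added via the outer-edge-collision clause, which have $\Delta_e(u)\geq 1$ and a non-empty set $W_u\subsetneq u$ of endpoints new to $V_i$, every one of which is \emph{isolated in $T(u)$}. The structural observation in (c) is what makes $(\ref{eq:deltaef})$ strong, via $|u\cap V(T'(u))| < k$. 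The bookkeeping then works because the sets $(V(H_{\hat{e}}^-))_f$ and $W_u$ are pairwise disjoint and their sizes sum to exactly $|V_i|-k$ (they account for every vertex entering $V_i$ after the initial $k$), while each non-initial edge of $E_i$ contributes exactly one $+\frac{1}{m_k(H_2)}$, giving $(\ref{eq:aifinal1})$, and strict $k$-balancedness kills the residual via $(\ref{eq:aifinal2})$. Note also that your worry about $E_i^0=\emptyset$ is moot: the initial edge of $E_i$ always satisfies $\Delta_e(f)=0$ (otherwise it would have been swept into an earlier $E_\ell$), so $E_i^0\neq\emptyset$ automatically; the strictness of the final inequality comes simply from $|E_i|\geq 2$, not from the existence of any edge with $\Delta_e\geq 1$.
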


\begin{proof}
Fix $i$. Firstly, as observed before, each $A_i$ is a non-empty subgraph of $H_2$. 
Moreover, $|E_i| \geq 2$ (by the condition in line~\ref{line:oeaiconstructionstart}).
Since $H_2$ is (strictly) $k$-balanced, we have that 
\begin{equation}\label{eq:f22balancedbound}
    m_k(H_2) \geq d_k(A_i) = \frac{|E_i| - 1}{|V_i| - k}.
\end{equation}
Now let us consider $\Delta_e(f)$ for each $f \in E_i$. 
For the edge $f$ added in line~\ref{line:oeeipushf}, observe that $\Delta_e(f) = 0$. 
Indeed, otherwise $\Delta_e(f) \geq 1$, and there exists $\ell < i$ such that $D_J(f) \cap \left(\bigcup_{f' \in E_{\ell}}D_J(f')\right) \neq \emptyset$. 
That is, $f$ would have been added to $E_{\ell}$ in line~\ref{line:oeeipushuw} previously in the algorithm. 
Since $(V(H_{\hat{e}}^{-}))_f \subseteq \Delta_V(f)$, we have that 
\begin{equation}\label{eq:aif}
    \Delta_e(f) = 0 \leq m_k(H_1, H_2)\left(\Delta_v(f) - |(V(H_{\hat{e}}^{-}))_f|\right).    
\end{equation}
Now, for each edge $u = \{u_1, \ldots, u_k\}$ added in line~\ref{line:oeeipushuw}, either $u_1,\ldots, u_k \in V_i$ when $u$ was added to $E_i$, or we had $D_J(u) \cap \left(\bigcup_{\substack{f' \in E_i: \\ f' \prec u}}D_J(f')\right) \neq \emptyset$, that is,
$\Delta_e(u) \geq 1$, and at least one of $u_1,\ldots, u_k$ did not belong to $V_i$ when $u$ was added to $E_i$.
In the former case, if $\Delta_e(u) \geq 1$, then by \eqref{eq:deltaef} and that $-(k - |u \cap V(T'(u))|) \leq 0$, we have that
\begin{equation}\label{eq:aiuwbad}
    \Delta_e(u) < m_k(H_1, H_2)\left(\Delta_v(u) - |(V(H_{\hat{e}}^{-}))_{u}| + \frac{1}{m_k(H_2)}\right).
\end{equation}
Observe that \eqref{eq:aiuwbad} also holds when $\Delta_{e}u) = 0$. In the latter case, observe that for all $\ell < i$, we must have $D_{J}(u) \cap \left(\bigcup_{f \in E_{\ell}}D_J(f)\right) = \emptyset$.
Indeed, otherwise $u$ would have been added to some $E_{\ell}$ in line~\ref{line:oeeipushuw}. 
Let $W_u\subseteq \{u_1, \ldots, u_k\}$ non-empty set (by assumption) which did not belong to $V_i$ before $u$ was added to $E_i$. Note that every vertex of $W_u$ is isolated in $T(u)$.
That is, no vertex in $W_u$ belongs to $T'(u)$, and so $|u \cap V(T'(u))| \in \{0, \ldots, k-1\}$. Thus, since $\Delta_e(u) \geq 1$, by \eqref{eq:deltaef} we have that 
\begin{equation}\label{eq:aiuwgood1}
    \Delta_e(u) < m_k(H_1, H_2)\left(\Delta_v(u) - |W-u| - |(V(H_{\hat{e}}^{-}))_{uw}| + \frac{1}{m_k(H_2)}\right).
\end{equation} %if $|uw \cap V(T'(uw))| = 1$, and

%\begin{equation}\label{eq:aiuwgood2}
 %   \Delta_e(uw) < m_2(H_1, H_2)\left(\Delta_v(uw) - 2 - |(V(H_{\hat{e}}^{-}))_{uw}| + \frac{1}{m_2(H_2)}\right)
%\end{equation} if $|uw \cap V(T'(uw))| = 0$.

In conclusion, except for the $k$ vertices in the edge added in line~\ref{line:oeeipushf}, every time a new vertex $x$ was added to $V_i$ when some edge $f$ was added to $E_i$, either $x \in (V(H_{\hat{e}}^{-}))_{f}$, or $x \in W_f$ and \eqref{eq:aiuwgood1} held.
Indeed, $x$ was isolated in $T(f)$. 
Moreover, after the $k$ vertices in the edge added in line~\ref{line:oeeipushf} there are $|V_i| - k$ vertices added to $V_i$.

Hence, by \eqref{eq:aif}-\eqref{eq:aiuwgood1}, we have that 
\begin{equation}\label{eq:aifinal1}
    \sum_{f \in E_i} \Delta_e(f) < m_k(H_1, H_2)\left(\sum_{f \in E_i} \Delta_v(f) - (|V_i| - k) + \frac{|E_i| - 1}{m_k(H_2)}\right).
\end{equation}
By \eqref{eq:f22balancedbound}, 
\begin{equation}\label{eq:aifinal2}
    - (|V_i| - k) + \frac{|E_i| - 1}{m_k(H_2)} \leq 0.
\end{equation}
Thus, by \eqref{eq:aifinal1} and \eqref{eq:aifinal2}, we have that
\[\sum_{f \in E_i} \Delta_e(f) < m_k(H_1, H_2) \sum_{f \in E_i} \Delta_v(f)\] as desired.\end{proof}

\begin{claim}\label{claim:notinei}
For each edge $f$ placed onto $s$ in line~\ref{line:oespushlastf}, we have $\Delta_e(f) = 0$.
\end{claim}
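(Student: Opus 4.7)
The plan is to show that when an edge $f$ is pushed onto $s$ via line~\ref{line:oespushlastf}, its outer edge set $D_J(f)$ is disjoint from $D_J(f')$ for every $f' \prec f$, which immediately yields $\Delta_e(f) = 0$ by the definition $\Delta_E(f) = D_J(f) \cap \bigcup_{f' \prec f} D_J(f')$. Since every edge of $E$ is eventually pushed onto $s$, each $f' \prec f$ was pushed either (a) inside the if-branch at lines~\ref{line:oespushf}--\ref{line:oeendwhile1} (so $f' \in E_{\ell}$ for some $\ell$), or (b) inside the else-branch at lines~\ref{line:oeconclusion}--\ref{line:oeendfor2}. I will handle these two cases separately.

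For case~(a), I will argue by contradiction. Suppose $f' \in E_{\ell}$ and $D_J(f) \cap D_J(f') \neq \emptyset$. Since $f$ is pushed via line~\ref{line:oespushlastf}, which comes strictly after every iteration of the outer while-loop that adds edges to some $E_j$, the edge $f$ was still in $E'$ throughout the construction of $E_{\ell}$. But then at the moment $f'$ sat in $E_{\ell}$, the inner while loop at line~\ref{line:oeaibadedgescollectedup} would have detected that $D_J(f) \cap \bigcup_{g \in E_{\ell}} D_J(g) \supseteq D_J(f) \cap D_J(f') \neq \emptyset$, and therefore $f$ would have been added to $E_{\ell}$ (and thereby pushed onto $s$ before being processed at line~\ref{line:oespushlastf}), a contradiction.

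For case~(b), observe that line~\ref{line:oespushlastf} is reached inside the else-branch of line~\ref{line:oeaiconstructionstart}. The failure of the if-condition means precisely that, at that moment, no two distinct edges in $E'$ have intersecting outer edge sets. All edges pushed at line~\ref{line:oespushlastf} are exactly the members of $E'$ at that moment (the for-loop at lines~\ref{line:oeconclusion}--\ref{line:oeendfor2} empties $E'$ and the outer while-loop then terminates). Hence for any two distinct $f, f'$ both pushed via line~\ref{line:oespushlastf}, we have $D_J(f) \cap D_J(f') = \emptyset$, as required.

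Combining cases~(a) and~(b) gives $D_J(f) \cap D_J(f') = \emptyset$ for every $f' \prec f$, whence $\Delta_e(f) = 0$. The argument is essentially bookkeeping about when edges leave $E'$, and the only subtlety — really the only thing worth verifying carefully — is that the ordering $\prec$ induced on $s$ truly matches the chronology of insertions, so that any $f' \prec f$ was already processed by the time $f$ was pushed; this is immediate from the definition of $\prec$ and the fact that insertions into $s$ and removals from $E'$ happen in lockstep throughout \textsc{Order-Edges}.
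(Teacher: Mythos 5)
Your proof is correct, and it is essentially the same argument as the paper's, just arranged as a direct proof rather than by contradiction: your case~(a) matches the paper's observation that $D_J(f)$ is disjoint from $\bigcup_{f'' \in E_i} D_J(f'')$ for every $i$ (else $f$ would have been scooped up by the inner while-loop at line~\ref{line:oeeipushuw}), and your case~(b) matches the paper's observation that any two remaining edges pushed via line~\ref{line:oespushlastf} would otherwise have triggered the condition in line~\ref{line:oeaiconstructionstart}.
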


\begin{proof}
Assume not. Then $\Delta_e(f) \geq 1$ for some edge $f$ placed onto $s$ in line~\ref{line:oespushlastf}. 
Observe that $D_J(f) \cap \left(\bigcup_{f'' \in E_i} D_J(f'')\right) = \emptyset$ for any $i$, otherwise $f$ would have been added to some $E_i$ in line~\ref{line:oeeipushuw} previously.

Thus we must have that $D_J(f) \cap D_J(f') \neq \emptyset$ for some edge $f' \neq f$ where $f'$ was placed onto $s$ also in line~\ref{line:oespushlastf}.
But then $f$ and $f'$ satisfy the condition in line~\ref{line:oeaiconstructionstart} and would both be contained in some $E_i$, contradicting that $f$ was placed onto $s$ in line~\ref{line:oespushlastf}.\end{proof}

Since $J \in \mathcal{H}(F, \hat{e}, H_1, H_2)\setminus \mathcal{H}^*(F, \hat{e}, H_1, H_2)$, we must have that $\Delta_v(f) \geq 1$ for some $f \in E$. 
Thus, if $\Delta_e(f) = 0$ for all $f \in E$ then \eqref{eq:conclusion2} holds trivially. If $\Delta_e(f) \geq 1$ for some $f \in E$, then $E_1 \neq \emptyset$ and $A_1$ is a non-empty subhypergraph of $H_{\hat{e}}^{-}$. 
Then, by \eqref{eq:m2h1h2} and Claims~\ref{claim:inei} and \ref{claim:notinei}, we have that 
\begin{alignat*}{2}
    \frac{\sum_{f \in E} \Delta_e(f)}{\sum_{f \in E} \Delta_v(f)} & = \frac{\sum_i\sum_{f \in E_i} \Delta_e(f) + \sum_{f \in E\setminus \cup_i E_i} \Delta_e(f)}{\sum_{f \in E} \Delta_v(f)} \\
    & < \frac{m_k(H_1, H_2)\sum_i\sum_{f \in E_i} \Delta_v(f)}{\sum_i\sum_{f \in E_i} \Delta_v(f)} \\
    & = m_k(H_1, H_2) \\
    & = \frac{e^{+}(J^*)}{v^{+}(J^*)}.
\end{alignat*}
Thus \eqref{eq:conclusion2} holds and we are done.\qed

\section{The \texorpdfstring{$m_k(H_1) = m_k(H_2)$}{equality} case}\label{app:mh1mh2equal}%\label{sec:altcase}

In this section (in the first two subsections) we prove Case~2 of Lemma~\ref{lemma:noerror}, that is, when $m_k(H_1) = m_k(H_2)$. 
Our proof follows that of Case~1 significantly, but uses different procedures \textsc{Grow-$\hat{\mathcal{B}}_{\eps}$-Alt}, \textsc{Special-Alt}\footnote{\textsc{Special-Alt} is written verbatim the same as \textsc{Special}, but is morally different since the definition of $\hat{\mathcal{B}}$ differs depending on whether $m_k(H_1) > m_k(H_2)$ or $m_k(H_1) = m_k(H_2)$.} and \textsc{Grow-Alt}.
All definitions and notation are the same as previously unless otherwise stated. 
In the third subsection, we prove Lemma~\ref{lem:finiteequal}, which asserts finiteness of $\hat{\mathcal{B}}(H_1,H_2,\eps)$ for the case $m_k(H_1)=m_k(H_2)$. 

\subsection{Definition of $\hat{\mathcal{B}}_{\eps}$: $m_k(H_1)=m_k(H_2)$ case}\label{sec:growbalt}

Let $F$ be a $k$-graph. We define an edge $f$ to be \emph{open in $F$} if there exists no $L \in \mathcal{L}_{G}$ and $R \in \mathcal{R}_G$ such that $f = E(L) \cap E(R)$. Now, either attach a copy $L$ of $H_2$ to $F$ at some edge $e \in E(F)$ such that $E(L) \cap E(F) = \{e\}$ and $V(L) \cap V(F) = e$ or attach a copy $R$ of $H_1$ to $F$ at $e$ such that $E(R) \cap E(F) = \{e\}$ and $V(R) \cap V(F) = e$. Call the $k$-graph constructed in this process $F'$. We call the $k$-graph induced by the collection of edges in $J = L$ or $J = R$ -- depending which option was taken -- a \emph{fully open flower}\footnote{Using the name `flower' here may seem not as reflective of the structure in this case as in the $m_k(H_1) > m_k(H_2)$ case. We keep the same nomenclature so as to draw parallels with Case $1$.}
\COMMENT{JH:Too pompous? RH: `and so aid the reader in understanding.' deleted, now think it is fine.} and the edges belonging to $J\setminus e$ the \emph{petal edges of $J$}. If the construction of $F'$ from $F$ is different at all from the above, that is, if $J$ intersects $F$ in more than one edge or more than $k$ vertices, then $J$ is not a fully open flower. Later, in Section~\ref{sec:finiteequal}, we will see that the petal edges are in fact all open, hence the name `fully open flower'. For now, all we need observe is that the construction process of $F'$ yields that one can tell by inspection whether an edge belongs to the petal edges of some fully open flower or not without knowing anything about how $F$ and $F'$ were constructed. This may seem enigmatic currently, but this observation will allow us to define a useful function \textsc{Eligible-Edge-Alt}, used in algorithm \textsc{Grow-$\hat{\mathcal{B}}_{\eps}$-Alt} below and \textsc{Grow-Alt} = \textsc{Grow-Alt}$(G, \eps, n)$ in Appendix~\ref{sec:growalt}, in a way that does not increase the number of outputs of either algorithm.\COMMENT{JH (repeat comment): Not sure how happy I am with this paragraph.}

\begin{figure}[!ht]
\begin{algorithmic}[1]
\Procedure{Grow-$\hat{\mathcal{B}}_{\eps}$-Alt}{$H_1, H_2, \eps$}
    \State $F_0 \gets H_1$\label{line:growaltanyrV}
    \State $i \gets 0$\label{line:growalti0V}
    \While {$\forall \tilde{F}\subseteq F_i$:$\lambda(F) > -\gamma$}\label{line:growaltwhileconditionV}
        \If {$\exists e \in E(F_i)$ s.t. $e$ is open in $F_i$}\label{line:growaltifopeneV}
            \State $e \gets \textsc{Eligible-Edge-Alt}(F_i)$\label{line:avalteligible-edge}
            \State $F_{i+1} \gets$ \textsc{Close-$e$-Alt}($F_i, e$)\label{line:F_i+1getsextendaltV}
            \State $i \gets i + 1$\label{line:opengrowaltitoi+1V}
        \Else 
            \State \Return{$F_i$}\label{line:growaltreturnfiV}
            \State $e \gets$ any edge of $F_i$ \label{line:growaltanyedgeV}
            \State $F_{i+1} \gets$ \textsc{Close-$e$-Alt}($F_i, e$)\label{line:growaltifclosedeV}
            \State $i \gets i + 1$\label{line:closedgrowaltitoi+1V}
        \EndIf
    \EndWhile
\EndProcedure
\end{algorithmic}\smallskip\smallskip

\begin{algorithmic}[1]
\Procedure{Close-$e$-Alt}{$F,e$}
    \State $\{L,R\} \gets$ any pair $\{L,R\}$ such that $L \cong H_2$, $R \cong H_1$, $E(L) \cap E(R) = \{e\}$ and either $L \subseteq F \wedge R \not\subseteq F$ or $R \subseteq F \wedge L \not\subseteq F$ \label{line:lrv}
    \If{$L \nsubseteq F$}\label{line:iflv}
        \State $F' \gets F \cup L$\label{line:flv}
    \Else
        \State $F' \gets F \cup R$\label{line:frv}
    \EndIf
    \State \Return {$F'$}\label{line:growaltreturnf'v}
\EndProcedure
\end{algorithmic}\smallskip\smallskip

\caption{The implementation of algorithm \textsc{Grow-$\hat{\mathcal{B}}_{\eps}$-Alt}.}\label{fig:growbaltfig}
\end{figure}

Let us describe procedure \textsc{Grow-$\hat{\mathcal{B}}_{\eps}$-Alt}, as given in Figure~\ref{fig:growbaltfig}. There are a number of similarities between \textsc{Grow-$\hat{\mathcal{B}}_{\eps}$-Alt} and \textsc{Grow-$\hat{\mathcal{B}}_{\eps}$}, but we will repeat details here for clarity. 
Firstly, as with \textsc{Grow-$\hat{\mathcal{B}}_{\eps}$}, observe that \textsc{Grow-$\hat{\mathcal{B}}_{\eps}$-Alt} takes as input two $k$-graphs $H_1$ and $H_2$ with properties as described in Theorem~\ref{thm:colourb} and the constant $\eps \geq 0$. %that was an input into \textsc{Asym-Edge-Col-$\hat{\mathcal{B}}_{\eps}$}.
Note that there is no underlying $k$-graph determining at step $i$ what the possible structure of $F_i$ could be, as will be the case in \textsc{Grow-Alt} (in Section~\ref{sec:growalt}) later, hence whenever we have a choice in step $i$ of how to attach a $k$-graph we must consider every possible structure $F_{i+1}$ that could possibly result.
Utilising $k$-graph versions of results from \cite{h}, notably Claims~\ref{claim:non-degensym} and \ref{claim:degenfullsym}, we will see that the majority of these choices result in making $F_{i+1}$ less likely to appear in $G^k_{n,p}$ than $F_{i}$. This is the reason behind line~\ref{line:growaltwhileconditionV}:
While $\lambda(F_i) \geq 0$, it is possible that $F_i$ appears in $G^k_{n,p}$ with positive probability, however once $\lambda(F) \leq -\gamma$ for some subhypergraph $F \subseteq F_i$,
we can show immediately using Markov's inequality that $F_i$ does not exist in $G^k_{n,p}$ a.a.s. %As such, the choice of `$-1$' in the while-loop condition is in some sense arbitrary, but we choose it to make later proofs in this paper go through with ease.

The outputs $F$ of \textsc{Grow-$\hat{\mathcal{B}}_{\eps}$-Alt} have several properties that will be useful to us. 
Firstly, all their edges are \emph{not} open; that is, $F \in \mathcal{C}(H_1, H_2)$. 
Secondly, we will see later that every $k$-graph $\hat{G}$ constructable by \textsc{Grow-Alt}, 
is constructable using \textsc{Grow-$\hat{\mathcal{B}}_{\eps}$-Alt}. 
Notably, as before, we will see that \textsc{Grow-Alt} cannot construct any $k$-graph which is an output of \textsc{Grow-$\hat{\mathcal{B}}_{\eps}$-Alt}. 
This will be particularly useful in the proofs of Claims~\ref{claim:growsymv} and \ref{claim:conclusionsym}.
In some sense, \textsc{Grow-$\hat{\mathcal{B}}_{\eps}$-Alt} outputs those $k$-graphs which the while-loop of \textsc{Grow-Alt} could get stuck on and which are the building blocks for those subhypergraphs of $G^k_{n,p}$ which one cannot trivially colour, using the while-loop of \textsc{Asym-Edge-Col-$\hat{\mathcal{B}}_{\eps}$} or otherwise.
\COMMENT{JH (repeat comment): I'm not sure how happy I am referring so much to \textsc{Grow-Alt($G, \eps$)} here. But this is crucial to understanding why algorithm \textsc{$Grow-\hat{\mathcal{B}}_{\eps}$-Alt} looks like it does.}
Thirdly, we make the following claim.
\begin{claim}\label{claim:mgbound2}
Let $F$ be an output of \textsc{Grow-$\hat{\mathcal{B}}_{\eps}$-Alt}. Then $m(F) \leq m_k(H_1, H_2) + \eps$.    
\end{claim}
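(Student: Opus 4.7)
\medskip

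\noindent\textbf{Proof proposal.} The plan is to argue by contradiction in exactly the same manner as Claim~\ref{claim:mgbound}. The key point is that the argument in Claim~\ref{claim:mgbound} never actually used any structural feature of \textsc{Grow-$\hat{\mathcal{B}}_{\eps}$} beyond the termination condition of its while-loop; it only exploited the fact that the loop invariant $\lambda(\tilde F) > -\gamma$ holds for every subhypergraph $\tilde F \subseteq F$ whenever $F$ is an output. Since \textsc{Grow-$\hat{\mathcal{B}}_{\eps}$-Alt} has precisely the same loop guard in line~\ref{line:growaltwhileconditionV}, the same invariant applies here and so the same algebraic manipulation should carry over verbatim.

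In more detail, I would suppose for contradiction that some non-empty $F' \subseteq F$ satisfies $d(F') > m_k(H_1,H_2) + \eps$, i.e.\ $e(F') > (m_k(H_1,H_2)+\eps) v(F')$. Since $F$ is an output of \textsc{Grow-$\hat{\mathcal{B}}_{\eps}$-Alt}, the while condition on line~\ref{line:growaltwhileconditionV} tells us that $\lambda(F') > -\gamma$, i.e.\ $v(F') - e(F')/m_k(H_1,H_2) > -\gamma$. Rewriting the density hypothesis as
\[
\frac{e(F')}{m_k(H_1,H_2)+\eps} - \frac{e(F')}{m_k(H_1,H_2)} > v(F') - \frac{e(F')}{m_k(H_1,H_2)} = \lambda(F') > -\gamma,
\]
and using the definition $-\gamma = \frac{1}{m_k(H_1,H_2)+\eps} - \frac{1}{m_k(H_1,H_2)}$ from~\eqref{eq:gamma}, one obtains (after dividing through by the positive quantity $\frac{1}{m_k(H_1,H_2)} - \frac{1}{m_k(H_1,H_2)+\eps}$, which requires $\eps>0$) the inequality $e(F') < 1$, a contradiction. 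The degenerate case $\eps=0$ yields $0>0$ directly, as in Claim~\ref{claim:mgbound}.

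Since no ingredient specific to the $m_k(H_1) > m_k(H_2)$ algorithm is used (no appeal to \textsc{Close-$e$}, \textsc{Attach-$H_1$}, or the strict balance of $H_1$ with respect to $d_k(\cdot,H_2)$), I do not anticipate any obstacle here; the only thing to be careful about is citing the correct while-loop line of \textsc{Grow-$\hat{\mathcal{B}}_{\eps}$-Alt} rather than that of \textsc{Grow-$\hat{\mathcal{B}}_{\eps}$}, and to note that the case split on whether $\eps = 0$ or $\eps > 0$ is handled exactly as before. In fact, one could present the argument as a single joint proof for both claims, parameterised by which algorithm produced $F$, since the proof depends only on the shared termination criterion.
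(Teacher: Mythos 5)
Your proposal is correct and matches the paper's approach exactly: the paper itself simply notes that the proof is identical to that of Claim~\ref{claim:mgbound}, and you have correctly identified that the only ingredient used there is the shared while-loop invariant $\lambda(\tilde F) > -\gamma$, which \textsc{Grow-$\hat{\mathcal{B}}_{\eps}$-Alt} enforces via line~\ref{line:growaltwhileconditionV} just as \textsc{Grow-$\hat{\mathcal{B}}_{\eps}$} does. The algebra and the $\eps=0$ case split are reproduced faithfully, so there is nothing to add.
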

The proof of this claim is identical to the proof of Claim~\ref{claim:mgbound}.
%Indeed, assume for a contradiction that there is some subgraph $F' \subseteq F$ such that $d(F') > m_k(H_1, H_2) + \eps$. Since $F$ was an output of \textsc{Grow-$\hat{\mathcal{B}}_{\eps}$}, we must have that $\lambda(F') > -\gamma$. Using the definitions of $\lambda$ and $\gamma$, one can show with little effort that these two inequalities cannot hold simultaneously, producing a contradiction.\COMMENT{JH: Include the working?} Setting $\eps = 0$, and hence $\gamma = 0$, we have thus confirmed the second property needed for Theorem~\ref{thm:colourb} to hold when $m_k(H_1) = m_k(H_2)$.\COMMENT{JH: Maybe this should be a lemma or claim?}

We finish this subsection by describing 
\textsc{Grow-$\hat{\mathcal{B}}_{\eps}$-Alt} in more detail. In lines~\ref{line:growaltanyrV} and \ref{line:growalti0V} we initialise the $k$-graph as a copy of $H_1$ and initialise $i$ as $0$.
We have $\lambda(H_1) = v_1 - e_1/m_k(H_1,H_2) = k - 1/m_k(H_2) > 0$, so we enter the while-loop on line~\ref{line:growaltwhileconditionV}. 
In the $i$th iteration of the while-loop, we first check in line~\ref{line:growaltifopeneV} whether there are any open edges in $F_i$. 
If not, then every edge in $F_i$ is not open and we output this $k$-graph in line~\ref{line:growaltreturnfiV}; 
if so, then in lines~\ref{line:avalteligible-edge}-\ref{line:opengrowaltitoi+1V}, \textsc{Grow-$\hat{\mathcal{B}}_{\eps}$-Alt} invokes a procedure \textsc{Eligible-Edge-Alt} which functions similarly to \textsc{Eligible-Edge} as follows:
for a $k$-graph $F$, \textsc{Eligible-Edge-Alt($F$)} returns a specific open edge $e \in E(F)$ that does not belong to a fully open flower,
or, if no such edge exists, just returns a specific edge $e \in E(F)$ which is open. 
\textsc{Eligible-Edge-Alt} selects this edge $e$ to be \textit{unique up to isomorphism of $F_i$}, that is, for any two isomorphic $k$-graphs $F$ and $F'$, there exists an isomorphism $\phi$ with $\phi(F) = F'$ such that \[\phi(\textsc{Eligible-Edge-Alt}(F)) = \textsc{Eligible-Edge-Alt}(F').\] \textsc{Eligible-Edge-Alt} can make sure it always takes the same specific edge of $F$ regardless of how it was constructed using a theoretical look-up table of all possible $k$-graphs,
that is, \textsc{Eligible-Edge-Alt} itself does not contribute in anyway to the number of outputs of \textsc{Grow-$\hat{\mathcal{B}}_{\eps}$-Alt}.
As observed earlier, whether an edge in $F$ is in a fully open flower or not is verifiable by inspection, hence \textsc{Eligible-Edge-Alt} is well-defined.\COMMENT{JH: Need I say more here?}
\COMMENT{(This is probably not necessary, or could be merged with the corresponding footnote from the description of \textsc{Grow-$\hat{\mathcal{B}}_{\eps}$-Alt}. A reader familiar with \cite{h} will observe that this definition of \textsc{Eligible-Edge-Alt} differs from that the procedure of the same name in \cite{h}. Indeed, this is a subtle change we need to introduce, and will discuss this more in Section~\ref{sec:guidetoapp}.}
In line~\ref{line:F_i+1getsextendaltV}, we invoke procedure \textsc{Close-$e$-Alt($F_i, e$)}, using the edge we just selected with \textsc{Eligible-Edge-Alt}.
Procedure \textsc{Close-$e$-Alt($F_i, e$)} attaches either a copy $L$ of $H_1$ or a copy $R$ of $H_2$ to $F_i$ such that $L$ and $R$ have edge-intersection uniquely at $e$.
Whether \textsc{Close-$e$-Alt($F_i, e$)} attaches $L$ or $R$ depends on which $k$-graph is not fully contained in $F$, with priority given to $L$.
Observe that \textsc{Close-$e$-Alt($F_i, e$)} always adds at least one edge to $F_i$ by construction.
In line~\ref{line:opengrowaltitoi+1V}, we iterate $i$ in preparation for the next iteration of the while-loop. 

If we enter line~\ref{line:growaltreturnfiV}, then afterwards we perform procedure \textsc{Close-$e$-Alt} on some (arbitrary, closed) edge of $F_i$. Observe that the last condition in line~\ref{line:lrv} of \textsc{Close-$e$-Alt} guarantees that a new edge is added.%, except we make sure that at least one new edge is added. We need to stipulate this since the edge $e$ we select in line~\ref{line:growaltifclosedeV} is closed.

In Section~\ref{sec:finiteequal}, we prove that, for all $\eps \geq 0$, $\hat{\mathcal{B}}_{\eps}$ is finite in this $m_k(H_1) = m_k(H_2)$ case.

\subsection{Algorithms \textsc{Special-Alt} and \textsc{Grow-Alt}}\label{sec:growalt}

As with algorithms \textsc{Special} and \textsc{Grow},
algorithms \textsc{Special-Alt}$=$\textsc{Special-Alt($G', \eps,n$)} and \textsc{Grow-Alt}$=$\textsc{Grow-Alt($G', \eps,n$)} are both defined for any $\eps \geq 0$ (see Figures~
\ref{fig:specialalt} and \ref{growsymfig}) 
and each take input $G'$ to be the $k$-graph which \textsc{Asym-Edge-Col-$\hat{\mathcal{B}}_{\eps}$} got stuck on.
Note that if \textsc{Asym-Edge-Col-$\hat{\mathcal{B}}_{\eps}$} gets stuck, then it did not ever use \textsc{B-Colour($G',\eps$)}, 
so the processes in these algorithms are still well-defined for all $\eps \geq 0$, even though \textsc{B-Colour($G',\eps$)} may not exist for the given value of $\eps$.
\begin{figure}[!ht]
\begin{algorithmic}[1]
\Procedure{Special-Alt}{$G' = (V,E), \eps, n$}
    \If {$\forall e \in E: |\mathcal{S}^B_{G'}(e)| = 1$}\label{line:growaltforalle}
     \State $T \gets$ any member of $\mathcal{T}^B_{G'}$%\label{line:growaltanyt}
     \State \Return $\bigcup_{e \in E(T)}\mathcal{S}^B_{G'}(e)$\label{line:growaltreturnsg'e}
 \EndIf
 \If{$\exists e \in E : |\mathcal{S}^B_{G'}(e)| \geq 2$}%\label{line:growaltsg'e2}
     \State $S_1, S_2 \gets$ any two distinct members of $\mathcal{S}^B_{G'}(e)$%\label{line:growalt2members}
     \State \Return $S_1 \cup S_2$%\label{line:growalts1s2}
 \EndIf\label{line:endifgrowalt}
\EndProcedure
\end{algorithmic}
\caption{The implementation of algorithm \textsc{Special-Alt}.}\label{fig:specialalt}
\end{figure}

%%%Algorithm \textsc{Special} has as input the $k$-graph $G' \subseteq G$ that \textsc{Asym-Edge-Col-$\hat{\mathcal{B}}_{\eps}$} got stuck on, as well as the constant $\eps \geq 0$ that was an input into \textsc{Asym-Edge-Col-$\hat{\mathcal{B}}_{\eps}$} and $n$ (the $n$ from $G^k_{n,p}$). 

%As with \textsc{Special}, algorithm \textsc{Special-Alt} has input $G' \subseteq G$, the $k$-graph that \textsc{Asym-Edge-Col-$\hat{\mathcal{B}}_{\eps}$} got stuck on, the constant $\eps \geq 0$ which was an input to \textsc{Asym-Edge-Col-$\hat{\mathcal{B}}_{\eps}$} and $n$ (the $n$ from $G^k_{n,p}$). 
Algorithm \textsc{Special-Alt} operates in the exact same way as algorithm \textsc{Special}. 

\begin{figure}[!ht]
\begin{algorithmic}[1]
\Procedure{Grow-Alt}{$G' = (V,E), \eps$}
 \State $e \gets$ any $e \in E : |\mathcal{S}^B_{G'}(e)| = 0$\label{line:growaltanye}
 \State $F_0 \gets$ any $R \in \mathcal{R}_{G'}:e \in E(R)$\label{line:growaltanyr}
 \State $i \gets 0$\label{line:growalti0}
 \While {$(i < \ln(n)) \land (\forall \tilde{F} \subseteq F_i : \lambda(\tilde{F}) > - \gamma)$}\label{line:growaltwhileconditions}
         \State $e \gets \textsc{Eligible-Edge-Alt}(F_i)$\label{line:eligible}
         \State $F_{i+1} \gets \textsc{Extend}(F_i, e, G')$\label{line:extend}    
         \State $i \gets i + 1$
 \EndWhile 
 \If {$i \geq \ln(n)$}%\label{line:growaltilnn}
     \State \Return{$F_i$}\label{line:growaltreturnfi}
 \Else
     \State \Return{\textsc{Minimising-Subgraph}($F_i$)}%\label{line:growaltreturnminsub}
 \EndIf
\EndProcedure
\end{algorithmic}\smallskip\smallskip

\begin{algorithmic}[1]
\Procedure{Extend}{$F,e,G'$}
 \State $\{L,R\} \gets$ any pair $\{L,R\}$ such that $L \in \mathcal{L}_{G'}$, $R \in \mathcal{R}_{G'}$, $E(L) \cap E(R) = e$ and either $L \subseteq F$ or $R \subseteq F$ \label{line:lr}
 \If{$L \nsubseteq F$}\label{line:ifl}
 \State $F' \gets F \cup L$\label{line:fl}
 \Else
 \State $F' \gets F \cup R$\label{line:fr}
 \EndIf
 \State \Return {$F'$}\label{line:growaltreturnf'}
\EndProcedure
\end{algorithmic}
\caption{The implementation of algorithm \textsc{Grow-Alt}.}\label{growsymfig}
\end{figure}

Let us now discuss \textsc{Grow-Alt}. Algorithm \textsc{Grow-Alt} operates in a similar way to \textsc{Grow}, with the same inputs.
In line~\ref{line:eligible}, the function \textsc{Eligible-Edge-Alt}, introduced in procedure \textsc{Grow-Alt-$\hat{\mathcal{B}}_{\eps}$} is called. 
Recall that \textsc{Eligible-Edge-Alt} maps $F_i$ to an open edge $e \in E(F_i)$, prioritising open edges that do not belong to fully open flowers.
As with \textsc{Eligible-Edge} in Case~1, this edge $e$ is selected to be unique up to isomorphism. 
We then apply a new procedure \textsc{Extend} which attaches either a $k$-graph $L \in \mathcal{L}_{G'}$ or $R \in \mathcal{R}_{G'}$ that contains $e$ to $F_i$.
As in Case~1, because the condition in line~\ref{line:edgeremoval} of \textsc{Asym-Edge-Col-$\hat{\mathcal{B}}_{\eps}$} fails, $G' \in \mathcal{C}$.\footnote{Note that we could also conclude $G' \in \mathcal{C}^*$, however this will not be necessary in what follows.} 

We now show that the number of edges of $F_i$ increases by at least one in each iteration of the while-loop and that \textsc{Grow-Alt} operates as desired with a result analogous to Claim~\ref{claim:growv}. 

\begin{claim}\label{claim:growsymv}
Algorithm \textsc{Grow-Alt} terminates without error on any input $k$-graph $G' \in \mathcal{C}$
that is not an $(H_1, H_2)$-sparse $\hat{\mathcal{B}}_{\eps}$-graph, for which \textsc{Special-Alt($G',\eps,n$)} did not output a subhypergraph.
\footnote{See Definitions~\ref{def:avgraph} and \ref{def:h1h2sparse}.} 
Moreover, for every iteration $i$ of the while-loop, we have $e(F_{i+1}) > e(F_i)$.
\end{claim}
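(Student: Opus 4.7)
The plan is to mirror the proof of Claim~\ref{claim:growv} for Case~1, making the obvious adaptations to \textsc{Grow-Alt} and \textsc{Grow-$\hat{\mathcal{B}}_{\eps}$-Alt}. There are essentially four things to check: (a) the assignments in lines~\ref{line:growaltanye} and~\ref{line:growaltanyr} succeed; (b) every call to \textsc{Eligible-Edge-Alt} in line~\ref{line:eligible} returns an open edge; (c) every call to \textsc{Extend} in line~\ref{line:extend} succeeds, i.e.\ the pair $\{L,R\}$ required in line~\ref{line:lr} exists; and (d) $e(F_{i+1}) > e(F_i)$ in every iteration.

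First I would handle (a) and (c) together. Since \textsc{Special-Alt$(G',\eps,n)$} did not output a subhypergraph, the condition in line~\ref{line:growaltforalle} of \textsc{Special-Alt} must have failed, so there is an edge $e \in E(G')$ with $|\mathcal{S}^B_{G'}(e)| = 0$; this supplies line~\ref{line:growaltanye}. Since $G' \in \mathcal{C}$, the edge $e$ is contained in some copy of $H_1$ in $G'$, which supplies line~\ref{line:growaltanyr} and hence the seed $F_0$. For (c), because $G' \in \mathcal{C}$ every edge $e \in E(G')$, and in particular the one chosen by \textsc{Eligible-Edge-Alt} in line~\ref{line:eligible}, is the unique edge-intersection of some $L \in \mathcal{L}_{G'}$ and $R \in \mathcal{R}_{G'}$, so the assignment in line~\ref{line:lr} of \textsc{Extend} is realisable.

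The main obstacle -- and the step where the argument requires the most care -- is (b). As in the proof of Claim~\ref{claim:growv}, I would argue by contradiction: suppose $F_i$ has no open edge for some $i \geq 0$. Then $F_i \in \mathcal{C}$, and I claim $F_i$ is a candidate for a $k$-graph constructible by \textsc{Grow-$\hat{\mathcal{B}}_{\eps}$-Alt}. Indeed, each iteration of the while-loop of \textsc{Grow-Alt} can be mirrored by an iteration of \textsc{Grow-$\hat{\mathcal{B}}_{\eps}$-Alt} (both invoke the same \textsc{Eligible-Edge-Alt}, which depends only on the isomorphism type of its input, and both call \textsc{Close-$e$-Alt}/\textsc{Extend} in essentially the same way). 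However, the choice of $F_0$ in line~\ref{line:growaltanyr} forces $F_i$ to contain the edge $e$ from line~\ref{line:growaltanye}, which satisfies $|\mathcal{S}^B_{G'}(e)| = 0$; since $F_0 \subseteq F_i \subseteq G'$, this means $F_i$ is not isomorphic to any member of $\hat{\mathcal{B}}_{\eps}$. Hence $F_i$ is not an output of \textsc{Grow-$\hat{\mathcal{B}}_{\eps}$-Alt}, so \textsc{Grow-$\hat{\mathcal{B}}_{\eps}$-Alt} would have terminated in its while-loop condition (line~\ref{line:growaltwhileconditionV}) before producing $F_i$. This yields some $\tilde{F} \subseteq F_i$ with $\lambda(\tilde{F}) \leq -\gamma$, but then the corresponding while-loop condition in line~\ref{line:growaltwhileconditions} of \textsc{Grow-Alt} would have already evaluated to false and \textsc{Eligible-Edge-Alt} would never have been called on $F_i$, a contradiction.

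Finally, for (d), let $e$ be the edge returned by \textsc{Eligible-Edge-Alt$(F_i)$}. Because $e$ is open in $F_i$, by the definition of openness in the $m_k(H_1)=m_k(H_2)$ setting, there do not exist $L \in \mathcal{L}_{F_i}$ and $R \in \mathcal{R}_{F_i}$ with $E(L) \cap E(R) = \{e\}$. However, in line~\ref{line:lr} of \textsc{Extend} the chosen pair $\{L,R\}$ has $L \in \mathcal{L}_{G'}$, $R \in \mathcal{R}_{G'}$ and $E(L) \cap E(R) = \{e\}$, so at least one of $L,R$ is not a subhypergraph of $F_i$. The conditional in lines~\ref{line:ifl}--\ref{line:fr} of \textsc{Extend} then attaches precisely such a copy to $F_i$ (prioritising $L$), and since $H_1$ and $H_2$ are connected by Lemma~\ref{lem:newconnectivity}, at least one new edge is added. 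Hence $e(F_{i+1}) > e(F_i)$, completing the proof.
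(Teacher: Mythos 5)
Your proposal is correct and takes essentially the same route as the paper's proof: the initial assignments succeed because \textsc{Special-Alt} returned nothing and $G' \in \mathcal{C}$; the open-edge guarantee is obtained by the same mirroring argument with \textsc{Grow-$\hat{\mathcal{B}}_{\eps}$-Alt} (the seed contains an edge $e$ with $|\mathcal{S}^B_{G'}(e)| = 0$, so $F_i \notin \hat{\mathcal{B}}_{\eps}$, forcing some $\tilde{F} \subseteq F_i$ with $\lambda(\tilde{F}) \leq -\gamma$ and hence termination at the while-condition before \textsc{Eligible-Edge-Alt} is called); and the edge-increase claim follows, as in the paper, because openness of $e$ in $F_i$ forces at least one of $L,R$ to lie outside $F_i$. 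Your extra appeal to connectivity (no isolated vertices) in the last step is a harmless refinement of the paper's one-line assertion and mirrors how the analogous point is handled in Claim~\ref{claim:growv}.
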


\begin{proof}
%The special cases in lines~\ref{line:growaltforalle}-\ref{line:endifgrowalt} and 
The assignments in lines~\ref{line:growaltanye} and \ref{line:growaltanyr} operate successfully for the exact same reasons as given in the proof of Claim~\ref{claim:growv}. 
Next, we show that the call to \textsc{Eligible-Edge-Alt} in line~\ref{line:eligible} is always successful.
Indeed, suppose for a contradiction that no edge is open in $F_i$ for some $i \geq 0$. Then every edge $e \in E(F_i)$ is the unique edge-intersection of some copy $L \in \mathcal{L}_{F_i}$ of $H_2$ and $R \in \mathcal{R}_{F_i}$ of $H_1$, by definition. 
Hence $F \in \mathcal{C}$. 
Furthermore, observe that $F$ could possibly be constructed by algorithm \textsc{Grow-$\hat{\mathcal{B}}_{\eps}$-Alt}.
Indeed, each step of the while-loop of \textsc{Grow-Alt} can be mirrored by a step in \textsc{Grow-$\hat{\mathcal{B}}_{\eps}$-Alt}, regardless of the input $k$-graph $G' \in \mathcal{C}^*$ to \textsc{Grow-Alt}; 
in particular, \textsc{Eligible-Edge-Alt} acts in the same way in both algorithms.
However, our choice of $F_0$ in line~\ref{line:growaltanyr} guarantees that $F_i$ is not in $\hat{\mathcal{B}}_{\eps}$.
Indeed, the edge $e$ selected in line~\ref{line:growaltanye} satisfying $|\mathcal{S}^B_{G'}(e)| = 0$ is an edge of $F_0$ and $F_0 \subseteq F_i \subseteq G'$. 
Thus, $F$ is not an output of \textsc{Grow-$\hat{\mathcal{B}}_{\eps}$-Alt}.
Together with the fact that $F$ could possibly be constructed by algorithm \textsc{Grow-$\hat{\mathcal{B}}_{\eps}$-Alt}, it must be the case that \textsc{Grow-$\hat{\mathcal{B}}_{\eps}$-Alt} would terminate before it is able to construct $F$.
\textsc{Grow-$\hat{\mathcal{B}}_{\eps}$-Alt} only terminates in some step $i$ when $\lambda(F_i) \leq -\gamma$, hence there exists $\tilde{F} \subseteq F$ such that $\lambda(\tilde{F}) \leq -\gamma$.
Thus \textsc{Grow-Alt} terminates in line~\ref{line:growaltwhileconditions} without calling \textsc{Eligible-Edge-Alt}. Thus every call that is made to \textsc{Eligible-Edge-Alt} is successful and returns an open edge $e$. 
Since $G' \in \mathcal{C}$, the call to \textsc{Extend}$(F_i, e, G')$ is also successful and thus there exist suitable $k$-graphs $L \in \mathcal{L}_{G'}$ and $R \in \mathcal{R}_{G'}$ such that $E(L) \cap E(R) = \{e\}$, that is, line~\ref{line:lr} is successful. 

It remains to show that for every iteration $i$ of the while-loop, we have $e(F_{i+1}) > e(F_i)$.
Since $e$ is open in \textsc{Grow-Alt} for $F_i$ and $E(L) \cap E(R) = \{e\}$, we must have that either $L \nsubseteq F_i$ or $R \nsubseteq F_i$. 
Hence \textsc{Extend} outputs $F' := F \cup L$ such that $e(F_{i+1}) = e(F') > e(F_i)$ or $F' := F \cup R$ such that $e(F_{i+1}) = e(F') > e(F_i)$.\end{proof}

We call iteration $i$ of the while-loop in algorithm \textsc{Grow-Alt} \emph{non-degenerate} if the following hold:

\begin{itemize}
 \item If $L \nsubseteq F_i$ (that is, line~\ref{line:ifl} is true), then in line~\ref{line:fl} we have $V(F_i) \cap V(L) = e$;
 \item If $L \subseteq F_i$ (that is, line~\ref{line:ifl} is false), then in line~\ref{line:fr} we have $V(F_i) \cap V(R) = e$.
\end{itemize}
Otherwise, we call iteration $i$ \emph{degenerate}. 
Note that, in non-degenerate iterations $i$, there are only a constant number of $k$-graphs that $F_{i+1}$ can be for any given $F_i$; 
indeed, \textsc{Eligible-Edge-Alt} determines the exact position where to attach $L$ or $R$ (recall that the edge $e$ found by \textsc{Eligible-Edge-Alt}($F_i$) is \emph{unique up to isomorphism of $F_i$}).
% Since the while-loops of \textsc{Grow-Alt} in \cite{h} and \textsc{Grow-Alt-V} are precisely the same, we may use several results from \cite{h}, where \textsc{Grow-Alt} has been replaced by \textsc{Grow-Alt}. 

\begin{claim}\label{claim:non-degensym}
For any $\eps \geq 0$, if iteration $i$ of the while-loop in procedure \textsc{Grow-Alt($G',\eps,n$)} is non-degenerate, we have \[\lambda(F_{i+1}) = \lambda(F_i).\]
\end{claim}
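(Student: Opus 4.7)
The plan is to carry out a direct vertex-and-edge count in each of the two possible non-degenerate cases, using that in the $m_k(H_1)=m_k(H_2)$ setting both $H_1$ and $H_2$ are strictly $k$-balanced. The key preliminary observation is that when $m_k(H_1)=m_k(H_2)$, one has
\[
m_k(H_1,H_2)=m_k(H_1)=m_k(H_2).
\]
Indeed, since $H_1$ is (strictly) $k$-balanced, $v_1-k=(e_1-1)/m_k(H_1)=(e_1-1)/m_k(H_2)$, so
\[
d_k(H_1,H_2)=\frac{e_1}{v_1-k+\frac{1}{m_k(H_2)}}=\frac{e_1}{\frac{e_1-1}{m_k(H_2)}+\frac{1}{m_k(H_2)}}=m_k(H_2),
\]
and one argues similarly for any subhypergraph $J\subseteq H_1$ that realises the maximum in $m_k(H_1,H_2)$.

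Now consider a non-degenerate iteration $i$ of \textsc{Grow-Alt}. By construction, \textsc{Extend} attaches either (a) a copy $L\cong H_2$ with $E(L)\cap E(F_i)=\{e\}$ and $V(L)\cap V(F_i)=e$, or (b) a copy $R\cong H_1$ with $E(R)\cap E(F_i)=\{e\}$ and $V(R)\cap V(F_i)=e$. In case (a), exactly $v_2-k$ new vertices and $e_2-1$ new edges are added, so
\[
\lambda(F_{i+1})-\lambda(F_i)=(v_2-k)-\frac{e_2-1}{m_k(H_1,H_2)}=(v_2-k)-\frac{e_2-1}{m_k(H_2)}=0,
\]
using $m_k(H_2)=(e_2-1)/(v_2-k)$. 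In case (b), the symmetric calculation with $v_1-k$ new vertices, $e_1-1$ new edges and $m_k(H_1,H_2)=m_k(H_1)=(e_1-1)/(v_1-k)$ gives $\lambda(F_{i+1})-\lambda(F_i)=0$.

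This is entirely routine; there is no real obstacle. The only point that warrants care is reducing $m_k(H_1,H_2)$ to $m_k(H_1)=m_k(H_2)$ in the denominator, which is handled by the observation in the first paragraph above (and parallels the analogous step in the proof of Claim~\ref{claim:non-degen}, the $m_k(H_1)>m_k(H_2)$ case). Since \textsc{Eligible-Edge-Alt} selects $e$ in a way that depends only on the isomorphism type of $F_i$, the resulting $F_{i+1}$ is determined up to a constant number of choices, but this plays no role in the value of $\lambda$.
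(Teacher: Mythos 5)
Your proposal is correct and follows essentially the same route as the paper: in each of the two non-degenerate cases you count the $v_2-k$ (resp.\ $v_1-k$) new vertices and $e_2-1$ (resp.\ $e_1-1$) new edges and use $k$-balancedness of $H_2$ (resp.\ $H_1$) together with $m_k(H_1,H_2)=m_k(H_1)=m_k(H_2)$ to get $\lambda(F_{i+1})=\lambda(F_i)$. The only cosmetic difference is that you re-derive the equality $m_k(H_1,H_2)=m_k(H_2)$ by direct computation, whereas the paper simply invokes Proposition~\ref{prop:m2h1h2}; both justifications are fine.
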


\begin{proof}
In a non-degenerate iteration, we either add $v_2 - k$ vertices and $e_2 - 1$ edges for the copy $L$ of $H_2$ to $F_i$ or add $v_1 - k$ vertices and $e_1 - 1$ edges for the copy $R$ of $H_1$ to $F_i$.
In the former case,
\begin{align*}
  \lambda(F_{i+1}) - \lambda(F_i)   & = v_2 - k - \frac{e_2 - 1}{m_k(H_1,H_2)} \\
                                 & = v_2 - k - \frac{e_2 - 1}{m_k(H_2)} \\
                                 & = 0,
\end{align*} where the second equality follows from $m_k(H_1,H_2) = m_k(H_2)$ (see Proposition~\ref{prop:m2h1h2}) and the last equality follows from $H_2$ being (strictly) $k$-balanced.

In the latter case,
\begin{align*}
  \lambda(F_{i+1}) - \lambda(F_i)   & = v_1 - k - \frac{e_1 - 1}{m_k(H_1,H_2)} \\
                                 & = v_1 - k - \frac{e_1 - 1}{m_k(H_1)} \\
                                 & = 0.
\end{align*} where the second equality follows from $m_k(H_1,H_2) = m_k(H_1)$ (see Proposition~\ref{prop:m2h1h2}) and the last equality follows from $H_1$ being (strictly) $k$-balanced.\end{proof} 

As in Case~1, when we have a degenerate iteration $i$, the structure of $F_{i+1}$ depends not just on $F_i$ but also on the structure of $G'$.
Indeed, if $F_i$ is extended by a copy $L$ of $H_2$ in line~\ref{line:fl} of \textsc{Extend}, then $L$ could intersect $F_i$ in a multitude of ways.
Moreover, there may be several copies of $H_2$ that satisfy the condition in line~\ref{line:lr} of \textsc{Extend}.
One could say the same for $k$-graphs added in line~\ref{line:fr} of \textsc{Extend}.
Thus, as in Case~1, degenerate iterations cause us difficulties since they enlarge the family of $k$-graphs that algorithm \textsc{Grow-Alt} can return.
However, we will show that at most a constant number of degenerate iterations can happen before algorithm \textsc{Grow-Alt} terminates, allowing us to bound from above sufficiently well the number of non-isomorphic $k$-graphs \textsc{Grow-Alt} can return.  
Pivotal in proving this is the following claim, analogous to Claim~\ref{claim:degenfull}.

\begin{claim}\label{claim:degenfullsym}
There exists a constant $\kappa = \kappa(H_1,H_2) > 0$ such that, for any $\eps \geq 0$, if iteration $i$ of the while-loop in procedure \textsc{Grow-Alt($G',\eps,n$)} is degenerate then we have \[\lambda(F_{i+1}) \leq \lambda(F_i) - \kappa.\]
\end{claim}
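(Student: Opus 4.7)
The plan is to compare the output $F_{i+1}$ of a degenerate iteration directly against what a non-degenerate iteration (with the same choice between attaching a copy of $H_1$ and a copy of $H_2$) would have produced from $F_i$. Set $F := F_i$ and $F' := F_{i+1}$. The only place where \textsc{Grow-Alt} can be degenerate is inside \textsc{Extend}$(F, e, G')$, where a $k$-graph $K \in \{L, R\}$ (with $L \cong H_2$ or $R \cong H_1$) is attached at $e$. Let $H := H_2$ if $K = L$ and $H := H_1$ if $K = R$, and set $K' := K \cap F$, so that $K'$ is a subhypergraph of $K \cong H$ containing $e$, and $F' = F \cup K$ adds $v(H) - v(K')$ new vertices and $e(H) - e(K')$ new edges.

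A non-degenerate iteration using the same choice of attachment type would produce some $F^*$ with $V(K^*) \cap V(F) = e$, i.e.\ $v(K^*) = k$ and $e(K^*) = 1$; by Claim~\ref{claim:non-degensym}, $\lambda(F^*) = \lambda(F)$, so it suffices to produce a constant $\kappa = \kappa(H_1, H_2) > 0$ with $\lambda(F') - \lambda(F^*) \leq -\kappa$. A direct computation, using $m_k(H_1, H_2) = m_k(H) = m_k(H_1) = m_k(H_2)$ in this case, yields
\[
\lambda(F') - \lambda(F^*) = -\bigl(v(K') - k\bigr) + \frac{e(K') - 1}{m_k(H_1, H_2)}.
\]
Since iteration $i$ is degenerate, $K'$ is a proper subhypergraph of $K$ which strictly contains $e$ in either its vertex set or its edge set.

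I then split into two cases on $e(K')$. If $e(K') \geq 2$, then $K' \subsetneq K \cong H$, and because $(H_1, H_2)$ is a heart with $m_k(H_1) = m_k(H_2)$, both $H_1$ and $H_2$ are strictly $k$-balanced, so $d_k(K') < m_k(H) = m_k(H_1, H_2)$. Rearranging gives $\frac{e(K') - 1}{m_k(H_1, H_2)} < v(K') - k$, so the expression above is strictly negative; since there are only finitely many isomorphism types of proper subhypergraphs $K'$ of $H_1$ or $H_2$, a uniform gap $\kappa_A = \kappa_A(H_1, H_2) > 0$ exists. If $e(K') = 1$, then degeneracy forces $v(K') \geq k+1$, and the expression above is at most $-1$. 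Taking $\kappa := \min\{1, \kappa_A\}$ completes the proof.

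The main issue to verify is the clean dependence on the heart hypothesis: when $m_k(H_1) = m_k(H_2)$, the definition of heart directly requires both $H_1$ and $H_2$ to be strictly $k$-balanced, which is exactly the input needed for the strict inequality $d_k(K') < m_k(H)$ above. Once that is noted, the remainder of the argument is a finite case analysis, markedly simpler than the three-transformation proof of Claim~\ref{claim:degenfull} in Case~1, because here \textsc{Extend} attaches only a single copy (of $H_1$ or $H_2$) rather than a flower built from a copy of $H_2$ with copies of $H_1$ appended at every inner edge, and consequently there is no analog of Lemma~\ref{lemma:21} needed.
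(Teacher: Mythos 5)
Your proposal is correct and takes essentially the same approach as the paper's proof: compare $F' = F_{i+1}$ to an abstract non-degenerate attachment $F^*$ (using Claim~\ref{claim:non-degensym} to get $\lambda(F^*) = \lambda(F_i)$), reduce to the sign of $v(K')-k - (e(K')-1)/m_k(H_1,H_2)$, split on $e(K')=1$ versus $e(K')\geq 2$, invoke strict $k$-balancedness of whichever of $H_1,H_2$ was attached, and extract a uniform $\kappa$ by finiteness of proper subhypergraphs. The paper only writes out the $L\cong H_2$ case and declares the $R\cong H_1$ case identical, whereas you handle both uniformly via $K$ and $H$, but the substance is the same.
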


Compared to the proof of Claim~\ref{claim:degenfull}, the proof of Claim~\ref{claim:degenfullsym} is relatively straightforward.

\begin{proof}
Let $F := F_i$ be the $k$-graph before the operation in line~\ref{line:extend} (of \textsc{Grow-Alt}) is carried out and let $F' := F_{i+1}$ be the output from line~\ref{line:extend}. We aim to show there exists a constant $\kappa = \kappa(H_1, H_2) > 0$ such that
\begin{equation*}
    \lambda(F) - \lambda(F') = v(F) - v(F') - \frac{e(F) - e(F')}{m_k(H_1, H_2)} \geq \kappa
\end{equation*} whether \textsc{Extend} attached a $k$-graph $L \in \mathcal{L}_{G'}$ or $R \in \mathcal{R}_{G'}$ to $F$. 
We need only consider the case when $L \in \mathcal{L}_{G'}$ is the $k$-graph added by \textsc{Extend} in this non-degenerate iteration $i$ of the while-loop, as the proof of the $R \in \mathcal{R}_{G'}$ case is identical. 
Let $V_{L'} := V(F) \cap V(L)$ and $E_{L'} := E(F) \cap E(L)$ and set $v_{L'} := |V_{L'}|$ and $e_{L'} := |E_{L'}|$.
Let ${L'}$ be the $k$-graph on vertex set $V_{L'}$ and edge set $E_{L'}$. By Claim~\ref{claim:growsymv}, we have $e(F') > e(F)$, hence ${L'}$ is a proper subhypergraph of $H_2$. Also, observe that
since iteration $i$ was degenerate, we must have that $v_{L'} \geq k+1$. 
Let $F_{\hat{L}}$ be the $k$-graph produced by a non-degenerate iteration at $e$ with a copy $\hat{L}$ of $H_2$, that is, $F_{\hat{L}} := F \cup \hat{L}$ and $V(F) \cap V(\hat{L}) = e$. Our strategy is to compare $F'$ with $F_{\hat{L}}$. 
By Claim~\ref{claim:non-degensym}, $\lambda(F) = \lambda(F_{\hat{L}})$. Thus, since $m_k(H_1, H_2) = m_k(H_2)$, we have 
\begin{equation}\label{eq:f1beginsymL}
 \lambda(F) - \lambda(F') = \lambda(F_{\hat{L}}) - \lambda(F') = v_2 - k - (v_2 - v_{L'}) - \frac{(e_2 - 1) - (e_2 - e_{L'})}{m_k(H_1, H_2)} = v_{L'} - k - \frac{e_{L'} - 1}{m_k(H_1, H_2)}. 
\end{equation}
If $e_{L'} = 1$ (that is, $E(L') = \{e\}$), then since $v_{L'} \geq k+1$ we have $\lambda(F) - \lambda(F') \geq 1$. So assume $e_{L'} \geq 2$. 
Since $H_2$ is \emph{strictly} $k$-balanced and $L'$ is a proper subhypergraph of $H_2$ with $e_{L'} \geq 2$ (that is, $L'$ is not an edge), we have that \begin{equation}\label{eq:f1l'sym}
 \frac{v_{L'} - k}{e_{L'} - 1} > \frac{1}{m_k(H_2)}.
\end{equation}
Using \eqref{eq:f1beginsymL}, \eqref{eq:f1l'sym} and that $e_{L'} \geq 2$ and $m_k(H_1, H_2) = m_k(H_2)$, we have 
\begin{equation*}
 \lambda(F) - \lambda(F') = (e_{L'} - 1)\left(\frac{v_{L'} - k}{e_{L'} - 1} - \frac{1}{m_k(H_2)}\right) > 0.
\end{equation*}
Letting $\alpha := \frac{1}{2} \min\left\{(e_{L'} - 1)\left(\frac{v_{L'} - k}{e_{L'} - 1} - \frac{1}{m_k(H_2)}\right): L' \subsetneq H_2, e_{L'} \geq 2\right\}$, we take $$\kappa := \min\{1, \alpha\}.$$\end{proof}

Together, Claims~\ref{claim:non-degensym} and \ref{claim:degenfullsym} yield the following claim (analogous to Claim~\ref{claim:q_1}). 

\begin{claim}\label{claim:q_2}
There exists a constant $q_2 = q_2(H_1,H_2)$ such that, for any input $G'$,  algorithm \textsc{Grow-Alt($G',\eps^*,n$)} performs at most $q_2$ degenerate iterations before it terminates.
\end{claim}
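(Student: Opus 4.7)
The plan is to mirror almost verbatim the proof of Claim~\ref{claim:q_1} from Appendix~\ref{sec:omittedproofs}, substituting the symmetric analogues of the density-tracking claims. First I would invoke Claim~\ref{claim:non-degensym} to note that the function $\lambda$ is invariant under any non-degenerate iteration of the while-loop of \textsc{Grow-Alt}, i.e.\ $\lambda(F_{i+1}) = \lambda(F_i)$ whenever iteration $i$ is non-degenerate. Then I would apply Claim~\ref{claim:degenfullsym} to obtain the constant $\kappa = \kappa(H_1,H_2) > 0$ such that $\lambda(F_{i+1}) \leq \lambda(F_i) - \kappa$ whenever iteration $i$ is degenerate. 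Combining these two facts, after any prefix of the execution containing $d$ degenerate iterations (interleaved with any number of non-degenerate ones), we have $\lambda(F_i) \leq \lambda(F_0) - d\kappa$.

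Next I would use the termination condition in line~\ref{line:growaltwhileconditions} of \textsc{Grow-Alt}: the while-loop stops as soon as there exists $\tilde{F} \subseteq F_i$ with $\lambda(\tilde{F}) \leq -\gamma^*$, and in particular once $\lambda(F_i) \leq -\gamma^*$. Setting
\[
q_2 := \frac{\lambda(F_0) + \gamma^*}{\kappa},
\]
after more than $q_2$ degenerate iterations we would have $\lambda(F_i) \leq \lambda(F_0) - q_2 \kappa \leq -\gamma^*$, contradicting that we are still inside the while-loop. Hence \textsc{Grow-Alt} performs at most $q_2$ degenerate iterations before terminating.

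Finally, to confirm that $q_2$ is a genuine constant depending only on $H_1$ and $H_2$, I would observe that $F_0 \cong H_1$, so $\lambda(F_0) = v_1 - e_1/m_k(H_1,H_2) = k - 1/m_k(H_2)$ depends only on $H_1$ and $H_2$; that $\gamma^* = \gamma(H_1,H_2,\eps^*)$ depends only on $H_1$ and $H_2$ (via Definition~\ref{def:eps*}); and that $\kappa$ also depends only on $H_1$ and $H_2$ by Claim~\ref{claim:degenfullsym}. No genuine obstacle is expected here: with Claims~\ref{claim:non-degensym} and~\ref{claim:degenfullsym} already established, this is a short bookkeeping argument copied directly from the corresponding proof in Case~1.
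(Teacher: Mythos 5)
Your proposal is correct and matches the paper's proof: the paper simply declares this claim ``analogous to the proof of Claim~\ref{claim:q_1}'', and your argument is exactly that analogue, combining Claims~\ref{claim:non-degensym} and~\ref{claim:degenfullsym} with the termination condition in line~\ref{line:growaltwhileconditions} to get $q_2 = (\lambda(F_0)+\gamma^*)/\kappa$.
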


\begin{proof}
Analogous to the proof of Claim~\ref{claim:q_1}.\end{proof}

For $0 \leq d \leq t \leq \lceil \ln(n) \rceil$, let $\mathcal{F}_{\textsc{Alt}}(t,d)$ denote a family of representatives for the isomorphism classes of all $k$-graphs $F_t$ that algorithm \textsc{Grow-Alt} can possibly generate after exactly $t$ iterations of the while-loop with exactly $d$ of those $t$ iterations being degenerate. 
Let $f_{\textsc{Alt}}(t,d) := |\mathcal{F}_{\textsc{Alt}}(t,d)|$.

\begin{claim}%\label{claim:polylogsym}
There exist constants $C_0 = C_0(H_1, H_2)$ and $A = A(H_1,H_2)$ such that, for any input $G'$ and any $\eps \geq 0$, $$f_{\textsc{Alt}}(t,d) \leq \lceil \ln(n) \rceil^{(C_0 +1)d} \cdot A^{t-d}$$ for $n$ sufficiently large.
\end{claim}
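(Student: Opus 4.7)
The plan is to follow the proof of Claim~\ref{claim:polylog} essentially verbatim, adapting only the few places where \textsc{Grow-Alt} differs from \textsc{Grow}. Since by Claim~\ref{claim:growsymv} each iteration of the while-loop adds at least one edge, and since in every iteration the procedure \textsc{Extend} attaches either a single copy of $H_1$ or a single copy of $H_2$, the newly added edges span at most $K := \max\{v_1, v_2\}$ vertices; in particular $v(F_t) \leq v_1 + Kt$ and, because the while-loop terminates once $t \geq \lceil \ln(n) \rceil$, we always have $v(F_t) \leq v_1 + K\lceil \ln(n)\rceil$ for $n$ large.

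For a \emph{degenerate} iteration $i$, I would bound the number of possible extensions of $F_i$ to $F_{i+1}$ by specifying: (1) the isomorphism type of the $k$-graph spanned by the new edges $E(F_{i+1})\setminus E(F_i)$, which lies in a set $\mathcal{G}_K$ of $k$-graphs on at most $K$ vertices; (2) the number $y$ of intersection vertices with $F_i$; (3) an ordered list of $y$ vertices in the new $k$-graph and an ordered list of $y$ vertices in $F_i$. As in the proof of Claim~\ref{claim:polylog}, this is bounded by
\[
\sum_{G \in \mathcal{G}_K}\sum_{y=0}^{v(G)} v(G)^y \, v(F_i)^y \;\leq\; |\mathcal{G}_K|\cdot K \cdot K^K (v_1+Kt)^K \;\leq\; \lceil\ln(n)\rceil^{C_0}
\]
for a constant $C_0 = C_0(H_1,H_2)$ depending only on $v_1$ and $v_2$.

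For a \emph{non-degenerate} iteration, \textsc{Eligible-Edge-Alt}$(F_i)$ returns a specific edge $e$ that is unique up to isomorphism of $F_i$, and the attached copy of $H_1$ or $H_2$ meets $F_i$ exactly in $e$. Hence $F_{i+1}$ is determined by (a) the binary choice of attaching a copy of $H_1$ or $H_2$, (b) the choice of which edge of the attached copy is identified with $e$ (at most $\max\{e_1,e_2\}$ options), and (c) the bijection between the $k$ vertices of that chosen edge and the $k$ vertices of $e$ (at most $k!$ options). Therefore the number of extensions is at most
\[
A \;:=\; 2\,k!\,\max\{e_1,e_2\},
\]
which depends only on $H_1,H_2$.

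Combining, for $0 \leq d \leq t < \lceil\ln(n)\rceil$, the number of isomorphism classes $F_t$ producible by \textsc{Grow-Alt} in $t$ iterations with exactly $d$ degenerate iterations is bounded by the number of ways to choose which of the $t$ iterations are degenerate, times the bounds above, giving
\[
f_{\textsc{Alt}}(t,d) \;\leq\; \binom{t}{d}\bigl(\lceil\ln(n)\rceil^{C_0}\bigr)^{d}\cdot A^{t-d} \;\leq\; \lceil\ln(n)\rceil^{(C_0+1)d}\cdot A^{t-d},
\]
using $\binom{t}{d} \leq t^d \leq \lceil\ln(n)\rceil^d$. No step here is a genuine obstacle; the only mild subtlety is ensuring the non-degenerate count is genuinely constant, which is why it matters that \textsc{Eligible-Edge-Alt} returns an edge unique up to isomorphism of $F_i$ — this rules out the otherwise polynomial factor of $v(F_i)$ coming from the choice of attachment edge.
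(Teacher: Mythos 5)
Your proposal is correct and follows exactly the route the paper intends: the paper's own proof of this claim reads simply ``Analogous to the proof of Claim~\ref{claim:polylog},'' and what you have written is precisely that analogy carried out. The two adaptations you make are the right ones: replacing $K = v_2 + (e_2-1)(v_1-k)$ by $K = \max\{v_1,v_2\}$ because \textsc{Extend} attaches a single copy of $H_1$ or $H_2$ rather than a flower, and replacing the non-degenerate count $k!\,e_2\,(k!\,e_1)^{e_2-1}$ by the constant $2\,k!\max\{e_1,e_2\}$ accounting for the binary $H_1$/$H_2$ choice, the choice of which edge of the attached copy is identified with $e$, and the $k!$ orientations. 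Your observation at the end — that the whole argument hinges on \textsc{Eligible-Edge-Alt} returning an edge that is unique up to isomorphism of $F_i$, which is what prevents a polynomial-in-$v(F_i)$ factor from appearing in the non-degenerate case — is exactly the key point.
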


\begin{proof}
Analogous to the proof of Claim~\ref{claim:polylog}.\end{proof}

Let $\mathcal{F}_{\textsc{Alt}} = \mathcal{F}_{\textsc{Alt}}(H_1, H_2, n, \eps^*)$ be a family of representatives for the isomorphism classes of \emph{all} $k$-graphs that can be an output of either \textsc{Special-Alt($G',\eps^*,n$)} or \textsc{Grow-Alt($G',\eps^*,n$)}.

Note that the proof of the following claim requires finiteness of $\hat{\mathcal{B}}_{\eps^*}$. 
%the conditions of Theorem~\ref{thm:colourb} to be true; in particular, we need that $\hat{\mathcal{B}}$ is finite when $m_k(H_1) = m_k(H_2)$. We prove this in Section~\ref{sec:finiteequal} shortly using Claims~\ref{claim:non-degensym} and \ref{claim:degenfullsym}.

\begin{claim}\label{claim:conclusionsym}
There exists a constant $b = b(H_1,H_2) > 0$ such that for all $p \leq bn^{-1/m_k(H_1,H_2)}$, $G^k_{n,p}$ does not contain any $k$-graph from $\mathcal{F}_{\textsc{Alt}}(H_1,H_2,n,\eps^*)$ a.a.s.
\end{claim}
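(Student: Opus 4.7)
The plan is to mirror the combined strategies of Claims~\ref{claim:conclusion1} and~\ref{claim:conclusion2}, accounting for the fact that in the $m_k(H_1)=m_k(H_2)$ case both ``special'' outputs and \textsc{Grow-Alt} outputs live inside the single class $\mathcal{F}_{\textsc{Alt}}$. Write $\mathcal{F}_{\textsc{Alt}} = \mathcal{F}_0^{\textsc{Alt}} \cup \tilde{\mathcal{F}}^{\textsc{Alt}}$, where $\mathcal{F}_0^{\textsc{Alt}}$ is the class of $k$-graphs that \textsc{Special-Alt} can return and $\tilde{\mathcal{F}}^{\textsc{Alt}}$ is a family of representatives of isomorphism classes for outputs of \textsc{Grow-Alt}. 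It suffices to prove that, for some constant $b=b(H_1,H_2)>0$ and $p\leq bn^{-1/m_k(H_1,H_2)}$, the expected number of copies in $G^k_{n,p}$ of any $k$-graph from each of these two subfamilies is $o(1)$; Markov's inequality then finishes the argument. Recall we write $\gamma^* := \gamma(H_1,H_2,\eps^*)$, which is strictly positive by Definition~\ref{def:eps*}.

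For $\mathcal{F}_0^{\textsc{Alt}}$, the argument is essentially that of Claim~\ref{claim:conclusion1} with \textsc{Grow-$\hat{\mathcal{B}}_{\eps^*}$-Alt} in place of \textsc{Grow-$\hat{\mathcal{B}}_{\eps^*}$}. Any $F \in \mathcal{F}_0^{\textsc{Alt}}$ is either $\bigcup_{e\in E(T)}\mathcal{S}^B_{G'}(e)$ for some $T\in\mathcal{T}^B_{G'}$ or $S_1\cup S_2$ for edge-intersecting $S_1,S_2\in\mathcal{S}^B_{G'}$. In either form $F$ is a $k$-graph that could be produced by \textsc{Grow-$\hat{\mathcal{B}}_{\eps^*}$-Alt}, yet by maximality in the definition of $\mathcal{S}^B_{G'}$ it is not isomorphic to any element of $\hat{\mathcal{B}}_{\eps^*}$. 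Hence \textsc{Grow-$\hat{\mathcal{B}}_{\eps^*}$-Alt} would have terminated before constructing $F$, so some subhypergraph $\tilde F\subseteq F$ must satisfy $\lambda(\tilde F)\leq -\gamma^*$. Finiteness of $\hat{\mathcal{B}}_{\eps^*}$ (Theorem~\ref{thm:bfinite}) together with the bounded number of ways to combine a constant number of members of $\hat{\mathcal{B}}_{\eps^*}$ shows that $\mathcal{F}_0^{\textsc{Alt}}$ is finite, so Markov's inequality on the family yields $G^k_{n,p}$ a.a.s.\ avoids every $F\in\mathcal{F}_0^{\textsc{Alt}}$.

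For $\tilde{\mathcal{F}}^{\textsc{Alt}}$, I follow the structure of Claim~\ref{claim:conclusion2} with the symmetric-case analogues. Split $\tilde{\mathcal{F}}^{\textsc{Alt}} = \mathcal{F}_1^{\textsc{Alt}} \cup \mathcal{F}_2^{\textsc{Alt}}$, where $\mathcal{F}_1^{\textsc{Alt}}$ collects outputs returned in line~\ref{line:growaltreturnfi} (so $e(F)\geq \ln n$ by Claim~\ref{claim:growsymv}, and $\lambda(F)\leq \lambda(F_0)$ by Claims~\ref{claim:non-degensym} and \ref{claim:degenfullsym}) and $\mathcal{F}_2^{\textsc{Alt}}$ collects \textsc{Minimising-Subhypergraph} outputs (so $\lambda(F)\leq -\gamma^*$). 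Taking $b:=(Ae)^{-\lambda(F_0)-1}\leq 1$ for the constant $A$ supplied by the symmetric version of the polylog count, the expected count is
\begin{align*}
\sum_{F\in \tilde{\mathcal{F}}^{\textsc{Alt}}} n^{v(F)}p^{e(F)}
&\leq \sum_{F\in \mathcal{F}_1^{\textsc{Alt}}} b^{e(F)} n^{\lambda(F)} + \sum_{F\in \mathcal{F}_2^{\textsc{Alt}}} b^{e(F)} n^{-\gamma^*}.
\end{align*}
Bounding $|\mathcal{F}_j^{\textsc{Alt}}|$ via $f_{\textsc{Alt}}(t,d)\leq \lceil\ln n\rceil^{(C_0+1)d}A^{t-d}$, using $d\leq q_2$ from Claim~\ref{claim:q_2}, and noting that $e(F)\geq t$ for outputs produced after $t$ iterations (again by Claim~\ref{claim:growsymv}), both sums become polylogarithmic times $n^{-1}$ and $n^{-\gamma^*}$ respectively, hence $o(1)$. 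The calculation is identical line-by-line to that in Claim~\ref{claim:conclusion2}, with Claims~\ref{claim:non-degen}, \ref{claim:degenfull} replaced by Claims~\ref{claim:non-degensym}, \ref{claim:degenfullsym} and $q_1$ by $q_2$.

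The main obstacle, and the only point at which the symmetric case needs the strengthened conclusion over Claims~\ref{claim:conclusion1} and~\ref{claim:conclusion2} separately, is verifying that the same value of $\eps^*$ simultaneously enforces $\gamma^*>0$ (needed for the Markov step on $\mathcal{F}_0^{\textsc{Alt}}$ and on $\mathcal{F}_2^{\textsc{Alt}}$) and finiteness of $\hat{\mathcal{B}}_{\eps^*}$ (needed to bound $|\mathcal{F}_0^{\textsc{Alt}}|$). This is precisely guaranteed by Definition~\ref{def:eps*} combined with Theorem~\ref{thm:bfinite}, so no further work is required.
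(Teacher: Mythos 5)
Your proposal is correct and follows exactly the approach the paper intends: the paper's proof of Claim~\ref{claim:conclusionsym} is simply stated as ``analogous to the combination of proofs of Claims~\ref{claim:conclusion1} and~\ref{claim:conclusion2},'' and your argument unpacks precisely what that analogy means, substituting \textsc{Grow-$\hat{\mathcal{B}}_{\eps^*}$-Alt}, \textsc{Grow-Alt}, \textsc{Special-Alt}, Claims~\ref{claim:non-degensym}/\ref{claim:degenfullsym}, and $q_2$ for their asymmetric-case counterparts. Your closing remark on $\eps^*$ is a non-issue (Definition~\ref{def:eps*} already guarantees both $\gamma^*>0$ and finiteness simultaneously), but it does no harm.
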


\begin{proof}
Analogous to the combination of  proofs of Claims~\ref{claim:conclusion1} and \ref{claim:conclusion2}.\end{proof}

\begin{proofofnoerrorlemmacasetwo}
Suppose that the call to \textsc{Asym-Edge-Col-$\hat{\mathcal{B}}_{\eps}$}$(G)$ gets stuck for some $k$-graph $G$, and consider $G' \subseteq G$ at this moment. 
Then \textsc{Special-Alt}$(G', \eps^*,n)$ or \textsc{Grow-Alt}$(G', \eps^*,n)$ returns a copy of a $k$-graph $F \in \mathcal{F}_{\textsc{Alt}}(H_1, H_2, n, \eps^*)$ that is contained in $G' \subseteq G$. 
By Claim~\ref{claim:conclusionsym}, this event a.a.s.~ does not occur in $G = G^k_{n,p}$ with $p$ as claimed. Thus \textsc{Asym-Edge-Col-$\hat{\mathcal{B}}_{\eps}$} does not get stuck a.a.s.~and, by Lemma~\ref{lemma:avcolour}, finds a valid edge-colouring for $H_1$ and $H_2$ of $G^k_{n,p}$ with $p \leq bn^{-1/m_k(H_1,H_2)}$ a.a.s.\qed
\end{proofofnoerrorlemmacasetwo}

\subsection{Proof of Lemma~\ref{lem:finiteequal}}\label{sec:finiteequal}

We call iteration $i$ of the while loop of algorithm \textsc{Grow-$\hat{\mathcal{B}}_{\eps}$-Alt} \emph{non-degenerate} if all of the following hold:

\begin{itemize}
    \item \textsc{Close-$e$-Alt} is called; 
    \item If line~\ref{line:flv} of \textsc{Close-$e$-Alt} is entered into, then we have $V(F) \cap V(L) = e$;
    \item If line~\ref{line:frv} of \textsc{Close-$e$-Alt} is entered into, then we have $V(F) \cap V(R) = e$.
\end{itemize}

Otherwise, we call iteration $i$ \emph{degenerate}. 
So a non-degenerate iteration in algorithm \textsc{Grow-$\hat{\mathcal{B}}_{\eps}$-Alt} is equivalent to a non-degenerate iteration in algorithm \textsc{Grow-Alt} -- in terms of the exact structure of the attached $k$-graph which, importantly, means the same number of edges and vertices are added -- and a degenerate iteration in algorithm \textsc{Grow-$\hat{\mathcal{B}}_{\eps}$-Alt} is equivalent to a degenerate iteration of \textsc{Grow-Alt}.

As with \textsc{Grow-$\hat{\mathcal{B}}_{\eps}$}, in order to conclude that \textsc{Grow-$\hat{\mathcal{B}}_{\eps}$-Alt} outputs only a finite number of $k$-graphs, 
    it is sufficient to prove that there exists a constant $K$ such that there are no outputs of \textsc{Grow-$\hat{\mathcal{B}}_{\eps}$-Alt} after $K$ iterations of the while-loop.
In order to prove this, we employ Claims~\ref{claim:non-degensym} and \ref{claim:degenfullsym}. Since degenerate and non-degenerate iterations are defined the exact same in algorithms \textsc{Grow-Alt} and \textsc{Grow-$\hat{\mathcal{B}}_{\eps}$-Alt}, we may write Claims~\ref{claim:non-degensym} and \ref{claim:degenfullsym} with reference to \textsc{Grow-$\hat{\mathcal{B}}_{\eps}$-Alt}.

\begin{claim:non-degensym}
For any $\eps \geq 0$, if iteration $i$ of the while-loop in procedure \textsc{Grow-$\hat{\mathcal{B}}_{\eps}$-Alt} is non-degenerate, we have \[\lambda(F_{i+1}) = \lambda(F_i).\]
\end{claim:non-degensym}

\begin{claim:degenfullsym}
There exists a constant $\kappa_2 = \kappa_2(H_1,H_2) > 0$ such that, for any $\eps \geq 0$, if iteration $i$ of the while-loop in procedure \textsc{Grow-$\hat{\mathcal{B}}_{\eps}$-Alt} is degenerate then we have \[\lambda(F_{i+1}) \leq \lambda(F_i) - \kappa_2.\]
\end{claim:degenfullsym}

For any iteration $i$, if \textsc{Close-$e$-Alt} is used, then let $J_i:=L$ if using line~\ref{line:flv}, and $J_i:=R$ if using line~\ref{line:frv}. 

If $i$ is non-degenerate, then we call $J_i$ a \emph{flower}\footnote{We will use the nomenclature from Section~\ref{sec:bfinite} here to draw effective comparisons, even though pictorially the names may not line up with the structures discussed.}, and we call the edge $e$ for which \textsc{Close-$e$-Alt} was run on the \emph{attachment edge} of $J_i$. 
For a flower $J_i$ with attachment edge $e = \{x_1, \ldots, x_k\}$, call $E(J_i) \setminus \{e\}$ the \emph{petal edges} and call $V(J_i) \setminus \{x_1, \ldots, x_k\}$ the \emph{internal vertices}.

Call a flower $J_{i}$ {\it fully open at time $\ell$}
if it was produced in a non-degenerate iteration at time $i$ with $i \leq \ell$, and no internal vertex of $J_i$ intersects with an edge added after time $i$.

\COMMENT{RH: names of things: flowers, petal edges, internal edges, internal vertices, `destroyed', etc, could be better to change these.}

\begin{lem}\label{lem:newfiniteness1a}
Let $G$ be any $k$-graph and $e \in E(G)$. 
Suppose $J_i$ is a flower attached at $e$.
\begin{itemize}
\item[(i)] If $H$ is any copy of $H_1$ or $H_2$ in $G$ containing an internal vertex (or equivalently a petal edge) of $J_i$, then $H=J_i$.
\item[(ii)] Every petal edge of $J_i$ is open at time $i$. 
\end{itemize}
\end{lem}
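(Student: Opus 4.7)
The plan is to derive (ii) from (i) and then prove (i) by contradiction. Assuming (i), any $L \in \mathcal{L}_G$ or $R \in \mathcal{R}_G$ containing a petal edge $f$ of $J_i$ must equal $J_i$; but $J_i$ is isomorphic to only one of $H_1, H_2$ when $H_1 \neq H_2$, and even when $H_1 = H_2$ we would have $E(L) \cap E(R) = E(J_i) \supsetneq \{f\}$ (using $e_{J_i} \geq 3$, which follows from $m_k(H_2) > 1$ and strict $k$-balancedness). So no witnessing pair exists and $f$ is open, establishing (ii).

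For (i), I would suppose for contradiction that $H$ is a copy of $H_1$ or $H_2$ in $G$ with $H \neq J_i$ containing a petal edge, and set $P := H \cap J_i$ and $Q := H - E(P)$. Since $H_1$ and $H_2$ are strictly $k$-balanced with the same $k$-density, neither is isomorphic to a proper subhypergraph of a copy of the other; combined with $H \neq J_i$ this forces $P \subsetneq J_i$ and $P \subsetneq H$. The flower condition (internal vertices of $J_i$ are incident only to edges of $J_i$ in $G$) implies $C := V(P) \cap V(Q) \subseteq e$, so $|C| \leq k$. Since $e_H \geq 3$, at least one of $e_P, e_Q$ exceeds $1$, and when $|C| \leq k-1$ the partition $(P,Q)$ of $E(H)$ (possibly after swapping roles) satisfies the hypotheses of Lemma~\ref{lem:newconnectivity} applied to $H$, giving an immediate contradiction.

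The critical case is $|C| = k$, i.e.\ $V(P) \cap V(Q) = e$, where the connectivity lemma does not apply directly. I would split on whether $e \in E(H)$. If $e \in E(H)$, then $H[V(Q)] = Q \cup \{e\}$ is a proper subhypergraph of $H$ (as $H$ contains an internal vertex of $J_i$, so $V(H) \setminus V(Q) \neq \emptyset$); combining $d_k(H[V(Q)]) < m_k(H)$ with $d_k(P) < m_k(J_i) = m_k(H)$, via the identity $v_H - k = (v_P - k) + (v_Q - k)$, yields the two bounds $(e_P - 1)(v_Q - k) < e_Q(v_P - k)$ and $e_Q(v_P - k) < (e_P - 1)(v_Q - k)$, a direct contradiction. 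If $e \notin E(H)$ and $P \cup \{e\} \subsetneq J_i$, an entirely parallel argument using $d_k(P \cup \{e\}) < m_k(J_i)$ in place of $d_k(P)$, paired with $d_k(Q) < m_k(H)$, gives the same type of sandwich.

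The hardest sub-case, and the main obstacle, is $e \notin E(H)$ with $P \cup \{e\} = J_i$, meaning $J_i \setminus \{e\} \subseteq H$ and $V(J_i) \subseteq V(H)$. Here a short analysis using $m_k(H_1) = m_k(H_2) =: m$ shows first that $|V(H) \setminus V(J_i)| \geq 1$ (else $H$ would be forced to equal $J_i \setminus \{e\}$, contradicting that $H$ is strictly $k$-balanced of density $m$), and then that $e_Q = m \, |V(H) \setminus V(J_i)| + 1$, whence $d_k(Q) = (e_Q - 1)/(v_Q - k) = m$ exactly. This contradicts strict $k$-balancedness of $H$ applied to the proper subhypergraph $Q \subsetneq H$. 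This final density computation, which crucially exploits the exact equality $m_k(H_1) = m_k(H_2)$, is the heart of the proof and mirrors, in a simpler form, the key calculation~(\ref{eq:openedgecalc}) in the proof of Lemma~\ref{lem:newfinitenesshyper1}.
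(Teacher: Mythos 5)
Your proposal is correct and follows essentially the same route as the paper: part (ii) is deduced from (i) exactly as in the paper, and for (i) you split $H$ along the attachment edge $e$, rule out an intersection of size at most $k-1$ via Lemma~\ref{lem:newconnectivity} (with Corollary~\ref{col:newconcor}), and obtain the contradiction from strict $k$-balancedness together with $m_k(H_1)=m_k(H_2)$ --- which is precisely the paper's decomposition of $H$ into $H[V(G)]$ and $H[V(J_i)]$ followed by the mediant inequality of Fact~\ref{fact:ineq}(i). Your finer sub-case analysis (in particular the exact computation $d_k(Q)=m_k(H_1,H_2)$ when $P\cup\{e\}=J_i$) merely makes explicit a boundary case that the paper absorbs into its single mediant computation, where only one of the two density inequalities needs to be strict.
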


\begin{proof}
For (i), suppose not, that is there exists such an $H\not=J_i$. 
Since $H_1$ and $H_2$ are both strictly $k$-balanced and $m_k(H_1)=m_k(H_2)$, it follows that neither is a strict subhypergraph of the other, and so $H$ is not a subhypergraph of $J_i$. 
Letting $e=\{x_1, \ldots, x_k\}$, it follows that $H$ must contain at least one vertex of $V(G) \setminus \{x_1, \ldots, x_k\}$. 
Further, by Lemma~\ref{lem:newconnectivity} and Corollary~\ref{col:newconcor}, it must contain every vertex in $e$, i.e. $x_1, \ldots, x_k$.\COMMENT{JH: Do we need to give the explanation of why again since its already appeared twice in previous proofs.}
Let $H_g:=H[V(G)]$ and $H_j:=H[V(J_i)]$.
The above shows that $v_H=v_{H_g}+v_{H_j}-k$ and $v_{H_g},v_{H_j} \geq k+1$.
If $e \notin E(H)$, then add $e$ to $H_j$. Then $e_H=e_{H_g}+e_{H_j}-1$ regardless of whether $e \in E(H)$.
As $H_g$ and $H_j$ are strict subhypergraphs of $H$, and $H$ is strictly $k$-balanced, we have
\begin{align*}
\frac{e_{H_g}-1}{v_{H_g}-k}<m_k(H_1,H_2) \ \quad \ \text{and} \ \quad \ \frac{e_{H_j}-1}{v_{H_j}-k}<m_k(H_1,H_2).
\end{align*}
But this yields a contradiction, as 
\begin{align*}
m_k(H_1,H_2)=m_k(H)=\frac{e_H-1}{v_H-k} = \frac{e_{H_g}-1+e_{H_j}-1}{v_{H_g}-k+v_{H_j}-k}<m_k(H_1,H_2),
\end{align*} where the inequality follows from Fact~\ref{fact:ineq}(i).

For (ii), suppose that there exists a petal edge $f$ of $J_i$ which is closed. 
Being closed implies there exist copies $L$ of $H_2$ and $R$ of $H_1$ such that $E(L) \cap E(R) =\{f\}$.
However (i) implies that the only copy of $H_1$ or $H_2$ which contains $f$ is $J_i$, a contradiction.
\end{proof}

\begin{claim}\label{claim:newfinitenessc1a} 
Suppose $J_i$ is a fully open flower at time $\ell-1$ and $J_{\ell}$ is such that $V(J_{\ell})$ has empty intersection with the internal vertices of $J_i$. 
Then $J_i$ is fully open at time $\ell$.
\end{claim}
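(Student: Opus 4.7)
The plan is to mirror the proof of Claim~\ref{claim:newfinitenessc1} verbatim, replacing Lemma~\ref{lem:newfinitenesshyper1} by Lemma~\ref{lem:newfiniteness1a}(ii) at the key step. The argument has two ingredients: first, verifying that none of the $k$-graphs $J_{i+1}, \ldots, J_\ell$ touches any internal vertex of $J_i$; and second, using this to apply Lemma~\ref{lem:newfiniteness1a}(ii) to conclude that the petal edges of $J_i$ remain open in $F_\ell$.

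For the first ingredient, since $J_i$ is fully open at time $\ell - 1$ by hypothesis, the vertex set of each $J_j$ with $i < j \leq \ell - 1$ is disjoint from the internal vertices of $J_i$; the additional hypothesis $V(J_\ell) \cap (\text{internal vertices of } J_i) = \emptyset$ extends this to $j = \ell$. In particular, no edge added after time $i$ is incident to an internal vertex of $J_i$.

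For the second ingredient, let $e$ be the attachment edge of $J_i$. Every petal edge $f \in E(J_i) \setminus \{e\}$ contains at least one internal vertex of $J_i$, because $f$ is a $k$-subset of $V(J_i)$ distinct from the $k$-set $e$ and hence not contained in $e$. By the first ingredient, no $J_{i+1}, \ldots, J_\ell$ then shares a petal edge with $J_i$, so the $k$-graph $G$ with edge set $E(G) := E(F_\ell) \setminus (E(J_i) \setminus \{e\})$ fully contains each of $J_{i+1}, \ldots, J_\ell$. Applying Lemma~\ref{lem:newfiniteness1a}(ii) to the ambient $k$-graph $F_\ell = G \cup J_i$ and to the flower $J_i$ attached at $e$ now yields that every petal edge of $J_i$ is open in $F_\ell$. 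Together with the vertex condition, this shows $J_i$ is fully open at time $\ell$. There is no substantive obstacle; the only mildly delicate point is that Lemma~\ref{lem:newfiniteness1a}(ii) is applied to the full host $k$-graph $F_\ell$ rather than to $F_i$, which is permissible since the lemma is stated for an arbitrary ambient $k$-graph containing a flower attached at a designated edge.
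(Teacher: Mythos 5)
Your proof is correct and follows the same route as the paper: use the hypothesis that $J_i$ is fully open at time $\ell-1$ together with the assumption on $V(J_\ell)$ to see that the graph $G$ with $E(G)=E(F_\ell)\setminus(E(J_i)\setminus\{e\})$ contains each of $J_{i+1},\dots,J_\ell$, then apply Lemma~\ref{lem:newfiniteness1a}(ii) with this $G$ to get that the petal edges of $J_i$ remain open in $F_\ell=G\cup J_i$. Your explicit verification that every petal edge contains an internal vertex (hence is removed in passing to $G$) is a detail the paper leaves implicit, but it is exactly the point being relied on.
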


\begin{proof}
Let $e$ be the attachment edge of $J_i$. 
Since $J_i$ is a fully open flower at time $\ell-1$, we know that none of the $k$-graphs $J_{i+1}, \dots, J_{\ell}$ touch any of the internal vertices of $J_i$, 
the $k$-graph $G$ with edge set $E(G):=E(F_{\ell}) \setminus (E(J_i) \setminus \{e\})$ fully contains each of $J_{i+1},\dots,J_{\ell}$. 
Therefore, we may apply Lemma~\ref{lem:newfiniteness1a} to $G$ and $J_i$, and deduce that the petal edges of $J_i$ are still open. 
It follows that $J_i$ is fully open at time $\ell$.
\end{proof}

\begin{claim}\label{claim:newfinitenessc2a}
Suppose $J_{\ell}$ is a flower. 
Then at most one fully open flower is no longer fully open.
\end{claim}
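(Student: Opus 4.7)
The approach is to mirror the proof of the analogous result Claim~\ref{claim:newfinitenessc2} for the $m_k(H_1)>m_k(H_2)$ case, with the key leverage now coming from Lemma~\ref{lem:newfiniteness1a}(i), which in the $m_k(H_1)=m_k(H_2)$ setting says that any copy of $H_1$ or $H_2$ containing an internal vertex of a fully open flower must equal that flower. Since $J_{\ell}$ is a flower, iteration $\ell$ is non-degenerate, so $V(J_{\ell})\cap V(F_{\ell-1})=e^*$, where $e^*$ is the attachment edge of $J_{\ell}$. Consequently, any vertex of $J_{\ell}$ lying in $V(F_{\ell-1})$ must be a vertex of $e^*$, and the only mechanism by which $J_{\ell}$ can cause a previously fully open flower $J_i$ (with $i<\ell$) to cease being fully open is for $e^*$ itself to contain an internal vertex of $J_i$.

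Suppose for a contradiction that two distinct fully open flowers $J_i, J_j$ (with $i,j<\ell$) are both destroyed by the addition of $J_{\ell}$. Then $e^*$ contains an internal vertex $u_i$ of $J_i$ and an internal vertex $u_j$ of $J_j$. Now $u_i$ was introduced precisely at step $i$, so it belongs to no edge of $F_{i-1}$; moreover, since $J_i$ was fully open at time $\ell-1$, no edge added during steps $i+1,\ldots,\ell-1$ can contain $u_i$. As $e^*\in E(F_{\ell-1})$ and $u_i\in e^*$, we must therefore have $e^*\in E(J_i)$. A symmetric argument yields $e^*\in E(J_j)$.

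To finish, I would apply Lemma~\ref{lem:newfiniteness1a}(i) to the flower $J_i$, taking the copy in question to be $H:=J_j$: since $u_i\in e^*\in E(J_j)$, the copy $J_j$ of $H_1$ or $H_2$ contains the internal vertex $u_i$ of $J_i$, so the lemma forces $J_j=J_i$, contradicting $i\neq j$. The main (and essentially only) subtlety in writing this out cleanly lies in upgrading ``$e^*$ contains an internal vertex of $J_i$'' to the stronger ``$e^*\in E(J_i)$,'' but this follows immediately from the definition of ``fully open at time $\ell-1$'' pinning down the unique step at which $e^*$ could have been introduced.
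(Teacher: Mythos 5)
Your proof is correct and follows essentially the same route as the paper's: the paper asserts (without spelling it out) that internal vertices of two distinct fully open flowers cannot be adjacent, so the attachment edge of $J_\ell$ meets the internal vertices of at most one such flower, and then invokes Claim~\ref{claim:newfinitenessc1a}; you unpack precisely that assertion by showing the attachment edge would have to be a petal edge of both $J_i$ and $J_j$ and then invoking Lemma~\ref{lem:newfiniteness1a}(i) to force $J_i=J_j$. The extra detail you supply — pinning down that $e^*$ must lie in $E(J_i)$ because $J_i$ was fully open at time $\ell-1$ — is exactly the justification the paper leaves implicit, so there is no gap.
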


\begin{proof}
Let $e$ be the attachment edge of $J_{\ell}$. 
Note that any pair of internal vertices from two different fully open flowers are non-adjacent, and therefore $e$ contains the internal vertices of at most one fully open flower. 
Let $J_i$ be any of the other fully open flowers (clearly $i<\ell$). 
Now we can apply Claim~\ref{claim:newfinitenessc1a}, and deduce that $J_i$ is still fully open, as required.
\end{proof}

\begin{claim}\label{claim:newfinitenessc3a}
Suppose $J_{\ell}$ and $J_{\ell+1}$ are each flowers, and further after $J_{\ell}$ is added, one fully open flower is no longer fully open.
Then after $J_{\ell+1}$ is added, the total number of fully open flowers does not decrease.
\end{claim}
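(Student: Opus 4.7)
The plan is to argue that, in the $m_k(H_1)=m_k(H_2)$ setting, the hypothesis of the claim can never actually be realised: adding a flower $J_\ell$ cannot destroy any previously fully open flower. Once that is established, the claim holds vacuously (and in fact the number of fully open flowers strictly increases after each non-degenerate iteration). This is in sharp contrast with the strict case (Claim~\ref{claim:newfinitenessc3}), where the proof must exploit the compound structure of a fully open flower in order to locate a surviving open petal edge; here, because each flower is a single copy of $H_1$ or $H_2$, the rigidity provided by Lemma~\ref{lem:newfiniteness1a}(i) is strong enough on its own.

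The key step, which I would carry out by contradiction, runs as follows. Assume some flower $J_i$ with $i<\ell$ is fully open at time $\ell-1$ but not at time $\ell$. By the definition of fully open, some edge added in iteration $\ell$, i.e.\ some $f\in E(J_\ell)\setminus\{e_\ell\}$ (where $e_\ell$ is the attachment edge of $J_\ell$), must contain an internal vertex $v$ of $J_i$. In particular $v\in V(J_\ell)$. Since iteration $\ell$ is non-degenerate (as $J_\ell$ is a flower), the only vertices of $J_\ell$ lying in $V(F_{\ell-1})$ are those of $e_\ell$; but $v\in V(J_i)\subseteq V(F_{\ell-1})$, and so $v\in e_\ell\subseteq V(J_\ell)$. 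Applying Lemma~\ref{lem:newfiniteness1a}(i) to the graph $F_\ell$ and the flower $J_i$, with $H:=J_\ell$, then forces $J_\ell=J_i$ as subhypergraphs of $F_\ell$. The desired contradiction comes from \textsc{Close-$e$-Alt}, which guarantees that $J_\ell$ contains at least one edge outside $F_{\ell-1}$, whereas $E(J_i)\subseteq E(F_{\ell-1})$; hence $E(J_\ell)\neq E(J_i)$.

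To wrap up, I would invoke Claim~\ref{claim:newfinitenessc1a}: having shown that $V(J_\ell)$ contains no internal vertex of any flower $J_i$ with $i<\ell$, each such $J_i$ remains fully open at time $\ell$. Applying the same reasoning with $\ell$ replaced by $\ell+1$ then shows that no fully open flower is destroyed at time $\ell+1$ either, so the count of fully open flowers does not decrease. The main obstacle is conceptual rather than technical: one has to resist the temptation to mimic the delicate petal-tracking argument used for Claim~\ref{claim:newfinitenessc3} and instead recognise that in the equal-density regime Lemma~\ref{lem:newfiniteness1a}(i) already kills the hypothesis outright, making the whole claim vacuous.
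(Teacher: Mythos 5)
Your proposal rests on the claim that the hypothesis of the statement is vacuous, i.e.\ that a flower $J_{\ell}$ can never destroy a fully open flower $J_i$. This is false, and the error is in the application of Lemma~\ref{lem:newfiniteness1a}(i). That lemma only applies when the ambient graph decomposes as $G \cup J_i$ with $V(G)\cap V(J_i)$ equal to the attachment edge of $J_i$ -- in other words, when no edge of the ambient graph other than the petal edges of $J_i$ meets the internal vertices of $J_i$. In your contradiction scenario you have already deduced that the attachment edge $e_\ell$ of $J_\ell$ contains an internal vertex $v$ of $J_i$ (indeed $e_\ell$ is then a petal edge of $J_i$), and since $\delta(H_1),\delta(H_2)\geq 2$ the vertex $v$ lies in a petal edge of $J_\ell$; so in $F_\ell$ the required decomposition with respect to $J_i$ fails precisely because of $J_\ell$. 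Hence you cannot apply the lemma to $F_\ell$ and $J_i$ to force $J_\ell = J_i$, and no contradiction arises: a copy of $H_1$ or $H_2$ attached non-degenerately along a petal edge of $J_i$ contains internal vertices of $J_i$ without being $J_i$. This situation is not exotic -- \textsc{Eligible-Edge-Alt} is forced to choose a petal edge of a fully open flower whenever every open edge of $F_{\ell-1}$ lies in a fully open flower, which does occur during the growth process; that is exactly why the claim is stated with this hypothesis, why Claim~\ref{claim:newfinitenessc2a} speaks of ``at most one'' flower being destroyed, and why the proof of Lemma~\ref{lem:finiteequal} only gains $+1$ over \emph{two} consecutive non-degenerate iterations rather than at every one. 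If your stronger statement (strict increase at every non-degenerate iteration) were true, all of this bookkeeping would be unnecessary, which should have been a warning sign.

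The actual argument must accept that $J_i$ has been destroyed and instead show that $J_i$ still retains an open edge in $F_\ell$: for any petal edge $f$ of $J_i$ other than $e_\ell$, one applies Lemma~\ref{lem:newfiniteness1a}(i) twice to carefully chosen auxiliary graphs (first to $F_\ell$ with the edges of $J_\ell$ and of $J_i$, except the attachment edge of $J_i$, removed, to see that the only copy through $f$ inside $F_{\ell-1}$ is $J_i$ itself; then to $F_{\ell-1}$ and $J_\ell$, to see that no closing pair for $f$ can use a petal edge of $J_\ell$), concluding $f$ is still open. Since $J_i$ is no longer fully open, this open edge does not lie in a fully open flower, so the priority rule of \textsc{Eligible-Edge-Alt} guarantees the attachment edge of $J_{\ell+1}$ avoids the internal vertices of every fully open flower, and by the argument of Claim~\ref{claim:newfinitenessc2a} no fully open flower is destroyed at step $\ell+1$. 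Your proposal is missing this entire mechanism.
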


\begin{proof}
Suppose $J_i$, $i<\ell$ is the flower which is no longer fully open after $J_{\ell}$ is added. 

We claim that $J_i$ still contains an open edge in $F_{\ell}$. 

%Denote $J_i$ by $L \cup_{j=1}^{e_2-1} R_j$.
Let $e$ be the attachment edge of $J_{\ell}$. 
We note that $e \in J_i$ and further, since $e$ was an open edge previously, it must be a petal edge of $J_i$.
Now let $f$ be any other petal edge of $J_i$ and suppose for a contradiction that it is closed in $F_{\ell}$.

First, this means that there exist copies $L$ of $H_2$ and $R$ of $H_1$ such that $E(L) \cap E(R) = \{f\}$.
Let $g$ be the attachment edge of $J_i$. 
Since none of the $k$-graphs $J_{i+1}, \dots, J_{\ell-1}$ touch any of the internal vertices of $J_i$, 
the $k$-graph $G$ with edge set $E(G):=E(F_{\ell}) \setminus (E(J_{\ell}) \cup E(J_i) \setminus \{g\})$ fully contains each of $J_{i+1},\dots,J_{\ell-1}$. 
Hence we can apply Lemma~\ref{lem:newfiniteness1a}(i) with $G$ and $J_i$, which shows that the only copy of $H_1$ or $H_2$ containing $f$ in $F_{\ell-1}$ is $J_i$. 
Therefore one of $L$ or $R$ must contain a petal edge of $J_{\ell}$.
However this is not possible, since Lemma~\ref{lem:newfiniteness1a}(i) applied with $G=F_{\ell-1}$ and $J_{\ell}$ 
gives that $L$ and $R$ cannot contain any petal edges of $J_{\ell}$.
\COMMENT{RH: This last paragraph, and the proof of Lemma~\ref{lem:newfiniteness1a} are essentially the only things which are different in the $m_k(H_1)=m_k(H_2)$ case, compared to the $m_k(H_1)>m_k(H_2)$ case.
Also note that this last paragraph is completely skipped in the Nenadov+Steger short proof for the symmetric case; the fact is just stated, as though its obvious!}

Since \textsc{Eligible-Edge-Alt} prioritises open edges which do not belong to fully open flowers, the next iteration will select an edge, possibly of $J_i$, to close which does not destroy\footnote{That is, make \emph{not} fully open.} any fully open copies. 
That is, the attachment edge of $J_{\ell+1}$ must be either an open edge from $J_i$ or some other open edge not belonging to a fully open flower at time $\ell$.
%The argument from Claim~\ref{claim:newfinitenessc2} shows that $J_i$ is the only fully open flower which $J_{\ell+1}$ can destroy. 
Since $J_i$ has already been destroyed, we have that $J_{\ell+1}$ does not destroy any fully open flowers using the argument from Claim~\ref{claim:newfinitenessc2a}.\COMMENT{JH: Possibly a nicer way of writing this. I don't think it is possible to stipulate that Eligible-Edge-Alt selects an open edge exactly in $J_i$. One could think up very symmetrical looking $k$-graphs and Eligible-Edge-Alt won't be able to distinguish which `$J_i$' we're talking about here.}
\end{proof}

We can now prove Lemma~\ref{lem:finiteequal}.

\begin{proof}
By Claims~\ref{claim:non-degensym} and~\ref{claim:degenfullsym} 
it easily follows that the number of degenerate iterations is finite, specifically at most $X:=(\lambda(H_1) + \gamma)/ \kappa_2$ (this comes from the proof of Claim~\ref{claim:q_2}), to maintain that $\lambda \geq -\gamma$.

Let $f(\ell)$ be the number of fully open flowers at time $\ell$. 
For the algorithm to output a $k$-graph $G$ at time $\ell$, we must have $f(\ell)=0$. 
By Claim~\ref{claim:newfinitenessc1a}, if iteration $i$ is degenerate, then $f(i) \geq f(i-1)-Y$, where $Y:=\max \{v_1,v_2\}$ (note $v(J_i) \leq Y$ for all $i$). 
By Lemma~\ref{lem:newfiniteness1a}, Claim~\ref{claim:newfinitenessc2a} and
Claim~\ref{claim:newfinitenessc3a},
if $J_i$ and $J_{i+1}$ are both
non-degenerate iterations,
then $f(i+1) \geq f(i-1)+1$.

Now assume for a contradiction that there are $2XY+2$ consecutive non-degenerate iterations.
It follows that after all $X$ degenerate iterations are performed, we will still have
$f(\ell) \geq (2XY+2)/2 -XY>0$. 
Since any further iteration cannot decrease $f(\ell)$ while still maintaining that $\lambda \geq -\gamma$, the algorithm will not output any $k$-graph $G$ at any time $\ell' \geq \ell$. 
Thus \textsc{Grow-$\hat{\mathcal{B}}_{\eps}$-Alt} outputs only a finite number of $k$-graphs.
\end{proof}

\end{document}